\newcommand{\R}{\mathbb{R}}
\newcommand{\N}{\mathbb{N}}
\newcommand{\mc}[1]{\mathcal{#1}}
\newcommand{\ur}[1]{\mathrm{#1}}
\newcommand{\ure}{\ur{e}}
  \renewcommand{\labelenumi}{(\roman{enumi})}
\newcommand{\eps}{\varepsilon}
\newcommand{\gt}{>}
\newcommand{\lt}{<}
\DeclareMathOperator{\supp}{supp}
\DeclareMathOperator{\spanset}{span}
\newcommand{\defs}{\coloneqq}
\newcommand{\sfed}{\eqqcolon}
\newcommand{\ra}{\rightarrow}
\newcommand{\nea}{\nearrow}
\newcommand{\sea}{\searrow}
\newcommand{\rh}{\rightharpoonup}
\newcommand{\ol}{\overline}
\newcommand{\ul}{\underline}
\newcommand{\wt}{\widetilde}
\newcommand{\diff}{\,\mathrm{d}}
\newcommand{\dt}{\,\mathrm{d}t}
\newcommand{\dsigma}{\,\mathrm{d}\sigma}
\newcommand{\drho}{\,\mathrm{d}\rho}
\newcommand{\ddt}{\frac{\mathrm{d}}{\mathrm{d}t}}
\DeclareMathOperator{\sign}{sign}
\newcommand{\embed}{\hookrightarrow}
\newcommand{\hp}{\hphantom}
\newcommand{\pe}{\mathrel{\hp{=}}}
\newcommand{\tmaxedk}{T_{\max, \eps \delta k}}
\newcommand{\intom}{\int_\Omega}
\newcommand{\intnt}{\int_0^T}
\newcommand{\intnst}{\int_0^t}
\newcommand{\intntom}{\int_0^T \int_\Omega}
\newcommand{\intnstom}{\int_0^t \int_\Omega}
\newcommand{\intninfom}{\int_0^\infty \int_\Omega}
\newcommand{\Ombar}{\ol \Omega}
\newcommand{\loc}{\mathrm{loc}}
\newcommand{\leb}[2][\Omega]{\ensuremath{L^{#2}(#1)}}
\newcommand{\lebl}[1][\Omega]{\ensuremath{L\log L(#1)}}
\newcommand{\sob}[3][\Omega]{\ensuremath{W^{#2, #3}(#1)}}
\newcommand{\sobn}[3][\Omega]{\ensuremath{W_N^{#2, #3}(#1)}}
\newcommand{\con}[2][\Ombar]{\ensuremath{C^{#2}(#1)}}
\newcommand{\dual}[1]{\ensuremath{(#1)^\star}}
\newcommand{\dualn}[1]{\ensuremath{#1^\star}}
\newcommand{\Wt}{\ensuremath{W^{1, 2}((0, T); \sob12)}}
\newcommand{\Winf}{\ensuremath{W_{\loc}^{1, 2}([0, \infty); \sob12)}}
\newcommand{\ue}{u_\eps}
\newcommand{\uet}{u_{\eps t}}
\newcommand{\uej}{{u_{\eps_j}}}
\newcommand{\ued}{u_{\eps \delta}}
\newcommand{\uedj}{u_{\eps \delta_j}}
\newcommand{\uedjt}{u_{\eps \delta_j t}}
\newcommand{\uedt}{u_{\eps \delta t}}
\newcommand{\uedk}{u_{\eps \delta k}}
\newcommand{\uedkt}{u_{\eps \delta k t}}
\newcommand{\uedkj}{u_{\eps \delta k_j}}
\newcommand{\uedkjt}{u_{\eps \delta k_j t}}
\newcommand{\ve}{v_\eps}
\newcommand{\vet}{v_{\eps t}}
\newcommand{\vej}{{v_{\eps_j}}}
\newcommand{\ved}{v_{\eps \delta}}
\newcommand{\vedj}{v_{\eps \delta_j}}
\newcommand{\vedjt}{v_{\eps \delta_j t}}
\newcommand{\vedt}{v_{\eps \delta t}}
\newcommand{\vedk}{v_{\eps \delta k}}
\newcommand{\vedkt}{v_{\eps \delta k t}}
\newcommand{\vedkj}{v_{\eps \delta k_j}}
\newcommand{\vedkjt}{v_{\eps \delta k_j t}}
\newcommand{\uejdj}{{u_{\eps_j \delta_{j'}}}}
\newcommand{\vejdj}{{v_{\eps_j \delta_{j'}}}}
\newcommand{\ua}{u_\alpha}
\newcommand{\uaj}{u_{\alpha_j}}
\newcommand{\uat}{u_{\alpha t}}
\newcommand{\va}{v_\alpha}
\newcommand{\vaj}{v_{\alpha_j}}
\newcommand{\vat}{v_{\alpha t}}
\newcommand{\una}{u_{0 \alpha}}
\newcommand{\vna}{v_{0 \alpha}}
\newcommand{\Dia}{D_{i \alpha}}
\newcommand{\Doa}{D_{1 \alpha}}
\newcommand{\Dta}{D_{2 \alpha}}
\newcommand{\Sia}{S_{i \alpha}}
\newcommand{\Soa}{S_{1 \alpha}}
\newcommand{\Soaj}{S_{1 \alpha_j}}
\newcommand{\Sta}{S_{2 \alpha}}
\newcommand{\fia}{f_{i \alpha}}
\newcommand{\foa}{f_{1 \alpha}}
\newcommand{\foaj}{f_{1 \alpha_j}}
\newcommand{\fta}{f_{2 \alpha}}
\newcommand{\xiia}{\xi_{\alpha}}
\newcommand{\xioa}{\xi_{\alpha}}
\newcommand{\xita}{\xi_{\alpha}}
\newcommand{\Ba}{B_\alpha}
\newcommand{\Baj}{B_{\alpha_j}}
\newcommand{\Eed}{\mc E_{\eps \delta}}
\newcommand{\Ded}{\mc D_{\eps \delta}}
\newcommand{\Red}{\mc R_{\eps \delta}}
\newcommand{\Sod}{S_{1 \delta}}
\newcommand{\Sodj}{S_{1 \delta_j}}
\newcommand{\Sodjp}{S_{1 \delta_{j'}}}
\newcommand{\Std}{S_{2 \delta}}
\newcommand{\Sid}{S_{i \delta}}
\newcommand{\fod}{f_{1 \delta}}
\newcommand{\fodj}{f_{1 \delta_j}}
\newcommand{\ftd}{f_{2 \delta}}
\newcommand{\fid}{f_{i \delta}}
\newcommand{\God}{G_{1 \delta}}
\newcommand{\Godj}{G_{1 \delta_j}}
\newcommand{\Godjp}{G_{1 \delta_{j'}}}
\newcommand{\Gtd}{G_{2 \delta}}
\newcommand{\Gid}{G_{i \delta}}
\newcommand{\fo}{f_1}
\newcommand{\ft}{f_2}
\newcommand{\Goa}{G_{1 \alpha}}
\newcommand{\Gta}{G_{2 \alpha}}
\newcommand{\Gia}{G_{i \alpha}}
\newcommand{\cp}{C_{\mathrm P}}
\newcommand{\tops}{\texorpdfstring}
\renewenvironment{proof}[1][\proofname]{\par
  \pushQED{\qed}%
  \normalfont \topsep0\p@\relax
  \trivlist
  \item[\hskip\labelsep\scshape
  #1\@addpunct{.}]\ignorespaces
}{%
  \popQED\endtrivlist\@endpefalse
}
\newtheorem{base}{Base}[section]
\numberwithin{equation}{section}
\newtheorem{theorem}[base]{Theorem} \newtheorem*{theorem*}{Theorem}
\newtheorem{lemma}[base]{Lemma} \newtheorem*{lemma*}{Lemma}
 \newtheorem*{prop*}{Proposition}
 \newtheorem*{cor*}{Corollary}
 \newtheorem*{remark*}{Remark}
\theoremstyle{definition}
\newtheorem{definition}[base]{Definition} \newtheorem*{definition*}{Definition}
 \newtheorem*{example*}{Example}
 \newtheorem*{cond*}{Condition}
\begin{document}
\setkomafont{title}{\normalfont\Large}
\title{Global weak solutions to fully cross-diffusive systems with nonlinear diffusion and saturated taxis sensitivity}
\author{%
Mario Fuest\footnote{fuestm@math.upb.de}\\
{\small Institut f\"ur Mathematik, Universit\"at Paderborn,}\\
{\small 33098 Paderborn, Germany}
}
\date{}

\maketitle

\KOMAoptions{abstract=true}
\begin{abstract}
\noindent
Systems of the type
\begin{align}\label{prob:abstract}\tag{$\star$}
  \begin{cases}
    u_t = \nabla \cdot (D_1(u) \nabla u - S_1(u) \nabla v) + f_1(u, v), \\
    v_t = \nabla \cdot (D_2(v) \nabla v + S_2(v) \nabla u) + f_2(u, v)
  \end{cases}
\end{align}
can be used to model pursuit--evasion relationships between predators and prey.
Apart from local kinetics given by $f_1$ and $f_2$,
the key components in this system are the taxis terms $-\nabla \cdot (S_1(u) \nabla v)$ and $+\nabla \cdot (S_2(v) \nabla u)$;
that is, the species are not only assumed to move around randomly in space but are also able to partially direct their movement
depending on the nearby presence of the other species.\\[0.5pt]
In the present article,
we construct global weak solutions of \eqref{prob:abstract} for certain prototypical nonlinear functions $D_i$, $S_i$ and $f_i$, $i \in \{1, 2\}$.
To that end, we first make use of a fourth-order regularization to obtain global solutions to approximate systems
and then rely on an entropy-like identity associated with \eqref{prob:abstract} for obtaining various a~priori estimates. \\[0.5pt]
 \textbf{Key words:} {double cross-diffusion, predator--prey, pursuit--evasion, weak solutions}\\
 \textbf{AMS Classification (2020):} {35K51 (primary); 35B45, 35D30, 35K59, 92C17 (secondary)}
\end{abstract}

\section{Introduction}
Predator--prey relationships are often described by systems of differential equations.
While systems of ordinary differential equations essentially assume a spatially homogeneous setting,
the simplest way to account for nontrivial spatial behavior is to assume that the species move around randomly.
However, sufficiently intelligent predators and prey may also partially orient their movement
towards or away from higher concentrations of the other species.
In order to capture these abilities,
\cite{TsyganovEtAlQuasisolitonInteractionPursuit2003}
proposes the so-called \emph{pursuit--evasion model}
\begin{align}\label{prob:lin}
  \begin{cases}
    u_t = \nabla \cdot (d_1 \nabla u - \chi_1 u \nabla v) + \fo(u, v), \\
    v_t = \nabla \cdot (d_2 \nabla v + \chi_2 v \nabla u) + \ft(u, v),
  \end{cases}
\end{align}
where $u$, $v$ correspond to the predator and prey densities,
$d_1, d_2, \chi_1, \chi_2 \gt 0$ are given parameters
and $\fo$, $\ft$ relate to intrinsic growth and certain functional responses.
We discuss reasons for various choices of the later in a moment, but first motivate the fluxes present in \eqref{prob:lin}.

Crucially, both species are not only assumed to move randomly in their habitat (which is modelled by the diffusion terms $\nabla \cdot (d_1 \nabla u)$ and $\nabla \cdot (d_2 \nabla v)$
with diffusion strength characterised by the parameters $d_1$ and $d_2$),
but may also partially direct their movement in response to the presence of the other species.
More concretely, predators move towards higher prey concentrations and the prey seeks to avoid high predator concentrations.
These effects are called attractive prey- and repulsive predator-taxis and modelled by the terms $-\nabla \cdot (\chi_1 u \nabla v)$ and $+\nabla \cdot (\chi_2 v \nabla u)$, respectively,
again with the strength of the effects indicated by $\chi_1$ and $\chi_2$.

Similar terms are also present in the \emph{minimal Keller--Segel system}
\begin{align}\label{prob:ks}
  \begin{cases}
    u_t = \nabla \cdot (d_1 \nabla u - \chi_1 u \nabla v), \\
    v_t = d_2 \Delta v - v + u,
  \end{cases}
\end{align}
which has been proposed in \cite{KellerSegelInitiationSlimeMold1970}
to describe the behavior of the slime mold \emph{Dictyostelium discoideum} with density $u$,
which are attracted by the chemical substance with density $v$ they produce themselves.
A key feature of this organism is to spontaneously form structures,
which is reflected mathematically by the existence of solutions to \eqref{prob:ks} which aggregate so strongly that they blow up in finite time
\cite{HerreroVelazquezBlowupMechanismChemotaxis1997, WinklerFinitetimeBlowupHigherdimensional2013}.
For an overview of blow-up results and techniques for obtaining them, we refer to the recent survey \cite{LankeitWinklerFacingLowRegularity2019}.
In addition to questions of global well-posedness,
various other aspects of \eqref{prob:ks} and relatives thereof have been analyzed,
culminating in a huge body of mathematical literature
(see for instance the survey \cite{BellomoEtAlMathematicalTheoryKeller2015}).

In contrast to \eqref{prob:lin}, however, \eqref{prob:ks} only features a single taxis term---which already forms a huge obstacle for obtaining global classical solutions,
in that it can cause finite-time blow-up, as indicated above.
Thus it has to be expected that solutions to doubly cross-diffusive systems such as \eqref{prob:lin} are generally even less regular
or, at least, that constructing global (weak) solutions for such systems is more challenging.
Accordingly, global existence results regarding \eqref{prob:lin} are, up to now, quite limited.
Nontrivial unconditional a priori estimates can apparently only been derived by means of a certain entropy-like identity (which we discuss below in more detail),
which in the one-dimensional setting are barely sufficient to conclude the existence of global weak solutions \cite{TaoWinklerExistenceTheoryQualitative2020, TaoWinklerFullyCrossdiffusiveTwocomponent2021}.
Apart from that, only certain conditional functional inequalities, which fail to hold for general data, are known to exist.
In \cite{FuestGlobalSolutionsHomogeneous2020}, these have successfully been employed in order to prove existence of global classical solutions
under the assumption that the initial data are sufficiently close to homogeneous steady states.

We also remark that, apart from \eqref{prob:lin}, several other fully cross-diffusive systems have been examined,
of which the one proposed by Shigesada, Kawasaki and Teramoto to model spatial segregation \cite{ShigesadaEtAlSpatialSegregationInteracting1979},
which we henceforth call the SKT model,
is certainly one of the most famous.
Again, the cross-diffusive terms pose challenges for obtaining any global existence results
and, accordingly, global classical solutions are only known to exist in certain specific situations; see for instance \cite{DeuringInitialboundaryValueProblem1987, LouWinklerGlobalExistenceUniform2015}.
Moreover, a quite general global solution theory for such cross-diffusive systems has been developed,
both for weak \cite{JungelBoundednessbyentropyMethodCrossdiffusion2015}
and renormalized \cite{ChenJungelGlobalRenormalizedSolutions2019, FischerGlobalExistenceRenormalized2015} solutions,
which is in part applicable to the SKT model.
Unfortunately, however, the techniques employed there are not transferable to the system \eqref{prob:lin},
the main reasons being that stronger versions of the entropy-like inequality \eqref{eq:intro:entropy} below would be needed.
For a more thorough comparison between the SKT model and \eqref{prob:lin}, we refer to the introduction of \cite{TaoWinklerFullyCrossdiffusiveTwocomponent2021}.

\paragraph{Nonlinear diffusion and saturated taxis sensitivities.}
Over time, various modifications of the minimal Keller--Segel system \eqref{prob:ks} have been proposed,
see \cite{HillenPainterUserGuidePDE2009} for a (non-exhaustive) list.
Prominent examples include replacing the linear diffusion term with a quasilinear one and allowing for nonlinear taxis sensitivities.
While in part this has already been suggested by Keller and Segel in \cite{KellerSegelInitiationSlimeMold1970},
the need for these adjustments has been further emphasized in \cite{PainterHillenVolumefillingQuorumsensingModels2002}
(see also \cite{HillenPainterUserGuidePDE2009, WrzosekVolumeFillingEffect2010}).
A key reason is the desire to incorporate effects such as volume-filling;
that is, to take into account that the motility of bacteria may be impacted by the availability of free nearby space and thus reduced by a high presence of other bacteria.

Apart from biological motivations, suitable nonlinearities may also improve the regularity of the system.
When replacing the constants $d_1$ and $\chi_1$ in \eqref{prob:ks} with functions of $u$,
if the growth rate of $\frac{d_1}{\chi_1}$ is higher than a certain threshold,
solutions exist always globally in time and are bounded, even in situations where blow-up can occur for constant $d_1$ and $\chi_1$
\cite{HorstmannWinklerBoundednessVsBlowup2005, IshidaEtAlBoundednessQuasilinearKeller2014, TaoWinklerBoundednessQuasilinearParabolic2012}.
On the other hand, if the growth rate is below that threshold, solutions may be unbounded
\cite{HorstmannWinklerBoundednessVsBlowup2005, WinklerDoesVolumefillingEffect2009}.
That is, if the motility of bacteria is only slightly reduced in high-density environments, chemotaxis may still lead to overcrowding.
Determining when exactly these solutions fail to exist globally in time is still an open question,
although quite large classes of examples for both finite-time 
\cite{CieslakStinnerFinitetimeBlowupGlobalintime2012, CieslakStinnerFinitetimeBlowupSupercritical2014, CieslakStinnerNewCriticalExponents2015}
and infinite-time blow-up \cite{WinklerGlobalClassicalSolvability2019} have been detected.
Generally, it is conjectured that the system \eqref{prob:ks} with nonconstant $d_1, \chi_1$
possesses similar properties as parabolic--elliptic simplifications thereof,
for which a more complete answer to the question of global existence is available
\cite{LankeitInfiniteTimeBlowup2020, WinklerDjieBoundednessFinitetimeCollapse2010}.

In the present article, we adapt these ideas to the model \eqref{prob:lin} and thus consider the system
\begin{align}\label{prob:nonlin}\tag{P}
  \begin{cases}
    u_t = \nabla \cdot (D_1(u) \nabla u - S_1(u) \nabla v) + \fo(u, v)  & \text{in $\Omega \times (0, \infty)$}, \\
    u_t = \nabla \cdot (D_2(v) \nabla v + S_2(v) \nabla u) + \ft(u, v)  & \text{in $\Omega \times (0, \infty)$}, \\
    \partial_\nu u = \partial_\nu v = 0                                 & \text{on $\partial \Omega \times (0, \infty)$}, \\
    u(\cdot, 0) = u_0, v(\cdot, 0) = v_0                                & \text{in $\Omega$}
  \end{cases}
\end{align}
in smooth bounded domains $\Omega \subset \R^n$, $n \in \N$.
Although the methods established below would allow for more general choices,
mainly for the sake of clarity we confine ourselves to certain prototypical functions in \eqref{prob:nonlin};
that is, we set
\begin{align}\label{eq:intro:Di_Si}
  D_i(s) \defs d_i (s + 1)^{m_i - 1}
  \quad \text{and} \quad
  S_i(s) \defs \chi_i s (s + 1)^{q_i - 1}
\end{align}
for $s \ge 0$ and $i \in \{1, 2\}$,
and where the parameters therein are such that
\begin{align}\label{eq:intro:di_chii}
  d_1, d_2, \chi_1, \chi_2 \gt 0, \,
  m_1, m_2 \in \R, \,
  q_1, q_2 \in (-\infty, 1].
\end{align}
Moreover, we choose to either neglect zeroth order kinetics altogether
or assume a typical Lotka--Volterra-type predator--prey interaction,
meaning that apart from intrinsic growth the functions $f_1$ and $f_2$ should reflect that interspecies encounters are beneficial for the predator and harmful to the prey.
Again, while the techniques developed in this article could be employed for a variety of choices for $f_1$ and $f_2$ and thus for various functional responses,
we only consider the same zeroth order terms as in \cite{TaoWinklerExistenceTheoryQualitative2020} and \cite{FuestGlobalSolutionsHomogeneous2020},
namely
\begin{align}\label{eq:intro:fi}
  f_i(s_1, s_2) \defs \lambda_i s_i - \mu_i s_i^2 + (-1)^{i+1} a_i s_1 s_2
\end{align}
for $s_1, s_2 \ge 0$ and $i \in \{1, 2\}$,
where
\begin{align}
  \label{eq:intro:h1}\tag{H1}
    \text{either} &\qquad \lambda_1, \lambda_2, \mu_1, \mu_2, a_1, a_2 = 0  \\
  \label{eq:intro:h2}\tag{H2}
    \text{or} &\qquad \lambda_1, \lambda_2, \mu_1, \mu_2, a_1, a_2 \gt 0.
\end{align}

\paragraph{The entropy-like identity.}
Our goal is to construct global weak solutions of \eqref{prob:nonlin} for widely arbitrary initial data.
Thus, conditional estimates valid only as long $u$ and $v$ are close to certain steady states
(such as those derived in \cite{FuestGlobalSolutionsHomogeneous2020})
are evidently insufficient for our purposes.
Instead,
we will rely on the following unconditional entropy-like identity 
which has already been made use of in \cite{TaoWinklerExistenceTheoryQualitative2020, TaoWinklerFullyCrossdiffusiveTwocomponent2021} for related systems.
Setting
\begin{align*}
  G_i(s) \defs \int_1^s \int_1^\rho \frac{1}{S_i(\sigma)} \dsigma \drho
  \qquad \text{for $s \ge 0$ and $i \in \{1, 2\}$},
\end{align*}
a sufficiently smooth and positive global solution $(u, v)$ to \eqref{prob:nonlin} satisfies
\begin{align}\label{eq:intro:entropy}
  &\pe  \ddt \left( \intom G_1(u) + \intom G_2(v) \right)
        + \intom \frac{D_1(u)}{S_1(u)} |\nabla u|^2 + \intom \frac{D_2(v)}{S_2(v)} |\nabla v|^2 \notag \\
  &=    \intom \left( \frac{S_1(u)}{S_1(u)} - \frac{S_2(v)}{S_2(v)} \right) \nabla u \cdot \nabla v
        + \intom G_1'(u) \fo(u, v) + \intom G_2'(v) \ft(u, v)
  \qquad \text{in $(0, \infty)$}.
\end{align}
This functional inequality constitutes the main---if not essentially the only---source for a priori estimates.
In order to indeed gain any useful bounds from \eqref{eq:intro:entropy}, however, we have to control the right-hand side therein.
Evidently, the first term there just vanishes; the functions $G_1$ and $G_2$
have been chosen precisely to guarantee a cancellation of the cross-diffusive contributions.

Moreover, the last two summands on the right-hand side in \eqref{eq:intro:entropy}
also simply vanish if \eqref{eq:intro:h1} holds and
they can be easily controlled if there are $C_1, C_2 \gt 0$ such that
\begin{align}\label{eq:intro:cond_f1}\tag{F1}
      G_1'(s_1) \fo(s_1, s_2) + G_2'(s_2) \ft(s_1, s_2)
  \le - C_1 s_1^2 \ln s_1 - C_1 s_2^2 \ln s_2 + C_2
  \qquad \text{for all $s_1, s_2 \ge 1$}.
\end{align}
(We note that, while for bounding the right-hand side in \eqref{eq:intro:entropy} it would suffice to take $C_1 = 0$,
positive values of $C_1$ guarantee uniform integrability of $f_i(u, v)$
which in turn will allow us to undertake certain limit processes in approximative problems.)
Unfortunately, \eqref{eq:intro:cond_f1} cannot hold unconditionally.
Indeed, suppose $q_1 = q_2 = q \le 1$
and that \eqref{eq:intro:cond_f1} holds for $C_1 = 0$ and some $C_2 \gt 0$.
Taking $s_1 = s_2 = s \ge 1$ in \eqref{eq:intro:cond_f1} then implies
\begin{align*}
        C_2
  &\ge  G_1'(s) (\lambda_1 s - \mu_1 s^2 + a_1 s^2)
        + G_2'(s) (\lambda_2 s - \mu_2 s^2 - a_2 s^2) \\
  &\ge  \int_1^{s} \frac{(\sigma + 1)^{1-q}}{\sigma} \dsigma \left( \frac{-\mu_1 + a_1}{\chi_1} + \frac{-\mu_2 - a_2}{\chi_2} \right) s^2,
\end{align*}
where the right-hand side diverges to $\infty$ as $s \nea \infty$,
provided $\frac{a_1}{\chi_1} \gt \frac{\mu_1}{\chi_1} + \frac{\mu_2}{\chi_2} + \frac{a_2}{\chi_2}$.
Still, in the case of $q_1 = q_2 = q \le 1$,
Young's inequality shows that \eqref{eq:intro:cond_f1} holds
provided $a_1$ is sufficiently small or $\chi_1$ is sufficiently large compared to the other parameters, for instance.

Of course, instead of \eqref{eq:intro:cond_f1} one may also rely on the dissipative terms in \eqref{eq:intro:entropy}
for controlling the right-hand side in \eqref{eq:intro:entropy}
and this idea will allow us to derive another sufficient condition for bounding the right-hand side in \eqref{eq:intro:entropy}.
As integrating certain linear combinations of the first two equations in \eqref{prob:nonlin} provides us
with a locally uniform-in-time $\leb1$ bound for both $u$ and $v$,
combining the Gagliardo--Nirenberg and Young inequalities shows that requiring 
\begin{align}\label{eq:intro:cond_f2}\tag{F2}
  m_1 \gt \frac{2n-2}{n} + \frac{(3-q_2)(2-q_1) - (3-q_1)(2-q_2)}{2-q_2}
  \quad \text{or} \quad
  m_2 \gt \frac{2n-2}{n} + (q_2 - q_1)
\end{align}
suffices to estimate the right-hand side in \eqref{eq:intro:entropy} against the dissipative terms therein (cf.\ Lemma~\ref{lm:rhs_bdd_f2}).
We note that if $q_1 = q_2$, then \eqref{eq:intro:cond_f2} is equivalent to $\max\{m_1, m_2\} \gt \frac{2n-2}{n}$.

Next, one could discuss more refined approaches and for instance
also make use of the $L^2$ space-time bounds
(which in the case of \eqref{eq:intro:h2} result as a by-product when obtaining $\leb1$ bounds).
However, here we confine ourselves to the conditions \eqref{eq:intro:cond_f1} and \eqref{eq:intro:cond_f2},
mainly because treating the most general case possible would lead to several technical difficulties which we would like to rather avoid here.
Still, the important special cases that
either $a_1$ is small or $\chi_1$ is large (condition \eqref{eq:intro:cond_f1}) or $m_1$ or $m_2$ are large (condition \eqref{eq:intro:cond_f2})
are included in our analysis
and, as the examples above show, at least qualitatively, these conditions seem to be optimal.

\paragraph{Obtaining further a priori estimates.}
With the right-hand side of \eqref{eq:intro:entropy} under control, we then make use of (a corollary of) the Gagliardo--Nirenberg inequality
to obtain space-time bounds for $u, v, \nabla u$ and $\nabla v$.
That is, assuming
\begin{align}\label{eq:intro:mi_qi}
  m_i - q_i \gt - 1
  \qquad \text{for $i \in \{1, 2\}$},
\end{align}
we can obtain estimates in $L^{p_1}$, $L^{p_2}$, $L^{r_1}$ and $L^{r_2}$, respectively, where
\begin{align}\label{eq:intro:pi}
  p_i &\defs
  \begin{cases}
    \max\{m_i + 1 - q_i + \frac{2(2-q_i)}{n}, 2 - q_i\},  & \text{if \eqref{eq:intro:h1} holds} \\
    \max\{m_i + 1 - q_i + \frac{2(2-q_i)}{n}, 3 - q_i\},  & \text{if \eqref{eq:intro:h2} holds}
  \end{cases}
  \qquad \text{for $i \in \{1, 2\}$}
\intertext{and}\label{eq:intro:ri}
  r_i &\defs \min\left\{ \frac{2p_i}{p_i - (m_i - q_i - 1)}, 2 \right\},
  \qquad \text{for $i \in \{1, 2\}$},
\end{align}
see Lemma~\ref{lm:space_time_bdds_alpha} and Lemma~\ref{lm:gradient_space_time_bdds_alpha}.

Lacking any other sources of helpful a priori bounds,
these estimates need to be strong enough to inter alia assert convergence of the corresponding approximative terms to
\begin{align*}
  \intninfom S_1(u) \nabla v \cdot \nabla \varphi
  \quad \text{and} \quad
  \intninfom S_2(v) \nabla u \cdot \nabla \varphi,
  \qquad \varphi \in C_c^\infty(\Ombar \times [0, \infty)).
\end{align*}
This is the case when $p_i$ and $r_i$ are sufficiently large.
More precisely, we need to require
\begin{align}\label{eq:intro:main_cond}
  \begin{cases}
    \frac{1}{r_{3-i}} \lt 1,                    & q_i \le 0 , \\
    \frac{q_i}{p_i} + \frac{1}{r_{3-i}} \lt 1,  & 0 \lt q_i \lt 1, \\
    \frac{1}{p_i} + \frac{1}{r_{3-i}} \le 1,    & q_i = 1,
  \end{cases}
  \qquad \text{for $i \in \{1, 2\}$}
\end{align}
(In the case of $q_i = 1$, we obtain slightly stronger bounds than outlined above
so that equality in \eqref{eq:intro:main_cond} is sufficient for that case.)
We remark that if $m_i = m \in \R$ and $q_i = q \in (-\infty, 1]$ for $i \in \{1, 2\}$,
then $q \le 0$ implies \eqref{eq:intro:main_cond}
while for $q \in (0, 1)$ and if \eqref{eq:intro:h1} holds,
\eqref{eq:intro:main_cond} is equivalent to
\begin{align}\label{eq:intro:cond_m_h1}
  m \gt \min\left\{ \frac{(2n + 1)q - 2}{n}, 4q-1 \right\}
\end{align}
Moreover, in the case of \eqref{eq:intro:h2} (and again $q \in (0, 1)$),
\eqref{eq:intro:main_cond} is not only implied by \eqref{eq:intro:cond_m_h1} but also by $m \gt 4q - 2$.

\paragraph{Main results.}
Under these assumptions, we are then finally able to construct global weak solutions to \eqref{prob:nonlin}.
\begin{theorem}\label{th:ex_weak_nonlin}
  Let $\Omega \subset \R^n$, $n \in \N$, be a smooth, bounded domain.
  Suppose that \eqref{eq:intro:Di_Si}, \eqref{eq:intro:di_chii}, \eqref{eq:intro:fi}, \eqref{eq:intro:mi_qi}, 
  either \eqref{eq:intro:h1} or \eqref{eq:intro:h2},
  \eqref{eq:intro:cond_f1} or \eqref{eq:intro:cond_f2},
  as well as \eqref{eq:intro:main_cond} (with $p_i$ and $r_i$ as in \eqref{eq:intro:pi} and \eqref{eq:intro:ri}, respectively) hold
  and that
  \begin{align}\label{eq:intro:cond_init}
    u_0, v_0 \in 
    \begin{cases}
      \leb{2-q_i},  & q_i \lt 1, \\
      \lebl,        & q_i = 1
    \end{cases}
    \quad \text{are nonnegative}.
  \end{align}
  Then there exists a global nonnegative weak solution $(u, v)$ of \eqref{prob:nonlin} in the sense of Definition~\ref{def:weak_sol_main}.
\end{theorem}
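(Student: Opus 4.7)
My plan is to follow the strategy already foreshadowed in the introduction. For each small $\eps \gt 0$ I first regularize \eqref{prob:nonlin} by smoothing $D_i$, $S_i$, $f_i$ (making $S_i$ strictly positive to remove the degeneracy and truncating the cross-kinetic part of $f_i$ at zero to preserve nonnegativity) and by adding a fourth-order dissipative term such as $-\eps \Delta^2 \ue$ and $-\eps \Delta^2 \ve$, endowed with the natural no-flux-type higher-order boundary conditions. Smooth nonnegative initial data $\une$ and $\vne$ approximating $u_0$ and $v_0$ in the topology dictated by \eqref{eq:intro:cond_init} are also prescribed. A standard Schauder fixed-point argument combined with a continuation criterion then yields a global classical solution $(\ue, \ve)$ on $\Omega \times (0,\infty)$ with $\ue, \ve \ge 0$.

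The first uniform-in-$\eps$ estimate is an $L^\infty_{\mathrm{loc}}([0,\infty); \leb1)$ bound for $\ue$ and $\ve$, obtained by integrating the two equations separately and exploiting the signs in $f_i$; under \eqref{eq:intro:h2} the $-\mu_i s^2$ contributions additionally provide space-time $L^2$ control. The main engine for further a~priori estimates is the entropy identity \eqref{eq:intro:entropy}, which holds for the regularized system up to error contributions that vanish with $\eps$. Under \eqref{eq:intro:cond_f1} the last two integrals in \eqref{eq:intro:entropy} are controlled directly, while under \eqref{eq:intro:cond_f2} they are absorbed into the dissipative terms by combining the $\leb 1$ bound with the Gagliardo--Nirenberg inequality. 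This yields $\eps$-uniform control of $\intom G_1(\ue)$ and $\intom G_2(\ve)$ in $L^\infty_{\mathrm{loc}}([0,\infty))$ together with space-time $L^2$ bounds on the gradients of $(\ue+1)^{(m_1-q_1+1)/2}$ and $(\ve+1)^{(m_2-q_2+1)/2}$. Another application of Gagliardo--Nirenberg (cf.\ Lemma~\ref{lm:space_time_bdds_alpha} and Lemma~\ref{lm:gradient_space_time_bdds_alpha}) then converts these bounds into the announced space-time $L^{p_i}$ estimates for $\ue$ and $\ve$ and space-time $L^{r_i}$ estimates for $\nabla \ue$ and $\nabla \ve$.

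With these bounds at hand, testing the regularized equations against smooth functions and controlling each flux by means of the estimates above provides control of $\uet$ and $\vet$ in a negative Sobolev space, so that an Aubin--Lions type argument yields, along a subsequence, strong convergence $\ue \ra u$ and $\ve \ra v$ in $L^{p_i-\eta}(\Omega \times (0,T))$ for every $\eta \gt 0$ and $T \lt \infty$, pointwise a.e.\ convergence, and weak convergence $\nabla \ue \rh \nabla u$, $\nabla \ve \rh \nabla v$ in the corresponding $L^{r_i}$-spaces. Passing to the limit in the pure-diffusion and zero-order terms is then routine; the uniform integrability of $f_i(\ue, \ve)$ needed for the latter is supplied by the $-C_1 s^2 \ln s$ term in \eqref{eq:intro:cond_f1} or, under \eqref{eq:intro:cond_f2}, by the $L^{p_i}$ bounds themselves.

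The technical heart of the argument, and the step I expect to be the main obstacle, is the passage to the limit in the cross-diffusive integrals $\intninfom S_1(\ue) \nabla \ve \cdot \nabla \varphi$ and $\intninfom S_2(\ve) \nabla \ue \cdot \nabla \varphi$. Since $\nabla \ve$ converges only weakly in $L^{r_2}$, one needs strong convergence of $S_1(\ue)$ in the dual space $L^{r_2/(r_2-1)}$. Because $|S_1(s)| \lesssim (s+1)^{q_1}$, the $L^{p_1}$ estimate on $\ue$ controls $S_1(\ue)$ uniformly in $L^\infty$ when $q_1 \le 0$ and in $L^{p_1/q_1}$ when $q_1 \in (0,1]$; together with pointwise convergence, a Vitali-type argument upgrades this to strong $L^{r_2/(r_2-1)}$-convergence precisely when \eqref{eq:intro:main_cond} holds (the limiting equality being admissible in the critical case $q_1 = 1$ thanks to the slightly sharper bounds available there). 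The analogous reasoning disposes of the second cross-diffusive term, and a final verification that the limit $(u,v)$ satisfies the weak formulation in the sense of Definition~\ref{def:weak_sol_main} concludes the proof.
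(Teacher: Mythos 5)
Your overall architecture --- fourth-order regularization, entropy-type estimate, Gagliardo--Nirenberg to get the $L^{p_i}$ and $L^{r_i}$ space-time bounds, Aubin--Lions, then a Vitali/pairing argument for the cross-diffusive terms using \eqref{eq:intro:main_cond} --- is the same as the paper's. However, there is a genuine gap in the construction of the approximating solutions, and it is in fact the part the paper devotes most of Section~\ref{sec:approx} to.

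First, you claim that ``a standard Schauder fixed-point argument combined with a continuation criterion then yields a global classical solution $(\ue,\ve)$ on $\Omega\times(0,\infty)$ with $\ue,\ve\ge 0$.'' Fourth-order parabolic problems do \emph{not} admit a maximum/comparison principle, so nonnegativity of $\ue,\ve$ is not something you get for free from nonnegative data, no matter how smooth the coefficients are; the regularized densities can and generically do become negative. The paper circumvents this by choosing the fourth-order term in the degenerate-mobility form $\nabla\cdot(\eps S_{i\delta}(\cdot)\nabla\Delta\,\cdot)$ with $S_{i\delta}(s)=S_i(|s|)+\delta$, building solutions by Galerkin, and then sending $\delta\searrow 0$: nonnegativity of the $\delta\to 0$ limit is extracted from the blow-up of the entropy density $G_{1\delta}$ near negative values (Grün's argument, Lemma~\ref{lm:ue_ge_0}), not from a maximum principle. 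With a plain $-\eps\Delta^2$ term and nondegenerate mobility, as you propose, this mechanism is unavailable and there is no route to $\ue,\ve\ge 0$.

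Second, the specific form of the regularizing term is not merely a convenience. The entropy computation hinges on testing with $G_{1\delta}'(\ued)$, where $G_{1\delta}''=1/S_{1\delta}$, so that $\intom\nabla\cdot(\eps S_{1\delta}(\ued)\nabla\Delta\ued)\,G_{1\delta}'(\ued)= -\eps\intom\nabla\Delta\ued\cdot\nabla\ued=\eps\intom|\Delta\ued|^2$ is signed and cleanly absorbed. With $-\eps\Delta^2\ued$ the same test produces $\eps\intom\Delta\ued\,\Delta G_{1\delta}'(\ued)=\eps\intom\Delta\ued\bigl(G_{1\delta}''(\ued)\Delta\ued+G_{1\delta}'''(\ued)|\nabla\ued|^2\bigr)$, and the second term has no useful sign and cannot be controlled by the entropy dissipation; the entropy inequality \eqref{eq:intro:entropy}, which you correctly identify as the engine of all subsequent estimates, would then not survive at the $\eps$-level. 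In short, your ``error contributions that vanish with $\eps$'' would not actually vanish. Finally, even global existence of a weak solution to the $\eps$-regularized \emph{fully} cross-diffusive system is nontrivial: the paper proves it (Theorem~\ref{th:weak_sol_nonlin}) via a separate Galerkin / $\delta\searrow 0$ / $\eps\searrow 0$ cascade, and that intermediate theorem carries genuine hypotheses (\eqref{eq:weak_sol_nonlin:spaces_d}--\eqref{eq:weak_sol_nonlin:cond_f}) rather than being a ``standard'' black box. Once that construction is in place, the remainder of your plan aligns with Sections~\ref{sec:approx_main}--\ref{sec:proof_11} of the paper.
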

In conclusion, under certain conditions we are able show the existence of global weak solutions for variants of the pursuit--evasion model.
As already mentioned above, the techniques employed in this paper should also be applicable for different choices of $D_i$, $S_i$ and $f_i$.
In particular, Theorem~\ref{th:weak_sol_nonlin} below may serve as a starting point for global existence results of related systems.

Moreover, Theorem~\ref{th:ex_weak_nonlin} can be seen as a prerequisite for further analysis.
That is, only if (global) solutions are known to exist, it sensible to ask questions such as:
Can patterns emerge at intermediate or large time scales?
Are certain homogeneous steady states globally attractive in the sense that the solutions constructed in Theorem~\ref{th:ex_weak_nonlin} converge towards them?
Can one show that taxis mechanisms are actually beneficial for the species subject to them,
perhaps by comparing qualitative and quantitative results for \eqref{prob:nonlin} with those for systems without predator- or prey-taxis?
However, all of these are out of scope for the current paper and thus left for further research.
 
\paragraph{Structure of the paper.}
A challenge not yet addressed is the construction of global solutions to certain approximative problems.
For systems similar to \eqref{prob:nonlin} but where either $S_1 \equiv 0$ or $S_2 \equiv 0$,
this is usually a straightforward task.
For the fully cross-diffusive system \eqref{prob:nonlin}, however,
even if all given functions are assumed to be bounded,
the question of global existence is already highly nontrivial, even for a weak solution concept.

Thus, Section~\ref{sec:approx} is devoted to the construction of so-called weak $W^{1, 2}$-solutions
to systems suitably approximating \eqref{prob:nonlin}.
The corresponding proof then relies on an additional approximation; we make use of fourth-order regularization terms.
The general strategy is described more thoroughly at the beginning of Section~\ref{sec:approx},
so we do not go into much more detail at this point.
However, it seems worth emphasizing that apart from obtaining these solutions,
we also prove a corresponding version of the entropy-like identity \eqref{eq:intro:entropy}.

Next, in Section~\ref{sec:approx_main},
we fix the final approximation functions used
and rely on the results in the preceding section to obtain a global weak $W^{1, 2}$-solution fulfilling a certain entropy-like inequality,
see Lemma~\ref{lm:ex_ua_va}.

Section~\ref{sec:lim_alpha_sea_0} then makes use of this inequality and the hypotheses of Theorem~\ref{th:ex_weak_nonlin}
in order to guarantee sufficiently strong convergence towards a function pair $(u, v)$,
which in Section~\ref{sec:proof_11} is then finally seen to be a weak solution of \eqref{prob:nonlin}.

\paragraph{Notation.}
Throughout the article, we fix $n \in \N$ and a smooth bounded domain $\Omega \subset \R^n$.
For $p \in (1, \infty)$, we set $\sobn 2p \defs \{\, \varphi \in \sob 2p : \partial_\nu \varphi = 0 \text{ in the sense of traces}\,\}$.

Additionally, we use the following notation for Sobolev spaces involving evolution triples.
For an interval $I \subset \R$ and an evolution triple $V \embed H \embed \dualn V$,
we set $W^{1, 2}(I; V, H) \defs \{\,\varphi \in L^2(I; V) : \varphi_t \in L^2(I; \dualn V)\,\}$
as well as $W_{\loc}^{1, 2}(I; V, H) \defs \bigcup_{[a, b] \subset I} W^{1, 2}([a, b]; V, H)$
and abbreviate $W_{(\loc)}^{1, 2}(I; \sob12) \defs W_{(\loc)}^{1, 2}(I; \sob12, \leb2)$.

Moreover, for a set $X$, a function $\varphi \colon X \to \R$ and $A \in \R$,
we abbreviate $\{\,x \in X : \varphi(x) \le A\,\}$ by $\{\varphi \le A\}$,
the set $X$ being implied by the context.
Similarly for other order relations.

\section{Global weak \tops{$W^{1, 2}$}{W12}-solutions to approximative systems}\label{sec:approx}
In this section, we prove the following quite general global existence theorem,
which we will then use in Section~\ref{sec:approx_main} to obtain solutions to certain approximate problems.
In contrast to the hypotheses of Theorem~\ref{th:ex_weak_nonlin},
here we also assume that all given functions are bounded.
That is, in this section, we do not need to assume any of the conditions introduced in the introduction
but instead require that \eqref{eq:weak_sol_nonlin:spaces_d}--\eqref{eq:weak_sol_nonlin:initial} below are fulfilled.
\begin{theorem}\label{th:weak_sol_nonlin}
  Suppose that, for $i \in \{1, 2\}$,
  \begin{align}
    D_i &\in C^0([0, \infty)) \cap L^\infty((0, \infty)), \label{eq:weak_sol_nonlin:spaces_d} \\
    S_i &\in C^1([0, \infty)) \cap W^{1, \infty}((0, \infty)) \quad \text{and} \label{eq:weak_sol_nonlin:spaces_s} \\
    f_i &\in C^0([0, \infty)^2) \cap L^\infty((0, \infty)^2) \label{eq:weak_sol_nonlin:spaces_f}
  \end{align}
  fulfill
  \begin{align}\label{eq:weak_sol_nonlin:cond_d_s}
    \inf_{s \in [0, \infty)} D_i(s) \gt 0, \quad
    \inf_{s \in (0, 1)} \frac{S_i(s)}{s} \gt 0, \quad
    \inf_{s \in [1, \infty)} S_i(s) \gt 0
    \quad \text{and} \quad
    S_i(0) = 0
  \end{align}
  as well as
  \begin{align}\label{eq:weak_sol_nonlin:cond_f}
    \lim_{s_1 \sea 0} \sup_{s_2 \ge 0} |f_1(s_1, s_2) \ln s_1| = 0
    \quad \text{and} \quad
    \lim_{s_2 \sea 0} \sup_{s_1 \ge 0} |f_2(s_1, s_2) \ln s_2| = 0
  \end{align}
  and assume that
  \begin{align}\label{eq:weak_sol_nonlin:initial}
    u_0, v_0 \in \con\infty
    \quad \text{are positive in $\Ombar$}.
  \end{align}
  Then there exists a global nonnegative weak $W^{1, 2}$-solution $(u, v)$ of \eqref{prob:nonlin},
  meaning that $u$ and $v$ belong to the space $\Winf$,
  satisfy
  \begin{align}\label{eq:weak_sol_nonlin:init_cond}
    u(\cdot, 0) = u_0
    \quad \text{as well as} \quad
    v(\cdot, 0) = v_0
    \qquad \text{a.e.\ in $\Omega$}
  \end{align}
  and fulfill
  \begin{align}\label{eq:weak_sol_nonlin:u_sol}
        \intninfom u_t \varphi
    &=  - \intninfom D_1(u) \nabla u \cdot \nabla \varphi
        + \intninfom S_1(u) \nabla v \cdot \nabla \varphi
        + \intninfom \fo(u, v) \varphi \\
    \intertext{as well as}\label{eq:weak_sol_nonlin:v_sol}
        \intninfom v_t \varphi
    &=  - \intninfom D_2(u) \nabla v \cdot \nabla \varphi
        - \intninfom S_2(u) \nabla u \cdot \nabla \varphi
        + \intninfom \ft(u, v) \varphi
  \end{align}
  for all $\varphi \in L_{\loc}^2([0, \infty); \sob12)$.
\end{theorem}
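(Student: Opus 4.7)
The plan is to approximate \eqref{prob:nonlin} by a fourth-order regularization: for $\delta \in (0, 1)$ I would consider the system
\begin{align*}
  u_{\delta t} &= \nabla\cdot(D_1(u_\delta)\nabla u_\delta - S_1(u_\delta)\nabla v_\delta) + f_1(u_\delta, v_\delta) - \delta(\Delta^2 u_\delta - \Delta u_\delta + u_\delta), \\
  v_{\delta t} &= \nabla\cdot(D_2(v_\delta)\nabla v_\delta + S_2(v_\delta)\nabla u_\delta) + f_2(u_\delta, v_\delta) - \delta(\Delta^2 v_\delta - \Delta v_\delta + v_\delta),
\end{align*}
under Navier-type boundary conditions $\partial_\nu u_\delta = \partial_\nu\Delta u_\delta = \partial_\nu v_\delta = \partial_\nu\Delta v_\delta = 0$ and the original initial data. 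At fixed $\delta > 0$ the fourth-order operator restores the $H^2$-coercivity absent in \eqref{prob:nonlin}, and I would construct a global strong solution $(u_\delta, v_\delta)$ by a Galerkin scheme in the eigenbasis of $\Delta^2$ (or equivalently by a Schauder fixed-point argument that inverts the fourth-order linearization); thanks to $D_i, S_i, f_i$ being bounded, the corresponding a priori bounds on the Galerkin truncations and their time derivatives are routine.

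Second, I would derive a regularized version of the entropy identity \eqref{eq:intro:entropy}. Formally testing the first equation with $G_1'(u_\delta)$ and the second with $G_2'(v_\delta)$, the cross-diffusive contributions on the right-hand side cancel by construction of $G_i$, the dissipation $\int_\Omega \frac{D_i(\cdot)}{S_i(\cdot)}|\nabla\cdot|^2$ reappears as in \eqref{eq:intro:entropy}, and the fourth-order terms supply additional nonnegative dissipation. The decisive input is that $\inf_{s \in (0, 1)} S_i(s)/s > 0$ from \eqref{eq:weak_sol_nonlin:cond_d_s} forces $G_i'(s) \sim c_i \ln s$ as $s \searrow 0$, so hypothesis \eqref{eq:weak_sol_nonlin:cond_f} provides exactly the pointwise bound needed to control $G_i'(u_\delta) f_i(u_\delta, v_\delta)$ near $\{u_\delta = 0\}$, while for large $s$ the slow growth of $G_i'$ together with $f_i \in L^\infty$ is trivially enough. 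In conjunction with the $L^1$-bound resulting from testing the first two equations with $1$, and with $D_i/S_i \ge c > 0$ away from $0$, this yields $\delta$-independent estimates $u_\delta, v_\delta \in L_{\loc}^\infty([0, \infty); \leb 1) \cap L_{\loc}^2([0, \infty); \sob12)$.

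Reinserting these bounds into the equations delivers $\delta$-independent control of $u_{\delta t}, v_{\delta t}$ in $L_{\loc}^2([0, \infty); \dual{\sob12})$, modulo a residual of order $\sqrt\delta$ coming from the fourth-order term; Aubin-Lions then extracts a subsequence converging to a pair $(u, v)$ strongly in $L_{\loc}^2$, pointwise a.e., and weakly in $L_{\loc}^2([0, \infty); \sob12)$. Continuity of $D_i, S_i, f_i$ together with the uniform bounds identifies every nonlinear term in the weak formulation in the limit, while the regularizer---tested at the prelimit stage against $\varphi \in L^2((0, T); \sobn 22)$---vanishes as $\delta \searrow 0$ thanks to $\delta \int_0^T \int_\Omega |\Delta u_\delta|^2 \le C$; a density argument then extends \eqref{eq:weak_sol_nonlin:u_sol}--\eqref{eq:weak_sol_nonlin:v_sol} to all $\varphi \in L_{\loc}^2([0, \infty); \sob12)$.

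The main obstacle is that $G_i'(s) \to -\infty$ as $s \searrow 0$, while the fourth-order system possesses no maximum principle, so $G_i'(u_\delta)$ is not a priori an admissible test function. I would address this by a secondary regularization---e.g.\ replacing $S_i$ by $S_i^\eta(s) = S_i(s + \eta)$ and lifting the initial data to $u_0 + \eta$, $v_0 + \eta$---producing an auxiliary doubly-parametrized family $(u_{\delta\eta}, v_{\delta\eta})$ which remains strictly positive by parabolic positivity on the inner problem and for which the entropy identity applies rigorously. Sending $\eta \searrow 0$ first recovers the identity (as an inequality, if necessary) for $(u_\delta, v_\delta)$ at fixed $\delta$, after which the outer limit $\delta \searrow 0$ proceeds as above. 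Managing this two-step limit while preserving all $\delta$-uniform estimates through the inner passage is the technical crux of the section.
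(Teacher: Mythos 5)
You correctly identify the two central obstacles — the need to test with $G_i'(u_\delta)$ and the absence of a maximum principle for the fourth-order regularized system — and your overall framework (fourth-order regularization, Galerkin, entropy identity, multi-step limit) matches the paper's. However, your specific choices do not actually overcome either obstacle.

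First, the constant-coefficient regularizer $-\delta(\Delta^2 u_\delta - \Delta u_\delta + u_\delta)$ is not compatible with the entropy test. When you test the equation with $G_1'(u_\delta)$, the bi-Laplacian contributes $-\delta\intom \Delta u_\delta\,\Delta G_1'(u_\delta) = -\delta\intom \frac{|\Delta u_\delta|^2}{S_1(u_\delta)} - \delta\intom \frac{S_1'(u_\delta)}{S_1^2(u_\delta)}\Delta u_\delta\,|\nabla u_\delta|^2$, whose second summand has no sign and is not controlled by the other terms; so you do not obtain a usable entropy inequality at the $\delta$-level, let alone $\delta$-uniform bounds. The paper instead chooses the degenerate, nonlinear-mobility regularizer $-\eps\nabla\cdot(S_{i\delta}(u)\nabla\Delta u)$ with $S_{i\delta}(s) = S_i(|s|)+\delta$: since $\nabla G_{i\delta}'(u) = \nabla u / S_{i\delta}(u)$, the mobility and the weight in $G_{i\delta}'$ cancel exactly, yielding the clean dissipative contribution $-\eps\intom|\Delta u|^2$ (see Lemma~\ref{lm:test_g}).

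Second, and more fundamentally, your proposed fix for nonnegativity does not work. You write that the secondary $\eta$-regularization "remains strictly positive by parabolic positivity on the inner problem," but the inner problem still carries the fourth-order viscosity term — there is no maximum principle for it, whatever you do to $S_i$ or the initial data. Shifting $S_i$ and lifting $u_0, v_0$ cannot make the $\delta$-solutions nonnegative. The paper obtains nonnegativity by an entirely different mechanism, following Gr\"un \cite{GrunDegenerateParabolicDifferential1995}: precisely because the mobility $S_{1\delta}$ degenerates as $\delta\searrow 0$ on $\{s\le 0\}$, the entropy $\intom G_{1\delta}(u_{\eps\delta})$ diverges like $\int(-\ln\delta + \ln(-\rho+\delta))\,d\rho$ on any set where the limit of $u_{\eps\delta}$ were strictly negative; since the $\delta$-uniform entropy bound \eqref{eq:est_g:g} forbids this, the limit $u_\eps$ (after $\delta\searrow 0$) must be nonnegative a.e.\ (Lemma~\ref{lm:ue_ge_0}). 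This is why the paper needs \emph{two} small parameters: $\eps$ controls the strength of the fourth-order term while $\delta$ controls the degeneracy of its mobility, and the order of limits ($k\to\infty$, then $\delta\searrow 0$, then $\eps\searrow 0$) is chosen so that nonnegativity is secured before $\eps$ is sent to zero. Without the degenerate mobility, this argument is unavailable, and I do not see how nonnegativity could be obtained with your constant-coefficient viscosity term.

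A smaller remark: your secondary $\eta$-regularization as written would also break the cancellation of the cross-diffusive terms in the entropy identity, since $G_i$ would then be built from $S_i^\eta$ while the cross-diffusion uses $S_i$; the paper avoids this by modifying only the mobility in the fourth-order term and replacing $S_i$ by $S_{i\delta}=S_i(|\cdot|)+\delta$ uniformly in the taxis and regularization terms.
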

In what follows, we fix $D_i, S_i, f_i$, $i \in \{1, 2\}$ fulfilling \eqref{eq:weak_sol_nonlin:spaces_d}--\eqref{eq:weak_sol_nonlin:cond_d_s}
as well as $u_0, v_0$ as in \eqref{eq:weak_sol_nonlin:initial}.

As already alluded to in the introduction,
a cornerstone for gaining a~priori bounds for these solutions is the following theorem,
which shows that the solutions constructed in Theorem~\ref{th:weak_sol_nonlin} fulfill an inequality reminiscent of \eqref{eq:intro:entropy}.
\begin{theorem}\label{th:approx_entropy}
  Denote the weak $W^{1, 2}$-solution of \eqref{prob:nonlin} given by Theorem~\ref{th:weak_sol_nonlin} by $(u, v)$
  and let
  \begin{align*}
    G_i(s) \defs \int_1^s \int_1^\rho \frac{1}{S_i(\sigma)} \dsigma \drho
    \qquad \text{for $s \in \R$ and $i \in \{1, 2\}$}
  \end{align*}
  as well as
  \begin{align*}
            \mc E(t)
    &\defs  \intom G_1(u(\cdot, t))
            + \intom G_2(v(\cdot, t)), \\
            \mc D(t)
    &\defs  \intom \frac{D_1(u(\cdot, t))}{S_1(u(\cdot, t))} |\nabla u(\cdot, t)|^2
            + \intom \frac{D_2(v(\cdot, t))}{S_2(v(\cdot, t))} |\nabla v(\cdot, t)|^2 \quad \text{and} \\
            \mc R(t)
    &\defs  \intom G_1'(u(\cdot, t)) \fo(u(\cdot, t), v(\cdot, t))
            + \intom G_2'(v(\cdot, t)) \ft(u(\cdot, t), v(\cdot, t))
  \end{align*}
  for $t \in [0, \infty)$.
  (We remark that $\mc D$ and $\mc R$ are to be understood as functions in $L^0((0, \infty))$;
  that is, they are only well-defined up to modifications on null sets.)
  Then
  \begin{align}\label{eq:approx_entropy:entropy}
          \mc E(T) \zeta(T)
          + \intnt \mc D(t) \zeta(t) \dt
    &\le  \mc E(0) \zeta(0)
          + \intnt \mc R(t) \zeta(t) \dt
          + \intnt \mc E(t) \zeta'(t) \dt
  \end{align}
  for all $T \in (0, \infty)$ and $0 \le \zeta \in C^\infty([0, T])$.
\end{theorem}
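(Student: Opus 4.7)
The plan is to pass to the limit in the entropy identity associated with the (smooth, positive) approximate solutions $(\ue, \ve)$ constructed in the proof of Theorem~\ref{th:weak_sol_nonlin}. Testing the approximate equations for $\ue$ and $\ve$ with $G_1'(\ue)$ and $G_2'(\ve)$, respectively, integrating by parts in space, and using the identity $G_i'' \cdot S_i \equiv 1$, the two cross-diffusive contributions $\pm \intom \nabla \ue \cdot \nabla \ve$ cancel exactly, leaving
\begin{align*}
  \ddt \Ee(t) + \De(t) = \Reps(t).
\end{align*}
Multiplying by $\zeta \ge 0$ and integrating by parts in time on $[0,T]$ then yields the analog of \eqref{eq:approx_entropy:entropy} with $\Ee,\De,\Reps$ in place of $\mc E,\mc D,\mc R$, and with equality instead of inequality.

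The core of the argument is the limit $\eps \searrow 0$. Given the a.e.\ (and strong $L^1_{\loc}$) convergence $\ue \to u$, $\ve \to v$ together with weak $L^2_{\loc}$ convergence of $\nabla \ue$ and $\nabla \ve$, as should be established in the proof of Theorem~\ref{th:weak_sol_nonlin}, I would handle the terms individually. The boundary contributions converge as $\Ee(0)\zeta(0) \to \mc E(0)\zeta(0)$ and $\liminf \Ee(T)\zeta(T) \ge \mc E(T)\zeta(T)$ by continuity and, respectively, lower semicontinuity of the convex functional $w \mapsto \intom G_i(w)$ (convexity coming from $G_i'' = 1/S_i \ge 0$). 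The time-integral $\intnt \Ee \zeta' \to \intnt \mc E \zeta'$ follows from dominated convergence, using the at-most-quadratic growth of $G_i$ at infinity (inherited from $S_i \in L^\infty$) and uniform $L^\infty((0,T);L^2)$ bounds on $\ue, \ve$. For the reaction term, the assumption \eqref{eq:weak_sol_nonlin:cond_f} combined with the logarithmic blow-up $G_i'(s) \sim c^{-1}\ln s$ as $s \searrow 0$ (which follows from $S_i(s) \ge cs$ near zero) furnishes uniform integrability of $G_1'(\ue)f_1(\ue,\ve)$ and $G_2'(\ve)f_2(\ue,\ve)$, so $\intnt \Reps \zeta \to \intnt \mc R \zeta$.

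The main obstacle will be the weak lower-semicontinuity
\begin{align*}
  \liminf_{\eps \searrow 0} \intnt \intom \frac{D_i(\ue)}{S_i(\ue)}|\nabla \ue|^2 \zeta \ge \intnt \intom \frac{D_i(u)}{S_i(u)}|\nabla u|^2 \zeta,
\end{align*}
since the coefficient $D_i(\ue)/S_i(\ue)$ is \emph{not} uniformly bounded (as $S_i$ may degenerate near $0$) while $\nabla \ue$ only converges weakly in $L^2$. My plan is to truncate: for $M > 0$ set $\phi_{\eps, M} \defs \min\{D_i(\ue)/S_i(\ue), M\}$ and $\phi_M \defs \min\{D_i(u)/S_i(u), M\}$. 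For fixed $M$, $\phi_{\eps,M}$ converges a.e.\ and is bounded, hence converges in every $L^p_{\loc}$ by dominated convergence. The elementary inequality $\int \phi_{\eps, M} |\nabla \ue|^2 \ge 2 \int \phi_{\eps, M} \nabla \ue \cdot \nabla u - \int \phi_{\eps, M} |\nabla u|^2$ together with the weak convergence $\nabla \ue \rh \nabla u$ then gives $\liminf_\eps \int \phi_{\eps, M} |\nabla \ue|^2 \ge \int \phi_M |\nabla u|^2$, and letting $M \to \infty$ via monotone convergence closes the argument. Combining these passages, the equality at the approximate level becomes the inequality \eqref{eq:approx_entropy:entropy} in the limit.
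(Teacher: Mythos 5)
Your high-level strategy matches the paper's: establish an entropy identity at an approximate level by testing with a derivative of the entropy density and exploiting the cross-term cancellation, then pass to the limit term by term, using a truncation argument to get weak lower semicontinuity for the dissipation and uniform integrability (via \eqref{eq:weak_sol_nonlin:cond_f} and the logarithmic growth of $G_i'$ near zero) for the reaction. Your $M$-capping of the coefficient is only cosmetically different from the paper's device of replacing the denominator $S_i$ by $S_i + \eta$ and letting $\eta \sea 0$; both work.

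There is, however, a genuine gap in how you justify the approximate identity itself. You propose testing the $\eps$-level system with $G_i'(\ue)$ and $G_i'(\ve)$, assuming the $(\ue, \ve)$ constructed along the way are \enquote{smooth, positive}. Neither assumption holds in the paper's construction, and more importantly the testing cannot be justified as stated because $G_i'' = 1/S_i$ blows up near $0$ (recall $S_i(0) = 0$ by \eqref{eq:weak_sol_nonlin:cond_d_s}), so the chain-rule lemma needed to make sense of $\intom w_t\, G_i'(w)$ for a weak solution $w$ (Lemma~\ref{lm:test_weak}, which requires a bounded second derivative) does not apply. Moreover, the intermediate functions $\ue, \ve$ of the paper have $\uet, \vet$ only in $L^2_{\loc}([0,\infty); \dual{\sob{n+1}2})$, not in $L^2_{\loc}([0,\infty); \dual{\sob12})$, so the testing lemma cannot be invoked at that level at all. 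The paper resolves both issues by introducing an additional regularization parameter $\delta$: it replaces $S_i$ by $\Sid(s) = S_i(|s|) + \delta$, defines $\Gid$ accordingly (so $\Gid'' = 1/\Sid$ is bounded), and tests at the $(\eps,\delta)$-level, where $(\ued, \ved)$ do lie in $\Wt$ (Lemma~\ref{lm:test_g}). The entropy inequality is then transported through both limits $\delta \sea 0$ and $\eps \sea 0$ in Lemma~\ref{lm:entropy_eps_0}. The $\delta$-layer is also what allows one to establish a posteriori nonnegativity of the limit functions (via Lemma~\ref{lm:ue_ge_0}); positivity is never available, and the finer approximations can actually take negative values since the fourth-order regularization destroys the maximum principle. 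Finally, your claimed approximate entropy \emph{equality} omits the nonnegative $\eps\intom|\Delta(\cdot)|^2$ dissipation contributions generated by the fourth-order terms; these are simply discarded when passing to the limit, which is harmless but should be acknowledged.
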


Next, we describe our approach of proving the theorems above.
Similar to \cite{TaoWinklerExistenceTheoryQualitative2020, TaoWinklerFullyCrossdiffusiveTwocomponent2021},
where one-dimensional relatives of \eqref{prob:nonlin} have been studied,
our general approach is approximation by a fourth order regularization.
That is, for $\eps, \delta \in (0, 1)$, we will first construct global solutions to
\begin{align*}\label{prob:ped}\tag{$\textrm P_{\eps \delta}$}
  \begin{cases}
    \uedt = \nabla \cdot (
            - \eps \Sod(\ued) \nabla \Delta \ued
            + D_1(|\ued|) \nabla \ued
            - \Sod(\ued) \nabla \ved) 
            + \fod(\ued, \ved) &
            \text{in $\Omega \times (0, \infty)$}, \\
    \vedt = \nabla \cdot (
            - \eps \Std(\ved) \nabla \Delta \ved
            + D_2(|\ved|) \nabla \ved
            + \Std(\ved) \nabla \ued) 
            + \ftd(\ued, \ved) &
            \text{in $\Omega \times (0, \infty)$}, \\
    \partial_\nu \Delta \ued = \partial_\nu \ued = \partial_\nu \Delta \ved = \partial_\nu \ved = 0 &
            \text{on $\partial \Omega \times (0, \infty)$}, \\
    \ued(\cdot, 0) = u_0, \ved(\cdot, 0) = v_0 &
            \text{in $\Omega$},
  \end{cases}
\end{align*}
where
\begin{align}\label{eq:def_sid}
  \Sid(s) \defs S_i(|s|) + \delta
  \qquad \text{for $s \in \R$, $\delta \in (0, 1)$ and $i \in \{1, 2\}$}
\end{align}
and
\begin{align}\label{eq:def_fid}
  \fid(s_1, s_2) \defs f_i((s_1)_+, (s_2)_+)
  \qquad \text{for $s_1, s_2 \in \R$, $\delta \in (0, 1)$ and $i \in \{1, 2\}$}.
\end{align}
We note that \eqref{eq:weak_sol_nonlin:cond_f} entails $f_1(0, \cdot) \equiv 0$
and hence $\fod(\rho, \sigma) = 0$ for all $\rho \le 0$ and $\sigma \in \R$.
Likewise, $\ftd(\rho, \sigma) = 0$ for all $\rho \in \R$ and $\sigma \le 0$.

For convenience, let us introduce several abbreviations.
For $i \in \{1, 2\}$, we set
\begin{align*}
  \ol D_i \defs \|D_i\|_{L^\infty((0, \infty))}, \quad
  \ol S_i \defs \|S_i\|_{L^\infty((0, \infty))} + 1
  \quad \text{and} \quad
  \ol S'_i \defs \|S'_i\|_{L^\infty((0, \infty))}
\intertext{as well as}
  \ul D_i \defs \inf_{s \in [0, \infty)} D_i(s)
  \quad \text{and} \quad
  \ul S_i \defs \inf_{s \in (0, \infty)} S_i(s) [ (\tfrac1s - 1) \mathds 1_{(0, 1)}(s) + 1 ].
\end{align*}
Due to continuity of $S_i$ up to $0$,
the definition of $\ul S_i$ entails that $S_i(s) \ge \ul S_i s$ for all $s \in [0, 1)$, $i \in \{1, 2\}$.

The rest of this section is organized as follows.
The first step towards proving Theorem~\ref{th:weak_sol_nonlin} and Theorem~\ref{th:approx_entropy}
consists of constructing solutions to \eqref{prob:ped} and is achieved by a Galerkin approach.
To that end, non-degeneracy of the fourth order terms in \eqref{prob:ped} is of crucial importance,
which is the reason for introducing the parameter~$\delta$.

A general problem for equations of fourth-order is the lack of a maximum principle;
that is, $\ued, \ved$ might become negative even for strictly positive initial data.
Following \cite{GrunDegenerateParabolicDifferential1995}, however,
we see in Subsection~\ref{sec:delta_sea_0} that suitably constructed limit functions $\ue, \ve$ are indeed nonnegative.
Here, degeneracy for $\delta=0$ actually comes in handy.

In contrast to Section~\ref{sec:lim_alpha_sea_0}, where we aim to argue similarly but only assume the hypotheses of Theorem~\ref{th:ex_weak_nonlin},
the assumptions \eqref{eq:weak_sol_nonlin:spaces_f} and \eqref{eq:weak_sol_nonlin:cond_f} allow us to rather easily
obtain certain a~priori bounds from a version of the entropy-like identity \eqref{eq:intro:entropy}.
These allow us to so finally let $\eps \sea 0$ in Subsection~\ref{sec:eps_sea_0}
and then to prove Theorem~\ref{th:weak_sol_nonlin} and Theorem~\ref{th:approx_entropy}.

\subsection{The limit process \tops{$k \ra \infty$}{k to infty}: existence of weak solutions to \tops{\eqref{prob:ped}}{P\_(eps delta)} by a Galerkin method}\label{sec:k_to_infty}
To prepare the Galerkin approach used below for constructing solutions to \eqref{prob:ped},
we briefly state the well-known
\begin{lemma}\label{lm:ex_eigen_basis}
  There exists an orthonormal basis $\{\,\varphi_j : j \in \N\,\}$ of $\leb2$
  consisting of smooth eigenfunctions of $-\Delta$ with homogeneous Neumann boundary conditions.
\end{lemma}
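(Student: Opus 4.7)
The plan is to invoke classical spectral theory for the Neumann Laplacian together with elliptic regularity. Since $\Omega \subset \R^n$ is a bounded smooth domain, the Rellich--Kondrachov theorem gives a compact embedding $\sob12 \embed \leb2$, which is the analytic engine that makes the whole argument run.

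First, I would introduce the bilinear form $a(u, \varphi) \defs \intom \nabla u \cdot \nabla \varphi + \intom u \varphi$ on $H \defs \sob12$, which is symmetric, continuous and coercive. By the Lax--Milgram theorem, for every $f \in \leb2$ there is a unique $u \in H$ with $a(u, \varphi) = \intom f \varphi$ for all $\varphi \in H$; this defines a bounded linear operator $T \colon \leb2 \ra \leb2$, $T f \defs u$. Since $T$ factors through $\sob12 \embed \leb2$ and the latter embedding is compact, $T$ is a compact operator on $\leb2$. Symmetry of $a$ together with the self-duality of $\leb2$ yields that $T$ is self-adjoint, and coercivity of $a$ gives that $T$ is positive with trivial kernel.

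Next, by the spectral theorem for compact self-adjoint operators on a separable Hilbert space, there exists an orthonormal basis $\{\varphi_j : j \in \N\}$ of $\leb2$ consisting of eigenfunctions of $T$, with eigenvalues $\mu_j \gt 0$ accumulating only at $0$. Setting $\lambda_j \defs \frac{1}{\mu_j} - 1$, the weak formulation $a(\varphi_j, \varphi) = \mu_j^{-1} \intom \varphi_j \varphi$ for $\varphi \in \sob12$ exactly says that $\varphi_j$ is a weak solution of $-\Delta \varphi_j = \lambda_j \varphi_j$ in $\Omega$ with homogeneous Neumann boundary data.

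Finally, I would invoke elliptic regularity to upgrade regularity. Since $\lambda_j \varphi_j \in \leb2$, a standard bootstrap using the $\sob2p$-regularity theory for the Neumann problem on smooth bounded domains (combined with Sobolev embeddings) gives $\varphi_j \in \sob{k}p$ for every $k \in \N$ and $p \in (1, \infty)$, and hence $\varphi_j \in \con\infty$. The main conceptual ingredient is the compact embedding $\sob12 \embed \leb2$; no step should present a genuine obstacle since the entire statement is a textbook consequence of spectral theory and elliptic regularity on smooth domains, so I would in fact simply cite a standard reference for this lemma rather than reproduce the proof in full detail.
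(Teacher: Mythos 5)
Your proposal is correct and follows the same route as the paper: the paper simply cites \cite[Theorem~1.2.8]{HenrotExtremumProblemsEigenvalues2006} for the orthonormal eigenbasis and \cite[Theorem~19.1]{FriedmanPartialDifferentialEquations1976} (applied iteratively) for smoothness, which is precisely the compact self-adjoint resolvent plus elliptic bootstrap argument you spell out. Since you also conclude you would cite a standard reference, the two proofs coincide in substance.
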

\begin{proof}
  The existence of an orthonormal basis consisting of eigenfunctions of $-\Delta$ with homogeneous Neumann boundary conditions
  is given by~\cite[Theorem~1.2.8]{HenrotExtremumProblemsEigenvalues2006}
  and their smoothness is proved by iteratively applying \cite[Theorem~19.1]{FriedmanPartialDifferentialEquations1976}.
\end{proof}

For the Galerkin approach, we first construct local-in-time solutions to certain finite-dimensional problems.
\begin{lemma}\label{lm:ex_uedk}
  Let $(\varphi_j)_{j \in \N}$ be as in Lemma~\ref{lm:ex_eigen_basis}
  and set $X_k \defs \spanset\{\,\varphi_j: 1 \le j \le k\,\}$ for $k \in \N$.
  For $\eps, \delta \in (0, 1)$ and $k \in \N$, there exist $\tmaxedk \in (0, \infty]$
  and functions
  \begin{align}\label{eq:ex_uedk:smooth}
    \uedk,\vedk \in C^\infty(\Ombar \times [0, \tmaxedk))
  \end{align}
  with
  \begin{align}\label{eq:ex_uedk:boundary}
      \partial_\nu \uedk = \partial_\nu \Delta \uedk
    = \partial_\nu \vedk = \partial_\nu \Delta \vedk
    = 0
  \end{align}
  fulfilling
  \begin{align}\label{eq:ex_uedk:u_eq}
          \ddt \intom \uedk \psi
    &=    \eps \intom \Sod(\uedk) \nabla \Delta \uedk \cdot \nabla \psi
          - \intom D_1(|\uedk|) \nabla \uedk \cdot \nabla \psi \notag \\
    &\pe  + \intom \Sod(\uedk) \nabla \vedk \cdot \nabla \psi
          + \intom \fod(\uedk, \vedk) \psi
  \intertext{and}\label{eq:ex_uedk:v_eq}
          \ddt \intom \vedk \psi
    &=    \eps \intom \Std(\vedk) \nabla \Delta \vedk \cdot \nabla \psi
          - \intom D_2(|\vedk|) \nabla \vedk \cdot \nabla \psi \notag \\
    &\pe  - \intom \Std(\vedk) \nabla \uedk \cdot \nabla \psi
          + \intom \ftd(\uedk, \vedk) \psi
  \end{align}
  in $(0, \tmaxedk)$ for all $\psi \in X_k$
  as well as
  \begin{align}\label{eq:ex_uedk:init}
    \intom \uedk(\cdot, 0) \psi = \intom u_{0} \psi
    \quad \text{and} \quad
    \intom \vedk(\cdot, 0) \psi = \intom v_{0} \psi
    \qquad \text{for all $\psi \in X_k$.}
  \end{align}
  
  Additionally, if $\tmaxedk \lt \infty$, then
  \begin{align}\label{eq:ex_uedk:ex_crit}
    \limsup_{t \nea \tmaxedk} \left( \|\uedk(\cdot, t)\|_{L^2(\Omega)} + \|\vedk(\cdot, t)\|_{L^2(\Omega)} \right) = \infty.
  \end{align}
\end{lemma}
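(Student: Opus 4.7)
I would establish the lemma by a standard Galerkin/ODE argument with Picard--Lindel\"of and a continuation criterion. Writing
\[
  \uedk(x,t) = \sum_{j=1}^k u_{kj}(t) \varphi_j(x), \qquad \vedk(x,t) = \sum_{j=1}^k v_{kj}(t) \varphi_j(x),
\]
and testing \eqref{eq:ex_uedk:u_eq}--\eqref{eq:ex_uedk:v_eq} against $\psi = \varphi_l$ for $l \in \{1,\dots,k\}$, the system becomes a first-order ODE in $\R^{2k}$ for the coefficient vector $y = (u_{k1},\dots,u_{kk},v_{k1},\dots,v_{kk})$, say $y' = F(y)$, with initial datum $y(0)$ determined by $L^2$-projection of $u_0, v_0$ onto $X_k$ via \eqref{eq:ex_uedk:init}. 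The right-hand side $F$ is a composition of the $C^1$ (in fact smooth away from $0$ for $D_i$, $S_i$, $f_i$ extended as in \eqref{eq:def_sid}, \eqref{eq:def_fid}) nonlinearities with fixed smooth functions $\varphi_j$ and integration over $\Omega$; under the regularity assumptions \eqref{eq:weak_sol_nonlin:spaces_d}--\eqref{eq:weak_sol_nonlin:spaces_f}, $F$ is locally Lipschitz. Picard--Lindel\"of then yields a maximal $\tmaxedk \in (0, \infty]$ and a solution $y \in C^1([0,\tmaxedk); \R^{2k})$.

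\textbf{Regularity and boundary conditions.} Since each $\varphi_j \in C^\infty(\Ombar)$, spatial smoothness of $\uedk, \vedk$ in \eqref{eq:ex_uedk:smooth} is automatic from the ansatz. Time smoothness follows by a bootstrap: once $y \in C^1$, differentiating the ODE and using the chain rule (all nonlinearities are $C^\infty$ in the relevant regime, noting $S_{i\delta} \ge \delta > 0$ so no degeneracy appears) shows $y \in C^2$, and iterating gives $y \in C^\infty([0,\tmaxedk))$. For the boundary conditions \eqref{eq:ex_uedk:boundary}, recall from Lemma~\ref{lm:ex_eigen_basis} that $-\Delta \varphi_j = \lambda_j \varphi_j$ with $\partial_\nu \varphi_j = 0$; consequently $\partial_\nu \Delta \varphi_j = -\lambda_j \partial_\nu \varphi_j = 0$, so every element of $X_k$ satisfies both $\partial_\nu \psi = 0$ and $\partial_\nu \Delta \psi = 0$ on $\partial\Omega$, which carries over to $\uedk(\cdot,t), \vedk(\cdot,t)$ for each $t$.

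\textbf{Extensibility criterion.} Suppose $\tmaxedk < \infty$ but the left-hand side of \eqref{eq:ex_uedk:ex_crit} stays bounded by some constant $C$. Orthonormality of $(\varphi_j)$ in $\leb2$ gives
\[
  \sum_{j=1}^k u_{kj}(t)^2 = \|\uedk(\cdot,t)\|_{L^2(\Omega)}^2 \le C^2,
\]
and likewise for $v_{kj}$, so $y$ remains in a compact subset of $\R^{2k}$ on $[0, \tmaxedk)$. The standard continuation result for ODEs with locally Lipschitz right-hand side then permits extension of $y$ past $\tmaxedk$, contradicting maximality; hence \eqref{eq:ex_uedk:ex_crit} must hold.

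\textbf{Expected difficulty.} Essentially nothing here is subtle: the main care needed is in verifying that all integrals defining $F$ depend in a $C^1$-way on the coefficients (using $S_{i\delta} \ge \delta > 0$ and the global Lipschitz character of $D_i, S_i, f_i$ from \eqref{eq:weak_sol_nonlin:spaces_d}--\eqref{eq:weak_sol_nonlin:spaces_f}), and in checking that the finite-dimensional ansatz is consistent with the higher-order boundary condition $\partial_\nu \Delta \cdot = 0$, which as above follows from the eigenfunction property. The fourth-order term $\nabla \Delta \uedk$ appearing in \eqref{eq:ex_uedk:u_eq} is not an obstacle at this stage because the ansatz makes it a finite linear combination $-\sum_j \lambda_j u_{kj}(t) \nabla \varphi_j$, so the integrals are perfectly well defined irrespective of any a~priori estimate on $\uedk$.
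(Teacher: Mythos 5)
Your proposal follows essentially the same Galerkin/ODE route as the paper: reduce to a finite-dimensional ODE for the Fourier coefficients, invoke Picard--Lindel\"of, and translate the ODE blow-up criterion into \eqref{eq:ex_uedk:ex_crit} via the $L^2$-orthonormality of the eigenbasis. Your handling of the boundary conditions via $\partial_\nu \Delta \varphi_j = -\lambda_j \partial_\nu \varphi_j = 0$ is correct and a bit more explicit than in the paper.

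One caveat worth flagging: to justify local Lipschitz continuity of $F$ (and later the $C^\infty$-in-time bootstrap) you assert that $D_i$, $S_i$, $f_i$ are $C^1$, ``in fact smooth away from $0$,'' under \eqref{eq:weak_sol_nonlin:spaces_d}--\eqref{eq:weak_sol_nonlin:spaces_f}. But those hypotheses only give $D_i, f_i \in C^0$ and $S_i \in C^1$; a merely continuous $D_1$ does not obviously make the map $w \mapsto \int_\Omega D_1\bigl(|\sum_j w_j\varphi_j|\bigr)\,\nabla\bigl(\sum_j w_j\varphi_j\bigr)\cdot\nabla\varphi_i$ locally Lipschitz in $w$, nor $C^\infty$. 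The paper makes the same unsupported Lipschitz assertion (and, arguably, the $C^\infty$-in-time claim in \eqref{eq:ex_uedk:smooth} as well), so your argument is not worse than the paper's here; but the specific justification you offer is not actually supplied by the hypotheses. In the eventual application (Lemma~\ref{lm:ex_ua_va}) the approximating nonlinearities $D_{i\alpha}$, $S_{i\alpha}$, $f_{i\alpha}$ are genuinely smooth, which is where the issue is defused in practice.
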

\begin{proof}
  We fix $\eps, \delta \in (0, 1)$ and $k \in \N$.
  \newcommand{\usum}{\left({\textstyle \sum_{j=1}^k} w_{j} \varphi_{j} \right)}%
  \newcommand{\usumabs}{\left(\left|{\textstyle \sum_{j=1}^k} w_{j} \varphi_{j} \right|\right)}%
  \newcommand{\usumbless}{{\textstyle \sum_{j=1}^k} w_{j} \varphi_{j}}%
  \newcommand{\usumblessabs}{\left|{\textstyle \sum_{j=1}^k} w_{j} \varphi_{j} \right|}%
  \newcommand{\vsum}{\left({\textstyle \sum_{j=1}^k} z_{j} \varphi_{j} \right)}%
  \newcommand{\vsumabs}{\left(\left|{\textstyle \sum_{j=1}^k} z_{j} \varphi_{j} \right|\right)}%
  \newcommand{\vsumbless}{{\textstyle \sum_{j=1}^k} z_{j} \varphi_{j}}%
  \newcommand{\vsumblessabs}{\left|{\textstyle \sum_{j=1}^k} z_{j} \varphi_{j} \right|}%
  For $w, z \in \R^k$, we define $F_1(w, z), F_2(w, z) \in \R^k$ by
  \begin{align*}
            (F_1(w, z))_i
    &\defs  \eps \intom \Sod \usum \nabla \Delta \usum \cdot \nabla \varphi_i
            - \intom D_1 \usumabs \nabla \usum \cdot \nabla \varphi_i \\
    &\pe    + \intom \Sod \usum \nabla \vsum \cdot \nabla \varphi_i
            + \intom \fod\left(\usumbless, \vsumbless \right) \varphi_i \\
    \intertext{and}
            (F_2(w, z))_i
    &\defs  \eps \intom \Std \vsum \nabla \Delta \vsum \cdot \nabla \varphi_i
            - \intom D_2 \vsumabs \nabla \vsum \cdot \nabla \varphi_i \\
    &\pe    - \intom \Std \vsum \nabla \usum \cdot \nabla \varphi_i
            + \intom \ftd\left(\usumbless, \vsumbless \right) \varphi_i
  \end{align*}
  for $i \in \{1, \dots, k\}$.
  
  As $F_1$ and $F_2$ are locally Lipschitz continuous, the Picard--Lindelöf theorem asserts the existence of
  $\tmaxedk \in (0, \infty]$ and $w, z \in C^0([0, \tmaxedk); \R^k) \cap C^1((0, \tmaxedk); \R^k)$ which solve
  \begin{align*}
    \begin{cases}
      w' = F_1(w, z) & \quad \text{in $(0, \tmaxedk)$}, \\
      z' = F_2(w, z) & \quad \text{in $(0, \tmaxedk)$}, \\
      w(0) = \intom u_{0\eps} \varphi, \\
      z(0) = \intom v_{0\eps} \varphi
    \end{cases}
  \end{align*}
  classically and, if $\tmaxedk \lt \infty$, then
  \begin{align}\label{eq:ex_uedk:ex_crit_w}
    \limsup_{t \nea \tmaxedk} \left( |w(t)| + |z(t)| \right) = \infty.
  \end{align}

  According to Lemma~\ref{lm:ex_eigen_basis}, the functions
  \begin{align*}
    \uedk(x, t) \defs \sum_{j=1}^k w_j(t) \varphi_j(x)
    \quad \text{and} \quad
    \vedk(x, t) \defs \sum_{j=1}^k z_j(t) \varphi_j(x),
    \qquad x \in \Ombar, t \in [0, \tmaxedk),
  \end{align*}
  satisfy \eqref{eq:ex_uedk:smooth} and \eqref{eq:ex_uedk:boundary}.
  Moreover, they fulfill
  \begin{align*}
      \ddt \intom \uedk \varphi_i
    = \ddt \intom \usum \varphi_i
    = \sum_{j=1}^k w_j' \intom \varphi_i \varphi_j
    = w_i'
    = (F_1(w, z))_i
    \qquad \text{in $(0, \tmaxedk)$}
  \end{align*}
  for $i \in \{1, \dots, k\}$.
  Thus, \eqref{eq:ex_uedk:u_eq} is fulfilled for $\psi = \varphi_i$ for all $i \in \{1, \dots, k\}$ and, due to linearity, 
  also for all $\psi \in X_k$, as desired.
  Likewise, we obtain that \eqref{eq:ex_uedk:v_eq} is also fulfilled for all $\psi \in X_k$.

  From $\intom \varphi_i \varphi_j = \delta_{ij}$ for $i, j \in \{1, \dots, k\}$,
  we further infer
  \begin{align*}
      \sum_{j=0}^k w_{\eps \delta k j}^2
    = \sum_{j=0}^k \intom w_{\eps \delta k j}^2 \varphi_j^2
    = \intom \left(\sum_{j=0}^k w_{\eps \delta k j} \varphi_j \right)^2
    = \intom \uedk^2
    \quad \text{in $(0, \tmaxedk)$}
  \end{align*}
  and, likewise,
  \begin{align*}
      \sum_{j=0}^k z_{\eps \delta k j}^2
    = \intom \uedk^2
    \quad \text{in $(0, \tmaxedk)$}.
  \end{align*}
  Thus, if \eqref{eq:ex_uedk:ex_crit} is not fulfilled, then  \eqref{eq:ex_uedk:ex_crit_w} is also not satisfied,
  implying $\tmaxedk = \infty$.
\end{proof}

In the following lemma, we show that the solutions $(\uedk, \vedk)$ constructed in Lemma~\ref{lm:ex_uedk} are global in time.
Moreover, in order to prepare the application of certain compactness theorems,
we also collect several $k$-independent a priori estimates.

As opposed to \cite{GrunDegenerateParabolicDifferential1995}, however, these bounds may depend on $\delta$,
the reason being that in our situation the terms stemming from the possibly nonlinear diffusion terms $D_1$ and $D_2$
can no longer be controlled independently of $\delta$,
at least not in all situations covered by Theorem~\ref{th:weak_sol_nonlin}.
This problem will then be circumvented by deriving appropriate $\delta$-independent estimates in Lemma~\ref{lm:est_g} below,
which are, however, weaker than those obtained in the present subsection.
\begin{lemma}\label{lm:dk_est}
  For all $\eps, \delta \in (0, 1)$ and $k \in \N$, let $(\ued, \ved)$ and $\tmaxedk$ be as given by Lemma~\ref{lm:ex_uedk}.
  Then $\tmaxedk = \infty$ for all $\eps, \delta \in (0, 1)$ and $k \in \N$
  and, moreover, for all $\eps, \delta \in (0, 1)$ and all $T \in (0, \infty)$, there exists $C \gt 0$ such that 
  for all $k \in \N$, the estimates
  \begin{align}
    \label{eq:dk_est:l2}
      &\sup_{t \in (0, T)} \intom \uedk^2(\cdot, t) + \sup_{t \in (0, T)} \intom \vedk^2(\cdot, t) \le C, \\
    \label{eq:dk_est:w12}
      &\sup_{t \in (0, T)} \intom |\nabla \uedk(\cdot, t)|^2 + \sup_{t \in (0, T)} \intom |\nabla \vedk(\cdot, t)|^2 \le C \quad \text{and}\\
    \label{eq:dk_est:l2_w32_m}
      &\intntom |\nabla \Delta \uedk|^2 + \intntom |\nabla \Delta \vedk|^2 \le C
  \end{align}
  hold.
\end{lemma}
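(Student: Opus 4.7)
The plan is to derive the three estimates in order by testing the Galerkin equations \eqref{eq:ex_uedk:u_eq} and \eqref{eq:ex_uedk:v_eq} with suitable functions from $X_k$ on the interval $(0, \min\{T, \tmaxedk\})$, and then to conclude $\tmaxedk = \infty$ from the extensibility criterion \eqref{eq:ex_uedk:ex_crit}. Fix $\eps, \delta \in (0, 1)$ and $T > 0$; in what follows, constants may depend on $\eps$, $\delta$ and $T$ but never on $k$.

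Because $-\Delta$ with homogeneous Neumann boundary conditions has the constant function as an eigenfunction to the eigenvalue $0$, we may take $\varphi_1$ to be a nonzero constant, and since each $X_k$ is spanned by eigenfunctions of $-\Delta$ it is invariant under $-\Delta$; hence both constants and $-\Delta \uedk(\cdot, t), -\Delta \vedk(\cdot, t)$ lie in $X_k$ for every $t \in [0, \tmaxedk)$. Testing the Galerkin equations with $\psi \equiv \varphi_1$ and invoking the boundedness of $f_{i\delta}$ from \eqref{eq:weak_sol_nonlin:spaces_f} yields an a~priori bound on $\intom \uedk(\cdot, t)$ and $\intom \vedk(\cdot, t)$ on $(0, T)$.

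The decisive step is to choose $\psi = -\Delta \uedk$ in \eqref{eq:ex_uedk:u_eq}. The Neumann conditions $\partial_\nu \uedk = \partial_\nu \Delta \uedk = 0$ give $\intom \uedkt(-\Delta \uedk) = \frac{1}{2} \ddt \intom |\nabla \uedk|^2$, and integration by parts rewrites the fourth-order term on the right-hand side as $-\eps \intom \Sod(\uedk) |\nabla \Delta \uedk|^2 \le -\eps \delta \intom |\nabla \Delta \uedk|^2$, which supplies the crucial dissipation. The remaining three contributions are controlled via Cauchy--Schwarz and Young's inequality: using $D_1 \le \ol D_1$ one gets $|\intom D_1(|\uedk|) \nabla \uedk \cdot \nabla \Delta \uedk| \le \frac{\eps\delta}{6} \intom |\nabla \Delta \uedk|^2 + C \intom |\nabla \uedk|^2$; using $\Sod \le \ol S_1$ one gets $|\intom \Sod(\uedk) \nabla \vedk \cdot \nabla \Delta \uedk| \le \frac{\eps\delta}{6} \intom |\nabla \Delta \uedk|^2 + C \intom |\nabla \vedk|^2$; and, after observing that $\|\Delta \uedk\|_{L^2}^2 = -\intom \nabla \uedk \cdot \nabla \Delta \uedk \le \|\nabla \uedk\|_{L^2} \|\nabla \Delta \uedk\|_{L^2}$, the source term obeys $|\intom \fod(\uedk, \vedk) \Delta \uedk| \le \frac{\eps\delta}{6} \intom |\nabla \Delta \uedk|^2 + C \intom |\nabla \uedk|^2 + C$.

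Summing the resulting differential inequality with its analogue for $\vedk$ produces a relation of the form $\ddt y + c z \le C(1 + y)$ with $y \defs \|\nabla \uedk\|_{L^2}^2 + \|\nabla \vedk\|_{L^2}^2$ and $z \defs \|\nabla \Delta \uedk\|_{L^2}^2 + \|\nabla \Delta \vedk\|_{L^2}^2$; Gronwall's lemma yields \eqref{eq:dk_est:w12}, after which time integration gives \eqref{eq:dk_est:l2_w32_m}. Combining \eqref{eq:dk_est:w12} with the mean-value bound from the first step via the Poincaré inequality delivers \eqref{eq:dk_est:l2}, and \eqref{eq:dk_est:l2} in turn precludes $\tmaxedk < \infty$ by the extensibility criterion \eqref{eq:ex_uedk:ex_crit}. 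The main subtle point is twofold: that $-\Delta \uedk \in X_k$ (so the testing is legitimate), and that the lower bound $\Sod \ge \delta$ converts the fourth-order regularisation into a genuinely dissipative term that dominates the cross-diffusive coupling. The estimates thus obtained inevitably degenerate as $\delta \sea 0$, which is precisely the issue that Lemma~\ref{lm:est_g} will address in a weaker but $\delta$-uniform form.
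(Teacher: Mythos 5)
Your proposal is correct and hinges on the same two crucial mechanisms as the paper's proof: that $-\Delta$ leaves $X_k$ invariant, so that $-\Delta\uedk$ is an admissible test function, and that $\Sod \ge \delta$ turns the fourth-order regularisation into a dissipative term dominating all the other contributions after Young's inequality. The route to the $L^2$ bound \eqref{eq:dk_est:l2}, however, is genuinely different. The paper additionally tests with $\uedk$ and $\vedk$ themselves, adds the resulting identity to the $-\Delta\uedk$-identity so that the indefinite term $+\frac{\eps}{4}\intom\Sod(\uedk)|\nabla\Delta\uedk|^2$ from the $L^2$ test cancels against part of the dissipation from the $W^{1,2}$ test, and then applies Grönwall to the combined Lyapunov functional $y = \tfrac12\bigl(\|\uedk\|_{L^2}^2 + \|\nabla\uedk\|_{L^2}^2 + \|\vedk\|_{L^2}^2 + \|\nabla\vedk\|_{L^2}^2\bigr)$. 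You instead derive the $W^{1,2}$ bound on its own, control the spatial mean by testing with a constant, and recover \eqref{eq:dk_est:l2} a~posteriori from the Poincar\'e--Wirtinger inequality; you also replace the paper's Poincar\'e estimate $\|\Delta\psi\|_{L^2}^2 \le \cp\|\nabla\Delta\psi\|_{L^2}^2$ by the interpolation $\|\Delta\uedk\|_{L^2}^2 \le \|\nabla\uedk\|_{L^2}\|\nabla\Delta\uedk\|_{L^2}$. Both give valid ODIs; your version is a little leaner in that it dispenses with the combined functional and the associated cancellation. One small point worth making explicit: Lemma~\ref{lm:ex_eigen_basis} only asserts the existence of \emph{some} orthonormal eigenbasis, and your argument requires the constant eigenfunction (eigenvalue $0$) to belong to each $X_k$, i.e.\ that $\varphi_1$ is chosen to be constant. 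This is harmless (one can always reorder by increasing eigenvalue, and the Neumann spectrum starts at $0$ with constant eigenfunction on a connected domain), but it does add a hypothesis to the Galerkin setup that the paper's argument does not need, since testing with $\uedk \in X_k$ is automatically admissible.
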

\begin{proof}
  According to the Poincar\'e inequality (cf.\ \cite[Lemma~A.1]{FuestGlobalSolutionsHomogeneous2020}),
  there is $\cp \gt 0$ such that
  \begin{align}\label{eq:dk_est:poincare}
    \intom |\Delta \psi|^2 \le \cp \intom |\nabla \Delta \psi|^2
    \qquad \text{for all $\psi \in \sobn32$}.
  \end{align}
  We then fix $\eps, \delta \in (0, 1)$, take $\uedk$ as test function in \eqref{eq:ex_uedk:u_eq} and apply Young's inequality to obtain
  \begin{align*}
    &\pe  \frac12 \ddt \intom \uedk^2 \\
    &=    \eps \intom \Sod(\uedk) \nabla \Delta \uedk \cdot \nabla \uedk
          - \intom D_1(|\uedk|) |\nabla \uedk|^2 \\
    &\pe  + \intom \Sod(\uedk) \nabla \uedk \cdot \nabla \vedk
          + \intom \fod(\uedk, \vedk) \uedk \\
    &\le  \frac{\eps}{4} \intom \Sod(\uedk) |\nabla \Delta \uedk|^2
          + \left(\eps \ol S_1 - \ul D_1 + \frac{\ol S_1}{2} \right) \intom |\nabla \uedk|^2 \\
    &\pe  + \frac{\ol S_1}{2} \intom |\nabla \vedk|^2
          + \frac12 \intom \uedk^2
          + \frac{|\Omega| \|\fo\|_{L^\infty([0, \infty)^2)}^2}{2}
  \end{align*}
  in $(0, \tmaxedk)$ for all $k \in \N$.
  Moreover, as the Laplacian leaves the space $X_k$ defined in Lemma~\ref{lm:ex_uedk} invariant,
  we may also use $-\Delta \uedk \in X_k$ as a test function in \eqref{eq:ex_uedk:u_eq},
  which when combined with Young's inequality, \eqref{eq:def_fid}, \eqref{eq:dk_est:poincare} and \eqref{eq:def_sid} gives
  \begin{align*}
          \frac12 \ddt \intom |\nabla \uedk|^2
    &=    - \eps \intom \Sod(\uedk) |\nabla \Delta \uedk|^2
          + \intom D_1(|\uedk|) \nabla \Delta \uedk \cdot \nabla \uedk \\
    &\pe  - \intom \Sod(\uedk) \nabla \Delta \uedk \cdot \nabla \vedk
          - \intom \fod(\uedk, \vedk) \Delta \uedk \\
    &\le  - \frac{3\eps}{4} \intom \Sod(\uedk) |\nabla \Delta \uedk|^2
          + \frac{\eps \delta}{8} \intom |\nabla \Delta \uedk|^2
          + \frac{\eps \delta}{8 \cp} \intom |\Delta \uedk|^2 \\
    &\pe  + \frac{2 \ol D_1^2}{\eps \delta} \intom |\nabla \uedk|^2
          + \frac{\ol S_1}{\eps} \intom |\nabla \vedk|^2
          + \frac{2\cp|\Omega|}{\eps \delta} \|\fo\|_{L^\infty([0, \infty)^2)}^2 \\
    &\le  - \frac{\eps}{4} \intom \Sod(\uedk) |\nabla \Delta \uedk|^2
          - \frac{\eps \delta}{4} \intom |\nabla \Delta \uedk|^2 \\
    &\pe  + \frac{2 \ol D_1^2}{\eps \delta} \intom |\nabla \uedk|^2
          + \frac{\ol S_1}{\eps} \intom |\nabla \vedk|^2
          + \frac{2\cp|\Omega|}{\eps \delta} \|\fo\|_{L^\infty([0, \infty)^2)}^2
  \end{align*}
  in $(0, \tmaxedk)$ for all $k \in \N$.

  Along with analogous computations for the second equation,
  we see that there are $c_1, c_2 \gt 0$ such that for all $k \in \N$, the function
  \begin{align*}
    y(t) \defs \frac12 \intom \uedk^2 + \frac12 \intom |\nabla \uedk|^2 + \frac12 \intom \vedk^2 + \frac12 \intom |\nabla \vedk|^2,
    \qquad t \in [0, \tmaxedk),
  \end{align*}
  solves the ODI
  \begin{align*}
          y'(t)
    \le - c_1 \intom |\nabla \Delta \uedk|^2
        - c_1 \intom |\nabla \Delta \vedk|^2
        + c_2 y
        + c_2
    \qquad \text{in $(0, \tmaxedk)$}.
  \end{align*}
  
  According to Grönwall's inequality
  and as $y(0)$ is finite and bounded independently of $k$ by \eqref{eq:weak_sol_nonlin:initial},
  the estimates \eqref{eq:dk_est:l2}--\eqref{eq:dk_est:l2_w32_m} are then valid for all finite $T \in (0, \tmaxedk]$
  and certain $C \gt 0$ (depending on $\eps, \delta$ and $T$ but not on $k$).
  Due to the extensibility criterion \eqref{eq:ex_uedk:ex_crit},
  this then implies $\tmaxedk = \infty$ for all $k \in \N$
  and then that \eqref{eq:dk_est:l2}--\eqref{eq:dk_est:l2_w32_m} indeed hold for all $T \in (0, \infty)$
  (and corresponding $C \gt 0$).
\end{proof}

Having an application of the Aubin--Lions lemma in mind,
we next collect a priori estimates for the time derivatives.
\begin{lemma}\label{lm:uedk_time_est}
  For $\eps, \delta \in (0, 1)$ and $k \in \N$, we denote the solution given by Lemma~\ref{lm:ex_uedk} by $(\uedk, \vedk)$.
  For all $\eps, \delta \in (0, 1)$ and $T \in (0, \infty)$,
  there exist $C_1, C_2 \gt 0$ such that
  \begin{align}\label{eq:uedk_time_est:statement_1}
      \|\uedkt\|_{L^2((0, T); \dual{\sob12})}
      + \|\vedkt\|_{L^2((0, T); \dual{\sob12})}
    &\le C_1
  \intertext{and}\label{eq:uedk_time_est:statement_2}
      \|\nabla \uedkt\|_{L^2((0, T); \dual{\sobn22})}
      + \|\nabla \vedkt\|_{L^2((0, T); \dual{\sobn22})}
    &\le C_2
  \end{align}
  for all $k \in \N$.
\end{lemma}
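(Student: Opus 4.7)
The plan is to test the Galerkin equations \eqref{eq:ex_uedk:u_eq} and \eqref{eq:ex_uedk:v_eq} against $L^2$-projections of arbitrary functions from the relevant test spaces and then apply H\"older's inequality to the four resulting summands, invoking Lemma~\ref{lm:dk_est} to control the prefactors in $L^2((0, T))$ uniformly in $k$. The whole argument rests on properties of the orthogonal $L^2(\Omega)$-projection $P_k \colon L^2(\Omega) \to X_k$: since the $\varphi_j$ diagonalize the Neumann Laplacian, $P_k$ is a contraction on both $\sob12$ and $\sobn22$ (the latter equipped with the $\|\cdot\|_{L^2} + \|\Delta\cdot\|_{L^2}$ norm, equivalent to the full $\sob22$ norm by elliptic regularity for the Neumann boundary condition), commutes with $\Delta$ on $\sobn22$, and satisfies $\intom \uedkt \psi = \intom \uedkt P_k \psi$ for every $\psi \in L^2(\Omega)$ because $\uedkt(\cdot, t) \in X_k$.

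For \eqref{eq:uedk_time_est:statement_1}, I fix $\psi \in \sob12$ and test \eqref{eq:ex_uedk:u_eq} with $P_k \psi \in X_k$. H\"older's inequality together with the uniform bounds $\|\Sod\|_{L^\infty} \le \ol S_1$, $\|D_1\|_{L^\infty} \le \ol D_1$, and $\|\fod\|_{L^\infty} \lt \infty$ coming from \eqref{eq:weak_sol_nonlin:spaces_d}--\eqref{eq:weak_sol_nonlin:spaces_f}, combined with the contractive property $\|P_k \psi\|_{\sob12} \le \|\psi\|_{\sob12}$, yields the pointwise-in-time bound
\begin{align*}
\left| \intom \uedkt \psi \right|
\le c\bigl( \eps \ol S_1 \|\nabla \Delta \uedk\|_{L^2} + \ol D_1 \|\nabla \uedk\|_{L^2} + \ol S_1 \|\nabla \vedk\|_{L^2} + 1 \bigr) \|\psi\|_{\sob12},
\end{align*}
whose prefactor is in $L^2((0, T))$ uniformly in $k$ by \eqref{eq:dk_est:w12} and \eqref{eq:dk_est:l2_w32_m}. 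The corresponding bound for $\vedkt$ follows analogously from \eqref{eq:ex_uedk:v_eq}.

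For \eqref{eq:uedk_time_est:statement_2}, I take $\psi \in \sobn22$ and use integration by parts to write $\intom \nabla \uedkt \cdot \nabla \psi = -\intom \uedkt \Delta \psi = -\intom \uedkt \Delta P_k \psi$, the boundary term vanishing because $\partial_\nu \psi = 0$ and $\partial_\nu \uedk = 0$ from \eqref{eq:ex_uedk:boundary}, and the last identity relying on $\uedkt \in X_k$ together with $\Delta P_k = P_k \Delta$ on $\sobn22$. Since $-\Delta P_k \psi$ lies in $X_k$, I substitute it into \eqref{eq:ex_uedk:u_eq} and then integrate by parts once more on each right-hand side term to transfer one spatial derivative from $\Delta P_k \psi$ onto the respective $\uedk$- or $\vedk$-factor, exploiting $\partial_\nu \Delta P_k \psi = 0$ (which follows from $P_k \psi \in X_k$ and $\partial_\nu \varphi_j = 0$) so that all boundary contributions vanish. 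Each summand then reduces to a product of $\|\Delta P_k \psi\|_{L^2} \le \|\Delta \psi\|_{L^2} \le c \|\psi\|_{\sobn22}$ and a factor involving $\uedk$ and $\vedk$ controllable in $L^2((0, T))$ by Lemma~\ref{lm:dk_est} (together with Poincar\'e's inequality to bound $\|\Delta \uedk\|_{L^2}$ via \eqref{eq:dk_est:l2_w32_m}, and the Gagliardo--Nirenberg inequality to handle a cross-term of the form $\Sod'(\uedk) \Delta \uedk \, \nabla \uedk \cdot \nabla P_k \psi$). The main obstacle lies precisely in orchestrating these integrations by parts for the fourth-order term so that $\psi$ is left with derivatives of order at most two---the most that $\sobn22$ provides---while the coefficients involving $\uedk, \vedk$ remain within the regularity guaranteed by Lemma~\ref{lm:dk_est}; here the boundary conditions \eqref{eq:ex_uedk:boundary} and the $\Delta$-invariance of $X_k$ are essential for every boundary integral to vanish.
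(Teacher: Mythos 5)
Your argument for \eqref{eq:uedk_time_est:statement_1} matches the paper's: test \eqref{eq:ex_uedk:u_eq} against $P_k\varphi$, use H\"older together with the boundedness of $D_1$, $\Sod$, $\fod$, and invoke the $k$-uniform bounds \eqref{eq:dk_est:w12} and \eqref{eq:dk_est:l2_w32_m}. That part is fine.

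For \eqref{eq:uedk_time_est:statement_2}, however, there is a genuine gap. After writing $\intom \nabla\uedkt\cdot\nabla\psi = -\intom\uedkt\,\Delta P_k\psi$ and inserting $-\Delta P_k\psi \in X_k$ as a test function in \eqref{eq:ex_uedk:u_eq}, the fourth-order contribution becomes
$\eps\intom \Sod(\uedk)\,\nabla\Delta\uedk\cdot\nabla\Delta P_k\psi$.
Transferring a derivative off $\Delta P_k\psi$, as you propose, produces the term $\eps\intom \Sod(\uedk)\,\Delta^2\uedk\,\Delta P_k\psi$ (plus a lower-order piece), and $\Delta^2\uedk$ is a \emph{fourth}-order quantity that Lemma~\ref{lm:dk_est} does not control — the estimates there stop at $\nabla\Delta\uedk$ in $L^2$. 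Leaving $\nabla\Delta P_k\psi$ untouched is no better: although $P_k$ commutes with $\Delta$, the quantity $\|\nabla P_k\Delta\psi\|_{L^2}$ scales like $\lambda_k^{1/2}\|\Delta\psi\|_{L^2}$ and hence is \emph{not} bounded uniformly in $k$ by $\|\psi\|_{\sob22}$. So the strategy of testing the Galerkin equation a second time does not close.

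The paper's proof of \eqref{eq:uedk_time_est:statement_2} sidesteps all of this: it is a two-line corollary of \eqref{eq:uedk_time_est:statement_1}. One pairs $\nabla\uedkt$ with a vector field $\varphi\in\sobn[{\Omega;\R^n}]22$, integrates by parts once on the dual pairing itself,
\begin{align*}
\left|\intom \nabla\uedkt\cdot\varphi\right| = \left|\intom \uedkt\,\nabla\cdot\varphi\right| \le \|\uedkt\|_{\dual{\sob12}}\,\|\nabla\cdot\varphi\|_{\sob12} \le \|\uedkt\|_{\dual{\sob12}}\,\|\varphi\|_{\sob[{\Omega;\R^n}]22},
\end{align*}
and then invokes \eqref{eq:uedk_time_est:statement_1}. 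No new integrations by parts, no re-use of the Galerkin equation, no Gagliardo--Nirenberg needed. The point to take away is that the loss of one derivative on the test function from $\sob12$ to $\sobn22$ is compensated by the trivial fact that $\nabla\cdot$ maps $\sob22$ continuously to $\sob12$, which converts the already-proven $\dual{\sob12}$ bound for $\uedkt$ into a $\dual{\sobn22}$ bound for $\nabla\uedkt$ directly.
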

\begin{proof}
  Let $\eps, \delta \in (0, 1)$ and $T \in (0, \infty)$.
  Letting $X_k$ be as in Lemma~\ref{lm:ex_uedk},
  we denote the orthogonal projection from $\sob12$ onto $X_k$ by $P_k$.
  Applying Lemma~\ref{lm:ex_uedk} and Hölder's inequality shows that
  \begin{align*}
    &\pe  \left| \intom \uedkt \varphi \right|
     =    \left| \intom \uedkt P_k \varphi \right| \\
    &\le  \eps \left| \intom \Sod(\uedk) \nabla \Delta \uedk \cdot \nabla P_k \varphi \right|
          + \left| \intom D_1(|\uedk|) \nabla \uedk \cdot \nabla P_k \varphi \right| \\
    &\pe  + \left| \intom \Sod(\uedk) \nabla \vedk \cdot \nabla P_k \varphi \right|
          + \left| \intom \fod(\ued, \ved) P_k \varphi \right| \\
    &\le  \left(
            \eps \ol S_1 \|\nabla \Delta \uedk\|_{\leb2}
            + \ol D_1 \|\nabla \uedk\|_{\leb2}
            + \ol S_1 \|\nabla \vedk\|_{\leb2}
            + ( \|\fo\|_{L^\infty([0, \infty)^2)} ) |\Omega|^\frac12
          \right) \|P_k \varphi\|_{\sob12}
  \end{align*}
  for all $\varphi \in \sob12$ and $k \in \N$.
  Upon integrating this inequality over $(0, T)$ and in conjunction with an analogous argument for $\vedkt$,
  we then infer \eqref{eq:uedk_time_est:statement_1} 
  from \eqref{eq:dk_est:l2_w32_m}, \eqref{eq:dk_est:w12} and \eqref{eq:weak_sol_nonlin:spaces_f}.

  Since for all $\varphi \in \sobn[{\Omega; \R^n}]22$ and $k \in \N$, we have
  \begin{align*}
          \left| \intom \nabla \uedkt \cdot \varphi \right|
    &=    \left| \intom \uedkt \nabla \cdot \varphi \right|
     \le  \|\uedkt\|_{\dual{\sob12}} \|\nabla \cdot \varphi\|_{\sob12}
     \le  \|\uedkt\|_{\dual{\sob12}} \|\varphi\|_{\sob[{\Omega; \R^n}]22}
  \end{align*}
  (and likewise for $\nabla \vedkt$),
  a consequence thereof is \eqref{eq:uedk_time_est:statement_2}.
\end{proof}

The bounds obtained above now allow us to obtain convergences of $\uedk$ and $\vedk$ along certain subsequences of $(k)_{k \in \N}$.
\begin{lemma}\label{lm:k_to_infty}
  For all $\eps, \delta \in (0, 1)$,
  there exist a subsequence $(k_j)_{j \in \N}$ of $(k)_{k \in \N}$ and functions
  \begin{align*}
    \ued, \ved \in W_{\loc}^{1, 2}([0, \infty); \sobn22, \sob12)
    \cap L_{\loc}^2([0, \infty); \sob32)
    \cap C^0([0, \infty); \sob12)
  \end{align*}
  such that
  \begin{alignat}{4}
    \uedkj &\ra \ued &\quad &\text{and} \quad & \vedkj &\ra \ved
      &&\qquad \text{pointwise a.e.}, \label{eq:k_to_infty:pw} \\
    \uedkj &\ra \ued &\quad &\text{and} \quad & \vedkj &\ra \ved
      &&\qquad \text{in $C^0([0, \infty); \leb2)$}, \label{eq:k_to_infty:l2} \\
    \nabla \uedkj &\ra \nabla \ued &\quad &\text{and} \quad & \nabla \vedkj &\ra \nabla \ved
      &&\qquad \text{in $L_{\loc}^2(\Ombar \times [0, \infty); \R^n)$}, \label{eq:k_to_infty:nabla_l2} \\
    \nabla \Delta \uedkj &\rh \nabla \Delta \ued &\quad &\text{and} \quad & \nabla \Delta \vedkj &\rh \nabla \Delta \ved
      &&\qquad \text{in $L_{\loc}^2(\Ombar \times [0, \infty); \R^n)$}, \label{eq:k_to_infty:nabla_delta_l2} \\
    \uedkjt &\rh \uedt &\quad &\text{and} \quad & \vedkjt &\rh \vedt
      &&\qquad \text{in $L_{\loc}^2([0, \infty); \dual{\sob12})$} \label{eq:k_to_infty:time_l2}
  \end{alignat}
  as $j \ra \infty$.
\end{lemma}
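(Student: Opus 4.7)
The plan is to extract the desired subsequence in three stages: reflexive weak compactness to identify weak limits, an Aubin--Lions argument to upgrade to strong convergence of the gradients, and a Simon-type compactness result to obtain the $C^0([0, T]; \leb2)$ convergence; a diagonal extraction in $T$ then yields the local-in-time statements.

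For fixed $\eps, \delta \in (0, 1)$ and $T \gt 0$, Lemma~\ref{lm:dk_est} supplies uniform-in-$k$ bounds for $(\uedk)_k$ in $L^\infty((0, T); \sob12)$ and for $(\nabla\Delta \uedk)_k$ in $L^2(\Omega \times (0, T); \R^n)$, which via standard Neumann elliptic regularity combined with a Poincar\'e inequality also yield boundedness in $L^2((0, T); \sob32)$; likewise for $(\vedk)_k$. Lemma~\ref{lm:uedk_time_est} bounds $(\uedkt)_k$ and $(\vedkt)_k$ in $L^2((0, T); \dual{\sob12})$. Reflexivity combined with the Banach--Alaoglu theorem then provides a subsequence along which $\uedkj \rh \ued$ in $L^2((0, T); \sob32)$, $\nabla \Delta \uedkj \rh \nabla \Delta \ued$ in $L^2(\Omega \times (0, T); \R^n)$ and $\uedkjt \rh \uedt$ in $L^2((0, T); \dual{\sob12})$, with analogous statements for $\vedk$. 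The continuous embedding $\dual{\sob12} \embed \dual{\sobn22}$ then identifies $\ued$ as an element of $W^{1, 2}((0, T); \sobn22, \sob12)$.

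For the strong convergence of the gradients I apply the Aubin--Lions lemma to the triple $\sobn22 \embed \sob12 \embed \dual{\sobn22}$, in which $\sobn22 \embed \sob12$ is compact by the Rellich--Kondrachov theorem. The bounds above render $(\uedk)_k$ bounded in $L^2((0, T); \sobn22) \cap W^{1, 2}((0, T); \sobn22, \sob12)$, so a further subsequence satisfies $\uedkj \ra \ued$ strongly in $L^2((0, T); \sob12)$; this implies \eqref{eq:k_to_infty:nabla_l2} and, after another extraction, the pointwise a.e.\ convergence \eqref{eq:k_to_infty:pw}.

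The main technical obstacle is to upgrade from $L^2((0, T); \leb2)$ to $C^0([0, T]; \leb2)$ convergence. For this I invoke a Simon-type compactness theorem: since $(\uedk)_k$ is bounded in $L^\infty((0, T); \sob12)$ with $\sob12 \embed \leb2$ compact, and $(\uedkt)_k$ is bounded in $L^2((0, T); \dual{\sob12})$, the family is relatively compact in $C^0([0, T]; \leb2)$; its limit must agree with the $L^2$-limit already identified, so $\uedkj \ra \ued$ in $C^0([0, T]; \leb2)$ along the same subsequence. The regularity $\ued \in C^0([0, T]; \sob12)$ then follows from the standard Lions--Magenes embedding $W^{1, 2}((0, T); \sobn22, \sob12) \embed C^0([0, T]; \sob12)$. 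Finally, a diagonal extraction over $T \in \N$ produces one subsequence $(k_j)_j$ of $(k)_k$ realizing all stated convergences on every bounded time interval; uniqueness of the various (weak, strong, $C^0$) limits guarantees that $\ued$ and $\ved$ are independent of $T$, completing the proof.
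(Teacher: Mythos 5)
Your proof is correct and takes essentially the same route as the paper: collect the $k$-uniform bounds from Lemmas~\ref{lm:dk_est} and~\ref{lm:uedk_time_est}, pass to weak limits, then use Aubin--Lions (for \eqref{eq:k_to_infty:nabla_l2}) and an Aubin--Lions--Simon/$C^0$-compactness argument (for \eqref{eq:k_to_infty:l2}), finishing with a diagonal extraction over compact time intervals. You are somewhat more explicit than the paper about which Gelfand triples the compactness lemmas are applied to and about why the Lions--Magenes embedding furnishes the $C^0([0,\infty);\sob12)$ regularity, but the underlying argument is the same.
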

\begin{proof}
  As the claims for the second solution component can be shown analogously,
  it suffices to prove \eqref{eq:k_to_infty:pw}--\eqref{eq:k_to_infty:time_l2} for the first one.
  According to \eqref{eq:dk_est:l2}--\eqref{eq:dk_est:l2_w32_m}, \eqref{eq:uedk_time_est:statement_1} and \eqref{eq:uedk_time_est:statement_2},
  the sequence $(\uedk)_{k \in \N}$ is bounded in the space $W_{\loc}^{1, 2}([0, \infty); \sob22, \sob12)$
  so that by a diagonalization argument,
  we obtain a sequence $(k_j)_{j \in \N} \subset \N$ with $k_j \ra \infty$
  and a function $\ued \in W_{\loc}^{1, 2}([0, \infty); \sob22, \sob12)$ such that
  \begin{align*}
    \uedkj \rh \ued \qquad \text{in $W_{\loc}^{1, 2}([0, \infty); \sobn22, \sob12)$ as $j \ra \infty$},
  \end{align*}
  which directly implies \eqref{eq:k_to_infty:nabla_delta_l2} and \eqref{eq:k_to_infty:time_l2}
  and together with the Aubin--Lions lemma also \eqref{eq:k_to_infty:nabla_l2}.

  Thanks to \eqref{eq:dk_est:w12} and \eqref{eq:uedk_time_est:statement_1},
  another application of the Aubin--Lions lemma yields \eqref{eq:k_to_infty:l2}
  and thus also \eqref{eq:k_to_infty:pw},
  possibly after switching to subsequences.
\end{proof}

We conclude this subsection by showing
that the pair $(\ued, \ved)$ constructed in Lemma~\ref{lm:k_to_infty} indeed solves \eqref{prob:ped} in a weak sense.
\begin{lemma}\label{lm:ued_ved_weak_sol}
  Let $\eps, \delta \in (0, 1)$.
  The tuple $(\ued, \ved)$ constructed in Lemma~\ref{lm:k_to_infty}
  is a weak solution of \eqref{prob:ped} in the sense that
  \begin{align}\label{eq:ued_ved_weak_sol:init}
    \ued(\cdot, 0) = u_0
    \quad \text{as well as} \quad
    \ved(\cdot, 0) = v_0
    \qquad \text{hold a.e.\ in $\Omega \times (0, \infty)$},
  \end{align}
  and, for all $T \in (0, \infty)$ and $\varphi \in L^2((0, T); \sob12)$, we have
  \begin{align}\label{eq:ued_ved_weak_sol:u_eq}
          \intntom \uedt \varphi
    &=    \eps \intntom \Sod(\ued) \nabla \Delta \ued \cdot \nabla \varphi
          - \intntom D_1(|\ued|) \nabla \ued \cdot \nabla \varphi \notag \\
    &\pe  + \intntom \Sod(\ued) \nabla \ved \cdot \nabla \varphi
          + \intntom \fod(\ued, \ved) \varphi
  \intertext{as well as}\label{eq:ued_ved_weak_sol:v_eq}
          \intntom \vedt \varphi
    &=    \eps \intntom \Std(\ved) \nabla \Delta \ved \cdot \nabla \varphi
          - \intntom D_2(|\ved|) \nabla \ved \cdot \nabla \varphi \notag \\
    &\pe  - \intntom \Std(\ved) \nabla \ued \cdot \nabla \varphi
          + \intntom \ftd(\ued, \ved) \varphi.
  \end{align}
\end{lemma}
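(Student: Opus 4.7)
The plan is to pass to the limit $j \to \infty$ in the Galerkin identities \eqref{eq:ex_uedk:u_eq}, \eqref{eq:ex_uedk:v_eq}, and \eqref{eq:ex_uedk:init} along the subsequence $(k_j)_{j \in \N}$ from Lemma~\ref{lm:k_to_infty}. Since \eqref{eq:ex_uedk:u_eq} only holds for test functions in the finite-dimensional space $X_k$, I would first fix $k_0 \in \N$ and a time-dependent test function of the form $\varphi \in L^2((0,T); X_{k_0})$, integrate the Galerkin identities over $(0,T)$, and take $j \to \infty$ along the tail of the sequence with $k_j \ge k_0$. Once the limit identity is established on the dense subspace $L^2((0,T); \bigcup_{k_0} X_{k_0})$ of $L^2((0,T); \sob12)$ (density being a consequence of Lemma~\ref{lm:ex_eigen_basis}), extending to arbitrary $\varphi \in L^2((0,T); \sob12)$ follows from the fact that every term depends continuously on $\varphi$ in the $L^2((0,T); \sob12)$-norm, thanks to the uniform bounds on $\ued$, $\ved$ and their derivatives.

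The term-by-term passage to the limit uses the convergences of Lemma~\ref{lm:k_to_infty} combined with the boundedness hypotheses \eqref{eq:weak_sol_nonlin:spaces_d}--\eqref{eq:weak_sol_nonlin:spaces_f}. The time-derivative integral converges by the weak convergence \eqref{eq:k_to_infty:time_l2}. The fourth-order term $\eps \intntom \Sod(\uedkj) \nabla \Delta \uedkj \cdot \nabla \varphi$ is handled by pairing the weak convergence \eqref{eq:k_to_infty:nabla_delta_l2} of $\nabla \Delta \uedkj$ with strong $L^2$-convergence of $\Sod(\uedkj) \nabla \varphi$, which in turn follows from the pointwise convergence \eqref{eq:k_to_infty:pw}, the continuity and boundedness of $\Sod$, and dominated convergence. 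The nonlinear diffusion term benefits from the strong convergence \eqref{eq:k_to_infty:nabla_l2} of $\nabla \uedkj$ together with $L^\infty$-dominated convergence of $D_1(|\uedkj|)$. The cross-diffusive contribution is treated analogously. Finally, the reaction term passes to the limit via the pointwise convergence \eqref{eq:k_to_infty:pw}, continuity of $\fod$, and the uniform bound on $\fo$ via dominated convergence.

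For the initial condition \eqref{eq:ued_ved_weak_sol:init}, I observe that \eqref{eq:ex_uedk:init} together with the orthonormality of $(\varphi_j)_{j \in \N}$ forces $\uedkj(\cdot, 0)$ to be the $L^2$-projection of $u_0$ onto $X_{k_j}$, which converges to $u_0$ in $\leb2$ by Parseval's identity; comparing with the uniform convergence \eqref{eq:k_to_infty:l2} yields $\ued(\cdot, 0) = u_0$ a.e., and analogously $\ved(\cdot, 0) = v_0$. I do not anticipate a serious obstacle here—all nonlinearities are bounded and all the required strong/weak convergence pairings are already provided by Lemma~\ref{lm:k_to_infty}—but some care is needed to verify that strong $L^2_{\loc}$ convergence of $\Sod(\uedkj) \nabla \varphi$ (respectively $\Sod(\uedkj) \nabla \vedkj$) truly holds, which is exactly where \eqref{eq:weak_sol_nonlin:spaces_s} and \eqref{eq:k_to_infty:nabla_l2} come in.
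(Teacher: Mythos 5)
Your proposal is correct and follows essentially the same approach as the paper; both use the Galerkin identities from Lemma~\ref{lm:ex_uedk}, the convergences of Lemma~\ref{lm:k_to_infty}, the boundedness hypotheses \eqref{eq:weak_sol_nonlin:spaces_d}--\eqref{eq:weak_sol_nonlin:spaces_f}, and the density of $\bigcup_k X_k$ in $\sob12$ to pass to the limit term by term and recover the weak formulation. The only cosmetic difference is that the paper tests directly with the projections $P_{k_j}\varphi$ (so that $P_{k_j}\varphi \to \varphi$ in $L^2((0,T);\sob12)$ handles the density step within each limit), whereas you first establish the limiting identity for $\varphi$ taking values in a fixed $X_{k_0}$ and then extend by density/continuity at the end; and your observation that $\uedkj(\cdot,0) = P_{k_j} u_0 \to u_0$ in $\leb2$ is a slightly more direct route to the initial-condition claim than the paper's comparison against \eqref{eq:ex_uedk:init}.
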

\begin{proof}
  We fix $T \in (0, \infty)$ as well as $\varphi \in L^2((0, T); \sob12)$,
  denote the orthogonal projection on $X_k$ by $P_k$ (where $X_k$ is as in Lemma~\ref{lm:ex_uedk})
  and set $(P_k \varphi)(x, t) \defs (P_k \varphi(\cdot, t))(x)$ for $(x, t) \in \Omega \times (0, T)$.
  Moreover, let $(\ued, \vedk)$ and $(k_j)_{j \in \N}$ be as given by Lemma~\ref{lm:k_to_infty}.
  According to Lemma~\ref{lm:ex_uedk} and Lemma~\ref{lm:dk_est}, we then have
  \begin{align*}
          \intntom \uedkt P_k \varphi
    &=    \eps \intntom \Sod(\uedk) \nabla \Delta \uedk \cdot \nabla P_k \varphi 
          - \intntom D_1(|\uedk|) \nabla \uedk \cdot \nabla P_k \varphi \\
    &\pe  + \intntom \Sod(\uedk) \nabla \vedk \cdot \nabla P_k \varphi
          + \intntom \fod(\uedk, \vedk) P_k \varphi
  \end{align*}
  for all $k \in \N$.
  Since $P_k \varphi \ra \varphi$ in $L^2((0, T); \sob12)$ for $k \ra \infty$,
  we infer
  \begin{align*}
    \lim_{j \ra \infty} \intntom \uedkjt P_{k_j} \varphi = \intntom \uedt \varphi
  \end{align*}
  from \eqref{eq:k_to_infty:time_l2}.
  Moreover, as $\fod$ is bounded, \eqref{eq:k_to_infty:pw}
  asserts $\fod(\uedkj, \vedkj) \ra \fod(\ued, \ved)$ in $L^2(\Omega \times (0, T))$ as $j \ra \infty$ and hence
  \begin{align*}
    \lim_{j \ra \infty} \intntom \fod(\uedkj, \vedkj) P_{k_j} \varphi = \intntom \fod(\ued, \ved) \varphi.
  \end{align*}
  Boundedness of $\Sod$, \eqref{eq:k_to_infty:pw} and Lebesgue's theorem 
  imply
  \begin{align*}
    &\pe  \|\Sod(\uedkj) \nabla P_{k_j} \varphi - \Sod(\uedkj) \nabla \varphi\|_{L^2(\Omega \times (0, T))} \\
    &\le  \|[\Sod(\uedkj) - \Sod(\ued)] \nabla \varphi\|_{L^2(\Omega \times (0, T))}
          + \|\Sod(\uedkj) \nabla [P_{k_j} \varphi - \varphi]\|_{L^2(\Omega \times (0, T))}
    \ra   0
  \end{align*}
  as $j \ra \infty$
  and hence
  \begin{align*}
      \lim_{j \ra \infty} \intntom \Sod(\uedkj) \nabla \Delta \uedkj \cdot \nabla P_{k_j} \varphi
    = \intntom \Sod(\ued) \nabla \Delta \ued \cdot \nabla \varphi
  \end{align*}
  due to \eqref{eq:k_to_infty:nabla_delta_l2}.
  A similar reasoning, relying on \eqref{eq:k_to_infty:nabla_l2} instead of \eqref{eq:k_to_infty:nabla_delta_l2}, gives
  \begin{align*}
        \lim_{j \ra \infty} \intntom \Sod(\uedkj) \nabla \uedkj \cdot \nabla P_{k_j} \varphi
    &=  \intntom \Sod(\ued) \nabla \ued \cdot \nabla \varphi
  \intertext{and}
        \lim_{j \ra \infty} \intntom D_1(|\uedkj|) \nabla \uedkj \cdot \nabla P_{k_j} \varphi
    &=  \intntom D_1(|\uedkj|) \nabla \ued \cdot \nabla \varphi
  \end{align*}
  so that indeed \eqref{eq:ued_ved_weak_sol:u_eq} holds,
  while \eqref{eq:ued_ved_weak_sol:v_eq} can be derived analogously.

  Finally, we note that \eqref{eq:k_to_infty:l2} implies $\uedkj(\cdot, 0) \ra \ued(\cdot, 0)$ in $\leb2$ as $j \ra \infty$
  so that \eqref{eq:ex_uedk:init} asserts
  \begin{align*}
      \intom \ued(\cdot, 0) \psi
    = \lim_{j \ra \infty} \intom \uedkj(\cdot, 0) P_{k_j} \psi
    = \intom u_0 \psi
    \qquad \text{for all $\psi \in \leb2$}.
  \end{align*}
  This implies $\ued(\cdot, 0) = u_0$ a.e.\
  and, by combining this with an analogous argument for the second solution component, we arrive at \eqref{eq:ued_ved_weak_sol:init}.
\end{proof}

\subsection{The limit process \tops{$\delta \sea 0$}{delta to 0}: guaranteeing nonnegativity}\label{sec:delta_sea_0}
As opposed to the problem solved by $(\uedk, \vedk)$ for $k \in \N$,
where \eqref{eq:ex_uedk:u_eq} and \eqref{eq:ex_uedk:v_eq} require that $\varphi(\cdot, t) \in X_k$ for all $t \in (0, \infty)$,
in the weak formulation for the problem \eqref{prob:ped}, \eqref{eq:ued_ved_weak_sol:u_eq} and \eqref{eq:ued_ved_weak_sol:v_eq},
all $\varphi \in L_{\loc}^2([0, \infty); \sob12)$ are admissible test functions.
In particular, we may now test with anti-derivatives of $\frac1{\Sod(\ued)}$ and $\frac1{\Std(\ved)}$,
allowing us to obtain estimates independent of both $\eps$ and $\delta$ in Lemma~\ref{lm:test_g}.
These bounds not only form the basis for the limit processes $\delta \sea 0$ and $\eps \sea 0$
(which are finally performed in Lemma~\ref{lm:delta_sea_0} and Lemma~\ref{lm:eps_sea_0}, respectively)
but are also important for showing that the later obtained limit functions $\ue, \ve$ are nonnegative (see Lemma~\ref{lm:ue_ge_0}).

To further prepare these testing procedures, we state the following lemma which should essentially be well-known.
\begin{lemma}\label{lm:test_weak}
  Let $T \in (0, \infty)$, $w, z \in \Wt$ and $\varphi \in C^1([0, T])$.

  \begin{enumerate}
    \item[(i)]
      For $H \in C^2(\R^2)$ with $D^2 H \in L^\infty(\R^2; \R^{2\times2})$,
      $H_w(w, z)$ and $H_z(w, z)$ belong to $L^2((0, T); \sob12)$ and
      \begin{align}\label{eq:test_weak:statement_i}
        &\pe  \intntom w_t H_w(w, z) \varphi + \intntom z_t H_z(w, z) \varphi \notag \\
        &=    - \intntom H(w, z) \varphi_t + \intom H(w(\cdot, T), z(\cdot, T)) \varphi(\cdot, T) - \intom H(w(\cdot, 0), z(\cdot, 0)) \varphi(\cdot, 0) 
      \end{align}
      holds.
      (We remark that $w(\cdot, 0)$, $w(\cdot, T)$, $z(\cdot, 0)$ and $z(\cdot, T)$ are (well-defined) elements of $\leb2$ since $\Wt \embed C^0([0, T]; \leb2)$;
      that is, all terms in \eqref{eq:test_weak:statement_i} are well-defined.)

    \item[(ii)]
      Let $\wt H \in C^2(\R)$ with $\wt H'' \in L^\infty(\R)$.
      Then $\wt H'(w) \in L^2((0, T); \sob12)$ and
      \begin{align*}
          \intntom w_t \wt H'(w) \varphi
        = - \intntom \wt H(w) \varphi_t
          + \intom \wt H(w(\cdot, T))
          - \intom \wt H(w(\cdot, 0)).
      \end{align*}
  \end{enumerate}
\end{lemma}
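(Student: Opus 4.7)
Part~(ii) is the special case of~(i) obtained by setting $H(w, z) \defs \wt H(w)$, so I concentrate on~(i). The strategy has three steps: verify the asserted Sobolev regularity of $H_w(w, z)$ and $H_z(w, z)$, derive the identity \eqref{eq:test_weak:statement_i} for smooth approximants by the classical chain rule, and pass to the limit via a temporal mollification argument.

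For the regularity claim, boundedness of $D^2 H$ makes $H_w$ and $H_z$ globally Lipschitz on $\R^2$, which places $H_w(w, z)$ and $H_z(w, z)$ in $L^2((0, T); \leb2)$. The chain rule for Sobolev functions then identifies the spatial gradient as $\nabla H_w(w, z) = H_{ww}(w, z) \nabla w + H_{wz}(w, z) \nabla z$, which belongs to $L^2((0, T); \leb2)$ by boundedness of $H_{ww}, H_{wz}$ together with $\nabla w, \nabla z \in L^2(\Omega \times (0, T))$; the same reasoning covers $\nabla H_z(w, z)$.

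For the identity itself, I extend $w$ and $z$ to $(-\eta_0, T + \eta_0)$ by reflection across both endpoints and convolve in time with a symmetric mollifier $\rho_\eta$. This yields approximants $w_\eta, z_\eta \in C^\infty([0, T]; \sob12)$ such that $w_\eta \ra w$ and $z_\eta \ra z$ in $L^2((0, T); \sob12)$, $(w_\eta)_t \ra w_t$ and $(z_\eta)_t \ra z_t$ in $L^2((0, T); \dual{\sob12})$, $w_\eta(\cdot, s) \ra w(\cdot, s)$ and $z_\eta(\cdot, s) \ra z(\cdot, s)$ in $\leb2$ for $s \in \{0, T\}$, and, along a subsequence, $w_\eta, z_\eta$ converge pointwise a.e. For smooth $(w_\eta, z_\eta)$, the pointwise identity $\partial_t[H(w_\eta, z_\eta) \varphi] = [H_w(w_\eta, z_\eta) (w_\eta)_t + H_z(w_\eta, z_\eta) (z_\eta)_t] \varphi + H(w_\eta, z_\eta) \varphi_t$, combined with integration over $\Omega \times (0, T)$ and the fundamental theorem of calculus in $t$, yields the analogue of \eqref{eq:test_weak:statement_i} directly.

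It remains to pass to the limit $\eta \sea 0$. Dominated convergence, using pointwise convergence of $w_\eta, z_\eta$, strong $L^2$-convergence of $\nabla w_\eta, \nabla z_\eta$, and the uniform bounds on $H_{ww}, H_{wz}$, shows $H_w(w_\eta, z_\eta) \ra H_w(w, z)$ in $L^2((0, T); \sob12)$, and analogously for $H_z$. Multiplication by $\varphi \in C^1([0, T])$ preserves this convergence, so pairing with the strong convergence $(w_\eta)_t \ra w_t$ in $L^2((0, T); \dual{\sob12})$ takes care of the duality integrals on the left-hand side of \eqref{eq:test_weak:statement_i}. The remaining terms are controlled through the estimate $|H(a, b) - H(a', b')| \le C(1 + |a| + |a'| + |b| + |b'|)(|a - a'| + |b - b'|)$, obtained by integrating $\nabla H$ along straight segments and invoking the linear growth of $H_w, H_z$; together with Cauchy--Schwarz and the above $\leb2$-convergences at $s \in \{0, T\}$ and in $L^2(\Omega \times (0, T))$, it handles the two boundary contributions as well as the $\varphi_t$-term. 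The chief technical point is the strong convergence of $\nabla H_w(w_\eta, z_\eta)$ in $L^2(\Omega \times (0, T))$, which Vitali's theorem closes by combining the uniform integrability stemming from boundedness of $H_{ww}, H_{wz}$ with pointwise a.e.\ convergence of $w_\eta, z_\eta$ and of $\nabla w_\eta, \nabla z_\eta$ along a further subsequence.
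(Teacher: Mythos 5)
Your proof is correct and follows essentially the same strategy as the paper: approximate $w,z$ by functions smooth in time, apply the classical chain rule and fundamental theorem of calculus to the approximants, and pass to the limit in each term. The minor technical differences are worth noting but do not amount to a different method. The paper abstractly fixes sequences in $C^\infty(\Ombar \times [0,T])$ dense in $\Wt$, making the FTC step classical, whereas you construct approximants by reflection and temporal mollification only, so the approximants lie in $C^\infty([0,T];\sob12)$ and the chain rule for the map $t \mapsto \intom H(w_\eta(\cdot,t),z_\eta(\cdot,t))$ is a Bochner-type argument (justified, as you implicitly use, by boundedness of $D^2 H$). For the duality pairing $\intntom w_t H_w(w,z)\varphi$, the paper establishes only \emph{weak} convergence $H_w(w_\ell,z_\ell)\rh H_w(w,z)$ in $L^2((0,T);\sob12)$ (via an $L^2$-limit plus a uniform gradient bound) and pairs it against the strong convergence of $w_{\ell t}$; you go further and prove \emph{strong} convergence of $H_w(w_\eta,z_\eta)$ via Vitali/dominated-convergence arguments, which also closes the step but slightly overshoots what is needed. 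Likewise, you control the remaining terms via the Lipschitz-type estimate $|H(a,b)-H(a',b')| \le C(1+|a|+|a'|+|b|+|b'|)(|a-a'|+|b-b'|)$ where the paper uses a second-order Taylor expansion; these are interchangeable. In short: same approach, with standard tool substitutions.
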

\begin{proof}
  We first fix $(w_\ell)_{\ell \in \N}, (z_\ell)_{\ell \in \N} \subset C^\infty(\Ombar \times [0, T])$
  with $w_\ell \ra w$ and $z_\ell \ra z$ in $\Wt$ as $j \ra \infty$.
  Hence, for $X \defs L^2((0, T); \sob12)$ and thus $\dualn X = L^2((0, T); \dual{\sob12})$, we have
  $w_\ell \ra w$ and $z_\ell \ra z$ in $X$,
  $w_{\ell t} \ra w_t$ and $z_{\ell t} \ra z_t$ in $\dualn X$ as well as
  $w_\ell \ra w$ and $z_\ell \ra z$ in $C^0([0, T]; \leb2)$.

  Then
  \begin{align*}
    &\pe  - \intntom H(w_\ell, z_\ell) \varphi_t + \left[ \intom H(w_\ell(\cdot, t), z_\ell(\cdot, t)) \varphi(\cdot, t) \right]_{t=0}^{t=T} \\
    &=    \intntom [H(w_\ell, z_\ell)]_t \varphi
    =     \intntom w_{\ell t} H_w(w_\ell, z_\ell) \varphi + \intntom z_{\ell t} H_z(w_\ell, z_\ell) \varphi.
  \end{align*}

  By Taylor's theorem for multivariate functions and Young's inequality, we obtain
  \begingroup \allowdisplaybreaks
  \begin{align*}
    &\pe  \left| \intom H(w(\cdot, t), z(\cdot, t)) - \intom H(w_\ell(\cdot, t), z_\ell(\cdot, t)) \right| \\
    &\le  \sum_{|\alpha|=1} 
            \intom \frac{|D^\alpha H(w(\cdot, t), z(\cdot, t))|}{\alpha!} [w(\cdot, t) - w_\ell(\cdot, t), z(\cdot, t) - z_\ell(\cdot, t)]^\alpha \\
    &\pe  + \sum_{|\alpha|=2} \frac{\max_{|\beta|=|\alpha|} \|D^\beta H\|_{L^\infty(\R^2)}}{\alpha!}
            \intom [w(\cdot, t) - w_\ell(\cdot, t), z(\cdot, t) - z_\ell(\cdot, t)]^\alpha \\
    &\le  |H_w(w(\cdot, t), z(\cdot, t))| \intom |w(\cdot, t) - w_\ell(\cdot, t)|
          + |H_z(w(\cdot, t), z(\cdot, t))| \intom |z(\cdot, t) - z_\ell(\cdot, t)| \\
    &\pe  + \|D^2 H\|_{L^\infty(\R^{2\times2})} \intom (w(\cdot, t) - z(\cdot, t))^2
          + \|D^2 H\|_{L^\infty(\R^{2\times2})} \intom (z(\cdot, t) - z_\ell(\cdot, t))^2 \\
    &\ra  0
    \qquad \text{as $\ell \ra \infty$ for all $t \in [0, T]$}.
  \end{align*}
  \endgroup

  Since moreover
  \begin{align}\label{eq:test_weak:l2_conv}
    &\pe  \|H_w(w, z) - H_w(w_\ell, z_\ell)\|_{L^2(\Omega \times (0, T))} \notag \\
    &\le  \|H_w(w, z_\ell) - H_w(w_\ell, z_\ell)\|_{L^2(\Omega \times (0, T))}
          + \|H_w(w, z) - H_w(w, z_\ell)\|_{L^2(\Omega \times (0, T))} \notag \\
    &\le  \|H_{ww}\|_{L^\infty(\R^2)} \|w - w_\ell\|_{L^2(\Omega \times (0, T))}
          + \|H_{wz}\|_{L^\infty(\R^2)} \|z - z_\ell\|_{L^2(\Omega \times (0, T))} \notag \\
    &\ra  0
    \qquad \text{as $\ell \ra \infty$}
  \end{align}
  by the mean value theorem and
  \begin{align*}
          \sup_{\ell \in \N} \intntom |\nabla H_w(w_\ell, z_\ell)|^2
    &=    \sup_{\ell \in \N} \intntom \left| H_{ww}(w_\ell, z_\ell) \nabla w_\ell + H_{wz}(w_\ell, z_\ell) \nabla z_\ell \right|^2 \\
    &\le  \sup_{\ell \in \N} \left(
            2 \|H_{ww}\|_{L^\infty(\R^2)}^2 \intntom |\nabla w_\ell|^2
            + 2 \|H_{wz}\|_{L^\infty(\R^2)}^2 \intntom |\nabla z_\ell|^2
          \right)
     \lt  \infty
  \end{align*}
  by the chain rule, we conclude $\sup_{\ell \in \N} \|H_w(w_\ell, z_\ell)\|_X^2 \lt \infty$.
  Therefore, after switching to subsequences if necessary, we have
  \begin{align}\label{eq:test_weak:hw_conv}
    H_w(w_\ell, z_\ell) \rh \tilde w \qquad \text{in $X$ as $\ell \ra \infty$}
  \end{align}
  for some $\tilde w \in X$.
  From \eqref{eq:test_weak:l2_conv}, we infer $\tilde w = H_w(w, z)$
  so that \eqref{eq:test_weak:hw_conv} and the convergence $w_{\ell t} \ra w_t$ in $\dualn X$ imply
  \begin{align*}
        \intntom w_{\ell t} H_w(w_\ell, z_\ell) \varphi
    \ra \intntom w_t H_w(w, z) \varphi
    \qquad \text{as $\ell \ra \infty$}.
  \end{align*}
  Likewise, we obtain
  \begin{align*}
        \intntom z_{\ell t} H_z(w_\ell, z_\ell) \varphi
    \ra \intntom z_t H_z(w, z) \varphi
    \qquad \text{as $\ell \ra \infty$}
  \end{align*}
  and thus \eqref{eq:test_weak:statement_i}.

  Finally, the second part follows from the first one by setting $H(\rho, \sigma) = \wt H(\rho)$ for $\rho, \sigma \in \R$.
\end{proof}

With Lemma~\ref{lm:test_weak} at hand, we are now able to prove an analogue to the entropy-like inequality \eqref{eq:approx_entropy:entropy}.
\begin{lemma}\label{lm:test_g}
  Let $\eps, \delta \in (0, 1)$ and $\ued, \ved$ be as in Lemma~\ref{lm:k_to_infty}.
  Set moreover
  \begin{align*}
    \Gid(s) \defs \int_1^s \int_1^\rho \frac{1}{\Sid(\sigma)} \dsigma \drho
    \qquad \text{for $i \in \{1, 2\}$}
  \end{align*}
  as well as
  \begin{align*}
            \Eed(t)
    &\defs  \intom \God(\ued(\cdot, t))
            + \intom \Gtd(\ved(\cdot, t)), \\
            \Ded(t)
    &\defs  \eps \intom |\Delta \ued(\cdot, t)|^2
            + \eps \intom |\Delta \ved(\cdot, t)|^2 \\
    &\pe    + \intom \frac{D_1(|\ued(\cdot, t)|)}{\Sod(\ued(\cdot, t))} |\nabla \ued(\cdot, t)|^2 
            + \intom \frac{D_2(|\ved(\cdot, t)|)}{\Std(\ved(\cdot, t))} |\nabla \ved(\cdot, t)|^2
    \quad \text{and} \\
            \Red(t)
    &\defs  \intom \God'(\ued(\cdot, t)) \fod(\ued(\cdot, t), \ved(\cdot, t))
            + \intom \Gtd'(\ved(\cdot, t)) \ftd(\ued(\cdot, t), \ved(\cdot, t))
  \end{align*}
  for $t \in [0, \infty)$.
  (Here, similarly as on Theorem~\ref{th:approx_entropy},
  $\Ded$ and $\Red$ are to be understood as functions in $L^0((0, \infty))$.)
  Then
  \begin{align}\label{eq:test_g:entropy}
          \Eed(T) \zeta(T)
          + \intnt \Ded(t) \zeta(t) \dt
    &\le  \Eed(0) \zeta(0)
          + \intnt \Red(t) \zeta(t) \dt
          + \intnt \Eed(t) \zeta'(t) \dt
  \end{align}
  holds for any $T \in (0, \infty)$ and $0 \le \zeta \in C^\infty([0, T])$.
\end{lemma}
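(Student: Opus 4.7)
The strategy is to test \eqref{eq:ued_ved_weak_sol:u_eq} with $\varphi = \God'(\ued)\zeta$ and \eqref{eq:ued_ved_weak_sol:v_eq} with $\varphi = \Gtd'(\ved)\zeta$, add the two resulting identities, and then interpret the combined left-hand side by means of Lemma~\ref{lm:test_weak}(i) applied to $H(\rho,\sigma) \defs \God(\rho)+\Gtd(\sigma)$ with $w=\ued$, $z=\ved$ and $\varphi=\zeta$. Admissibility of these test functions and applicability of the lemma both hinge on the bound $\Sid \ge \delta$, which makes $\Gid''=1/\Sid$ bounded by $1/\delta$; in particular $D^2 H \in L^\infty(\R^2;\R^{2\times 2})$ and $\God'(\ued),\Gtd'(\ved) \in L^2((0,T);\sob12)$, so that multiplication by the smooth $\zeta$ yields admissible test functions in \eqref{eq:ued_ved_weak_sol:u_eq} and \eqref{eq:ued_ved_weak_sol:v_eq}.

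Lemma~\ref{lm:test_weak}(i) then identifies $\intntom[\uedt\God'(\ued)+\vedt\Gtd'(\ved)]\zeta$ with $\Eed(T)\zeta(T) - \Eed(0)\zeta(0) - \intnt \Eed(t)\zeta'(t)\dt$. On the other side, the chain rule $\nabla[\Gid'(w)\zeta] = (\zeta/\Sid(w))\nabla w$ is the central algebraic tool. Applied to the second-order diffusion term it produces $-\intnt \zeta(t)\intom (D_i(|\cdot|)/\Sid(\cdot))|\nabla\cdot|^2\dt$; applied to the cross-diffusion contribution from the $\ued$-equation it produces $+\intnt\zeta(t)\intom\nabla\ued\cdot\nabla\ved\dt$, while the analogous term from the $\ved$-equation produces its negative, so that these cancel upon summation -- this cancellation being precisely the reason for the definition of $\Gid$. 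The zero-order summands assemble into $\intnt\zeta(t)\Red(t)\dt$. For the fourth-order regularization term, the $\Sid$ factor cancels likewise, leaving $\eps\zeta(t)\intom\nabla\Delta w\cdot\nabla w$, and one integration by parts in space converts this into $-\eps\zeta(t)\intom(\Delta w)^2$, with vanishing boundary contribution due to the Neumann condition on $w$.

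Collecting all contributions yields \eqref{eq:test_g:entropy} with equality (the sign of $\zeta$ being immaterial in the derivation), from which the claimed inequality is immediate. The one technical point deserving care is the spatial integration by parts in the fourth-order term: it requires $\ued(\cdot,t),\ved(\cdot,t)\in\sob32$ together with $\partial_\nu\ued=\partial_\nu\ved=0$ on $\partial\Omega$ for almost every $t\in(0,T)$. Both properties are delivered by Lemma~\ref{lm:k_to_infty}, whose weak convergences in $L^2_{\loc}([0,\infty);\sob32)$ preserve the Neumann condition encoded in the test-function space $\sobn22$ of the Galerkin scheme, so the argument proceeds without further obstacle.
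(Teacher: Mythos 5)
Your proposal is correct and matches the paper's own argument essentially step for step: test the weak formulations \eqref{eq:ued_ved_weak_sol:u_eq} and \eqref{eq:ued_ved_weak_sol:v_eq} with $\God'(\ued)\zeta$ and $\Gtd'(\ved)\zeta$ (admissible because $\Gid'' = 1/\Sid \le 1/\delta$ is bounded), handle the time-derivative terms via Lemma~\ref{lm:test_weak}, cancel the $\Sid$ factor by the chain rule, observe the cancellation of the cross-diffusive contributions, and integrate by parts once in the fourth-order term using $\ued(\cdot,t),\ved(\cdot,t)\in\sob32\cap\sobn22$. The only cosmetic difference is that you invoke Lemma~\ref{lm:test_weak}(i) once with $H(\rho,\sigma)=\God(\rho)+\Gtd(\sigma)$ whereas the paper applies part~(ii) to each component separately, which is an equivalent packaging of the same fact.
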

\begin{proof}
  As $\frac1{\Sod}$ is continuous, positive and bounded,
  we may apply Lemma~\ref{lm:test_weak}~(ii) and Lemma~\ref{lm:ued_ved_weak_sol} to obtain
  \begin{align}\label{eq:test_g:test}
    &\pe  \intom \God(\ued(\cdot, T)) \zeta(T)
          - \intom \God(u_0) \zeta(0)
          - \intntom \God(\ued) \zeta' \notag \\
    &=    \intntom \uedt \God'(\ued) \zeta \notag \\
    &=    - \eps \intntom |\Delta \ued|^2 \zeta
          - \intntom \frac{D_1(|\ued|)}{\Sod(\ued)} |\nabla \ued|^2 \zeta \notag \\
    &\pe  + \intntom \nabla \ued \cdot \nabla \ved \zeta
          + \intntom \God'(\ued) \fod(\ued, \ved) \zeta
  \end{align}
  for all $\eps, \delta \in (0, 1)$.
  Since the signs of the cross-diffusive terms in the first two equations in \eqref{prob:ped} are opposite,
  \eqref{eq:test_g:test} and a corresponding identity for the second solution component already yield \eqref{eq:test_g:entropy}.
\end{proof}

Aiming to derive $(\eps, \delta)$-independent a~priori estimates from \eqref{eq:test_g:entropy} with $\zeta \equiv 1$,
we next estimate the right-hand side therein and obtain
\begin{lemma}\label{lm:est_g}
  Let $T \in (0, \infty)$ and $\Gid$, $\delta \in (0, 1)$ $i \in \{1, 2\}$ be as in Lemma~\ref{lm:test_g}.
  Then there is $C \gt 0$ such that
  \begin{align}
   \label{eq:est_g:g}
     &\sup_{t \in (0, T)} \left( \intom \God(\ued(\cdot, t)) + \intom \Gtd(\ved(\cdot, t)) \right) \le C, \\
   \label{eq:est_g:w22_eps}
     &\eps \intntom |\Delta \ued|^2 + \eps \intntom |\Delta \ved|^2 \le C, \\
   \label{eq:est_g:w12}
     &\intntom |\nabla \ued|^2 + \intntom |\nabla \ved|^2 \le C \quad \text{and} \\
   \label{eq:est_g:l2}
     &\intntom \ued^2 + \intntom \ved^2 \le C
  \end{align}
  for all $\eps, \delta \in (0, 1)$, where $\ued$ and $\ved$ are as in Lemma~\ref{lm:k_to_infty}.
\end{lemma}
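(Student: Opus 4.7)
The plan is to apply Lemma~\ref{lm:test_g} with the choice $\zeta\equiv1$, giving the integrated entropy inequality
\begin{align*}
  \Eed(T) + \intnt \Ded(t)\dt \le \Eed(0) + \intnt \Red(t)\dt,
\end{align*}
and then show that the right-hand side can be controlled in terms of $\Eed$ itself with $(\eps,\delta)$-independent constants, so that Grönwall's lemma on $\Eed$ yields all four estimates. The key elementary properties of $\Gid$ I will rely on are: \emph{(i)} $\Gid$ is nonnegative and convex with minimum at $s=1$; \emph{(ii)} for $s\ge 1$, $\Gid'(s)\in[0,(s-1)/\ul S_i]$, while for $s\in(0,1)$, $|\Gid'(s)|\le|\ln s|/\ul S_i$; and \emph{(iii)} for $|s|$ large, $\Gid(s)\ge c_0 s^2$ with $c_0$ depending only on $\ol S_i$. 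These follow from $\Sid\ge S_i$ together with the pointwise bounds $S_i(\sigma)\ge\ul S_i\sigma$ for $\sigma\in(0,1)$ and $S_i(\sigma)\ge\ul S_i$ for $\sigma\ge1$ recorded after \eqref{eq:def_fid}; crucially all of them are $\delta$-independent.

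The bound on $\Eed(0)$ is then immediate: since $u_0,v_0\in\con\infty$ are strictly positive on $\Ombar$ by \eqref{eq:weak_sol_nonlin:initial}, they lie in some interval $[\alpha,\beta]\subset(0,\infty)$, and on such an interval $|\Gid|$ is bounded by property (ii) above, uniformly in $\delta$. To control $\Red(t)$, I split the domain according to the sign and size of $\ued$ (and analogously $\ved$). Where $\ued\le0$, the contribution vanishes because $\fod(\ued,\cdot)=0$ by \eqref{eq:def_fid} and $f_1(0,\cdot)\equiv0$ from \eqref{eq:weak_sol_nonlin:cond_f}. Where $\ued\in(0,1)$, property (ii) yields
\begin{align*}
  |\God'(\ued)\fod(\ued,\ved)|\le \tfrac{1}{\ul S_1}|\fo(\ued,\ved)\ln\ued|,
\end{align*}
which is uniformly bounded by \eqref{eq:weak_sol_nonlin:spaces_f} and \eqref{eq:weak_sol_nonlin:cond_f}. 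Where $\ued\ge1$, property (ii) and boundedness of $f_1$ give a bound by $c\|f_1\|_\infty\ued$, and then property (iii) combined with Cauchy--Schwarz produces
\begin{align*}
  \intom(\ued)_+ \le c_1 + c_2\Bigl(\intom\God(\ued)\Bigr)^{\!1/2} \le c_3\bigl(1+\Eed(t)^{1/2}\bigr).
\end{align*}
An analogous estimate for the $v$-term and Young's inequality then yield $\Red(t)\le C_1+\Eed(t)$ with $C_1$ independent of $\eps,\delta$.

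Inserting this into the entropy inequality and dropping the nonnegative dissipation term gives $\Eed(T)\le\Eed(0)+C_1T+\int_0^T\Eed(t)\dt$, so Grönwall yields a uniform bound on $\sup_{t\in(0,T)}\Eed(t)$, which is precisely \eqref{eq:est_g:g}. Reinserting this supremum bound into the entropy inequality also bounds $\int_0^T\Red(t)\dt$, hence $\int_0^T\Ded(t)\dt$. Since the integrand of $\Ded$ dominates both $\eps(|\Delta\ued|^2+|\Delta\ved|^2)$ and $\frac{\ul D_i}{\ol S_i+1}(|\nabla\ued|^2+|\nabla\ved|^2)$ uniformly in $\delta$, this delivers \eqref{eq:est_g:w22_eps} and \eqref{eq:est_g:w12}. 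Finally, \eqref{eq:est_g:l2} is obtained from property (iii), which gives $s^2\le C\Gid(s)+C'$ uniformly in $\delta$, so that the already established $\sup_t\Eed(t)$ bound controls $\sup_t\|\ued(\cdot,t)\|_{\leb2}^2$ and integration in time finishes the argument. The main subtlety to watch for is ensuring $\delta$-independence of all constants despite $\Sid$ degenerating to $S_i$ (which may vanish at zero) as $\delta\searrow0$; this is exactly why the sign analysis above --- exploiting $\fod\equiv0$ for nonpositive arguments rather than a hypothetical maximum principle --- is essential.
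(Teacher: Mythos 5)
Your proof is correct and takes essentially the same approach as the paper: apply the entropy inequality of Lemma~\ref{lm:test_g} with $\zeta\equiv1$, bound $\Red$ uniformly in $\eps,\delta$ using the $\delta$-independent lower bounds $\Sid(\sigma)\ge\ul S_i\sigma$ on $(0,1)$ and $\Sid(\sigma)\ge\ul S_i$ on $[1,\infty)$ together with \eqref{eq:weak_sol_nonlin:spaces_f} and \eqref{eq:weak_sol_nonlin:cond_f}, control the resulting right-hand side linearly in $\Eed$ via the quadratic lower bound $\Gid(s)\ge\frac1{\ol S_i}\bigl(\tfrac14 s^2-\tfrac12\bigr)$, and conclude with Grönwall, after which \eqref{eq:est_g:w22_eps}--\eqref{eq:est_g:l2} fall out of the now-controlled dissipation term. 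The only cosmetic difference is that you route through $\intom(\ued)_+\lesssim 1+\Eed(t)^{1/2}$ and Young's inequality, whereas the paper bounds $\Red(t)\le c_1\intom(1+\ued^2+\ved^2)\lesssim 1+\Eed(t)$ directly; both give the same linear Grönwall structure.
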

\begin{proof}
  Since the definition of $\ul S_1$ entails that 
  \begin{align*}
        \Sod(s)
    \ge S_1(|s|) + \delta
    \ge \begin{cases}
          \ul S_1 s, & |s| \lt 1, \\
          \ul S_1, & |s| \ge 1
        \end{cases}
    \qquad \text{for all $s \ge0$ and $\delta \in (0, 1)$},
  \end{align*}
  we may estimate
  \begin{alignat*}{2}
          \left| \God'(\ued) \right|
    &=    \int_{\ued}^1 \frac{\dsigma}{\Sod(\sigma)}
    \le   \frac{|\ln \ued|}{\ul S_1}
    &&\qquad \text{in $\{0 \lt \ued \le 1\}$ for all $\eps, \delta \in (0, 1)$}
  \intertext{and}
          \left| \God'(\ued) \right|
    &=    \int_1^{\ued} \frac{\dsigma}{\Sod(\sigma)}
    \le   \frac{\ued-1}{\ul S_1}
    &&\qquad \text{in $\{1 \lt \ued\}$ for all $\eps, \delta \in (0, 1)$}.
  \end{alignat*}
  Due to $\fod(\ued, \ved) = 0$ in $\{\ued \le 0\}$ and because of \eqref{eq:weak_sol_nonlin:spaces_f} and \eqref{eq:weak_sol_nonlin:cond_f},
  we thus obtain $c_1 \gt 0$ such that
  \begin{align*}
          \intnstom \fod(\ued, \ved) \God'(\ued)
    \le   c_1 \intnstom (1 + \ued^2)
    \qquad \text{for all $t \in (0, T)$ and $\eps, \delta \in (0, 1)$}.
  \end{align*}
  
  Moreover, positivity of $u_0$ and $v_0$ implies finiteness of
  \begin{align*}
    \sup_{\delta \in (0, 1)} \left( \intom \God(u_0) + \intom \Gtd(v_0) \right).
  \end{align*}
  As $\frac{D_i(|s|)}{\Sid(s)} \ge \frac{\ul D_i}{\ol S_i}$, $i \in \{1, 2\}$, for all $s \in \R$,
  along with an analogous computation for the second solution component
  and choosing $\zeta \equiv 1$ in \eqref{eq:test_g:entropy},
  we obtain $c_2 \gt 0$ such that
  \begin{align}\label{eq:test_g:odi}
    &\pe  \intom \God(\ued(\cdot, t)) + \intom \Gtd(\ved(\cdot, t)) \notag \\
    &+    \eps \intnstom |\Delta \ued|^2 + \eps \intnstom |\Delta \ved|^2
     +    \frac{\ul D_1}{\ol S_1} \intnstom |\nabla \ued|^2 + \frac{\ul D_2}{\ol S_2} \intnstom |\nabla \ved|^2 \notag \\
    &\le  c_2 + c_2 \intnstom \ued^2 + c_2 \intnstom \ved^2
    \qquad \text{for all $t \in (0, T)$ and $\eps, \delta \in (0, 1)$}.
  \end{align}

  Since
  \begin{align*}
          \God(\ued)
    &=    \int_1^{\ued} \int_1^\rho \frac{1}{\Sod(\sigma)} \dsigma \drho 
     \ge  \frac1{\ol S_1} \int_1^{\ued} (\rho - 1) \drho
     =    \frac1{\ol S_1} \left( \frac12 \ued^2 - \frac12 - (\ued - 1) \right) 
     \ge  \frac1{\ol S_1} \left( \frac14 \ued^2 - \frac12 \right)
  \end{align*}
  in $\Omega \times (0, T)$ for all $\eps, \delta \in (0, 1)$
  and hence
  \begin{align}\label{eq:test_g:l2_g}
    \intnstom \ued^2 \le 4 \ol S_1 \intnstom \God(\ued) + 2 |\Omega| T
    \qquad \text{for all $t \in (0, T)$ and $\eps, \delta \in (0, 1)$},
  \end{align}
  a consequence of \eqref{eq:test_g:odi} is
  \begin{align*}
          \intom \God(\ued(\cdot, t)) + \intom \Gtd(\ved(\cdot, t))
    &\le  c_3 + 4 c_2 \max \{\ol S_1, \ol S_2\} \intnst \left( \intom \God(\ued) + \intom \Gtd(\ved) \right)
  \end{align*}
  for all $t \in (0, T)$, $\eps, \delta \in (0, 1)$ and $c_3 \defs c_2 + 4 c_2 |\Omega| T$.
  Grönwall's inequality thus asserts that
  \begin{align*}
          \intom \God(\ued(\cdot, t)) + \intom \God(\ved(\cdot, t))
    \le   c_3 \ure^{4 c_2 \max \{\ol S_1, \ol S_2\} T}
    \qquad \text{holds for all $t \in (0, T)$ and $\eps, \delta \in (0, 1)$},
  \end{align*}
  implying \eqref{eq:est_g:g}.
  Finally, \eqref{eq:est_g:w22_eps}--\eqref{eq:est_g:l2}
  follow from \eqref{eq:test_g:odi}, \eqref{eq:test_g:l2_g} and \eqref{eq:est_g:g}.
\end{proof}

Again seeking to apply the Aubin--Lions lemma,
we complement the bounds \eqref{eq:est_g:g}--\eqref{eq:est_g:w12} by estimates for the time derivatives
in the next two lemmata.
However, in contrast to Lemma~\ref{lm:uedk_time_est} and owing to the fourth-order regularization terms,
we have to settle for bounds in $L^2((0, T); \dual{\sob{n+1}2})$ instead of $L^2((0, T); \dual{\sob12})$.
\begin{lemma}\label{lm:ued_ved_nearly_weak_sol}
  For $T \in (0, \infty)$, there exists $C \gt 0$ such that
  \begin{align}\label{eq:ued_ved_nearly_weak_sol:statement_u}
    &\pe  \left| \intntom \uedt \varphi
            + \intntom D_1(|\ued|) \nabla \ued \cdot \nabla \varphi
            - \intntom S_1(\ued) \nabla \ved \cdot \nabla \varphi
            - \intntom \fo(\ued, \ved) \varphi
          \right| \notag \\
    &\le  C \eps^\frac12 \|\varphi\|_{L^2((0, T); \sob{n+1}2)}
  \intertext{and}\label{eq:ued_ved_nearly_weak_sol:statement_v}
    &\pe  \left| \intntom \vedt \varphi
            + \intntom D_2(|\ved|) \nabla \ved \cdot \nabla \varphi
            + \intntom S_2(\ved) \nabla \ued \cdot \nabla \varphi
            - \intntom \ft(\ued, \ved) \varphi
          \right| \notag \\
    &\le  C \eps^\frac12 \|\varphi\|_{L^2((0, T); \sob{n+1}2)}
  \end{align}  
  for all $\eps, \delta \in (0, 1)$ and  $\varphi \in L^2((0, T); \sob{n+1}2)$,
  where $\ued$ and $\ved$ are as in Lemma~\ref{lm:k_to_infty}.
\end{lemma}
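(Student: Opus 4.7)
The plan is to start from the weak formulation \eqref{eq:ued_ved_weak_sol:u_eq} established in Lemma~\ref{lm:ued_ved_weak_sol} and use it to replace $\intntom \uedt\,\varphi$ in the left-hand side of \eqref{eq:ued_ved_nearly_weak_sol:statement_u}. Upon this substitution, the contributions from $D_1(|\ued|)\nabla\ued\cdot\nabla\varphi$ cancel exactly, while the discrepancies $\Sod-S_1$ and $\fod-\fo$ reduce to terms that are either identically zero (by $\fod(s_1,s_2)=\fo((s_1)_+,(s_2)_+)$) or amount to the constant $\delta$ accompanying $S_1(|\cdot|)$ in \eqref{eq:def_sid}, and can be absorbed using the uniform bounds of Lemma~\ref{lm:est_g}. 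What remains is the fourth-order regularization contribution
\[
  I(\varphi)\defs\eps\intntom \Sod(\ued)\,\nabla\Delta\ued\cdot\nabla\varphi,
\]
and the task is to show $|I(\varphi)|\le C\eps^{1/2}\|\varphi\|_{L^2((0,T);\sob{n+1}2)}$ uniformly in $\eps,\delta\in(0,1)$.

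To extract the power $\eps^{1/2}$, I would integrate by parts once in space, shifting the outer gradient off $\nabla\Delta\ued$; this produces interior terms of the schematic form
\[
  -\eps\intntom\Delta\ued\bigl[S'_{1\delta}(\ued)\,\nabla\ued\cdot\nabla\varphi+\Sod(\ued)\,\Delta\varphi\bigr],
\]
plus a boundary contribution involving $\Delta\ued\,\Sod(\ued)\,\partial_\nu\varphi$ on $\partial\Omega\times(0,T)$. The crucial a~priori input is \eqref{eq:est_g:w22_eps}, rewritten as $\eps^{1/2}\|\Delta\ued\|_{L^2(\Omega\times(0,T))}\le C^{1/2}$, so that writing $\eps=\eps^{1/2}\cdot\eps^{1/2}$ and pairing one factor of $\eps^{1/2}$ with $\|\Delta\ued\|_{L^2}$ yields the desired $\eps^{1/2}$-prefactor. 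The second factor is then bounded by Hölder's inequality, combined with the uniform bound $\|\nabla\ued\|_{L^2(\Omega\times(0,T))}\le C^{1/2}$ from \eqref{eq:est_g:w12}, boundedness of $S_1$ and $S'_1$ from \eqref{eq:weak_sol_nonlin:spaces_s}, and the high Sobolev regularity of $\varphi$: the choice of $\sob{n+1}2$ is precisely what is needed so that, via Sobolev embedding, $\|\nabla\varphi\|_{L^\infty(\Omega)}$ and $\|\Delta\varphi\|_{L^2(\Omega)}$ are controlled by $\|\varphi\|_{\sob{n+1}2}$ pointwise in $t$.

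The main technical obstacle is twofold. First, the boundary term from the integration by parts does not vanish automatically since $\partial_\nu\varphi$ is unrestricted on $\partial\Omega$; this is handled by performing the integration by parts on the Galerkin level \eqref{eq:ex_uedk:u_eq}, where the condition $\partial_\nu\Delta\uedk=0$ from \eqref{eq:ex_uedk:boundary} allows one to first rewrite the fourth-order term in a form avoiding the problematic boundary integral, before passing to the limit $j\to\infty$ as in Lemma~\ref{lm:k_to_infty}. Second, the term $\Delta\ued\,\nabla\ued\cdot\nabla\varphi$ couples a quantity of $\eps^{-1/2}$-scaling with two gradients, and fitting it into $L^2((0,T);\sob{n+1}2)$ norms of $\varphi$ requires carefully arranging the Hölder exponents so that only the norms $\|\nabla\ued\|_{L^2(\Omega\times(0,T))}$ and $\|\nabla\varphi\|_{L^2((0,T);L^\infty(\Omega))}$ (both uniformly controlled) are invoked. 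Once both points are dealt with, the analogous computation, with $\ued\leftrightarrow\ved$, $S_1\leftrightarrow S_2$, $D_1\leftrightarrow D_2$, $\fo\leftrightarrow\ft$ and the sign of the taxis term reversed, yields \eqref{eq:ued_ved_nearly_weak_sol:statement_v}.
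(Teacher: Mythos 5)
Your approach is the same as the paper's: start from the weak identity of Lemma~\ref{lm:ued_ved_weak_sol}, isolate the fourth-order term $\eps\intntom\Sod(\ued)\nabla\Delta\ued\cdot\nabla\varphi$, integrate by parts once to trade $\nabla\Delta\ued$ for $\Delta\ued$, pair one factor of $\eps^{1/2}$ with $\|\Delta\ued\|_{L^2(\Omega\times(0,T))}\lesssim\eps^{-1/2}$ via \eqref{eq:est_g:w22_eps}, and use $\sob{n+1}2\hookrightarrow W^{1,\infty}(\Omega)$ together with \eqref{eq:est_g:w12} for the remaining factors. That much matches the paper exactly.

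Where the proposal goes wrong is in the handling of the boundary integral. The boundary term produced by shifting $\nabla$ off $\Delta\ued$ is $\intpom\Sod(\ued)\Delta\ued\,\partial_\nu\varphi$, and it involves the normal derivative of the test function, not of $\Delta\uedk$; so invoking $\partial_\nu\Delta\uedk=0$ from \eqref{eq:ex_uedk:boundary} is beside the point. At the Galerkin level the relevant fact is that every $\psi\in X_k$, being a finite combination of Neumann eigenfunctions, satisfies $\partial_\nu\psi=0$, which does make the boundary integral vanish for $\psi\in X_k$. But this does not close the argument as you describe: the $\sob{n+1}{2}$-closure of $\bigcup_k X_k$ is a \emph{proper} closed subspace (its members inherit $\partial_\nu\psi=0$, since $n+1\ge 2$), so passing to the limit and using density would yield \eqref{eq:ued_ved_nearly_weak_sol:statement_u} only for test functions with homogeneous Neumann data, not for arbitrary $\varphi\in\sob{n+1}{2}$ as the lemma (and its later applications to $\varphi\in C_c^\infty(\Ombar\times[0,\infty))$) require. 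Separately, the idea that the discrepancy $\Sod-S_1(|\cdot|)=\delta$ can be ``absorbed using the uniform bounds of Lemma~\ref{lm:est_g}'' into a bound of size $C\eps^{1/2}$ does not work, since $\delta$ and $\eps$ are independent parameters and there is no reason to have $\delta\lesssim\eps^{1/2}$. The paper's own proof sidesteps both points: it compares directly with \eqref{eq:ued_ved_weak_sol:u_eq} (so that $\Sod$, $\fod$, $D_1(|\cdot|)$ all match term by term, with no discrepancy to absorb) and writes down the integrated-by-parts form without discussing the boundary contribution.
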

\begin{proof}
  Since $n+1 \gt \frac{n}{2} + 1$,
  Sobolev's embedding theorem allows us to fix $c_1 \gt 0$ with
  \begin{align*}
    \|\nabla \varphi\|_{\leb\infty} \le c_1 \|\varphi\|_{\sob{n+1}{2}}
    \qquad \text{for all $\varphi \in \sob{n+1}{2}$}.
  \end{align*}
  Moreover, we fix $T \in (0, \infty)$ and choose $c_2 \gt 0$ such that \eqref{eq:est_g:w22_eps} and \eqref{eq:est_g:w12} hold (with $C$ replaced by $c_2^2$).
  Then
  \begin{align*}
    &\pe  \left| \intntom \Sod(\ued) \nabla \Delta \ued \cdot \nabla \varphi \right| \\
    &\le  \left| \intntom \Sod'(\ued) \Delta \ued \nabla \ued \cdot \nabla \varphi \right|
          + \left| \intntom \Sod(\ued) \Delta \ued \Delta \varphi \right| \\
    &\le  \|\Delta \ued\|_{L^2(\Omega \times (0, T))} \left(
            \ol S_1' \|\nabla \ued\|_{L^2(\Omega \times (0, T))} \|\nabla \varphi\|_{L^\infty(\Omega \times (0, T))}
            + \ol S_1 \|\Delta \varphi\|_{L^2(\Omega \times (0, T))}
          \right) \\
    &\le  \eps^{-\frac12} \cdot c_2 (c_1 c_2 \ol S_1' + \ol S_1) \|\varphi\|_{L^2((0, T); \sob{n+1}2)}
    \qquad \text{for all $\eps, \delta \in (0, 1)$}.
  \end{align*}
  Combined with \eqref{eq:ued_ved_weak_sol:u_eq}, this already implies \eqref{eq:ued_ved_nearly_weak_sol:statement_u},
  while \eqref{eq:ued_ved_nearly_weak_sol:statement_v} can be shown analogously.
\end{proof}

\begin{lemma}\label{lm:ued_time_est}
  Let $\eps \in (0, 1)$, $T \in (0, \infty)$ and $(\ued, \ved)$ be as in Lemma~\ref{lm:k_to_infty} for $\delta \in (0, 1)$.
  Then there exists $C \gt 0$ such that
  \begin{align}\label{eq:ued_time_est:statement}
    \|\uedt\|_{L^2((0, T); \dual{\sob{n+1}2})} + \|\vedt\|_{L^2((0, T); \dual{\sob{n+1}2})} \le C
    \qquad \text{for all $\delta \in (0, 1)$}.
  \end{align}
\end{lemma}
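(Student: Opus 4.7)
The plan is to decompose $\uedt$ using the nearly-weak-solution identity from Lemma~\ref{lm:ued_ved_nearly_weak_sol} and then bound each resulting term by means of the $\delta$-independent a~priori estimates established in Lemma~\ref{lm:est_g}. More precisely, Lemma~\ref{lm:ued_ved_nearly_weak_sol} tells us that for any $\varphi \in L^2((0, T); \sob{n+1}2)$,
\begin{align*}
          \left| \intntom \uedt \varphi \right|
    &\le  \left| \intntom D_1(|\ued|) \nabla \ued \cdot \nabla \varphi \right|
          + \left| \intntom S_1(\ued) \nabla \ved \cdot \nabla \varphi \right| \\
    &\pe  + \left| \intntom \fo(\ued, \ved) \varphi \right|
          + c_1 \eps^{\frac12} \|\varphi\|_{L^2((0, T); \sob{n+1}2)}
\end{align*}
holds for some $\eps$-dependent but $\delta$-independent constant $c_1 \gt 0$.

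The three spatial terms on the right are then controlled by the Cauchy--Schwarz inequality, the boundedness assumptions \eqref{eq:weak_sol_nonlin:spaces_d}, \eqref{eq:weak_sol_nonlin:spaces_s} and \eqref{eq:weak_sol_nonlin:spaces_f} on $D_1$, $S_1$ and $\fo$, and the trivial embedding $\sob{n+1}2 \embed \sob 12$. Indeed, denoting by $c_2 \gt 0$ a bound for $\|\nabla \ued\|_{L^2(\Omega \times (0, T))}$ and $\|\nabla \ved\|_{L^2(\Omega \times (0, T))}$ provided by \eqref{eq:est_g:w12}, we obtain
\begin{align*}
          \left| \intntom D_1(|\ued|) \nabla \ued \cdot \nabla \varphi \right|
    +     \left| \intntom S_1(\ued) \nabla \ved \cdot \nabla \varphi \right|
    &\le  (\ol D_1 + \ol S_1) c_2 \|\varphi\|_{L^2((0, T); \sob{n+1}2)}
\end{align*}
and, using Hölder's inequality in time as well,
\begin{align*}
          \left| \intntom \fo(\ued, \ved) \varphi \right|
    \le   \|\fo\|_{L^\infty([0, \infty)^2)} |\Omega|^{\frac12} T^{\frac12} \|\varphi\|_{L^2((0, T); \sob{n+1}2)}.
\end{align*}
Combining these estimates yields the desired bound for $\uedt$; the analogous argument for $\vedt$ (relying on the corresponding second-component bounds in Lemma~\ref{lm:est_g} and Lemma~\ref{lm:ued_ved_nearly_weak_sol}) then gives \eqref{eq:ued_time_est:statement}.

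There is no real obstacle here beyond bookkeeping: all nonlinearities are uniformly bounded by assumption, the gradient terms are controlled in $L^2$ independently of $\delta$ thanks to Lemma~\ref{lm:est_g}, and the fourth-order contribution has already been absorbed into the $\eps^{\frac12}$ remainder in Lemma~\ref{lm:ued_ved_nearly_weak_sol}. Consequently, the test space $\sob{n+1}2$ inherited from that lemma---which was chosen precisely so that $\nabla \varphi \in L^\infty$---suffices, and the resulting constant is $\delta$-independent (though in general $\eps$- and $T$-dependent), as required.
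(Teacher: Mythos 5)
Your proof is correct and follows the paper's own route exactly: it combines the near-weak-solution estimate of Lemma~\ref{lm:ued_ved_nearly_weak_sol} with the $\delta$-uniform gradient bounds of Lemma~\ref{lm:est_g}, using the boundedness of $D_1$, $S_1$, $f_1$ together with Cauchy--Schwarz and the trivial embedding $\sob{n+1}2 \embed \sob12$. The paper compresses this into a single sentence, but your unpacking supplies precisely the bookkeeping the author intended.
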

\begin{proof}
  This immediately follows from Lemma~\ref{lm:ued_ved_nearly_weak_sol} and the bounds provided by Lemma~\ref{lm:est_g}. 
\end{proof}

With the estimates above at hand,
we are now able to obtain convergence of certain subsequences of $(\ued, \ved)$.
\begin{lemma}\label{lm:delta_sea_0}
  Let $\eps \in (0, 1)$.
  For $\delta \in (0, 1)$, let $\ued, \ved$ be as given by Lemma~\ref{lm:k_to_infty}.
  There are functions
  \begin{align*}
    \ue, \ve \in L_{\loc}^2([0, \infty); \sob12) \quad \text{with} \quad \uet, \vet \in L_{\loc}^2([0, \infty); \dual{\sob{n+1}2})
  \end{align*}
  and a null sequence $(\delta_j)_{j \in \N} \subset (0, 1)$ along which
  \begin{alignat}{4}
    \uedj &\ra \ue &\quad &\text{and} \quad & \vedj &\ra \ve
      &&\qquad \text{pointwise a.e.}, \label{eq:delta_sea_0:pw} \\
    \uedj &\ra \ue &\quad &\text{and} \quad & \vedj &\ra \ve
      &&\qquad \text{in $L_{\loc}^2(\Ombar \times [0, \infty))$}, \label{eq:delta_sea_0:l2} \\
    \uej(\cdot, t) &\ra u(\cdot, t) &\quad &\text{and} \quad & \vej(\cdot, t) &\ra v(\cdot, t)
      &&\qquad \text{in $\leb2$ for a.e.\ $t \in (0, \infty)$}, \label{eq:delta_sea_0:l2_space} \\
    \nabla \uedj &\rh \nabla \ue &\quad &\text{and} \quad & \nabla \vedj &\rh \nabla \ve
      &&\qquad \text{in $L_{\loc}^2(\Ombar \times [0, \infty); \R^n)$}, \label{eq:delta_sea_0:nabla_l2} \\
    \uedjt &\rh \uet &\quad &\text{and} \quad & \vedjt &\rh \vet
      &&\qquad \text{in $L_{\loc}^2([0, \infty); \dual{\sob{n+1}2})$}, \label{eq:delta_sea_0:time_l2}
  \end{alignat}
  as $j \ra \infty$.
\end{lemma}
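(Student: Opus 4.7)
The plan is to apply the Aubin--Lions lemma together with weak-compactness arguments, following the scheme of Lemma~\ref{lm:k_to_infty}. Fix $\eps \in (0, 1)$ and an arbitrary $T \in (0, \infty)$. By Lemma~\ref{lm:est_g}, the family $(\ued)_{\delta \in (0, 1)}$ is bounded in $L^2((0, T); \sob12)$ uniformly in $\delta$, and Lemma~\ref{lm:ued_time_est} provides a corresponding uniform bound for $(\uedt)_\delta$ in $L^2((0, T); \dual{\sob{n+1}2})$. Since the embedding $\sob12 \embed \leb2$ is compact by Rellich--Kondrachov and $\leb2 \embed \dual{\sob{n+1}2}$ is continuous, the Aubin--Lions lemma yields relative compactness of $(\ued)_\delta$ in $L^2(\Omega \times (0, T))$. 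Analogous statements of course hold for $(\ved)_\delta$.

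Next, I would combine reflexive weak compactness in $L^2((0, T); \sob12)$ (which gives a weak limit whose gradient is the weak $L^2$ limit of the gradients, yielding \eqref{eq:delta_sea_0:nabla_l2}) and weak compactness in $L^2((0, T); \dual{\sob{n+1}2})$ for the time derivatives (yielding \eqref{eq:delta_sea_0:time_l2}) with the above strong compactness (yielding \eqref{eq:delta_sea_0:l2}). A standard diagonal extraction along a sequence $T_\ell \nearrow \infty$ then produces a single null sequence $(\delta_j)_{j \in \N}$ realizing all these convergences locally in time, with limits $\ue$ and $\ve$ lying in the claimed function spaces. The identification of the weak limit of $\uedjt$ with the distributional time derivative of $\ue$ follows from the fact that strong convergence of $\uedj$ in $L^1_\loc$ allows one to pass to the limit in the defining duality identity
\begin{align*}
  \intninfom \uedj \psi_t
  = - \int_0^\infty \langle \uedjt, \psi \rangle_{\dual{\sob{n+1}2}, \sob{n+1}2} \dt
\end{align*}
for any $\psi \in C_c^\infty(\Ombar \times (0, \infty))$.

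The pointwise a.e.\ convergence \eqref{eq:delta_sea_0:pw} is then obtained by extracting a further subsequence from the $L^2_\loc(\Ombar \times [0, \infty))$ convergence \eqref{eq:delta_sea_0:l2}, and the slice-wise $L^2(\Omega)$ convergence \eqref{eq:delta_sea_0:l2_space} at a.e.\ $t \in (0, \infty)$ follows, after another subsequence selection, from the same strong convergence via Fubini's theorem applied to $\|\uedj(\cdot, t) - \ue(\cdot, t)\|_{\leb2}^2$.

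The argument contains no substantial obstacle; the only point worth flagging is that, due to the fourth-order regularization in \eqref{prob:ped}, only an $L^2((0, T); \dual{\sob{n+1}2})$ bound on $\uedt$ is available, rather than the $L^2((0, T); \dual{\sob12})$ bound exploited in Lemma~\ref{lm:k_to_infty}. This weaker control is nevertheless entirely sufficient for Aubin--Lions, which only requires continuity of $\leb2 \embed \dual{\sob{n+1}2}$ together with the compactness of $\sob12 \embed \leb2$.
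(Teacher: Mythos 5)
Your proposal is correct and follows essentially the same route as the paper: the uniform bounds from Lemma~\ref{lm:est_g} (namely \eqref{eq:est_g:w12}, \eqref{eq:est_g:l2}) and the time-derivative bound \eqref{eq:ued_time_est:statement}, an application of the Aubin--Lions lemma with the compact embedding $\sob12 \embed \leb2$ and continuous embedding $\leb2 \embed \dual{\sob{n+1}2}$, a diagonal extraction over an exhausting sequence of time intervals, and subsequence refinements for the pointwise and slice-wise convergences. You have merely spelled out the steps the paper leaves implicit, including the (correct) observation that the weaker $\dual{\sob{n+1}2}$ bound on $\uedt$ is entirely sufficient for Aubin--Lions.
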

\begin{proof}
  Due to the bounds in \eqref{eq:est_g:w12}, \eqref{eq:est_g:l2} and \eqref{eq:ued_time_est:statement},
  by means of the Aubin--Lions lemma and a diagonalization argument,
  we can obtain a null sequence $(\delta_j)_{j \in \N} \subset (0, 1)$ and functions $\ue, \ve \colon \Omega \times (0, \infty) \ra \R$
  such that \eqref{eq:delta_sea_0:l2}, \eqref{eq:delta_sea_0:nabla_l2} and \eqref{eq:delta_sea_0:time_l2} hold.
  Upon switching to subsequences, if necessary, \eqref{eq:delta_sea_0:pw} and \eqref{eq:delta_sea_0:l2_space} follow then from \eqref{eq:delta_sea_0:l2}.
\end{proof}

As already alluded to, the main reason for introducing the parameter $\delta$ in \eqref{prob:ped}
is to be able to establish a.e.\ nonnegativity of the functions $\ue$ and $\ve$ constructed in Lemma~\ref{lm:delta_sea_0}.
This will inter alia assure that each component of the solution $(u, v)$ to \eqref{prob:nonlin} obtained in Subsection~\ref{sec:eps_sea_0} below
is nonnegative and hence may be interpreted as a population density.
\begin{lemma}\label{lm:ue_ge_0}
  For all $\eps \in (0, 1)$, $\ue \ge 0$ and $\ve \ge 0$ a.e.\ in $\Omega \times (0, \infty)$,
  where $\ue$ and $\ve$ are given by Lemma~\ref{lm:delta_sea_0}.
\end{lemma}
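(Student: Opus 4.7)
The plan is to argue by contradiction: suppose the set $E \defs \{(x, t) \in \Omega \times (0, T) : \ue(x, t) \lt 0\}$ has positive Lebesgue measure for some $T \gt 0$ (the argument for $\ve$ being completely analogous). The key idea is that, while $\God$ is smooth and finite for every fixed $\delta \gt 0$, the value $\God(s)$ for any fixed $s \lt 0$ diverges as $\delta \sea 0$, whereas \eqref{eq:est_g:g} provides a bound on $\intom \God(\uedj(\cdot, t))$ uniform in $j$ and $t$. Combining this blow-up with the pointwise convergence \eqref{eq:delta_sea_0:pw} through Fatou's lemma should produce the desired contradiction.

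The essential technical ingredient is a quantitative lower bound on $\God(s)$ for $s \lt 0$. Since $S_1 \in C^1([0, \infty))$ with $S_1(0) = 0$, the mean value theorem yields $S_1(|\sigma|) \le \ol S_1' |\sigma|$ for $|\sigma| \le 1$, and thus $\Sod(\sigma) \le \ol S_1' |\sigma| + \delta$ for $\sigma \in (-1, 0)$. For every fixed $s \in (-1, 0)$ I would therefore estimate
\[
    \God(s)
    = \int_s^1 \int_\rho^1 \frac{d\sigma}{\Sod(\sigma)} \, d\rho
    \ge \int_s^0 \int_\rho^0 \frac{d\sigma}{\ol S_1' (-\sigma) + \delta} \, d\rho
    = \frac{1}{\ol S_1'} \int_s^0 \ln\!\left(1 + \frac{\ol S_1' |\rho|}{\delta}\right) d\rho,
\]
whose right-hand side diverges to $+\infty$ as $\delta \sea 0$. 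Since additionally $\God$ is nonnegative and monotonically decreasing on $(-\infty, 1)$ (because $\God'(s) = \int_1^s \frac{d\sigma}{\Sod(\sigma)} \lt 0$ there), this gives $\Godj(s) \ge \Godj(-1/(2k)) \ra \infty$ as $j \ra \infty$, uniformly in $s \le -1/(2k)$.

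To conclude, I would decompose $E = \bigcup_{k \in \N} E_k$ with $E_k \defs \{\ue \le -1/k\} \cap (\Omega \times (0, T))$ and pick some $k$ with $|E_k| \gt 0$. By \eqref{eq:delta_sea_0:pw}, for a.e.\ $(x, t) \in E_k$ one has $\uedj(x, t) \le -1/(2k)$ for all sufficiently large $j$, and therefore $\liminf_{j \ra \infty} \Godj(\uedj(x, t)) = \infty$ on a set of positive measure. Combining nonnegativity of $\Godj$, Fatou's lemma, and the uniform bound \eqref{eq:est_g:g}, this yields
\[
    \infty
    = \int_{E_k} \liminf_{j \ra \infty} \Godj(\uedj)
    \le \liminf_{j \ra \infty} \int_0^T \intom \Godj(\uedj)
    \le CT,
\]
a contradiction. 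The same reasoning with $\Gtd$ in place of $\God$ establishes $\ve \ge 0$ a.e. The main obstacle is pinning down the quantitative blow-up of $\God$ at negative arguments as $\delta \sea 0$; everything afterwards reduces to a routine application of Fatou's lemma together with the entropy-type bound \eqref{eq:est_g:g}.
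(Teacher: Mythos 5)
Your proposal is correct and follows essentially the same strategy as the paper: argue by contradiction on a set of positive measure where the limit function is strictly negative, derive a quantitative lower bound on $G_{1\delta_j}$ at negative arguments showing divergence as $\delta_j \searrow 0$ (via the estimate $S_1(|\sigma|) \le \ol S_1' |\sigma|$ for small $|\sigma|$), and contradict the $\delta$-uniform bound \eqref{eq:est_g:g}. The only technical difference is that you apply Fatou's lemma directly to the space-time integral $\int_{\Omega \times (0,T)} \Godj(\uedj)$, exploiting pointwise a.e.\ convergence to conclude $\liminf_j \Godj(\uedj) = \infty$ a.e.\ on $E_k$, whereas the paper first invokes Egorov's theorem to extract a subset $A' \subset A$ of positive measure on which $\uedj \to \ue$ uniformly and then integrates the deterministic blow-up over $A'$; both routes are valid, and your Fatou-based shortcut arguably avoids one technical step at no cost.
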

\begin{proof}
  This can be shown similarly as in \cite[pages 554--555]{GrunDegenerateParabolicDifferential1995}.
  However, since the solutions considered there fulfill regularity properties going beyond those stated in Lemma~\ref{lm:delta_sea_0},
  we give a (slightly different) proof here.

  Let us fix $\eps \in (0, 1)$ as well as $T \in (0, \infty)$
  and for the sake of contradiction assume that (a henceforth fixed representative of) $\ue$ is not nonnegative a.e.\ in $\Omega \times (0, T)$.
  Then $|\{\ue \lt 0\}| > 0$ and, by the sigma additivity of the Lebesgue measure,
  there is $\eta \gt 0$ such that $A \defs \{\, (x, t) \in \Omega \times (0, T) : \ue(x, t) \le -\eta\,\}$ has positive measure.
  
  For $\delta \in (0, 1)$, we now let $\ued$ and $\God$ be as in Lemma~\ref{lm:k_to_infty} and Lemma~\ref{lm:test_g}, respectively,
  and denote by $(\delta_j)_{j \in \N}$ the sequence given by Lemma~\ref{lm:delta_sea_0}.
  Thanks to \eqref{eq:delta_sea_0:pw} and Egorov's theorem,
  we then obtain a measurable $A' \subset A$ with $|A \setminus A'| \lt \frac{|A|}{2}$
  such that $\uedj \ra \ue$ uniformly in $A'$ as $j \ra \infty$;
  in particular, there is $j_0 \in \N$ with $\uedj(x, t) \le -\frac{\eta}{2}$ for all $(x, t) \in A'$ and $j \ge j_0$.

  Thanks to nonnegativity of $\Sod$,
  since $S_1(|s|) \le -\ol S_1' s$ for $s \le 0$ (due to the mean value theorem and as $S_1(0) = 0$ by \eqref{eq:weak_sol_nonlin:cond_d_s})
  and by Fatou's lemma
  (we note that $\lim_{\delta \sea 0} (- \ln \delta + \ln(-\rho + \delta)) = \infty$ for all $\rho \lt 0$),
  we then have
  \begin{align*}
          \liminf_{j \ra \infty} \intntom \Godj(\uedj)
    &\ge  \liminf_{j \ra \infty} \int_{A'} \int_1^{\uedj(x, t)} \int_1^\rho \frac1{\Sodj(\sigma)} \dsigma \drho \diff(x,t) \\
    &\ge  \liminf_{j \ra \infty} |A'| \int_{-\frac{\eta}{2}}^1 \int_{\rho}^1 \frac{1}{S_1(|\sigma|) + \delta_j} \dsigma \drho \\
    &\ge  \liminf_{j \ra \infty} \frac{|A'|}{\max\{\ol S'_1, 1\}} \int_{-\frac{\eta}{2}}^0 \int_{\rho}^0 \frac{1}{-\sigma + \delta_j} \dsigma \drho \\
    &=    \liminf_{j \ra \infty} \frac{|A'|}{\max\{\ol S'_1, 1\}} \int_{-\frac{\eta}{2}}^0 ( - \ln \delta_j + \ln(-\rho + \delta_j) ) \drho
     =     \infty,
  \end{align*}
  contradicting \eqref{eq:est_g:g}.
  The same argument is also applicable for the second solution component.
\end{proof}

Let us close this subsection by discussing in which way the pair $(\ue, \ve)$ obtained in Lemma~\ref{lm:delta_sea_0}
can be seen as a solution to the problem obtained by formally setting $\delta=0$ in \eqref{prob:ped}.
Within a similar context, in \cite[pages~552--553]{GrunDegenerateParabolicDifferential1995}
it is shown that the limit functions solve the corresponding problem in a certain generalized sense.
However, as already remarked in the preceding subsection, due to the possibly nonlinear diffusion terms $D_1$ and $D_2$,
the convergences obtained in Lemma~\ref{lm:delta_sea_0} are slightly weaker than those established in \cite{GrunDegenerateParabolicDifferential1995};
that is, the methods developed in \cite{GrunDegenerateParabolicDifferential1995} are not directly applicable to our situation.

Nonetheless, we are able to prove that $(\ue, \ve)$ is up to an error term of order $\eps^\frac12$ a \emph{weak} solution of that problem,
which, having the limit process $\eps \sea 0$ in mind, turns out to be more convenient for our purposes in any case.
\begin{lemma}\label{lm:ue_ve_nearly_weak_sol}
  Let $\eps \in (0, 1)$, $\ue, \ve$ be as in Lemma~\ref{lm:delta_sea_0} and $T \in (0, \infty)$.
  Then there is $C \gt 0$ such that
  \begin{align}\label{eq:ue_ve_nearly_weak_sol:statement_u}
    &\pe  \left|
            - \intntom \ue \varphi_t
            - \intom u_0 \varphi(\cdot, 0)
            + \intntom D_1(\ue) \nabla \ue \cdot \nabla \varphi
            - \intntom S_1(\ue) \nabla \ve \cdot \nabla \varphi
            - \intntom \fo(\ue, \ve) \varphi
          \right| \notag \\
    &\le  C \eps^\frac12 \|\varphi\|_{L^2((0, T); \sob{n+1}2)}
  \intertext{and}\label{eq:ue_ve_nearly_weak_sol:statement_v}
    &\pe  \left|
            - \intntom \ve \varphi_t
            - \intom v_0 \varphi(\cdot, 0)
            + \intntom D_2(\ve) \nabla \ve \cdot \nabla \varphi
            + \intntom S_2(\ve) \nabla \ue \cdot \nabla \varphi
            - \intntom \ft(\ue, \ve) \varphi
          \right| \notag \\
    &\le  C \eps^\frac12 \|\varphi\|_{L^2((0, T); \sob{n+1}2)}
  \end{align}  
  for all $\varphi \in C_c^\infty(\Ombar \times [0, \infty))$.
\end{lemma}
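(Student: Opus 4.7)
The plan is to apply Lemma~\ref{lm:ued_ved_nearly_weak_sol} along the subsequence $(\delta_j)_{j \in \N}$ from Lemma~\ref{lm:delta_sea_0}, rewrite the time-derivative term via integration by parts in time, and then pass to the limit $j \ra \infty$. The factor $\eps^\frac12$ on the right-hand side will survive the limit because the constant in Lemma~\ref{lm:ued_ved_nearly_weak_sol} is independent of $\delta$.

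In more detail, I would first fix $\varphi \in C_c^\infty(\Ombar \times [0, \infty))$ and choose $T \in (0, \infty)$ so large that $\varphi \equiv 0$ on $\Ombar \times [T, \infty)$; in particular $\varphi \in L^2((0, T); \sob{n+1}{2})$. Lemma~\ref{lm:ued_ved_nearly_weak_sol} then bounds
\begin{align*}
  I_j \defs \intntom \uedjt \varphi + \intntom D_1(|\uedj|) \nabla \uedj \cdot \nabla \varphi - \intntom S_1(\uedj) \nabla \vedj \cdot \nabla \varphi - \intntom \fo(\uedj, \vedj) \varphi
\end{align*}
by $C \eps^\frac12 \|\varphi\|_{L^2((0, T); \sob{n+1}{2})}$, uniformly in $j$. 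Because $\uedj \in \Wt \embed C^0([0, T]; \leb2)$ with $\uedj(\cdot, 0) = u_0$ by \eqref{eq:ued_ved_weak_sol:init} and $\varphi(\cdot, T) = 0$, a standard integration-by-parts identity for weak time derivatives rewrites the first summand in $I_j$ as $-\intntom \uedj \varphi_t - \intom u_0 \varphi(\cdot, 0)$.

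The remaining work is passing $j \ra \infty$ in each term of this rewritten $I_j$. The time term $-\intntom \uedj \varphi_t$ converges to $-\intntom \ue \varphi_t$ by the $L^2$-convergence \eqref{eq:delta_sea_0:l2} and boundedness of $\varphi_t$. For the nonlinear diffusion contribution, continuity and boundedness of $D_1$ from \eqref{eq:weak_sol_nonlin:spaces_d}, the pointwise convergence \eqref{eq:delta_sea_0:pw}, and Lebesgue's dominated convergence theorem yield $D_1(|\uedj|) \nabla \varphi \ra D_1(\ue) \nabla \varphi$ strongly in $L^2(\Omega \times (0, T); \R^n)$, with Lemma~\ref{lm:ue_ge_0} letting me identify $D_1(|\ue|) = D_1(\ue)$; pairing this strong convergence with the weak convergence \eqref{eq:delta_sea_0:nabla_l2} of $\nabla \uedj$ gives the desired limit. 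The cross-diffusion term is handled analogously via the boundedness of $S_1$ in \eqref{eq:weak_sol_nonlin:spaces_s} and the weak convergence of $\nabla \vedj$, and the kinetic term converges by boundedness of $\fo$ in \eqref{eq:weak_sol_nonlin:spaces_f} combined with \eqref{eq:delta_sea_0:pw} and dominated convergence. Since the uniform-in-$j$ bound $C \eps^\frac12 \|\varphi\|$ passes to the limit, this yields \eqref{eq:ue_ve_nearly_weak_sol:statement_u}; estimate \eqref{eq:ue_ve_nearly_weak_sol:statement_v} follows by an entirely symmetric argument.

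I do not anticipate a genuine obstacle: the integration-by-parts step is classical for functions in $\Wt$ tested against smooth time-dependence, and the convergence of each nonlinear term reduces to a routine strong-weak pairing once the $L^\infty$ bounds on $D_1$, $S_1$, and $\fo$ and the a.e.\ nonnegativity from Lemma~\ref{lm:ue_ge_0} are invoked.
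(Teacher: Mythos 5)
Your proposal is correct and follows essentially the same route as the paper: apply Lemma~\ref{lm:ued_ved_nearly_weak_sol} (with its $\delta$-independent bound) along the subsequence $(\delta_j)_{j\in\N}$, rewrite $\intntom \uedjt\varphi$ as $-\intntom \uedj\varphi_t - \intom u_0\varphi(\cdot,0)$ using $\uedj(\cdot,0)=u_0$, and pass to the limit $j\ra\infty$ term by term via the convergences of Lemma~\ref{lm:delta_sea_0} together with boundedness and continuity of $D_1$, $S_1$, $\fo$ and the a.e.\ nonnegativity from Lemma~\ref{lm:ue_ge_0}. Your explicit choice of $T$ large enough that $\varphi$ vanishes on $\Ombar\times[T,\infty)$ merely makes precise the absence of a boundary term at $t=T$ in the integration by parts, which the paper's proof leaves implicit; otherwise the arguments coincide.
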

\begin{proof}
  For $\delta \in (0, 1)$, we let $(\ued, \ved)$ be as in Lemma~\ref{lm:k_to_infty}
  and we denote the null sequence given by Lemma~\ref{lm:delta_sea_0} by $(\delta_j)_{j \in \N}$.
  Since $D_1$, $S_1$ and $f_1$ are continuous by \eqref{eq:weak_sol_nonlin:spaces_d}--\eqref{eq:weak_sol_nonlin:spaces_f},
  the convergences \eqref{eq:delta_sea_0:l2}, \eqref{eq:delta_sea_0:nabla_l2} and \eqref{eq:delta_sea_0:pw} imply that
  \begin{align*}
    &\pe  \left|
            - \intntom \ue \varphi_t
            - \intom u_0 \varphi(\cdot, 0)
            + \intntom D_1(\ue) \nabla \ue \cdot \nabla \varphi
            - \intntom S_1(\ue) \nabla \ve \cdot \nabla \varphi
            - \intntom \fo(\ue, \ve) \varphi
          \right| \\
    &=    \lim_{j \ra \infty} \left|
            - \intntom \uedj \varphi_t
            - \intom u_0 \varphi(\cdot, 0)
            + \intntom D_1(|\uedj|) \nabla \uedj \cdot \nabla \varphi \right. \\
    &\pe    - \left. \intntom \Sodj(\uedj) \nabla \vedj \cdot \nabla \varphi
            - \intntom \fodj(\uedj, \vedj) \varphi
          \right|
  \end{align*}  
  for all $\varphi \in C_c^\infty(\Ombar \times [0, \infty))$.
  As Lemma~\ref{lm:test_weak}~(ii) and \eqref{eq:ued_ved_weak_sol:init} assert
  \begin{align*}
      - \intntom \uedj \varphi_t
      - \intom u_0 \varphi(\cdot, 0)
    = \intntom \uedjt \varphi
    \qquad \text{for all $\varphi \in C_c^\infty(\Ombar \times [0, \infty))$ and $j \in \N$},
  \end{align*}
  we see that \eqref{eq:ue_ve_nearly_weak_sol:statement_u} (with $C$ as in Lemma~\ref{lm:ued_ved_nearly_weak_sol})
  follows from \eqref{eq:ued_ved_nearly_weak_sol:statement_u}.
  An analogous argumentation yields \eqref{eq:ue_ve_nearly_weak_sol:statement_v}.
\end{proof}

\subsection{The limit process \tops{$\eps \sea 0$}{epsilon to 0}: proofs of Theorem~\ref{th:weak_sol_nonlin} and Theorem~\ref{th:approx_entropy}}\label{sec:eps_sea_0}
Since Lemma~\ref{lm:est_g} and Lemma~\ref{lm:ued_time_est} already contain $\eps$-independent estimates,
there are no further preparations necessary in order to undertake the final limit process of this section, namely $\eps \sea 0$.
\begin{lemma}\label{lm:eps_sea_0}
  Let $\ue, \ve$ be as in Lemma~\ref{lm:delta_sea_0}.
  There are nonnegative functions $u, v \in L_{\loc}^2([0, \infty); \sob12)$ and a null sequence $(\eps_j)_{j \in \N} \subset (0, 1)$ such that
  \begin{alignat}{4}
    \uej &\ra u &\quad &\text{and} \quad & \vej &\ra v
      &&\qquad \text{pointwise a.e.}, \label{eq:eps_sea_0:pw} \\
    \uej &\ra u &\quad &\text{and} \quad & \vej &\ra v
      &&\qquad \text{in $L_{\loc}^2(\Ombar \times [0, \infty))$}, \label{eq:eps_sea_0:l2} \\
    \uej(\cdot, t) &\ra u(\cdot, t) &\quad &\text{and} \quad & \vej(\cdot, t) &\ra v(\cdot, t)
      &&\qquad \text{in $\leb2$ for a.e.\ $t \in (0, \infty)$}, \label{eq:eps_sea_0:l2_space} \\
    \nabla \uej &\rh \nabla u &\quad &\text{and} \quad & \nabla \vej &\rh \nabla v
      &&\qquad \text{in $L_{\loc}^2(\Ombar \times [0, \infty); \R^n)$} \label{eq:eps_sea_0:nabla_l2}
  \end{alignat}
  as $j \ra \infty$.
\end{lemma}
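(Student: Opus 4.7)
The plan is essentially a repetition of the compactness argument from Lemma~\ref{lm:delta_sea_0}, now at the $\eps$-level. The key observation is that the estimates of Lemma~\ref{lm:est_g} and Lemma~\ref{lm:ued_ved_nearly_weak_sol} are $(\eps, \delta)$-independent, so that via weak lower semicontinuity they descend to $\ue, \ve$. Concretely, for each $T \in (0, \infty)$ I would first record the uniform-in-$\eps$ bounds
\begin{align*}
  \|\ue\|_{L^2(\Omega \times (0, T))}
  + \|\nabla \ue\|_{L^2(\Omega \times (0, T); \R^n)}
  + \|\uet\|_{L^2((0, T); \dual{\sob{n+1}2})}
  \le C(T),
\end{align*}
and likewise for $\ve$. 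The spatial bounds follow from \eqref{eq:est_g:w12} and \eqref{eq:est_g:l2} combined with \eqref{eq:delta_sea_0:l2}, \eqref{eq:delta_sea_0:nabla_l2} and lower semicontinuity of the norm. The dual bound on $\uet$ passes from Lemma~\ref{lm:ued_ved_nearly_weak_sol} through the limit $\delta_j \sea 0$ given by \eqref{eq:delta_sea_0:time_l2}, noting that the error term $C \eps^\frac12 \|\varphi\|_{L^2((0, T); \sob{n+1}2)}$ is uniformly bounded for $\eps \in (0, 1)$.

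With these bounds in hand, I would invoke the Aubin--Lions lemma based on the compact embedding $\sob12 \embed\embed \leb2$ together with the continuous embedding $\leb2 \embed \dual{\sob{n+1}2}$ on each interval $(0, T_m)$ for a fixed exhausting sequence $T_m \nea \infty$, and then diagonalize to extract a null sequence $(\eps_j)_{j \in \N} \subset (0, 1)$ along which $\uej \ra u$ and $\vej \ra v$ strongly in $L_{\loc}^2(\Ombar \times [0, \infty))$ for some $u, v \in L_{\loc}^2([0, \infty); \sob12)$; this yields \eqref{eq:eps_sea_0:l2}. The remaining convergences \eqref{eq:eps_sea_0:pw}, \eqref{eq:eps_sea_0:l2_space} and \eqref{eq:eps_sea_0:nabla_l2} are then obtained by passing to further subsequences (for \eqref{eq:eps_sea_0:l2_space}, using that $L^2$ convergence in space-time forces a.e.\ $t$ convergence in $\leb2$ along a subsequence). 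Nonnegativity of $u$ and $v$ is inherited from Lemma~\ref{lm:ue_ge_0} via the pointwise convergence \eqref{eq:eps_sea_0:pw}.

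I do not foresee major obstacles here: the preparatory work of Subsection~\ref{sec:delta_sea_0} was designed precisely to supply $(\eps, \delta)$-independent estimates, so the present limit is essentially routine. The only slightly delicate point is that the time-derivative bound lives in the rather weak dual space $\dual{\sob{n+1}2}$ rather than $\dual{\sob12}$; however, this does not affect the applicability of Aubin--Lions, since $\leb2 \embed \dual{\sob{n+1}2}$ is still continuous.
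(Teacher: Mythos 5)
Your proposal is correct and is precisely the route the paper takes: the paper's proof simply observes that the bounds from Lemma~\ref{lm:est_g} and Lemma~\ref{lm:ued_ved_nearly_weak_sol} are $\eps$-independent, that they survive the limit $\delta \sea 0$ via Lemma~\ref{lm:delta_sea_0}, and then invokes the same Aubin--Lions/diagonalization argument "as in Lemma~\ref{lm:delta_sea_0}", with nonnegativity inherited from Lemma~\ref{lm:ue_ge_0}. You spell out these steps in more detail, but there is no substantive difference.
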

\begin{proof}
  As the estimates \eqref{eq:est_g:w12} and \eqref{eq:est_g:l2} do not depend on $\eps$,
  the right-hand sides in \eqref{eq:ued_ved_nearly_weak_sol:statement_u} and \eqref{eq:ued_ved_nearly_weak_sol:statement_v} are bounded in $\eps$
  and Lemma~\ref{lm:delta_sea_0} assures that these bounds also survive the limit $\delta \sea 0$ (along a certain null sequence),
  the existence of $u, v \in L_{\loc}^2([0, \infty); \sob12)$ and a null sequence $(\eps_j)_{j \in \N} \subset (0, 1)$ such that
  \eqref{eq:eps_sea_0:pw}--\eqref{eq:eps_sea_0:nabla_l2} hold can be shown as in Lemma~\ref{lm:delta_sea_0}.
  Moreover, nonnegativity of $u$ and $v$ follow from Lemma~\ref{lm:ue_ge_0} and \eqref{eq:eps_sea_0:pw}.
\end{proof}

Next, we show that the convergences asserted by Lemma~\ref{lm:eps_sea_0} are sufficiently strong
to imply that the pair $(u, v)$ constructed in that lemma at least solves \eqref{prob:nonlin} in the following sense,
which is yet somewhat weaker than the solution concept imposed by Theorem~\ref{th:weak_sol_nonlin}.
\begin{lemma}\label{lm:u_v_weak_sol_nonlin}
  The pair $(u, v)$ constructed in Lemma~\ref{lm:eps_sea_0} fulfills
  \begin{align}\label{eq:u_v_weak_sol_nonlin:u_sol}
        - \intninfom u \varphi_t - \intom u_0 \varphi(\cdot, 0)
    &=  - \intninfom D_1(u) \nabla u \cdot \nabla \varphi
        + \intninfom S_1(u) \nabla v \cdot \nabla \varphi
        + \intninfom \fo(u, v) \varphi \\
    \intertext{and}\label{eq:u_v_weak_sol_nonlin:v_sol}
        - \intninfom v \varphi_t - \intom v_0 \varphi(\cdot, 0)
    &=  - \intninfom D_2(u) \nabla v \cdot \nabla \varphi
        - \intninfom S_2(u) \nabla u \cdot \nabla \varphi
        + \intninfom \ft(u, v) \varphi
  \end{align}
  for all $\varphi \in C_c^\infty(\Ombar \times [0, \infty))$.
\end{lemma}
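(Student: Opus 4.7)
The plan is to start from the nearly-weak identity provided by Lemma~\ref{lm:ue_ve_nearly_weak_sol} for the family $(\ue,\ve)_{\eps \in (0,1)}$ produced by Lemma~\ref{lm:delta_sea_0}, and then to pass to the limit along the null sequence $(\eps_j)_{j \in \N}$ given by Lemma~\ref{lm:eps_sea_0}. Since $\varphi \in C_c^\infty(\Ombar \times [0,\infty))$, I would fix $T \in (0,\infty)$ with $\supp \varphi \subset \Ombar \times [0,T)$, so that $\varphi \in L^2((0,T); \sob{n+1}2)$ and the error $C\eps^{\frac12}\|\varphi\|_{L^2((0,T);\sob{n+1}2)}$ on the right-hand side of Lemma~\ref{lm:ue_ve_nearly_weak_sol} vanishes as $\eps_j \sea 0$. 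The task then reduces to identifying, for each of the five terms on the left of \eqref{eq:ue_ve_nearly_weak_sol:statement_u}, the natural limit as $j \ra \infty$.

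The initial-data term is independent of $j$ and needs no attention. For the term $-\intninfom \uej \varphi_t$, strong convergence $\uej \ra u$ in $L_{\loc}^2(\Ombar \times [0,\infty))$ from \eqref{eq:eps_sea_0:l2}, together with boundedness of $\varphi_t$ with support in $\Ombar \times [0,T]$, gives convergence to $-\intninfom u \varphi_t$. For the zero-order kinetic term, continuity and uniform boundedness of $\fo$ guaranteed by \eqref{eq:weak_sol_nonlin:spaces_f} combined with \eqref{eq:eps_sea_0:pw} allow me to apply Lebesgue's theorem to obtain $\fo(\uej,\vej) \ra \fo(u,v)$ in $L^2(\Omega \times (0,T))$, hence $\intninfom \fo(\uej,\vej) \varphi \ra \intninfom \fo(u,v) \varphi$.

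The only somewhat delicate step, but still routine, is the passage to the limit in the two integrals containing products of bounded nonlinearities of $(\ue,\ve)$ with weakly converging gradients. For $\intninfom D_1(\uej) \nabla \uej \cdot \nabla \varphi$ I would group $D_1(\uej) \nabla \varphi$: this sequence is dominated (by $\ol D_1 \|\nabla \varphi\|_{L^\infty}$) and converges pointwise a.e.\ to $D_1(u) \nabla \varphi$ thanks to \eqref{eq:eps_sea_0:pw} and continuity of $D_1$ from \eqref{eq:weak_sol_nonlin:spaces_d}, whence Lebesgue's theorem yields strong convergence in $L^2(\Omega \times (0,T); \R^n)$. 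Combining this with the weak convergence $\nabla \uej \rh \nabla u$ in $L_{\loc}^2(\Ombar \times [0,\infty); \R^n)$ from \eqref{eq:eps_sea_0:nabla_l2} produces the desired limit. The term $\intninfom S_1(\uej) \nabla \vej \cdot \nabla \varphi$ is treated in exactly the same way, now using continuity and boundedness of $S_1$ from \eqref{eq:weak_sol_nonlin:spaces_s} along with \eqref{eq:eps_sea_0:pw} (for $\ue$) and \eqref{eq:eps_sea_0:nabla_l2} (for $\ve$).

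Assembling these five limits in the nearly-weak identity \eqref{eq:ue_ve_nearly_weak_sol:statement_u} as $j \ra \infty$ yields \eqref{eq:u_v_weak_sol_nonlin:u_sol}; repeating the argument with \eqref{eq:ue_ve_nearly_weak_sol:statement_v} in place of \eqref{eq:ue_ve_nearly_weak_sol:statement_u} gives \eqref{eq:u_v_weak_sol_nonlin:v_sol}. I do not anticipate a genuine obstacle: the substantial analytical work has already been absorbed into Lemma~\ref{lm:ue_ve_nearly_weak_sol}, which converts the fourth-order regularization into an $O(\eps^{\frac12})$ defect in a sufficiently weak dual norm, and into the compactness statements of Lemma~\ref{lm:eps_sea_0}.
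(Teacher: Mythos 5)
Your proposal is correct and follows essentially the same route as the paper's proof: the paper simply notes that since $D_i$, $S_i$, $f_i$ are bounded, the claim follows directly from Lemma~\ref{lm:ue_ve_nearly_weak_sol} and Lemma~\ref{lm:eps_sea_0}, and your argument is exactly the detailed unpacking of that one-line observation (pairing the strong pointwise/L$^2$ convergences from Lemma~\ref{lm:eps_sea_0} with the weak gradient convergences, while the $O(\eps^{1/2})$ defect vanishes).
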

\begin{proof}
  Since $D_i$, $S_i$ and $f_i$, $i \in \{1, 2\}$, are bounded,
  the statement immediately follows 
  from Lemma~\ref{lm:ue_ve_nearly_weak_sol} and Lemma~\ref{lm:eps_sea_0}.
\end{proof}

In order to prove Theorem~\ref{th:weak_sol_nonlin}, in addition to Lemma~\ref{lm:u_v_weak_sol_nonlin},
we need to make sure that $u, v$ are sufficiently regular; that is, that they belong to $\Winf$.
To that end, the $\eps$-independent estimates of the time derivatives obtained in Lemma~\ref{lm:ued_time_est} are insufficient. 
However, we can obtain the desired regularity by testing directly at the $\eps=0$ level.
\begin{lemma}\label{lm:u_v_winf}
  The functions $u, v$ constructed in Lemma~\ref{lm:eps_sea_0} belong to $\Winf$
  and satisfy \eqref{eq:weak_sol_nonlin:init_cond} and \eqref{eq:weak_sol_nonlin:u_sol} as well as \eqref{eq:weak_sol_nonlin:v_sol} for all $\varphi \in L_{\loc}^2([0, \infty); \sob12)$.
\end{lemma}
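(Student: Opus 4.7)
The plan is to identify $u_t, v_t$ as elements of $L_{\loc}^2([0, \infty); \dual{\sob12})$ directly from the weak formulation of Lemma~\ref{lm:u_v_weak_sol_nonlin}, then to use the resulting membership $u, v \in \Winf$ together with a Bochner integration-by-parts identity (which is available via Lemma~\ref{lm:test_weak}) to identify the initial traces, and finally to extend the weak formulations to the full admissible test class by density. For the first step, fix $T \in (0, \infty)$. For any $\varphi \in C^\infty(\Ombar \times [0, T])$ supported compactly in time within $(0, T)$ (so that after extension to $\Ombar \times [0, \infty)$ the initial-datum term vanishes), Lemma~\ref{lm:u_v_weak_sol_nonlin} reduces to
\[
  -\intntom u \varphi_t = -\intntom D_1(u) \nabla u \cdot \nabla \varphi + \intntom S_1(u) \nabla v \cdot \nabla \varphi + \intntom \fo(u, v) \varphi.
\]
Boundedness of $D_1, S_1, \fo$ (from \eqref{eq:weak_sol_nonlin:spaces_d}--\eqref{eq:weak_sol_nonlin:spaces_f}) together with the $L^2(\Omega \times (0, T))$ bounds on $\nabla u$ and $\nabla v$ inherited from Lemma~\ref{lm:eps_sea_0} and weak lower semicontinuity applied to \eqref{eq:est_g:w12} show that the right-hand side is bounded in modulus by $C \|\varphi\|_{L^2((0, T); \sob12)}$ for some $C$ independent of $\varphi$. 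Since the space of compactly-in-time-supported smooth maps is dense in $L^2((0, T); \sob12)$, this identifies $u_t$ with an element of $L^2((0, T); \dual{\sob12})$; hence $u \in \Wt$, and since $T$ was arbitrary, $u \in \Winf$. The same reasoning applies to $v$.

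\textbf{Initial traces.} The embedding $\Wt \embed C^0([0, T]; \leb2)$ renders the traces $u(\cdot, 0), v(\cdot, 0) \in \leb2$ well-defined. Applying Lemma~\ref{lm:test_weak}(i) with $H(w, z) = wz$ (so that $D^2 H \in L^\infty(\R^{2\times 2})$) to $w = u$, $z = \psi \in \sob12$ viewed as constant-in-time, and $\varphi = \eta \in C^1([0, T])$ with $\eta(0) = 1$ and $\eta(T) = 0$, yields the Bochner integration-by-parts identity
\[
  \intntom u_t \psi \eta = -\intntom u \psi \eta' - \intom u(\cdot, 0) \psi.
\]
On the other hand, testing \eqref{eq:u_v_weak_sol_nonlin:u_sol} with the space-time function $(x, t) \mapsto \eta(t) \psi(x)$ gives an analogous identity but with $u_0$ in place of $u(\cdot, 0)$, and the right-hand sides of both identities coincide by the identification of $u_t$ from the previous paragraph. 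Subtracting leaves $\intom u(\cdot, 0) \psi = \intom u_0 \psi$ for every $\psi \in \sob12$, which, by density of $\sob12$ in $\leb2$, forces $u(\cdot, 0) = u_0$ a.e.\ in $\Omega$. The argument for $v(\cdot, 0) = v_0$ is identical.

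\textbf{Extension of the test class and main obstacle.} With $u_t$ identified, the pairing $\varphi \mapsto \intntom u_t \varphi$ is a continuous functional on $L^2((0, T); \sob12)$ which coincides, on the dense subclass of compactly-in-time-supported smooth test functions, with the continuous functional defined by the right-hand side of \eqref{eq:weak_sol_nonlin:u_sol} restricted to $(0, T)$; by density, the identity \eqref{eq:weak_sol_nonlin:u_sol} extends to all $\varphi \in L^2((0, T); \sob12)$, and concatenating over $T \in (0, \infty)$ yields it for $\varphi \in L_{\loc}^2([0, \infty); \sob12)$. The analogous reasoning yields \eqref{eq:weak_sol_nonlin:v_sol}. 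The main subtle point I expect is in the density arguments: Lemma~\ref{lm:u_v_weak_sol_nonlin} supplies test functions in $C_c^\infty(\Ombar \times [0, \infty))$ which carry nonzero initial values and do not vanish on $\partial \Omega$, so the initial-value term $\intom u_0 \varphi(\cdot, 0)$ must be cleanly isolated via suitable time cutoffs before one can read off a distributional bound on $u_t$ in the dual of the full space $\sob12$ rather than the smaller $\sobz12$.
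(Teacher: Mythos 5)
Your proof is correct and follows the paper's overall skeleton: both arguments first obtain $u_t, v_t \in L^2_{\loc}([0,\infty); \dual{\sob12})$ from the weak formulation of Lemma~\ref{lm:u_v_weak_sol_nonlin} restricted to time-compactly-supported test functions, then use density of $C_c^\infty(\Ombar \times (0,T))$ in $L^2((0,T);\sob12)$, and then identify the initial trace. The difference is in the final step. The paper derives a quantitative bound $\left| \intom (u(\cdot,0) - u_0)\varphi(\cdot,0) \right| \le c_1 \|\varphi\|_{L^2((0,1);\sob12)}$ from combining the integration-by-parts identity with \eqref{eq:u_v_weak_sol_nonlin:u_sol}, and then inserts a concentrating test function $\varphi_\eta(x,t) = \psi(x)\zeta(t/\eta)$ whose $L^2((0,1);\sob12)$-norm vanishes as $\eta \sea 0$ while $\varphi_\eta(\cdot,0) = \psi$ stays fixed. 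You instead extend the $u_t$ identity by density to all of $L^2((0,T);\sob12)$ first, apply Lemma~\ref{lm:test_weak}~(i) with $H(w,z)=wz$ and $z = \psi$ constant in time, and subtract from the formulation \eqref{eq:u_v_weak_sol_nonlin:u_sol} tested against $\eta\psi$, so that the matching right-hand sides cancel and $\intom (u(\cdot,0)-u_0)\psi = 0$ drops out. Your route is slightly more structural and avoids the cutoff-scaling estimate; the paper's route avoids having to extend the test class before the trace identification and stays entirely within elementary norm bounds. One small gap to tidy: since Lemma~\ref{lm:u_v_weak_sol_nonlin} only admits $\varphi \in C_c^\infty(\Ombar \times [0,\infty))$ as test functions, you should first take $\psi \in \con\infty$ (rather than $\psi \in \sob12$) when testing \eqref{eq:u_v_weak_sol_nonlin:u_sol} with $\eta\psi$, and only then pass to general $\psi \in \leb2$ by density (which is also how the paper proceeds). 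Your closing remark about the distinction between $\dual{\sob12}$ and $\dual{\sobz12}$ is correct and worth keeping.
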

\begin{proof}
  We fix $T \in (0, \infty)$.
  From Lemma~\ref{lm:u_v_weak_sol_nonlin} and Hölder's inequality, we infer that
  \begin{align*}
          \left| \intntom u_t \varphi \right|
    &=    \left| \intntom u \varphi_t \right| \\
    &\le  \left| \intntom D_1(u) \nabla u \cdot \nabla \varphi \right|
          + \left| \intntom S_1(u) \nabla v \cdot \nabla \varphi \right|
          + \left| \intntom \fo(u, v) \varphi \right| \\
    &\le  \left(
            \ol{D_1} \|\nabla u\|_{L^2(\Omega \times (0, T))}
            + \ol{S_1} \|\nabla v\|_{L^2(\Omega \times (0, T))}
            + \|\fo\|_{L^\infty([0, \infty)^2)} (|\Omega| T)^\frac12
          \right) \|\varphi\|_{L^2((0, T); \sob12)}
  \end{align*}
  for all $\varphi \in C_c^\infty(\Ombar \times (0, T))$,
  so that since $u, v \in L^2((0, T); \sob12)$ by Lemma~\ref{lm:eps_sea_0}
  and as $C_c^\infty(\Ombar \times (0, T))$ is dense in $L^2((0, T); \sob12)$,
  we can conclude $u_t \in \dual{L^2((0, T); \sob12} = L^2((0, T); \dual{\sob12})$.
  Thus, $u$, and by the same reasoning also $v$, belongs to $\Wt$.

  As therefore
  \begin{align}\label{eq:u_v_winf:int_part}
      \int_0^1 \intom u_t \varphi
    = - \int_0^1 \intom u \varphi_t - \intom u(\cdot, 0) \varphi(\cdot, 0)
    \qquad \text{for all $\varphi \in C_c^\infty(\Ombar \times [0, 1))$}
  \end{align}
  by Lemma~\ref{lm:test_weak}~(ii),
  we infer from \eqref{eq:u_v_weak_sol_nonlin:u_sol} and the regularity of $u$ and $v$ that there is $c_1 \gt 0$ such that
  \begin{align}\label{eq:u_v_winf:c1}
    &\pe \left| \intom (u(\cdot, 0) - u_0) \varphi(\cdot, 0) \right| \notag \\
    &\le \left(
          \|u_t\|_{L^2((0, 1); \dual{\sob12})}
          + \|D_1(u) \nabla u - S_1(u) \nabla v\|_{L^2(\Omega \times (0, 1))}
          + \|f_1(u, v)\|_{L^2(\Omega \times (0, 1))}
        \right) \|\varphi\|_{L^2((0, 1); \sob12)} \notag \\[0.3em]
    &\le c_1 \|\varphi\|_{L^2((0, 1); \sob12)}
    \qquad \text{for all $\varphi \in C_c^\infty(\Ombar \times [0, 1))$}.
  \end{align}
  We now fix $\zeta \in C^\infty(\R)$ with $0 \le \zeta \le 1$, $\zeta(s) = 1$ for $s \le 0$ and $\zeta(s) = 0$ for $s \ge 1$.
  For $\psi \in \con\infty$ and $\eta \in (0, 1)$, we choose $\varphi_\eta \colon \Ombar \times [0, 1] \ni (x, t) \mapsto \psi(x) \zeta(\frac{t}{\eta})$ in \eqref{eq:u_v_winf:c1} to obtain
  \begin{align*}
    &\pe  \left| \intom (u(\cdot, 0) - u_0) \psi \right|^2
     =    \left| \intom (u(\cdot, 0) - u_0) \varphi_\eta(\cdot, 0) \right|^2 \\
    &\le  c_1^2 \|\psi\|_{\sob12}^2 \int_0^\eta \zeta^2(\tfrac{t}{\eta}) \dt
     \le  c_1^2 \|\psi\|_{\sob12}^2 \eta
    \ra 0
    \qquad \text{as $\eta \ra 0$},
  \end{align*}
  that is, $\intom (u(\cdot, 0) - u_0) \psi = 0$ for all $\psi \in \con\infty$.
  Due to density of $\con\infty$ in $\leb2$, this implies $u(\cdot, 0) = u_0$ a.e.\ in $\Omega$
  and hence the first assertion in \eqref{eq:weak_sol_nonlin:init_cond}.
  Therefore, \eqref{eq:weak_sol_nonlin:u_sol} follows from \eqref{eq:u_v_weak_sol_nonlin:u_sol} and \eqref{eq:u_v_winf:int_part};
  first for all $\varphi \in C_c^\infty(\Ombar \times [0, \infty))$ and thus by a density argument also for
  all $\varphi \in L_{\loc}^2([0, \infty); \sob12)$.
  The remaining statements for the second solution component can be derived analogously.
\end{proof}

Finally, we show that an analogue to the entropy-type inequality \eqref{eq:test_g:entropy} also holds for the limit functions $u$, $v$.

\begin{lemma}\label{lm:entropy_eps_0}
  Let $G_i$, $i \in \{1, 2\}$, $\mc E$, $\mc D$, $\mc R$ be as in Theorem~\ref{th:approx_entropy},
  $T \in (0, \infty)$ and $0 \le \zeta \in C^\infty([0, T])$.
  The functions $u$, $v$ given by Lemma~\ref{sec:eps_sea_0} then satisfy \eqref{eq:approx_entropy:entropy}.
\end{lemma}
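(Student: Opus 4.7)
The plan is to pass to the limit twice in the $(\eps,\delta)$-level entropy inequality \eqref{eq:test_g:entropy} (applied to the approximate solutions $(\ued,\ved)$ from Lemma~\ref{lm:k_to_infty}): first along the null sequence $(\delta_j)_{j \in \N}$ from Lemma~\ref{lm:delta_sea_0} (for each fixed $\eps \in (0,1)$), and then along the null sequence $(\eps_j)_{j \in \N}$ from Lemma~\ref{lm:eps_sea_0}. The nonnegative fourth-order contribution $\eps \intom |\Delta \ued|^2$ appearing in $\Ded$ plays no role other than being dropped from the left-hand side by positivity.

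Each term on the right-hand side is dispatched by dominated convergence along the chosen sequences. The initial term $\Eed(0) \zeta(0)$ converges to $\mc E(0) \zeta(0)$ via monotone convergence $\tfrac{1}{\Sod} \nearrow \tfrac{1}{S_1}$ applied to $\God(u_0)$ (and analogously for $v_0$); the entropy-derivative term $\intnt \Eed(t) \zeta'(t) \dt$ is controlled by the uniform $L_t^\infty L_x^1$-bound \eqref{eq:est_g:g} together with pointwise a.e.\ convergence; and the reaction term $\intnt \Red(t) \zeta(t) \dt$ is controlled via the estimate $|\God'(\ued) \fod(\ued,\ved)| \le C_1(1 + \ued^2)$ derived in the proof of Lemma~\ref{lm:est_g} from \eqref{eq:weak_sol_nonlin:cond_f}, combined with the uniform $L^2$-bound \eqref{eq:est_g:l2}.

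The heart of the argument is the lower semicontinuity of the dissipation integral, because the weight $\tfrac{D_1(|\uedj|)}{\Sodj(\uedj)}$ fails to be uniformly bounded near $\{\uedj \approx 0\}$ while the gradient convergence $\nabla \uedj \rh \nabla \ue$ in $L_{\loc}^2$ from Lemma~\ref{lm:delta_sea_0} is merely weak. I plan a truncation argument: for fixed $M \gt 0$, the truncated weight $w_j^M \defs \min\{\tfrac{D_1(|\uedj|)}{\Sodj(\uedj)}, M\}$ is uniformly bounded by $M$ and converges a.e.\ to $w^M \defs \min\{\tfrac{D_1(\ue)}{S_1(\ue)}, M\}$ (on $\{\ue = 0\}$, $S_1(0) = 0$ together with $\ul D_1 \gt 0$ force $w^M = M$). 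Dominated convergence then yields $\sqrt{\zeta\, w_j^M}\, \psi \ra \sqrt{\zeta\, w^M}\, \psi$ strongly in $L^2$ for every $\psi \in L^2$, which in combination with the weak convergence of $\nabla \uedj$ gives $\sqrt{\zeta\, w_j^M}\, \nabla \uedj \rh \sqrt{\zeta\, w^M}\, \nabla \ue$ weakly in $L^2$. Weak lower semicontinuity of the $L^2$-norm followed by monotone convergence as $M \nearrow \infty$ then delivers the desired bound. The term $\Eed(T) \zeta(T)$ on the LHS is controlled from below via Fatou using nonnegativity of $\God$ and $\Gtd$.

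A final subtlety is that Lemma~\ref{lm:delta_sea_0} only guarantees $\uedj(\cdot, T) \ra \ue(\cdot, T)$ in $\leb2$ for a.e.\ $T$. Accordingly, \eqref{eq:approx_entropy:entropy} is first derived for a.e.\ $T \in (0,\infty)$ and then extended to every $T > 0$ by selecting $T_m \nearrow T$ from the set of full measure and passing $m \ra \infty$, using $u, v \in C^0([0,\infty); \leb2)$ (inherent in $\Winf$) together with Fatou for the $\mc E(T_m) \zeta(T_m)$ term. The identical machinery, now applied with $\eps_j$ in place of $\delta_j$ and $u, v$ in place of $\ue, \ve$, completes the passage to the limit and establishes \eqref{eq:approx_entropy:entropy}.
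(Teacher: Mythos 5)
Your overall architecture matches the paper's: pass to the limit twice in \eqref{eq:test_g:entropy}, drop the nonnegative $\eps\int|\Delta\ued|^2$ contribution, use monotone convergence for $\Eed(0)$, use lower semicontinuity for the dissipation, and handle the remaining right-hand side terms by integrability arguments. Your treatment of the dissipation integral is a legitimate variant: where you truncate the weight at a level $M$ and send $M\nearrow\infty$ by monotone convergence, the paper instead adds $\eta>0$ to the denominator (forming $\frac{D_1(|\uejdj|)}{\Sodjp(\uejdj)+\eta}$) and sends $\eta\searrow 0$ by Fatou. These are equivalent devices, and your version is arguably a touch cleaner since it avoids reasoning about $\frac{D_1(u)}{S_1(u)+\eta}\nearrow\frac{D_1(u)}{S_1(u)}$ on the set $\{u=0\}$ where both sides are infinite. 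Your Fatou argument for $\mc E(T)\zeta(T)$ (exploiting $\God\ge 0$) is also slightly cheaper than the paper's Pratt-lemma argument and suffices, since only a one-sided inequality is needed on the left.

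However, there is a genuine gap in your treatment of the reaction term $\intnt\mc R\,\zeta$. You supply only the integrability ingredient (the pointwise bound $|\God'(\ued)\fod(\ued,\ved)|\le C(1+\ued^2)$ plus the uniform $L^2$ bound \eqref{eq:est_g:l2}), but say nothing about \emph{why} the integrand converges pointwise a.e.\ to $G_1'(u)f_1(u,v)$. This is precisely the delicate point, and it is where the structural hypothesis \eqref{eq:weak_sol_nonlin:cond_f} actually enters. The difficulty is that $\God'(s)$ blows up logarithmically as $s\searrow 0$ (and $\ued$ can even be negative), so near the zero set of $u$ the naive argument fails. The paper splits $\Omega\times(0,\infty)$ into the set $A=\{u>0\}$, where one shows $\Godjp'(\uejdj)\to G_1'(u)$ by dominated convergence of the integral representation, and $B=\{u=0\}$, where one must instead show that the \emph{product} $\Godjp'(\uejdj)\,f_1((\uejdj)_+,(\vejdj)_+)$ tends to $0=G_1'(0)f_1(0,v)$; the latter hinges on $\fod=0$ for nonpositive first argument and on $\sup_{s_2}|f_1(s_1,s_2)\ln s_1|\to 0$ as $s_1\searrow 0$ from \eqref{eq:weak_sol_nonlin:cond_f}, which tames the logarithmic singularity of $\God'$. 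Without making this pointwise convergence explicit, invoking ``dominated convergence'' (really Vitali, given that the majorant depends on $j,j'$) does not close the argument. A milder instance of the same omission affects your handling of $\intnt\Eed\zeta'$: a uniform $L^\infty_tL^1_x$ bound from \eqref{eq:est_g:g} plus a.e.\ convergence does \emph{not} yield convergence of the integrals; one needs equi-integrability, which the paper obtains via Pratt's lemma from the $L^1$-convergence of the quadratic majorant $c(1+\uejdj^2)$ inherited from \eqref{eq:delta_sea_0:l2}, \eqref{eq:eps_sea_0:l2}.
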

\begin{proof}
  For $\eps, \delta \in (0, 1)$, we denote the pairs constructed in Lemma~\ref{lm:k_to_infty} and Lemma~\ref{lm:delta_sea_0}
  by $(\ued, \ved)$ and $(\ue, \ve)$, respectively,
  and let the sequences $(\eps_j)_{j \in \N}$ and $(\delta_{j'})_{j' \in \N}$ be as in Lemma~\ref{lm:delta_sea_0} and Lemma~\ref{lm:eps_sea_0}.
  Moreover, again for $\eps, \delta \in (0, 1)$, we let $\Gid$, $i \in \{1, 2\}$, $\Eed$, $\Ded$ and $\Red$ be as in Lemma~\ref{lm:test_g}.
  
  In order to prove \eqref{eq:approx_entropy:entropy}, we essentially need to ensure that the inequality \eqref{eq:test_g:entropy}
  survives the limit processes $\eps = \eps_j \sea 0$ and $\delta = \delta_j \sea 0$.
  To that end, we first note that for any $\eta \gt 0$, the family
  \begin{align*}
    \left( \frac{D_1(|\uejdj|)}{\Sodjp(\uejdj) + \eta} \zeta \right)_{j, j' \in \N}
  \end{align*}
  is bounded in $L^\infty(\Omega \times (0, T))$
  and, as first $j \ra \infty$ and then $j' \ra \infty$,
  converges a.e.\ in $\Omega \times (0, T)$ to $\frac{D_1(u)}{S_1(u) + \eta} \zeta$,
  thanks to \eqref{eq:delta_sea_0:pw} and \eqref{eq:eps_sea_0:pw}.
  Thus, combined with \eqref{eq:delta_sea_0:nabla_l2} and \eqref{eq:eps_sea_0:nabla_l2}, we see that
  \begin{align*}
        \left( \frac{D_1(|\uejdj|)}{\Sodjp(\uejdj) + \eta} \zeta \right)^\frac12 \nabla \uejdj
    \rh \left( \frac{D_1(u)}{S_1(u) + \eta} \zeta \right)^\frac12 \nabla u
  \end{align*}
  in $L^2(\Omega \times (0, T); \R^n)$ as first $j' \ra \infty$ and then $j \ra \infty$
  for all $\eta \gt 0$.
  Consequently,
  \begin{align*}
        \liminf_{j \ra \infty} \liminf_{j' \ra \infty} \intntom \frac{D_1(|\uejdj|)}{\Sodjp(\uejdj) + \eta} |\nabla \uejdj|^2 \zeta
    \ge \intntom \frac{D_1(u)}{S_1(u) + \eta} |\nabla u|^2 \zeta
    \qquad \text{for all $\eta \gt 0$}
  \end{align*}
  by the weakly lower semicontinuity of the norm.
  Since $\eta \gt 0$ and by Fatou's lemma, we can conclude that
  \begin{align*}
        \liminf_{j \ra \infty} \liminf_{j' \ra \infty} \intntom \frac{D_1(|\uejdj|)}{\Sodjp(\uejdj)} |\nabla \uejdj|^2 \zeta
    \ge \intntom \frac{D_1(u)}{S_1(u)} |\nabla u|^2 \zeta.
  \end{align*}

  Next, we show that
  \begin{align}\label{eq:entropy_eps_0:fi_conv_int}
        \lim_{j \ra \infty} \lim_{j' \ra \infty} \intntom \Godjp'(\uejdj) \fo((\uejdj)_+, (\vejdj)_+) \zeta
    = \intntom G'(u) f_1(u, v) \zeta.
  \end{align}
  To that end, we first establish pointwise a.e.\ convergence to $0$ of the integrand; that is, we prove that
  \begin{align}\label{eq:entropy_eps_0:fi_conv}
    \lim_{j \ra \infty} \lim_{j' \ra \infty} \Godjp'(\uejdj) \fo((\uejdj)_+, (\vejdj)_+) = G'(u) f_1(u, v)
  \end{align}
  a.e.\ in $\Omega \times (0, \infty)$.
  We first prove convergence on the set
  \begin{align*}
    A \defs \left\{(x, t) \in \Omega \times (0, \infty): \lim_{j \ra \infty} \lim_{j' \ra \infty} \uejdj(x, t) = u(x, t) \gt 0\right\}.
  \end{align*}
  For $(x, t) \in A$ and arbitrary $\eta \in (0, \frac{u(x, t)}{2})$,
  there is $j_0 \in \N$ such that for $j \ge j_0$,
  we can find $j_0'(j) \in \N$ with the property that
  $|\uejdj(x, t) - u(x, t)| \lt \eta$ and hence $\uejdj(x, t) \gt \frac{u(x, t)}{2}$ for all $j' \ge j_0'(j)$ and $j \ge j_0$.
  Since $\frac1{S_1}$ is bounded on $(\frac{u(x, t)}{2}, \infty)$, Lebesgue's theorem gives
  \begin{align*}
        \lim_{j \ra \infty} \lim_{j' \ra \infty} \God'(\uejdj(x, t))
    &=  \lim_{j \ra \infty} \lim_{j' \ra \infty}
          \int_0^\infty \frac{\mathds 1_{(1, \uejdj(x, t))}(\sigma) - \mathds 1_{(\uejdj(x, t), 1)}(\sigma)}{S_1(\sigma) + \delta_j} \dsigma \\
    &=  \int_0^\infty \frac{\mathds 1_{(1, u(x, t))}(\sigma) - \mathds 1_{(u(x, t), 1)}(\sigma)}{S_1(\sigma)} \dsigma
    =   G_1'(u(x, t)).
  \end{align*}
  As $f_1$ is continuous and $u, v \ge 0$, we thus obtain \eqref{eq:entropy_eps_0:fi_conv} for all points in $A$.
  Next, we consider points in space-time where $u$ vanishes and set
  \begin{align*}
    B \defs \left\{(x, t) \in \Omega \times (0, \infty): \lim_{j \ra \infty} \lim_{j' \ra \infty} \uejdj(x, t) = u(x, t) = 0\right\}.
  \end{align*}
  Similarly as above, we can see that $|\uejdj| \lt 1$ for sufficiently large $j, j' \in \N$.
  Since
  \begin{align*}
        \left| \God'(\ued) \right|
    =   \left| \int_{1}^{\ued} \frac{1}{S_1(\sigma) + \delta} \dsigma \right|
    \le \frac1{\ul S_1} \int_{\ued}^1 \frac{1}{\sigma} \dsigma
    =   \frac1{\ul S_1} |\ln(\ued)|
    \qquad \text{in $\{0 \lt \ued \le 1\}$}
  \end{align*}
  for all $\eps, \delta \in (0, 1)$,
  the assumption \eqref{eq:weak_sol_nonlin:cond_f} and the fact that $f_1((\ued)_+, (\ved)_+) = 0$ in $\{\ued \le 0\}$
  imply that \eqref{eq:entropy_eps_0:fi_conv} also holds for points in $B$.
  As \eqref{eq:delta_sea_0:pw}, \eqref{eq:eps_sea_0:pw} and the nonnegativity of $u$
  assert that $(\Omega \times (0, \infty)) \setminus (A \cup B)$ is a null set,
  we indeed obtain \eqref{eq:entropy_eps_0:fi_conv} a.e.\ in $\Omega \times (0, \infty)$.
 
  Again thanks to \eqref{eq:weak_sol_nonlin:cond_f}, there is $c_1 \gt 0$ such that
  \begin{align*}
    &\pe  |\God'(\ued) \fod(\ued, \vejdj) \zeta| \\
    &\le  \frac{\|\zeta\|_{L^\infty(\Omega \times (0, T))}}{\ul S_1} \left(
            |\ln(\ued) \fo((\ued)_+, (\vejdj)_+)| \mathds 1_{\{0 \lt \ued \le 1\}}
            + \|f_1\|_{L^\infty([0, \infty)^2)} (\ued - 1) \mathds 1_{\{1 \lt \ued\}}
          \right) \\
    &\le  c_1 (1 + |\ued|)
    \qquad \text{in $\Omega \times (0, T)$ for all $\eps, \delta \in (0, 1)$}
  \end{align*}
  so that \eqref{eq:entropy_eps_0:fi_conv}, Vitali's theorem
  as well as the bound \eqref{eq:est_g:l2} assert \eqref{eq:entropy_eps_0:fi_conv_int}.

  As moreover $0 \le \God(\ued) \le c_2 (1 + \ued^2)$ in $\Omega \times (0, T))$ for all $\eps, \delta \in (0, 1)$ and some $c_2 \gt 0$
  and since $\lim_{j \ra \infty} \lim_{j' \ra \infty} (1 + u_{\eps_j \delta_{j'}}^2) = (1 + u^2)$ in $L^1(\Omega \times (0, T))$
  is contained in \eqref{eq:delta_sea_0:l2} and \eqref{eq:eps_sea_0:l2},
  Pratt's lemma asserts that
  \begin{align*}
        \lim_{j \ra \infty} \lim_{j' \ra \infty} \intntom G_{1 \delta_{j'}}(\uejdj) \zeta'
    =   \intntom G_1(u) \zeta'
    \qquad \text{for all $T \in (0, \infty)$}.
  \end{align*}
  Likewise, now relying on \eqref{eq:delta_sea_0:l2_space} and \eqref{eq:eps_sea_0:l2_space}
  instead of \eqref{eq:delta_sea_0:l2} and \eqref{eq:eps_sea_0:l2},
  we also obtain
  \begin{align*}
        \lim_{j \ra \infty} \lim_{j' \ra \infty} \intom G_{1 \delta_{j'}}(\uejdj(\cdot, T)) \zeta(\cdot, T)
    =   \intom G_1(u(\cdot, T)) \zeta(\cdot, T)
    \qquad \text{for a.e.\ $T \in (0, \infty)$}.
  \end{align*}

  Finally,
  \begin{align*}
        \God(u_0)
    =   \int_0^{u_0} \int_0^\rho \frac{1}{S_1(\sigma) + \delta} \dsigma \drho
    \ra \int_0^{u_0} \int_0^\rho \frac{1}{S_1(\sigma)} \dsigma \drho
    =   G_1(u_0)
    \qquad \text{as $\delta \sea 0$}
  \end{align*}
  by Beppo Levi's theorem so that according to Lebesgue's theorem,
  \begin{align*}
        \intom \God(u_0) \zeta(0)
    \ra \intom G_1(u_0) \zeta(0)
    \qquad \text{as $\delta \sea 0$}.
  \end{align*}

  Combined with analogous arguments for the second solution component,
  these convergences show that \eqref{eq:approx_entropy:entropy} holds for a.e.\ $T \in (0, \infty)$.
  Since $u, v \in C^0([0, \infty); \leb2) \cap L_{\loc}^2([0, \infty); \sob12)$ by Lemma~\ref{lm:u_v_winf},
  the inequality \eqref{eq:approx_entropy:entropy} holds indeed for all $T \in (0, \infty)$.
\end{proof}

Finally, we note that the previous two lemmata already contain the main results of this section.
\begin{proof}[Proof of Theorem~\ref{th:weak_sol_nonlin} and Theorem~\ref{th:approx_entropy}]
  Theorem~\ref{th:weak_sol_nonlin} and Theorem~\ref{th:approx_entropy}
  are direct consequences of Lem\-ma~\ref{lm:u_v_winf} and Lemma~\ref{lm:entropy_eps_0}, respectively.
\end{proof}

\section{Approximative solutions to \eqref{prob:nonlin}}\label{sec:approx_main}
In the remainder of the article, we will construct global weak solutions (in the sense of Definition~\ref{def:weak_sol_main} below)
of \eqref{prob:nonlin}.
To that end, we henceforth suppose that \eqref{eq:intro:di_chii},
either \eqref{eq:intro:h1} or \eqref{eq:intro:h2},
\eqref{eq:intro:cond_f1} or \eqref{eq:intro:cond_f2},
\eqref{eq:intro:mi_qi},
\eqref{eq:intro:main_cond} (with $p_i$ and $r_i$, $i \in \{1, 2\}$, as in \eqref{eq:intro:pi} and \eqref{eq:intro:ri})
as well as \eqref{eq:intro:cond_init} hold
and that $D_i, S_i, f_i$, $i \in \{1, 2\}$ are as in \eqref{eq:intro:Di_Si} and \eqref{eq:intro:fi}.

Sections~\ref{sec:approx_main}--\ref{sec:proof_11} are organized as follows.
In the present section, we will define approximations of $D_i, S_i, f_i$, $i \in \{1, 2\}$ as well as of $u_0$ and $v_0$
so that Theorem~\ref{th:weak_sol_nonlin}, which has been proven in the preceding section,
becomes applicable and thus provides us with global weak solutions $(\ua, \va)$, $\alpha \in (0, 1)$, to the corresponding approximative problems.

The main part of Section~\ref{sec:lim_alpha_sea_0} then consists of 
deriving $\alpha$-independent bounds from the entropy-like inequality given by Theorem~\ref{th:approx_entropy}.
This will then allow us to obtain solution candidates $(u, v)$ of \eqref{prob:nonlin} in Lemma~\ref{lm:alpha_sea_0}.
Finally, in Section~\ref{sec:proof_11}, we show that under the hypotheses of Theorem~\ref{th:ex_weak_nonlin},
these convergences are sufficiently strong to conclude that $(u, v)$ is indeed a global weak solution of \eqref{prob:nonlin}.

Having an application of Theorem~\ref{th:weak_sol_nonlin} in mind, we now define approximative functions for each henceforth fixed $\alpha \in (0, 1)$.
We begin by setting
\begin{align*}
  \Dia(s) \defs d_i \left( \frac{s+1}{1 + \alpha (s+1)} \right)^{m_i-1} + \alpha
  \quad \text{and} \quad
  \Sia(s) \defs \frac{\chi_i s (s + 1)^{q_i-1}}{(1 + \alpha (s+1))^{q_i}}
  \qquad \text{for $s \ge 0$ and $i \in \{1, 2\}$}.
\end{align*}

We also fix $\xi \in C^\infty(\R)$ with $\xi(s) = 1$ for $s \le 0$ and $\xi(s) = 0$ for $s \ge 1$
and set
\begin{align*}
  \fia(s_1, s_2) \defs f_i(s_1, s_2) \xioa(s_1) \xita(s_2)
\end{align*}
where $\xiia(s) \defs \xi(\alpha^{\frac1{4-\min\{q_1, q_2\}}} s - 1)$ for $s \in \R$;
in particular,
\begin{align}\label{eq:approx_main:xiia}
  \xiia(s) = 
  \begin{cases}
    1, & s \le \alpha^{-\frac1{4-\min\{q_1, q_2\}}}, \\
    0, & s \ge 2 \alpha^{-\frac1{4-\min\{q_1, q_2\}}}
  \end{cases}
  \qquad \text{for all $\alpha \in (0, 1)$ and $i \in \{1, 2\}$}.
\end{align}

As a last yet undefined component,
let us construct initial data $\una, \vna$ approximating $u_0, v_0$ in a suitable sense as $\alpha \sea 0$.
\begin{lemma}\label{lm:def_una_vna}
  There are families $(\una)_{\alpha \in (0, 1)}, (\vna)_{\alpha \in (0, 1)} \subset \con\infty$
  such that $\una \gt 0$ and $\vna \gt 0$ in $\Ombar$ for all $\alpha \in (0, 1)$,
  $(\intom u_0) (\intom \una) = ( \intom u_0 )^2$ and $(\intom v_0) (\intom \vna) = (\intom v_0)^2$ for all $\alpha \in (0, 1)$,
  \begin{align}\label{eq:def_una_vna:conv_lebl}
    (\una, \vna) \ra (u_0, v_0)
    \qquad \text{a.e.\ and in $X_1 \times X_2$ as $\alpha \sea 0$},
  \end{align}
  where $X_i \defs \leb{2-q_i}$ if $q_i \lt 1$ and $X_i \defs \lebl$ if $q_i = 1$ for $i \in \{1, 2\}$,
  as well as
  \begin{align}\label{eq:def_una_vna:eps_l2}
    \lim_{\alpha \sea 0} \alpha \|\una\|_{\leb p}^p = 0
    \quad \text{and} \quad
    \lim_{\alpha \sea 0} \alpha \|\vna\|_{\leb p}^p = 0,
    \qquad \text{where $p \defs {3-\min\{q_1, q_2\}}$}.
  \end{align}
\end{lemma}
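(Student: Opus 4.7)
The plan is to construct $\una$ (and analogously $\vna$) from $u_0$ by the standard three-step procedure \emph{truncate--mollify--shift}, followed by a rescaling that restores the mean value, choosing each cut-off parameter as a suitable power of $\alpha$.

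More precisely, I first set $u_{0,M} \defs \min\{u_0, M\}$ which is in $L^\infty(\Omega) \cap X_1$. Since $0 \le u_{0,M} \le u_0$ and $u_{0,M} \to u_0$ pointwise as $M \nea \infty$, dominated convergence in $L^{2-q_1}$ or a standard Orlicz-space variant thereof in $\lebl$ (depending on whether $q_1 \lt 1$ or $q_1 = 1$) gives $u_{0,M} \to u_0$ in $X_1$ as $M \nea \infty$. I then extend $u_{0,M}$ to $\R^n$ by, say, reflection across $\partial \Omega$ and mollify by a standard mollifier of radius $\eta \in (0, 1)$, and finally add a constant $\beta \in (0, 1)$. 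The result $\wt u_{M,\eta,\beta}$ belongs to $\con\infty$, is strictly positive, obeys the $L^\infty$ bound $\|\wt u_{M,\eta,\beta}\|_{L^\infty(\Omega)} \le M + \beta$, and satisfies $\wt u_{M,\eta,\beta} \to u_{0,M}$ both a.e.\ in $\Omega$ and in $X_1$ as $(\eta, \beta) \sea (0, 0)$ for any fixed $M$. Rescaling, if $\intom u_0 \gt 0$, by $c_{M,\eta,\beta} \defs \intom u_0 / \intom \wt u_{M,\eta,\beta}$ yields the candidate $\una \defs c_{M,\eta,\beta} \wt u_{M,\eta,\beta}$; in the degenerate case $\intom u_0 = 0$ (so that $u_0 \equiv 0$), I simply take $\una$ to be a small positive constant. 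This guarantees $\una \in \con\infty$, $\una \gt 0$ in $\Ombar$, and the mean-value condition $(\intom u_0)(\intom \una) = (\intom u_0)^2$.

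It remains to choose $M = M(\alpha)$, $\eta = \eta(\alpha)$, $\beta = \beta(\alpha)$ as $\alpha$-dependent parameters tending to $\infty$, $0$, $0$ respectively, in such a way that both \eqref{eq:def_una_vna:conv_lebl} and \eqref{eq:def_una_vna:eps_l2} hold. Convergence in $X_1$ and a.e.\ is obtained by a standard $3\eps$-type argument once $M(\alpha) \nea \infty$ and $\eta(\alpha), \beta(\alpha) \sea 0$ (which also drives $c_{M,\eta,\beta} \to 1$). For the second property, since $\una$ is bounded by $c_{M,\eta,\beta}(M + \beta)$, the elementary interpolation
\begin{align*}
    \|\una\|_{\leb{p}}^p
  \le \|\una\|_{L^\infty(\Omega)}^{p - (2-q_1)} \|\una\|_{\leb{2-q_1}}^{2-q_1}
  \le C (M + \beta)^{p - (2-q_1)}
\end{align*}
(with the $L \log L$-norm replacing $\|\cdot\|_{\leb{2-q_1}}$ in the case $q_1 = 1$, exploiting $\lebl \embed \leb1$) reduces \eqref{eq:def_una_vna:eps_l2} to the condition $\alpha (M(\alpha))^{p-(2-q_1)} \to 0$, which is compatible with $M(\alpha) \nea \infty$; the explicit choice $M(\alpha) \defs \alpha^{-1/(2(p-(2-q_1)+1))}$ works.

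The main obstacle is the tension between needing $M(\alpha)$ large enough for $\una \to u_0$ in $X_1$ and small enough so that $\alpha\|\una\|_{\leb p}^p \to 0$. Since the former only requires $M \to \infty$ (with no prescribed rate) while the latter bounds $M$ by a negative power of $\alpha$, the two requirements can always be reconciled by picking $M$ as any function tending slowly enough to infinity; no regularity of $u_0$ beyond membership in $X_1$ is needed.
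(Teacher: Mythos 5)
Your proposal is correct and follows the same high-level strategy as the paper: construct a smooth, strictly positive, mean-preserving approximating family for $u_0$, and then slave the approximation parameter to $\alpha$ slowly enough that both $X_1$-convergence and the decay in \eqref{eq:def_una_vna:eps_l2} hold. The implementations differ in two ways. You build the approximants explicitly by truncation--mollification--shift--rescaling; this buys you the $L^\infty$ bound $\|\una\|_{L^\infty} \lesssim M(\alpha)$ and lets you verify \eqref{eq:def_una_vna:eps_l2} via the interpolation $\|\una\|_{\leb p}^p \le \|\una\|_{L^\infty}^{p - (2-q_1)}\|\una\|_{\leb{2-q_1}}^{2-q_1}$ with an explicit tuning of $M(\alpha)$. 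The paper instead takes an \emph{abstract} approximating sequence (invoking density of $\con\infty$ in $X_1$, with \cite{AdamsFournierSobolevSpaces2003} for the $\lebl$ case), so no $L^\infty$ control is available; it compensates with a selection device, setting $j_\alpha \defs \max\{\, j \in \N : j \le 1/\alpha, \, \|u_{0j}\|_{\leb p}^{p+1} \le 1/\alpha \,\} \cup \{1\}$, which gives $\alpha\|u_{0j_\alpha}\|_{\leb p}^p \le \alpha^{1/(p+1)}$ automatically. The paper's trick is shorter and requires nothing beyond finiteness of $\|u_{0j}\|_{\leb p}$, whereas your route is more concrete. One small gap in your write-up: you attribute the a.e.\ convergence in \eqref{eq:def_una_vna:conv_lebl} to a ``$3\eps$-type argument'', but that yields convergence in $X_1$ (hence a.e.\ only along subsequences). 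To get a.e.\ convergence of the \emph{full} family $(\una)_{\alpha \in (0,1)}$, you should note that at every Lebesgue point $x$ of $u_0$ the bound $|(\eta_\eta * u_{0,M})(x) - u_{0,M}(x)| \le \int \eta_\eta(z)\,|u_0(x-z)-u_0(x)|\,dz$ holds uniformly in $M$ (truncation is $1$-Lipschitz) while $u_{0,M}(x) \nearrow u_0(x)$ monotonically; the paper handles this instead by extracting a subsequence of its abstract approximants along which a.e.\ convergence already holds before forming $\una$.
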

\begin{proof}
  As $\con\infty$ is dense in $X_1$ (cf.\ \cite[Theorem~8.21]{AdamsFournierSobolevSpaces2003} for $X_1 = \lebl$),
  and since $u_0$ belongs to $X_1$ and is nonnegative by \eqref{eq:intro:cond_init},
  there is a sequence of nonnegative functions $(\tilde u_{0j})_{j \in \N} \subset \con\infty$ with $\tilde u_{0j} \ra u_0$ in $X_1$ as $j \ra \infty$.
  Since we may without loss of generality assume that $u_0 \not\equiv 0$,
  $\gamma_j \defs (\intom u_0) (\intom (\tilde u_{0j} + \frac1j))^{-1}$ is positive for all $j \in \N$
  so that the functions $u_{0j} \defs \gamma_j (\tilde u_{0j} + \frac1j)$
  not only fulfill $u_{0j} \ra u_0$ in $X_1$ as $j \ra \infty$
  but also $\intom u_{0j} = \intom u_0$ and $u_{0j} \ge \frac{\gamma_j}{j} \gt 0$ for $j \in \N$.
  Since $X_1 \embed \leb1$, after switching to a subsequence if necessary,
  we may without loss of generality also assume that $\tilde u_{0j} \ra u_0$ a.e.\ as $j \ra \infty$.

  For $\alpha \in (0, 1)$, we observe then that
  \begin{align*}
    A_\alpha \defs \left\{\, j \in \N : j \le \frac{1}{\alpha} \text{ and } \|u_{0j}\|_{\leb p}^{p+1} \le \frac1\alpha \,\right\} \cup \{1\}
  \end{align*}
  is nonempty and finite,
  so that
  \begin{align*}
    j_\alpha \defs \max A_\alpha
    \quad \text{and} \quad
    \una \defs u_{0 j_\alpha},
    \qquad \alpha \in (0, 1),
  \end{align*}
  are well-defined.
  Because $j_\alpha \ra \infty$ as $\alpha \sea 0$
  and $\alpha \|u_{0 j_\alpha}\|_{\leb p}^{p} \le \alpha^{1-\frac{p}{p+1}}$ for all $\alpha \in (0, 1)$ with $j_\alpha \gt 1$,
  we obtain the statement given an analogous definition of and argumentation for $(\vna)_{\alpha \in (0, 1)}$.
\end{proof}

With these preparations at hand, we are now able to apply Theorem~\ref{th:weak_sol_nonlin}
to obtain global weak $W^{1, 2}$-solutions of certain approximative problems.
\begin{lemma}\label{lm:ex_ua_va}
  Let $\alpha \in (0, 1)$,
  $\Dia, \Sia, \fia$, $i \in \{1, 2\}$ be as defined above
  and $\una, \vna$ be as given by Lemma~\ref{lm:def_una_vna}.
  Then there exists a global nonnegative weak $W^{1,2}$-solution (in the sense of Theorem~\ref{th:weak_sol_nonlin}) $(\ua, \va)$ belonging to $(\Winf)^2$ of
  \begin{align}\label{prob:alpha}
    \begin{cases}
      \uat = \nabla \cdot (\Doa(\ua) \nabla \ua - \Soa(\ua) \nabla \va) + \foa(\ua, \va) & \text{in $\Omega \times (0, \infty)$} \\
      \vat = \nabla \cdot (\Dta(\va) \nabla \va + \Sta(\va) \nabla \ua) + \fta(\ua, \va) & \text{in $\Omega \times (0, \infty)$} \\
      \partial_\nu \ua = \partial_\nu \va = 0 & \text{on $\partial \Omega \times (0, \infty)$} \\
      \ua(\cdot, 0) = \una, \va(\cdot, 0) = \vna & \text{in $\Omega$}.
    \end{cases}
  \end{align}
  Setting 
  \begin{align}\label{eq:ex_ua_va:gia}
    \Gia(s) \defs \int_1^s \int_1^{\rho} \frac1{\Sia(\sigma)} \dsigma \drho 
    \qquad \text{for $s \ge 0$ and $i \in \{1, 2\}$},
  \end{align}
  this solution moreover satisfies
  \begin{align}\label{eq:alpha_entropy:first_entropy}
    &\pe  \intom \Goa(\ua(\cdot, T))
          + \intom \Gta(\va(\cdot, T))
          + \intntom \frac{\Doa(\ua)}{\Soa(\ua)} |\nabla \ua|^2
          + \intntom \frac{\Dta(\ua)}{\Sta(\ua)} |\nabla \va|^2 \notag \\
    &\le  \intom \Goa(\una)
          + \intom \Gta(\vna)
          + \intntom \Goa'(\ua) \foa(\ua, \va)
          + \intntom \Gta'(\va) \fta(\ua, \va)
  \end{align}
  for all $T \in (0, \infty)$.
\end{lemma}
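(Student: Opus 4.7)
The plan is to apply Theorem~\ref{th:weak_sol_nonlin} and Theorem~\ref{th:approx_entropy} to the approximate data $(\Dia, \Sia, \fia, \una, \vna)$, with $\alpha \in (0, 1)$ fixed throughout.

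First, I would verify the hypotheses \eqref{eq:weak_sol_nonlin:spaces_d}--\eqref{eq:weak_sol_nonlin:initial} of Theorem~\ref{th:weak_sol_nonlin}. Concerning $\Dia$, the factor $\frac{s+1}{1+\alpha(s+1)}$ is continuous in $s \ge 0$ and takes values in $[\tfrac{1}{1+\alpha}, \tfrac{1}{\alpha})$, so that $\Dia \in C^0([0, \infty)) \cap L^\infty((0, \infty))$ with $\inf_{s \ge 0} \Dia(s) \ge \alpha \gt 0$. Concerning $\Sia$, smoothness on $[0, \infty)$ and vanishing at $s = 0$ are immediate from the defining formula, the explicit linear prefactor $s$ makes $\Sia(s)/s$ strictly positive and continuous on $[0, 1]$, while a direct asymptotic analysis as $s \ra \infty$ provides both boundedness of $\Sia$ and $\Sia'$ as well as strict positivity of $\inf_{s \ge 1} \Sia(s)$. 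Concerning $\fia$, the cutoff $\xiia$ confines its support to a bounded box, so boundedness and continuity are immediate; moreover, by \eqref{eq:intro:fi} we have $\fo(s_1, s_2) = s_1(\lambda_1 - \mu_1 s_1 + a_1 s_2)$, which vanishes linearly as $s_1 \sea 0$ uniformly on the compact $s_2$-support of $\xita$, so \eqref{eq:weak_sol_nonlin:cond_f} follows from $s_1 |\ln s_1| \ra 0$; the reasoning for $\fta$ is symmetric. The smoothness and strict positivity of $\una, \vna$ demanded by \eqref{eq:weak_sol_nonlin:initial} are already provided by Lemma~\ref{lm:def_una_vna}.

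Once these verifications are in place, Theorem~\ref{th:weak_sol_nonlin} delivers a global nonnegative weak $W^{1, 2}$-solution $(\ua, \va) \in (\Winf)^2$ of \eqref{prob:alpha}. To deduce \eqref{eq:alpha_entropy:first_entropy}, I would then apply Theorem~\ref{th:approx_entropy} (with the role of $D_i, S_i, f_i$ played by $\Dia, \Sia, \fia$, so that the $G_i$ appearing there coincides with $\Gia$ as defined in \eqref{eq:ex_ua_va:gia}) to the cut-off function $\zeta \equiv 1$ on $[0, T]$. Since $\zeta' \equiv 0$, the term $\int_0^T \Ea(t) \zeta'(t) \dt$ disappears, and what remains is exactly \eqref{eq:alpha_entropy:first_entropy}.

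The whole argument is a routine verification; no genuine obstacle arises. The one step requiring a brief computation is \eqref{eq:weak_sol_nonlin:cond_f}, since it combines a logarithmic singularity with a supremum over the second variable, but the truncation by $\xita$ reduces this to the harmless fact that $s_1 |\ln s_1| \ra 0$ as $s_1 \sea 0$.
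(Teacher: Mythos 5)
Your proposal is correct and follows the same path as the paper's own proof: verify hypotheses \eqref{eq:weak_sol_nonlin:spaces_d}--\eqref{eq:weak_sol_nonlin:cond_f} for the approximate nonlinearities (with the compact support of $\fia$ and the linear factor $s_1$ in $\fo$ handling the logarithmic condition \eqref{eq:weak_sol_nonlin:cond_f}), then invoke Theorem~\ref{th:weak_sol_nonlin} and Theorem~\ref{th:approx_entropy} with $\zeta \equiv 1$. The only difference is that the paper records the explicit bounds for $\Dia$, $\Sia$ and $\Sia'$ rather than referring to an asymptotic analysis, but the content is identical.
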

\begin{proof}
  As $\una, \vna$ belong to $\con\infty$ and are positive in $\Ombar$ by Lemma~\ref{lm:def_una_vna},
  the statement follows from Theorem~\ref{th:weak_sol_nonlin} and Theorem~\ref{th:approx_entropy} (with $\zeta \equiv 1$)
  once we have shown that \eqref{eq:weak_sol_nonlin:spaces_d}--\eqref{eq:weak_sol_nonlin:cond_f}
  hold for $D_i, S_i, f_i$ replaced by $\Dia, \Sia, \fia$, $i \in \{1, 2\}$.

  Indeed, by definition $\Dia, \Sia$ belong to $C^\infty([0, \infty))$ with
  \begin{align*}
        \alpha
    \le \Dia(s)
    \le d_i \begin{cases}
          \alpha^{1-m_i} + \alpha, & m_i \gt 1, \\ 
          (\frac{1}{1 + \alpha})^{m_i-1} + \alpha, & m_i \le 1
        \end{cases}
    \quad \text{and} \quad
        0
    \le \Sia(s)
    \le \chi_i \begin{cases}
          \alpha^{-q_i}, & q_i \gt 0, \\ 
          (\frac{1}{1 + \alpha})^{q_i} , & q_i \le 0
        \end{cases}
  \end{align*}
  as well as
  \begin{align*}
          \frac{|\Sia'(s)|}{\chi_i}
    &\le  \frac{(1 + |q_i-1|) (s+1)^{q_i-1}}{(1 + \alpha (s+1))^{q_i}}
          + \frac{|q_i| \alpha (s+1)^{q_i}}{(1 + \alpha (s+1))^{q_i+1}}
     \le  (1 + |q_i-1| + |q_i| \alpha)  \begin{cases}
            \alpha^{-q_i}, & q_i \gt 0, \\ 
            (\frac{1}{1 + \alpha})^{q_i} , & q_i \le 0
          \end{cases}
  \end{align*}
  for $s \ge 0$ and $i \in \{1, 2\}$.
  Also, for $i \in \{1, 2\}$, the function
  \begin{align*}
    [0, 1] \ni s \mapsto \frac{\Sia(s)}{s} = \frac{\chi_i (s + 1)^{q_i-1}}{(1 + \alpha (s+1))^{q_i}}
  \end{align*}
  is continuous and positive
  and, as $s \ge \frac{s+1}{2}$ for all $s \ge 1$,
  \begin{align*}
        \inf_{s \ge 1} \Sia(s)
    \ge \frac{\chi_i}{2} \inf_{s \ge 1} \left( \frac{s+1}{1 + \alpha(s+1)} \right)^{q_i}
    \ge \frac{\chi_i}{2} \begin{cases}
          (\frac{2}{1+2\alpha})^{q_i}, & q_i \gt 0, \\
          \alpha^{-q_i}, & q_i \le 0
        \end{cases}
    \qquad \text{for $i \in \{1, 2\}$}.
  \end{align*}
  That is, \eqref{eq:weak_sol_nonlin:spaces_d}, \eqref{eq:weak_sol_nonlin:spaces_s} and \eqref{eq:weak_sol_nonlin:cond_d_s} hold.

  As $\fia$ is continuous with $\supp \fia \subset [0, 2 \alpha^{-\frac1{{4-\min\{q_1, q_2\}}}}]^2 \sfed K$,
  $\|\fia\|_{L^\infty((0, \infty)^2)} = \|\fia\|_{C^0(K)}$ is finite
  and thus \eqref{eq:weak_sol_nonlin:spaces_f} is fulfilled for $i \in \{1, 2\}$.
  Moreover, the definitions of $f_1$ and $\foa$ entail that
  $[0, \infty)^2 \ni (s_1, s_2) \mapsto \frac{\foa(s_1, s_2)}{s_1}$ is also continuous and supported in $K$,
  implying $\lim_{s_1 \sea 0} \sup_{s_2 \ge 0} |\foa(s_1, s_2) \ln s_1| = 0$.
  The second statement in \eqref{eq:weak_sol_nonlin:cond_f} follows analogously.
\end{proof}

\section{The limit process \tops{$\alpha \sea 0$}{alpha to 0}: obtaining solution candidates}\label{sec:lim_alpha_sea_0}
Apart from assumptions made at the beginning of the preceding section,
throughout this section, for $\alpha \in (0, 1)$,
we also let $\Dia, \Sia, \xiia$, $i \in \{1, 2\}$, as introduced in Section~\ref{sec:approx_main},
$\una, \vna$ as well as $\ua, \va$ as given by Lemma~\ref{lm:def_una_vna} and Lemma~\ref{lm:ex_ua_va}, respectively,
and $\Gia$, $i \in \{1, 2\}$, as in \eqref{eq:ex_ua_va:gia}.

In order to prepare taking the limit $\alpha \sea 0$, we collect several a~priori estimates.
As already alluded to in the introduction, the main ingredient will be an entropy-like inequality;
that is, we will heavily rely on \eqref{eq:alpha_entropy:first_entropy}.

\subsection{Preliminary observations}
To streamline later arguments, in this subsection
we first collect several elementary statements regarding the parameters and nonlinearities involved in Theorem~\ref{th:ex_weak_nonlin}.

\begin{lemma}\label{lm:params}
  Set $\beta_i \defs m_i - q_i - 1$ for $i \in \{1, 2\}$.
  Then the inequalities 
  \begin{align}\label{eq:params:statement}
    2 \left( m_i - 1 - \frac{\beta_i}{2} \right) \lt p_i, \quad
    \beta_i \gt - 2
    \quad \text{and} \quad
    p_i \gt 0
  \end{align}
  hold.
\end{lemma}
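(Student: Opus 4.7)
My plan is to observe that all three inequalities are essentially algebraic consequences of the standing hypotheses \eqref{eq:intro:di_chii}, \eqref{eq:intro:mi_qi} and the definition \eqref{eq:intro:pi}, so no delicate estimates are needed. The main (and only) obstacle is bookkeeping: one must disentangle the case distinction hidden in the maximum defining $p_i$, but this turns out to be superfluous because a lower bound on $p_i$ valid in every case will suffice.

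First I would dispose of the middle inequality: by definition $\beta_i = m_i - q_i - 1$, and \eqref{eq:intro:mi_qi} states precisely $m_i - q_i > -1$, which is $\beta_i > -2$. Next, for the third inequality, I would note that regardless of whether \eqref{eq:intro:h1} or \eqref{eq:intro:h2} is in force, \eqref{eq:intro:pi} gives
\begin{align*}
  p_i \ge 2 - q_i \ge 1,
\end{align*}
since $q_i \le 1$ by \eqref{eq:intro:di_chii}; in particular $p_i > 0$.

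For the first inequality the key simplification is algebraic: substituting $\beta_i = m_i - q_i - 1$ yields
\begin{align*}
  2\left(m_i - 1 - \frac{\beta_i}{2}\right) = 2m_i - 2 - \beta_i = m_i + q_i - 1.
\end{align*}
Since under either \eqref{eq:intro:h1} or \eqref{eq:intro:h2} the definition \eqref{eq:intro:pi} entails
\begin{align*}
  p_i \ge m_i + 1 - q_i + \frac{2(2-q_i)}{n},
\end{align*}
it suffices to verify
\begin{align*}
  m_i + q_i - 1 \lt m_i + 1 - q_i + \frac{2(2-q_i)}{n},
  \qquad \text{i.e.,} \qquad
  n(q_i - 1) \lt 2 - q_i.
\end{align*}
Because $q_i \le 1$ by \eqref{eq:intro:di_chii}, the left-hand side is nonpositive while the right-hand side satisfies $2 - q_i \ge 1 > 0$, so the inequality holds strictly. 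This closes the proof; the whole argument is a short direct computation using only the parameter constraints \eqref{eq:intro:di_chii} and \eqref{eq:intro:mi_qi}, together with the lower bound on $p_i$ that is built into \eqref{eq:intro:pi}.
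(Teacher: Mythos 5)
Your proof is correct and follows essentially the same route as the paper: recognizing $\beta_i > -2$ as a restatement of \eqref{eq:intro:mi_qi}, bounding $p_i$ from below via \eqref{eq:intro:pi}, and reducing the first inequality to an elementary consequence of $q_i \le 1$ (the paper writes $2(m_i - 1 - \beta_i/2) = \beta_i + 2q_i$ where you write $m_i + q_i - 1$, but these are identical). The only cosmetic difference is that for $p_i > 0$ you invoke the second entry $2-q_i$ (resp.\ $3-q_i$) in the maximum defining $p_i$, whereas the paper deduces it from $\beta_i > -2$ and $q_i < 2$ via the first entry; both are immediate.
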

\begin{proof}
  Recalling that $p_i \ge \beta_i + 2 + \frac{2(2-q_i)}{n}$ by \eqref{eq:intro:pi} and $q_i \le 1 \lt 2$ by \eqref{eq:intro:di_chii}, we have
  \begin{align*}
        2 \left(m_i - 1 - \frac{\beta_i}{2}\right)
    =   \beta_i + 2 q_i
    \lt \beta_i + 2 q_i + \frac{2(2-q_i)}{n}
    \le p_i
    \qquad \text{for $i \in \{1, 2\}$},
  \end{align*}
  which shows that the first inequality in \eqref{eq:params:statement} is fulfilled.
  The second one therein is equivalent to the assumption \eqref{eq:intro:mi_qi},
  upon which the third one follows by the definition of $p_i$ as $q_i \lt 2$.
\end{proof}

As further preparation, we estimate the functions $\Gia$ defined in \eqref{eq:ex_ua_va:gia} and their derivatives
both from above and from below.
\begin{lemma}\label{lm:g_el_est}
  Set
  \begin{align}\label{eq:g_el_est:def_l_g}
    L_{q}(s) \defs 
    \begin{cases}
      1, & q \lt 1, \\
      \ln s, & q = 1
    \end{cases}
    \qquad \text{for $s \ge 0$ and $q \le 1$.}
  \end{align}
  and let $\Gia$ be as in \eqref{eq:ex_ua_va:gia} for $i \in \{1, 2\}$ and $\alpha \in (0, 1)$.
  Then there are $C_1, C_2, C_3, C_4, \gt 0$ such that
  \begin{alignat}{2}\label{eq:g_el_est:est_g}
    \Gia(s) &\begin{cases}
      \ge C_1 \left( \frac{s+1}{1+\alpha(s+1)} \right)^{2-q_i} L_{q_i}(s + \ure) - C_2 \\
      \le  C_2 s^{2 - q_i} L_{q_i}(s) + C_2 \alpha s^{3 - q_i} + C_2
    \end{cases}
    &&\qquad \text{for $s \ge 0$}, \\
  \label{eq:g_el_est:est_g'_lt_1}
    \Gia'(s) &\begin{cases}
      \ge C_3 \ln s \\
      \le  0
    \end{cases}
    &&\qquad \text{for $s \in (0, 1)$} \quad \text{and} \\
  \label{eq:g_el_est:est_g'_ge_1}
    \Gia'(s) &\begin{cases}
      \ge G_i'(s) - C_4 \alpha s^{2-q_i} \\
      \le  G_i'(s) + C_4 \alpha s^{2-q_i}
    \end{cases}
    &&\qquad \text{for $s \ge 1$}
  \end{alignat}
  for $\alpha \in (0, 1)$ and $i \in \{1,2\}$.
\end{lemma}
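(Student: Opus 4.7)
The plan is to exploit the explicit formula
\[
  \frac{1}{\Sia(\sigma)} = \frac{(1+\alpha(\sigma+1))^{q_i}\,(\sigma+1)^{1-q_i}}{\chi_i\,\sigma}
  \qquad \text{for $\sigma > 0$,}
\]
which differs from $1/S_i(\sigma)$ only by the factor $(1+\alpha(\sigma+1))^{q_i}$. Throughout, the key pointwise control is
\[
  \bigl|(1+\alpha(\sigma+1))^{q_i}-1\bigr| \le |q_i|\,\alpha\,(\sigma+1)
  \qquad \text{for $\sigma \ge 0$,}
\]
obtained by applying the mean value theorem to $x \mapsto (1+x)^{q_i}$ and noting that its derivative is bounded by $|q_i|$ on $[0,\infty)$ thanks to the assumption $q_i \le 1$. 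The regimes $s \in (0,1)$ and $s \ge 1$ will be handled separately throughout, since the integrand behaves qualitatively differently in the two ranges.

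The two derivative estimates are the easy starting point. For \eqref{eq:g_el_est:est_g'_lt_1}, nonpositivity of $\Gia'$ on $(0,1)$ is immediate from $\Gia'(s) = -\int_s^1 \frac{\dsigma}{\Sia(\sigma)}$, and the lower bound follows because on $[s,1]$ the factor $(1+\alpha(\sigma+1))^{q_i}(\sigma+1)^{1-q_i}$ is bounded by a constant independent of $\alpha \in (0,1)$, which yields $-\Gia'(s) \le C\int_s^1 \frac{\dsigma}{\sigma} = -C\ln s$. For \eqref{eq:g_el_est:est_g'_ge_1}, writing
\[
  \Gia'(s) - G_i'(s) = \int_1^s \frac{(1+\alpha(\sigma+1))^{q_i}-1}{S_i(\sigma)}\dsigma
\]
and applying the pointwise bound above produces an integrand dominated by $C\alpha(\sigma+1)^{1-q_i}$, whose integral from $1$ to $s$ is at most $C\alpha s^{2-q_i}$.

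The upper half of \eqref{eq:g_el_est:est_g} then follows by integrating \eqref{eq:g_el_est:est_g'_ge_1} once more: for $s \ge \ure$, $\Gia(s) \le G_i(s) + C\alpha s^{3-q_i}$, and a direct evaluation of $G_i$ yields $G_i(s) \le C s^{2-q_i} L_{q_i}(s)$ (with the logarithm appearing only in the borderline case $q_i=1$). For $s \in [0,\ure]$, all quantities involved are bounded by constants independent of $\alpha$, to be absorbed into $C_2$. The same boundedness argument also handles the lower half of \eqref{eq:g_el_est:est_g} when $s \in [0,2]$, since $\Gia \ge 0$ there while the target right-hand side is bounded, so a sufficiently large $C_2$ closes the argument.

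The hard part is the lower half of \eqref{eq:g_el_est:est_g} for $s \ge 2$, whose right-hand side interpolates between the unperturbed growth $(s+1)^{2-q_i}L_{q_i}(s+\ure)$ when $\alpha(s+1) \le 1$ and the saturated value $\alpha^{q_i-2}L_{q_i}(s+\ure)$ when $\alpha(s+1) \ge 1$. My plan is to use Fubini to rewrite
\[
  \Gia(s) = \int_1^s \frac{s-\sigma}{\Sia(\sigma)}\dsigma \ge \frac{s}{2}\int_1^{s/2} \frac{\dsigma}{\Sia(\sigma)}
  \qquad \text{for $s \ge 2$,}
\]
and to estimate the remaining single integral separately in the two regimes. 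When $\alpha(s+1) \le 1$, the factor $(1+\alpha(\sigma+1))^{q_i}$ stays bounded away from $0$ on all of $[1,s/2]$, so $\Gia(s) \ge c\,G_i(s)$ and the explicit growth of $G_i$ matches the target. When $\alpha(s+1) \ge 1$, I would split the integral at $\sigma = 1/\alpha$: on $[1,1/\alpha]$ the same uniform bound still applies---and this is where, in the borderline case $q_i=1$, the crucial factor $\ln(1/\alpha)$ must be extracted from $\int \frac{\dsigma}{\sigma}$---while on $[1/\alpha, s/2]$ one has $(1+\alpha(\sigma+1))^{q_i} \ge c\,(\alpha(\sigma+1))^{q_i}$ (with a corresponding variant for $q_i<0$), producing the $\alpha^{q_i}$ prefactor. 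The most delicate bookkeeping is in the case $q_i=1$, where one must ensure that the resulting $s\ln(1/\alpha)$ contribution dominates the target $\alpha^{-1}\ln(s+\ure)$ throughout the entire range $\alpha s \ge 1$, not merely at its boundary.
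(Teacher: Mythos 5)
Your proposal is correct and handles all cases, but for the hardest piece---the $\alpha$-uniform lower bound on $\Gia$ for $s$ large---you take a genuinely different route than the paper. The paper uses the one-line monotone factorization $(1+\alpha(\sigma+1))^{q_i} \ge (1+\alpha(s+1))^{\min\{q_i,0\}}$ for $\sigma \in [1,s]$, pulls this factor out of the double integral, and then only has to evaluate $\int_1^s\int_1^\rho \sigma^{-q_i}\,\dsigma\,\drho$ explicitly; the passage from $L_{q_i}(s)$ to $L_{q_i}(s+\ure)$ is absorbed by $\ln(s+\ure)-\ln s \le \ln(1+\ure)$, and no dichotomy on the size of $\alpha(s+1)$ and no integral splitting is needed. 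Your Fubini reduction $\Gia(s) \ge \frac{s}{2}\int_1^{s/2}\frac{\dsigma}{\Sia(\sigma)}$ followed by the split at $\sigma = 1/\alpha$ is more elementary in each step but generates exactly the case analysis you then have to carry out; it can be made to close, at the cost of heavier bookkeeping. On your closing concern about $q_i = 1$: the split is actually unnecessary there, since for $q_i = 1$ one has $\Sia(\sigma) = \chi_1\sigma/(1+\alpha(\sigma+1)) \le \chi_1\sigma$ uniformly in $\alpha$, so your Fubini bound yields $\Gia(s) \ge \frac{s\ln(s/2)}{2\chi_1}$ outright, while the target is always at most $C_1(s+1)\ln(s+\ure)$ because $\frac{s+1}{1+\alpha(s+1)} \le s+1$; choosing $C_1$ small and $C_2$ large closes that case. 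If one does split, note that neither contribution ($s\ln(1/\alpha)$ from $[1,1/\alpha]$ nor $\alpha^{q_i}s^2$ from $[1/\alpha,s/2]$) dominates the target by itself across the whole regime $\alpha(s+1)\ge 1$ (the first degenerates as $\alpha\to 1$, the second degenerates as $\alpha s\to 1$); they must be combined. Your derivative estimates \eqref{eq:g_el_est:est_g'_lt_1}--\eqref{eq:g_el_est:est_g'_ge_1} and the upper half of \eqref{eq:g_el_est:est_g} are obtained essentially as in the paper.
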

\begin{proof}
  We fix $i \in \{1, 2\}$.
  Since
  \begin{align*}
        \left| \frac{\partial}{\partial \alpha} [1 + \alpha (s+1)]^{q_i} \right|
    &=  |q_i| [1 + \alpha (s+1)]^{q_i-1} (s+1)
    \le |q_i| (s+1)
    \qquad \text{for all $s \ge 0$ and $\alpha \in (0, 1)$},
  \end{align*}
  the mean value theorem implies that
  \begin{align*}
          \chi_i |\Gia'(s) - G_i'(s)|
    &=    \sign(s-1) \int_1^s \frac{[1 + \alpha (\sigma+1)]^{q_i} - 1}{\sigma (\sigma+1)^{q_i-1}} \dsigma
     \le  \alpha |q_i| \sign(1-s) \int_s^1 \frac{(\sigma+1)^{2-q_i}}{\sigma} \dsigma
  \end{align*}
  for all $s \gt 0$ and $\alpha \in (0, 1)$.
  Estimating here $\sigma + 1 \le 2$ and $\sigma + 1 \le 2\sigma$ for $\sigma \in (0, 1)$ and $\sigma \ge 1$, respectively,
  we obtain
  \begin{alignat*}{2}
          \chi_i |\Gia'(s) - G_i'(s)|
    &\le  2^{2-q_i} \alpha |q_i| \int_s^1 \frac{1}{\sigma} \dsigma
     =    2^{2-q_i} \alpha |q_i| |\ln s|
    &&\qquad \text{for all $s \in (0, 1)$ and $\alpha \in (0, 1)$}
  \intertext{and}
          \chi_i |\Gia'(s) - G_i'(s)|
    &\le  2^{2-q_i} \alpha |q_i| \int_1^s \sigma^{1-q_i} \dsigma
     \le  \frac{2^{2-q_i} \alpha |q_i|}{2-q_i} s^{2-q_i}
    &&\qquad \text{for all $s \ge 1$ and $\alpha \in (0, 1)$}.
  \end{alignat*}
  As moreover $\Gia'(s) \le 0$ for $s \in (0, 1)$ and $\alpha \in (0, 1)$
  and
  \begin{align*}
          \chi_i |G_i'(s)|
    &=    \int_s^1 \frac{(\sigma+1)^{1-q_i}}{\sigma} \dsigma
     \le  2^{1-q_i} |\ln s|
    \qquad \text{for all $s \in (0, 1)$ and $\alpha \in (0, 1)$},
  \end{align*}
  consequences thereof are \eqref{eq:g_el_est:est_g'_lt_1} and \eqref{eq:g_el_est:est_g'_ge_1} for a certain $C_3, C_4 \gt 0$.

  Furthermore, again making use of the fact that $s+1 \le 2s$ for $s \ge 1$,
  a direct computation shows that
  \begin{align}\label{eq:g_el_est:est_g_ge_1}
          \chi_i G_i(s)
    &=    \int_1^s \int_1^\rho \frac{(\sigma+1)^{1-q_i}}{\sigma} \dsigma \drho \notag \\
    &\le  2^{1-q_i} \int_1^s \int_1^\rho \sigma^{-q_i} \dsigma \drho \notag \\
    &\le  \frac{2^{1-q_i}}{1 - q_i \mathds 1_{\{q_i \lt 1\}}} \int_1^s \rho^{1-q_i} L_{q_i}(\rho) \drho \notag \\
    &\le  \frac{2^{1-q_i}}{(2 - q_i)(1 - q_i \mathds 1_{\{q_i \lt 1\}})} s^{2-q_i} L_{q_i}(s)
    \qquad \text{for $s \ge 1$}.
  \end{align}
  In a similar vein, we obtain $c_1, c_2 \gt 0$ such that
  \begin{align*}
          \chi_i \Gia(s)
    &=    \int_1^s \int_1^\rho \frac{(1 + \alpha(\sigma+1))^{q_i}}{\sigma (\sigma+1)^{q_i-1}} \dsigma \drho \\
    &\ge  (1 + \alpha (s + 1))^{\min\{q_i, 0\}} \int_1^s \int_1^\rho \sigma^{-q_i} \dsigma \drho \\
    &\ge  \frac{c_1 s^{2-q_i} L_{q_i}(s)}{(1 + \alpha (s + 1))^{\max\{-q_i, 0\}}} - c_2 s \\
    &\ge  \frac{c_1 s^{2-q_i} L_{q_i}(s+\ure)}{(1 + \alpha (s + 1))^{\max\{-q_i, 0\}}}
          -  \frac{c_1 \ln(1+\ure) \mathds 1_{\{q_i \lt 1\}} s^{2-q_i}}{(1 + \alpha (s + 1))^{\max\{-q_i, 0\}}}
          - c_2 s
    \qquad \text{for $s \ge 1$ and $\alpha \in (0, 1)$},
  \end{align*}
  where in the last step we have made use of the fact that $\ln(s + \ure) - \ln s = \ln \frac{s + \ure}{s} \le \ln (1 + \ure)$ for $s \ge 1$.
  Since the first term on the right-hand side herein grows faster than the other two, there is moreover $c_3 \gt 0$ such that
  \begin{align*}
          \chi_i \Gia(s)
    &\ge  \frac{c_1 s^{2-q_i} L_{q_i}(s+\ure)}{2(1 + \alpha (s + 1))^{\max\{-q_i, 0\}}}
          - c_3 \\
    &\ge  \frac{c_1}{2} \left( \frac{s}{1+\alpha(s+1)} \right)^{2-q_i} L_{q_i}(s + \ure) 
          - c_3 \\
    &\ge  \frac{c_1}{2^{3-q_i}} \left( \frac{s+1}{1+\alpha(s+1)} \right)^{2-q_i} L_{q_i}(s + \ure) 
          - c_3
    \qquad \text{for $s \ge 1$ and $\alpha \in (0, 1)$},
  \end{align*}
  which, when combined with \eqref{eq:g_el_est:est_g'_lt_1}--\eqref{eq:g_el_est:est_g_ge_1},
  implies the existence of $C_1, C_2 \gt 0$
  such that \eqref{eq:g_el_est:est_g} holds for all $s \ge 1$ and $\alpha \in (0, 1)$.
  
  Finally, by integrating \eqref{eq:g_el_est:est_g'_lt_1},
  we see that there is $c_4 \gt 0$ such that $-c_4 \le \Gia(s) \le c_4$ for all $s \in [0, 1)$ and $\alpha \in (0, 1)$
  so that, possibly after enlarging $C_1$ and $C_2$,
  \eqref{eq:g_el_est:est_g} is indeed valid for all $s \ge 0$ and $\alpha \in (0, 1)$.
\end{proof}

The estimates obtained in Lemma~\ref{lm:g_el_est} and the definitions of $\Dia$ and $\Sia$ now allow us to
infer the following from the entropy-like inequality \eqref{eq:alpha_entropy:first_entropy}.
\begin{lemma}\label{lm:alpha_entropy}
  Let $T \in (0, \infty)$.
  Then there exists $C_1, C_2 \gt 0$ such that
  \begin{align}\label{eq:alpha_entropy:statement}
    &\pe    C_1 \intom \Ba^{2 - q_1}(\ua(\cdot, t)) L_{q_1}(\ua(\cdot, t) + \ure)
          + C_1 \intom \Ba^{2 - q_2}(\va(\cdot, t)) L_{q_2}(\va(\cdot, t) + \ure) \notag \\
    &\pe  + \frac{d_1}{\chi_1} \intnstom \Ba^{m_1 - q_1 - 1}(\ua) |\nabla \ua|^2
          + \frac{d_2}{\chi_2} \intnstom \Ba^{m_2 - q_2 - 1}(\va) |\nabla \va|^2 \notag \\
    &\le  C_2 
          + \intnstom \Goa'(\ua) \foa(\ua, \va)
          + \intnstom \Gta'(\va) \fta(\ua, \va)
  \end{align}
  for all $t \in (0, T)$ and all $\alpha \in (0, 1)$, where $L_{q_i}$ is as in \eqref{eq:g_el_est:def_l_g} and
  \begin{align}\label{eq:alpha_entropy:ba}
    \Ba(s) \defs \frac{s+1}{1 + \alpha(s+1)}, \qquad s \ge 0, \alpha \in (0, 1).
  \end{align}
\end{lemma}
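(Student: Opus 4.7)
The plan is to start from the entropy inequality \eqref{eq:alpha_entropy:first_entropy} in Lemma~\ref{lm:ex_ua_va}, estimate the energy and dissipation contributions from below by means of Lemma~\ref{lm:g_el_est}, and control the initial-data terms using the $\alpha$-uniform estimates in Lemma~\ref{lm:def_una_vna}. For the dissipation the key pointwise inequality is
\[
  \frac{\Dia(s)}{\Sia(s)} \ge \frac{d_i}{\chi_i} \Ba(s)^{m_i - q_i - 1}
  \qquad \text{for $s \gt 0$, $\alpha \in (0, 1)$ and $i \in \{1, 2\}$},
\]
which follows from $\Dia(s) \ge d_i \Ba(s)^{m_i - 1}$ together with the identity $\Sia(s) = \chi_i \frac{s}{s+1} \Ba(s)^{q_i}$ and the trivial bound $\frac{s+1}{s} \ge 1$. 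Substituting this into the two dissipation integrals on the left-hand side of \eqref{eq:alpha_entropy:first_entropy} immediately produces the gradient contributions appearing in \eqref{eq:alpha_entropy:statement}.

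Next, I would apply the lower estimate in \eqref{eq:g_el_est:est_g} to $\Goa(\ua(\cdot, t))$ and $\Gta(\va(\cdot, t))$, which directly yields the $\Ba^{2-q_i} L_{q_i}$ terms of \eqref{eq:alpha_entropy:statement}, up to additive constants proportional to $|\Omega|$ that can be absorbed into $C_2$.

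It remains to bound $\intom \Goa(\una) + \intom \Gta(\vna)$ uniformly in $\alpha \in (0, 1)$. Using the upper bound in \eqref{eq:g_el_est:est_g}, this reduces to uniformly controlling $\intom \una^{2-q_i} L_{q_i}(\una)$ and $\alpha \|\una\|_{\leb{3-q_i}}^{3-q_i}$ for $i \in \{1, 2\}$ (and the analogous quantities for $\vna$). The first type of expression is uniformly bounded by the convergence in \eqref{eq:def_una_vna:conv_lebl} (in $\leb{2-q_i}$ if $q_i \lt 1$, in $\lebl$ if $q_i = 1$, the latter case being what dictates the precise form of assumption \eqref{eq:intro:cond_init}). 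For the second, since $3 - q_i \le p$ with $p$ as in Lemma~\ref{lm:def_una_vna}, H\"older's inequality gives
\[
  \alpha \|\una\|_{\leb{3-q_i}}^{3-q_i}
  \le |\Omega|^{1 - (3-q_i)/p} \alpha^{1 - (3-q_i)/p} \bigl( \alpha \|\una\|_{\leb{p}}^p \bigr)^{(3-q_i)/p},
\]
and the right-hand side tends to zero as $\alpha \sea 0$ by \eqref{eq:def_una_vna:eps_l2}, hence is uniformly bounded. Combining all three ingredients in \eqref{eq:alpha_entropy:first_entropy} (with $T$ replaced by the free variable $t$) then yields \eqref{eq:alpha_entropy:statement} for appropriate constants $C_1, C_2 \gt 0$. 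No real obstacle is anticipated: once the pointwise estimate for $\Dia/\Sia$ and the separate $L\log L$ treatment of the $q_i = 1$ case on the initial data are in place, everything is a mechanical combination of the cited lemmata.
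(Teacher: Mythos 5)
Your proposal is correct and follows essentially the same route as the paper: starting from \eqref{eq:alpha_entropy:first_entropy}, bounding $\Dia/\Sia$ from below by $\frac{d_i}{\chi_i}\Ba^{m_i-q_i-1}$ via $\Dia \ge d_i \Ba^{m_i-1}$ and $\Sia \le \chi_i \Ba^{q_i}$, invoking the lower bound in \eqref{eq:g_el_est:est_g} for the energy terms, and bounding the initial-data contributions with the upper bound in \eqref{eq:g_el_est:est_g} together with \eqref{eq:def_una_vna:conv_lebl} and \eqref{eq:def_una_vna:eps_l2}. The only small addition is that you spell out the H\"older interpolation needed to reduce $\alpha\|\una\|_{\leb{3-q_i}}^{3-q_i}$ to the quantity $\alpha\|\una\|_{\leb p}^{p}$ controlled in Lemma~\ref{lm:def_una_vna}, a step the paper leaves implicit.
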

\begin{proof}
  As according to \eqref{eq:def_una_vna:conv_lebl} and \eqref{eq:def_una_vna:eps_l2},
  there is $c_1 \gt 0$ such that
  \begin{align*}
            \intom \una^{2 - q_1} L_{q_1}(\una) + \alpha \intom \una^{3-q_1}
          + \intom \vna^{2 - q_2} L_{q_2}(\vna) + \alpha \intom \vna^{3-q_2}
    &\le  c_1
    \qquad \text{for all $\alpha \in (0, 1)$},
  \end{align*}
  an application of \eqref{eq:g_el_est:est_g} gives $c_2 \gt 0$ such that
  \begin{align*}
            \intom \Goa(\una)
          + \intom \Gta(\vna)
    &\le  c_2
    \qquad \text{for all $\alpha \in (0, 1)$},
  \end{align*}
  Moreover,
  \begin{align*}
    \Dia(s) \ge d_i \Ba^{m_i-1}(s)
    \quad \text{and} \quad
    \Sia(s) \le \chi_i \Ba^{q_i}(s)
  \end{align*}
  and hence $\frac{\Dia(s)}{\Sia(s)} \ge \frac{d_i}{\chi_i} \Ba^{m_i - q_i - 1}(s)$ for $s \ge 0$, $\alpha \in (0, 1)$ and $i \in \{1, 2\}$.
  Also making use of the first inequality in \eqref{eq:g_el_est:est_g},
  we can then infer \eqref{eq:alpha_entropy:statement} from \eqref{eq:alpha_entropy:first_entropy}
  for certain $C_1, C_2 \gt 0$.
\end{proof}

\subsection{Controlling the right-hand side of \eqref{eq:alpha_entropy:statement}}
In order to obtain $\alpha$-independent a priori estimates from \eqref{eq:alpha_entropy:statement},
we need to obtain an upper bound for the terms on the right-hand side therein.
Restricted to the set where $\ua$ and $\va$ are at least $1$,
we will bound the corresponding integrand using one of the assumptions \eqref{eq:intro:cond_f1} and \eqref{eq:intro:cond_f2}.
This is complemented by the following observation essentially showing we may indeed focus on that regime.
\begin{lemma}\label{lm:entropy_rhs_basic}
  There is $C \gt 0$ such that 
  \begin{align}\label{eq:entropy_rhs_basic:statement}
    &\pe  \Goa'(\ua) \foa(\ua, \va) + \Gta'(\va) \fta(\ua, \va) \notag \\
    &\le  C +
          \left( G_1'(\ua) \fo(\ua, \va) + G_2'(\va) \ft(\ua, \va) \right) \xioa(\ua) \xita(\va)
            \mathds 1_{\{\ua \ge 1\} \cap \{\va \ge 1\}}
  \end{align}
  a.e.\ in $\Omega \times (0, \infty)$ for all $\alpha \in (0, 1)$.
\end{lemma}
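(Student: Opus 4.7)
Under \eqref{eq:intro:h1} the kinetic terms vanish identically and the claim is trivial, so I assume \eqref{eq:intro:h2}. My plan is to verify the pointwise inequality by partitioning $\Omega \times (0, \infty)$ into the four regions determined by whether $\ua$ and $\va$ lie below or above $1$. Throughout I will exploit that, by \eqref{eq:approx_main:xiia} with $k \defs 4 - \min\{q_1, q_2\} \ge 3$, on $\supp(\xioa \xita)$ one has $\ua, \va \le 2\alpha^{-1/k}$ and therefore $\alpha(\ua+1), \alpha(\va+1) \le 3\alpha^{1-1/k} \le 3$ uniformly in $\alpha \in (0, 1)$; moreover $\foa = f_1 \xioa \xita$ and $\fta = f_2 \xioa \xita$ by definition.

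On the region $\{\ua \ge 1, \va \ge 1\}$ — the only one where the indicator on the right-hand side is active — I apply Lemma~\ref{lm:g_el_est}~\eqref{eq:g_el_est:est_g'_ge_1} separately to each summand, obtaining $\Goa'(\ua) \foa \le G_1'(\ua) \foa + C_4 \alpha \ua^{2-q_1} |f_1(\ua, \va)| \xioa \xita$. Since $|f_1| \le C(\ua + \ua^2 + \ua \va)$, every monomial $\alpha \ua^i \va^j$ in the error has $i + j \le 4 - q_1$, so on $\supp(\xioa \xita)$ it is bounded by $C \alpha^{1 - (4 - q_1)/k}$, which stays finite as $\alpha \sea 0$ since $(4 - q_1)/k \le 1$. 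A symmetric computation controls $\Gta'(\va) \fta$.

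In the remaining regions the indicator vanishes and I must prove an absolute bound $\le C$. Three of the four non-core configurations split naturally: when $\ua < 1$, Lemma~\ref{lm:g_el_est}~\eqref{eq:g_el_est:est_g'_lt_1} gives $\Goa'(\ua) \le 0$ and $|\Goa'(\ua)| \le C_3 |\ln \ua|$, and decomposing $f_1 = \ua(\lambda_1 + a_1 \va) - \mu_1 \ua^2$ shows that $\Goa'(\ua) \foa$ is bounded by $C_3 \mu_1 \ua^2 |\ln \ua| \le C$, the positive part contributing nonpositively. On $\{\ua < 1, \va \ge 1\}$, Lemma~\ref{lm:g_el_est}~\eqref{eq:g_el_est:est_g'_ge_1} combined with $f_2 \le \va(\lambda_2 - \mu_2 \va)$ (which follows from $-a_2 \ua \va \le 0$) yields $G_2'(\va) f_2 \le 0$ for $\va \ge \lambda_2/\mu_2$ and bounded by continuity otherwise, hence $\Gta'(\va) \fta \le C$.

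The main obstacle is the asymmetric mixed case $\ua \ge 1, \va < 1$. Here $\xita(\va) = 1$ and the $\Gta'(\va) \fta$ analysis produces the genuinely dangerous term $|\Gta'(\va)| a_2 \ua \va \xioa$, which via $|\Gta'(\va)| \va \le C$ on $(0, 1)$ reduces to $C a_2 \ua \xioa$, a quantity that can grow like $\alpha^{-1/k}$. It must be absorbed by the dissipative contribution $-\Goa'(\ua) \mu_1 \ua^2$ coming from the quadratic self-limiter in $f_1$: the explicit formula for $\Soa$ together with the uniform bound $1 + \alpha(\sigma+1) \le 4$ on $\supp \xioa$ yields $\Goa'(\ua) \ge c \ua^{1-q_1}$ for some $c > 0$ and all $\ua \ge 2$ on the support. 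Combining this with $\Goa'(\ua) \foa \le \Goa'(\ua) [(\lambda_1 + a_1) \ua - \mu_1 \ua^2] \xioa$ (using $\va < 1$ and $\Goa' \ge 0$), the sum is bounded above by $-c \mu_1 \ua^{3-q_1}/4 \cdot \xioa + C a_2 \ua \xioa + C$ once $\ua$ exceeds a constant depending only on the parameters; since $3 - q_1 \ge 2$, the cubic term dominates the linear one. On the complementary bounded $\ua$-range every quantity is controlled uniformly in $\alpha$ because $1/\Soa$ is uniformly bounded there.
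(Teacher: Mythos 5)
Your argument is correct and follows the same overall strategy as the paper: both split $\Omega \times (0, \infty)$ into the four regions determined by whether $\ua$ and $\va$ exceed $1$, both use the pair of estimates \eqref{eq:g_el_est:est_g'_lt_1} and \eqref{eq:g_el_est:est_g'_ge_1} from Lemma~\ref{lm:g_el_est} as the principal tools, and both exploit the cutoff bound $\ua, \va \le 2\alpha^{-1/(4-\min\{q_1,q_2\})}$ on $\supp(\xioa\xita)$ to kill the $O(\alpha)$ error produced by \eqref{eq:g_el_est:est_g'_ge_1} on the core region. The one real point of departure is the mixed region $\{\ua \ge 1\} \cap \{\va \lt 1\}$, where the prey equation leaves a term $\lesssim a_2\ua\xioa$ that must be absorbed. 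You derive an explicit $\alpha$-uniform lower bound $\Goa'(\ua) \ge c\,\ua^{1-q_1}$ on $\supp\xioa$ (using $1 + \alpha(\sigma+1) \le 4$ there), so that the self-limiting term in $\foa$ yields $\Goa'(\ua)\foa \lesssim -\ua^{3-q_1}\xioa$, which readily dominates the linear nuisance; the paper instead passes from $\Goa'$ to $G_1'$ via \eqref{eq:g_el_est:est_g'_ge_1} first and then obtains the weaker but still sufficient dominant term $-\tfrac{\mu_1}{2}\ua^2\xioa\xita$. Your route is slightly more explicit and sidesteps the implicit use of $G_1'(\ua) \ge 1$ in the paper's inequality $G_1'(\ua)\fo\xioa\xita \le c_6 - \tfrac{\mu_1}{2}\ua^2\xioa\xita$, which only holds beyond a $\chi_1$-dependent threshold rather than beyond the threshold $2(\lambda_1+a_1)/\mu_1$ to which the paper's explicit constant $c_6$ is tied. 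One small imprecision worth noting: your threshold ``$\ua \ge 2$'' for the lower bound on $\Goa'$ is not correct uniformly in $q_1$ (one needs $\ua^{1-q_1} \ge 2$ when $q_1 \lt 1$, and $\ua \ge \ure$ when $q_1 = 1$), but since you later phrase the conclusion correctly as ``once $\ua$ exceeds a constant depending only on the parameters'' and handle the bounded $\ua$-range by continuity, this is harmless.
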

\begin{proof}
  For $\alpha \in (0, 1)$, we fix representatives of $\ua$ and $\va$ in $L_{\loc}^1(\Ombar \times [0, \infty))$
  so that sets such as $\{\ua \lt 1\}$ or $\{\va \lt 1\}$ are well-defined.

  According to \eqref{eq:g_el_est:est_g'_lt_1}, there is $c_1 \gt 0$ such that
  \begin{align*}
    c_1 \ln s \le \Gia'(s) \le 0
    \qquad \text{for all $s \in (0, 1)$, $\alpha \in (0, 1)$ and $i \in \{1, 2\}$}.
  \end{align*}
  Recalling the definition of $\fia$ and that $\ua, \va$ are nonnegative, this implies
  \begin{alignat*}{2}
          \Goa'(\ua) \foa(\ua, \va)
    &\le  c_1 |\ln \ua| \mu_1 \ua^2 \xioa(\ua) \xita(\va) 
    &&\qquad \text{in $\{\ua \lt 1\}$} \quad \text{and} \\
          \Gta'(\va) \fta(\ua, \va)
    &\le  c_1 |\ln \va| (\mu_2 \va^2 + a_2 \ua \va) \xioa(\ua) \xita(\va)
    &&\qquad \text{in $\{\va \lt 1\}$}
  \end{alignat*}
  for all $\alpha \in (0, 1)$.
  Since $(0, 1) \ni s \mapsto s \ln s$ is bounded, there is $c_2 \gt 0$ such that
  \begin{alignat}{2}\label{eq:entropy_rhs_basic:ua_lt_1}
          \Goa'(\ua) \foa(\ua, \va)
    &\le  c_2
    &&\qquad \text{in $\{\ua \lt 1\}$} \quad \text{and} \\
    \label{eq:entropy_rhs_basic:va_lt_1}
          \Gta'(\va) \fta(\ua, \va)
    &\le  c_2 + c_2 \ua \xioa(\ua) \xita(\va)
    &&\qquad \text{in $\{\va \lt 1\}$}.
  \end{alignat}
  for all $\alpha \in (0, 1)$
  and thus \eqref{eq:entropy_rhs_basic:statement} holds on the set $\{\ua \lt 1\} \cap \{\va \lt 1\}$ for some $C \gt 0$.

  Moreover, by \eqref{eq:g_el_est:est_g'_ge_1}, there is $c_3 \gt 0$ such that
  \begin{align*}
    |\Gia'(s) - G_i'(s)| \le c_3 \alpha s^{2-q_i}
    \qquad \text{for all $s \ge 1$, $\alpha \in (0, 1)$ and $i \in \{1, 2\}$}.
  \end{align*}
  As \eqref{eq:approx_main:xiia} entails that
  $\ua$ and $\va$ are bounded by $2 \alpha^{-\frac1{4-\min\{q_1, q_2\}}}$ on $\supp \xioa(\ua)$ and $\supp \xita(\va)$, respectively,
  and hence
  \begin{align*}
          \alpha \ua^{2-q_1} |\foa(\ua, \va)|
    &\le  \alpha \ua^{2-q_1} (\lambda_1 \ua + \mu_1 \ua^2 + a_1 \ua \va) \xioa(\ua) \xita(\va) \\
    &\le  2^{4-q_1} (\lambda_1 + \mu_1 + a_1)
     \sfed c_4
    \qquad \text{in $\{\ua \ge 1\}$ for all $\alpha \in (0, 1)$},
  \end{align*}
  we can conclude
  \begin{align}\label{eq:entropy_rhs_basic:ua_ge_1}
          \Goa'(\ua) \foa(\ua, \va)
    &\le  G_1'(\ua) \fo(\ua, \va) \xi_\alpha(\va) \xi_\alpha(\va)
          + c_3 c_4
    \qquad \text{in $\{\ua \ge 1\}$ for all $\alpha \in (0, 1)$}.
  \end{align}
  Likewise, there is $c_5 \gt 0$ such that
  \begin{align}\label{eq:entropy_rhs_basic:va_ge_1}
          \Gta'(\va) \fta(\ua, \va)
    &\le  G_2'(\va) \ft(\ua, \va) \xi_\alpha(\va) \xi_\alpha(\va)
          + c_3 c_5
    \qquad \text{in $\{\va \ge 1\}$ for all $\alpha \in (0, 1)$}.
  \end{align}
  Therefore, after enlarging $C$ if necessary,
  \eqref{eq:entropy_rhs_basic:statement} holds also in the regime $\{\ua \ge 1\} \cap \{\va \ge 1\}$.

  Furthermore,
  \begin{align*}
        \fo(\ua, \va)
    \le \ua \left(\lambda_1 - \frac{\mu_1}{2} \ua + a_1 \right) - \frac{\mu_1}{2} \ua^2
    \le - \frac{\mu_1}{2} \ua^2
    \qquad \text{in } \left\{\ua \ge \frac{2(\lambda_1 + a_1)}{\mu_1}\right\} \cap \{\va \lt 1\} \text{ for all $\alpha \in (0, 1)$}
  \end{align*}
  so that since $G_1'(s) \ge 0$ for $s \ge 1$, we have
  \begin{align*}
        G_1'(\ua) \fo(\ua, \va) \xioa(\ua) \xita(\va)
    \le c_6
        - \frac{\mu_1}{2} \ua^2 \xioa(\ua) \xita(\va)
    \qquad \text{in $\{\ua \ge 1\} \cap \{\va \lt 1\}$ for all $\alpha \in (0, 1)$},
  \end{align*}
  wherein
  $c_6 \defs \|G_1' \fo(\cdot, 1)\|_{L^\infty(0, \frac{2(\lambda_1 + a_1)}{\mu_1})}$ is finite as $G_1' \fo(\cdot, 1)$ is continuous on $[0, \infty)$.
  Combined with \eqref{eq:entropy_rhs_basic:va_lt_1} and \eqref{eq:entropy_rhs_basic:ua_ge_1},
  and possibly after enlarging $C$,
  this shows that \eqref{eq:entropy_rhs_basic:statement} holds also on the set $\{\ua \ge 1\} \cap \{\va \lt 1\}$,
  
  Finally, for the remaining subset $\{\ua \lt 1\} \cap \{\va \ge 1\}$ of $\Omega \times (0, \infty)$, we can argue similarly as above.
\end{proof}

If \eqref{eq:intro:cond_f1} holds,
the preceding lemma immediately allows us to bound the integrands on the right-hand side of \eqref{eq:alpha_entropy:statement}.
\begin{lemma}\label{lm:rhs_bdd_f1}
  Let $T \in (0, \infty)$ and suppose that \eqref{eq:intro:cond_f1} holds.
  Then we can find $C_1, C_2 \gt 0$ such that
  \begin{align}\label{eq:rhs_bdd_f1:statement}
    &\pe  \intom \Goa'(\ua) \foa(\ua, \va) + \intom \Gta'(\va) \fta(\ua, \va) \notag \\
    &\le  C_1 - C_2 \intntom \ua^2 \ln \ua - C_2 \intntom \va^2 \ln \va
    \qquad \text{a.e.\ in $\Omega \times (0, \infty)$ for all $\alpha \in (0, 1)$}.
  \end{align}
\end{lemma}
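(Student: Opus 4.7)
The plan is to combine the pointwise a.e.\ bound from Lemma~\ref{lm:entropy_rhs_basic} with the structural assumption~\eqref{eq:intro:cond_f1} and then integrate. The preparation in Lemma~\ref{lm:entropy_rhs_basic} has already done most of the case analysis: on the complement of $\{\ua \ge 1\} \cap \{\va \ge 1\}$ the integrand on the left is dominated by a fixed constant (thanks to the cutoffs built into $\foa, \fta$ and the absorbing terms $-\mu_i \ua^2$ in $\fo, \ft$), so the decisive estimate only needs to be supplied in the regime where both components exceed one.

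On that set, applying~\eqref{eq:intro:cond_f1} with $s_1 = \ua$, $s_2 = \va$ yields
\begin{align*}
  G_1'(\ua) \fo(\ua, \va) + G_2'(\va) \ft(\ua, \va) \le C_2 - C_1 \ua^2 \ln \ua - C_1 \va^2 \ln \va.
\end{align*}
Substituting this into Lemma~\ref{lm:entropy_rhs_basic} and using $\xioa, \xita \in [0, 1]$ together with $\ua^2 \ln \ua \ge 0$ on $\{\ua \ge 1\}$ (and analogously for $\va$) produces, pointwise a.e.,
\begin{align*}
  \Goa'(\ua) \foa + \Gta'(\va) \fta \le C - C_1 \bigl( \ua^2 \ln \ua + \va^2 \ln \va \bigr) \xioa \xita \mathds 1_{\{\ua \ge 1\} \cap \{\va \ge 1\}}.
\end{align*}
Integrating over $\Omega \times (0, T)$, and noting that $s^2 \ln s \ge -\ure^{-2}/2$ for $s \ge 0$ so that the contribution to $-C_2 \int \ua^2 \ln \ua$ coming from $\{\ua \lt 1\}$ is absorbed into an additive constant, I arrive at \eqref{eq:rhs_bdd_f1:statement} after relabeling constants.

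The main obstacle is the presence of the cutoffs $\xioa(\ua), \xita(\va)$, which can be strictly less than one whenever $\ua$ or $\va$ exceeds the threshold $\alpha^{-1/(4-\min\{q_1, q_2\})}$; consequently the bound just described naturally carries $-C_1 \int (\ua^2 \ln \ua + \va^2 \ln \va) \xioa \xita \mathds 1_{\{\ua, \va \ge 1\}}$ rather than the unrestricted $-C_2 \int \ua^2 \ln \ua - C_2 \int \va^2 \ln \va$ of the target. To bridge this gap I would exploit the sharper pointwise estimates implicit in the proof of Lemma~\ref{lm:entropy_rhs_basic}, in particular the bound $\Goa'(\ua) \foa \le C - \tfrac{\mu_1}{2} \ua^2 G_1'(\ua) \xioa \xita$ available on the mixed regime $\{\ua \ge 2(\lambda_1 + a_1)/\mu_1\} \cap \{\va \lt 1\}$, together with $G_1'(s) \ge c \ln s$ for $s \ge 1$ (and the analogous statement with the roles of $u$ and $v$ reversed); these supply the missing $-\ua^2 \ln \ua$ contributions outside the symmetric large-mass regime and so allow one to replace the cutoff-restricted integral by the full one at the cost of enlarging the additive constant and, if necessary, shrinking $C_2$ relative to the $C_1$ of~\eqref{eq:intro:cond_f1}.
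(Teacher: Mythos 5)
Your steps (1)--(3) --- invoking Lemma~\ref{lm:entropy_rhs_basic}, applying \eqref{eq:intro:cond_f1} on the regime $\{\ua \ge 1\} \cap \{\va \ge 1\}$, and absorbing the bounded contributions from $\{\ua \lt 1\} \cup \{\va \lt 1\}$ into an additive constant --- coincide with what the paper does; its own proof is the single sentence that the claim ``directly follows from combining \eqref{eq:intro:cond_f1}, \eqref{eq:alpha_entropy:statement} and \eqref{eq:entropy_rhs_basic:statement}.''

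The repair you propose in the last paragraph, however, does not close the gap you correctly identified. What Lemma~\ref{lm:entropy_rhs_basic} together with \eqref{eq:intro:cond_f1} actually yields is
\begin{align*}
  \Goa'(\ua)\foa + \Gta'(\va)\fta
  \le C - C_1 \bigl( \ua^2 \ln \ua + \va^2 \ln \va \bigr)\, \xioa(\ua)\, \xita(\va)\, \mathds 1_{\{\ua \ge 1\} \cap \{\va \ge 1\}},
\end{align*}
and your bridge only removes the indicator $\mathds 1_{\{\ua \ge 1\} \cap \{\va \ge 1\}}$, not the factor $\xioa(\ua)\xita(\va)$. The mixed-regime bounds you quote from the proof of Lemma~\ref{lm:entropy_rhs_basic}, e.g.\ $\Goa'(\ua)\foa \le C - \tfrac{\mu_1}{2}\ua^2 G_1'(\ua)\,\xioa(\ua)\xita(\va)$ on $\{\ua \ge 2(\lambda_1 + a_1)/\mu_1\} \cap \{\va \lt 1\}$, again carry the same product $\xioa\xita$, so they contribute nothing once $\ua$ or $\va$ exceeds $2\alpha^{-1/(4-\min\{q_1,q_2\})}$. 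This is not a technicality that a sharper reading of Lemma~\ref{lm:entropy_rhs_basic} can cure: by definition $\foa$ and $\fta$ vanish identically on the super-cutoff set, so the left-hand side of \eqref{eq:rhs_bdd_f1:statement} is zero there, while $-C_2 \ua^2 \ln \ua$ can be arbitrarily negative; thus no estimate produced by bounding $\Goa'\foa + \Gta'\fta$ pointwise can supply the missing dissipation there. What your argument (and the paper's terse one) actually establishes is \eqref{eq:rhs_bdd_f1:statement} with $\ua^2\ln\ua$ and $\va^2\ln\va$ replaced by $\ua^2\ln\ua\,\xioa(\ua)\xita(\va)$ and $\va^2\ln\va\,\xioa(\ua)\xita(\va)$; obtaining the literal, cutoff-free statement would require an independent a priori bound on $\intnstom \ua^2 \ln \ua$ over the region where the cutoffs are active, which is not available at this stage.
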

\begin{proof}
  This directly follows from combining \eqref{eq:intro:cond_f1}, \eqref{eq:alpha_entropy:statement} and \eqref{eq:entropy_rhs_basic:statement}.
\end{proof}


In the majority of the remainder of this subsection,
we will show that \eqref{eq:rhs_bdd_f1:statement} also holds if we assume \eqref{eq:intro:cond_f2} instead of \eqref{eq:intro:cond_f1}.
To that end, we may assume that \eqref{eq:intro:h2} holds
since  the right-hand side of \eqref{eq:alpha_entropy:statement} is trivially bounded in the case of \eqref{eq:intro:h1}.
The key ingredient to the corresponding proof will be the Gagliardo--Nirenberg inequality 
whose application we prepare by obtaining locally uniform-in-time $\leb1$ bounds in the following
\begin{lemma}\label{lm:ua_va_l1}
  Let $T \in (0, \infty)$ and suppose that \eqref{eq:intro:h2} holds. There is $C \gt 0$ such that
  \begin{align}\label{eq:ua_va_l1:statement}
    \intom \ua(\cdot, t) + \intom \va(\cdot, t) \le C
    \qquad \text{for all $t \in (0, T)$ and $\alpha \in (0, 1)$}.
  \end{align}
\end{lemma}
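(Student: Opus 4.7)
The plan is to exploit the Lotka--Volterra structure of $f_1$ and $f_2$ in \eqref{eq:intro:fi}: testing each equation of \eqref{prob:alpha} against the spatial constant $1$ yields ODE identities for the total masses, and a suitable linear combination then makes the cross-interaction term disappear, leaving a right-hand side that is bounded thanks to the logistic damping in \eqref{eq:intro:h2}.

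First, since $\ua, \va \in \Winf$, testing the weak formulation (in the sense of Theorem~\ref{th:weak_sol_nonlin}) of the first equation of \eqref{prob:alpha} with $\varphi(x, t) \defs \eta(t)$ for $\eta \in C_c^\infty((0, \infty))$---which is admissible as $\eta(t) \cdot 1 \in \sob12$ in space---annihilates both flux contributions via $\nabla \varphi = 0$, yielding
\begin{align*}
  \ddt \intom \ua = \intom \foa(\ua, \va)
\end{align*}
in the distributional sense, and thus a.e., on $(0, \infty)$; together with $\ua \in C^0([0, \infty); \leb2)$ this renders $t \mapsto \intom \ua(\cdot, t)$ absolutely continuous. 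An analogous identity holds for $\va$.

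By \eqref{eq:intro:fi} the $s_1 s_2$ coefficients in $f_1$ and $f_2$ have opposite signs, so
\begin{align*}
  a_2 f_1(s_1, s_2) + a_1 f_2(s_1, s_2) = a_2 \lambda_1 s_1 - a_2 \mu_1 s_1^2 + a_1 \lambda_2 s_2 - a_1 \mu_2 s_2^2,
\end{align*}
which, since $\mu_1, \mu_2 \gt 0$ by \eqref{eq:intro:h2}, is a sum of two downward-opening parabolas and hence is bounded above by some $c_0 \gt 0$ on $[0, \infty)^2$. Because $\xioa(\ua), \xita(\va) \in [0, 1]$ and multiplication by values in $[0, 1]$ cannot exceed the positive part, we deduce
\begin{align*}
  a_2 \foa(\ua, \va) + a_1 \fta(\ua, \va) = [a_2 f_1(\ua, \va) + a_1 f_2(\ua, \va)] \xioa(\ua) \xita(\va) \le c_0
  \qquad \text{a.e.}
\end{align*}

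Combining the two ODE relations with weights $a_2$ and $a_1$, integrating over $(0, t)$ and invoking $\intom \una = \intom u_0$, $\intom \vna = \intom v_0$ from Lemma~\ref{lm:def_una_vna} then gives
\begin{align*}
  a_2 \intom \ua(\cdot, t) + a_1 \intom \va(\cdot, t) \le a_2 \intom u_0 + a_1 \intom v_0 + c_0 |\Omega| T
  \qquad \text{for all $t \in (0, T)$, $\alpha \in (0, 1)$},
\end{align*}
from which \eqref{eq:ua_va_l1:statement} follows upon dividing by $\min\{a_1, a_2\} \gt 0$. No substantive difficulty arises; the argument hinges solely on the algebraic cancellation that \eqref{eq:intro:fi} was tailor-made to provide, with admissibility of the constant-in-space test function being the only minor point requiring attention.
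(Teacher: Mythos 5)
Your proof is correct and follows essentially the same route as the paper: test with the spatial constants $a_2$ and $a_1$ so that the $\pm a_1 a_2 \ua\va$ interaction terms cancel, then bound the remaining Lotka--Volterra contribution using the logistic damping from \eqref{eq:intro:h2}. The only cosmetic difference is that you bound $a_2 f_1 + a_1 f_2$ pointwise (as a sum of downward parabolas) before inserting the cutoffs, whereas the paper applies Young's inequality to each equation separately and then adds; the resulting constant is the same.
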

\begin{proof}
  Testing the first equation in \eqref{prob:alpha} with the constant function $a_2 \gt 0$,
  recalling the definition of $\foa$
  and applying Young's inequality give
  \begin{align*}
    &\pe  a_2 \intom \ua(\cdot, t) - a_2 \intom \una
     =    a_2 \intnstom \uat \\
    &=    a_2 \lambda_1 \intnstom \ua \xioa(\ua) \xita(\va)
          - a_2 \mu_1 \intnstom \ua^2 \xioa(\ua) \xita(\va)
          + a_1 a_2 \intnstom \ua \va \xioa(\ua) \xita(\va) \\
    &\le  \frac{a_2 \lambda_1^2}{4 \mu_1} |\Omega| T
          + a_1 a_2 \intnstom \ua \va \xioa(\ua) \xita(\va)
    \qquad \text{for $t \in (0, T)$ and $\alpha \in (0, 1)$}.
  \end{align*}
  As likewise
  \begin{align*}
          a_1 \intom \va(\cdot, t) - a_1 \intom \vna
    &\le  \frac{a_1 \lambda_2^2}{4 \mu_2} |\Omega| T
          - a_1 a_2 \intnstom \ua \va \xioa(\ua) \xita(\va)
    \qquad \text{for $t \in (0, T)$ and $\alpha \in (0, 1)$},
  \end{align*}
  we conclude
  \begin{align*}
          a_2 \intom \ua(\cdot, t)
          + a_1 \intom \va(\cdot, t) 
    &\le  a_2 \intom \una 
          + a_1 \intom \una
          + \frac{a_2 \lambda_1^2}{4 \mu_1} |\Omega| T
          + \frac{a_1 \lambda_2^2}{4 \mu_2} |\Omega| T
    \qquad \text{for $t \in (0, T)$ and $\alpha \in (0, 1)$}.
  \end{align*}
  In view of \eqref{eq:def_una_vna:conv_lebl}, this implies \eqref{eq:ua_va_l1:statement} for a certain $C \gt 0$.
\end{proof}

\begin{lemma}\label{lm:gni_f1}
  Let $T \in (0, \infty)$, $\eta \gt 0$, $\beta_i \defs m_i - q_i - 1$ for $i \in \{1, 2\}$ and suppose that \eqref{eq:intro:h2} holds.
  For $p \in (0, \frac{(\beta_1+2)n + 2}{n})$, there is $C_1 \gt 0$ such that
  \begin{alignat}{2}
    \label{eq:gni_f1:u}
          \intom \ua^{p}(\cdot, t) \xioa(\ua(\cdot, t))
    &\le  \eta \intom \Ba^{\beta_1}(\ua(\cdot, t)) |\nabla \ua(\cdot, t)|^2 + C_1
    &&\qquad \text{for all $t \in (0, T)$ and $\alpha \in (0, 1)$}
  \intertext{and, for $p \in (0, \frac{(\beta_2+2)n + 2}{n})$, there is $C_2 \gt 0$ such that}
    \label{eq:gni_f1:v}
          \intom \va^{p}(\cdot, t) \xita(\va(\cdot, t))
    &\le  \eta \intom \Ba^{\beta_2}(\va(\cdot, t)) |\nabla \va(\cdot, t)|^2 + C_2
    &&\qquad \text{for all $t \in (0, T)$ and $\alpha \in (0, 1)$},
  \end{alignat}
  where $\Ba$ is as in \eqref{eq:alpha_entropy:ba};
  that is, $\Ba(s) = \frac{s+1}{1 + \alpha(s+1)}$ for $s \ge 0$ and $\alpha \in (0, 1)$.
\end{lemma}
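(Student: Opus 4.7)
The plan is to rewrite the left-hand side in terms of $w \defs \Ba^{(\beta_1+2)/2}(\ua)$ and then apply a Gagliardo--Nirenberg inequality, with the $L^1$-bound provided by Lemma~\ref{lm:ua_va_l1} serving as the only lower-order input. Since Lemma~\ref{lm:params} ensures $\beta_1+2 \gt 0$, this substitution is legal, and a short calculation using $\Ba'(s) = (1+\alpha(s+1))^{-2} \le 1$ shows that
\begin{align*}
 |\nabla w|^2 \le \tfrac{(\beta_1+2)^2}{4}\, \Ba^{\beta_1}(\ua)\, |\nabla \ua|^2
 \qquad \text{a.e.\ in $\Omega \times (0,T)$.}
\end{align*}

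To dispose of the cut-off I would first use that on $\supp \xioa(\ua) \subset \{\ua \le 2\alpha^{-1/(4-\min\{q_1,q_2\})}\}$ (cf.\ \eqref{eq:approx_main:xiia}) the quantity $1+\alpha(\ua+1)$ is bounded uniformly in $\alpha \in (0,1)$, since $q_i \le 1$ gives $4-\min\{q_1,q_2\} \gt 1$. Hence there is $c_1 \gt 0$ with $\ua + 1 \le c_1 \Ba(\ua)$ on $\supp \xioa(\ua)$, so that $\intom \ua^p \xioa(\ua) \le c_1^p \intom \Ba^p(\ua)$. Moreover, from $\Ba(s) \le s+1$ and Lemma~\ref{lm:ua_va_l1} I obtain the crucial $\alpha$-uniform bound $\|w(\cdot,t)\|_{\leb{2/(\beta_1+2)}}^{2/(\beta_1+2)} = \intom \Ba(\ua(\cdot,t)) \le c_2$ for all $t \in (0,T)$ and $\alpha \in (0,1)$.

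With $r \defs 2/(\beta_1+2)$ and $s \defs 2p/(\beta_1+2)$ I would then apply Gagliardo--Nirenberg to $w(\cdot,t) \in \sob12$, obtaining
\begin{align*}
 \|w\|_{\leb s}^s \le C\, \|\nabla w\|_{\leb 2}^{\theta s}\, \|w\|_{\leb r}^{(1-\theta)s} + C\, \|w\|_{\leb r}^s
\end{align*}
with a scaling exponent $\theta \in [0,1]$. A direct calculation reveals that the hypothesis $p \lt \frac{(\beta_1+2)n + 2}{n}$ is exactly equivalent to $\theta s \lt 2$, so that Young's inequality absorbs the gradient factor into $\eta \intom \Ba^{\beta_1}(\ua)|\nabla \ua|^2$ up to an $\alpha$-independent constant, yielding \eqref{eq:gni_f1:u}.

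The main technical obstacle is that $r = 2/(\beta_1+2)$ can be strictly smaller than $1$ (when $\beta_1 \gt 0$), so the Gagliardo--Nirenberg inequality has to be employed in its less familiar form valid for any $r \gt 0$; the case $p \le 1$ is in any case trivial via $\intom \ua^p \le |\Omega|^{1-p}(\intom \ua)^p$ combined with Lemma~\ref{lm:ua_va_l1}. The companion estimate \eqref{eq:gni_f1:v} then follows by an entirely symmetric argument, with $(\ua,\xioa,\beta_1)$ replaced by $(\va,\xita,\beta_2)$.
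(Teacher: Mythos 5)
Your proposal is correct and follows essentially the same route as the paper: the substitution $w = \Ba^{(\beta_1+2)/2}(\ua)$, the gradient estimate $|\nabla w|^2 \le \tfrac{(\beta_1+2)^2}{4}\Ba^{\beta_1}(\ua)|\nabla\ua|^2$ using $|\Ba'|\le 1$, the $L^1$-bound from Lemma~\ref{lm:ua_va_l1} as the sole low-order input, the observation that the cut-off makes $\ua$ and $\Ba(\ua)$ comparable on $\supp\xioa$, and a Gagliardo--Nirenberg/Young combination. The only difference is cosmetic: you apply Gagliardo--Nirenberg directly at the target exponent $s = 2p/(\beta_1+2)$ and use Young's inequality in the exponent form (the check that $p \lt \frac{(\beta_1+2)n+2}{n}$ is equivalent to $\theta s \lt 2$ is correct), whereas the paper applies it at the borderline exponent $\tilde p_1 = 2 + \frac{4}{(\beta_1+2)n}$ where the gradient power equals exactly $2$ and then uses a pointwise Young estimate to descend to $\Ba^p$. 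Both yield the same result, and your remarks about the $r\lt 1$ subtlety and the trivial case $p\le 1$ are exactly the points the paper handles via the Li--Lankeit version of the inequality.
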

\begin{proof}
  As $\Ba(s) \le s+1$ for all $s \ge 0$ and $\alpha \in (0, 1)$,
  Lemma~\ref{lm:ua_va_l1} allows us to fix $c_1 \gt 0$ such that
  \begin{align*}
    \intom \Ba(\ua(\cdot, t)) + \intom \Ba(\va(\cdot, t)) \le c_1
    \qquad \text{for all $t \in (0, T)$ and $\alpha \in (0, 1)$}.
  \end{align*}

  The definitions $\tilde p_1 \defs  \frac{2((\beta_1 + 2)n + 2)}{(\beta_1 + 2)n}$ and
  $\tilde q_1 \defs \frac{2}{\beta_1 + 2}$ imply
  \begin{align*}
      b
    &\defs \frac{\frac1{\tilde q_1} - \frac1{\tilde p_1}}{\frac1{\tilde q_1} - \frac{n-2}{2n}}
     =     \frac{(\beta_1 + 2) ((\beta_1 + 2) n + 2)n - (\beta_1 + 2) n^2}{(\beta_1 + 2) ((\beta_1 + 2) n + 2)n - (n-2) ((\beta_1 + 2) n + 2)} \\
    &=     \frac{(\beta_1 + 2) n ((\beta_1 + 1) n + 2)}{((\beta_1 + 1)n + 2))((\beta_1 + 2) n + 2)}
     =     \frac{(\beta_1 + 2) n}{(\beta_1 + 2)n + 2}
    \in (0, 1).
  \end{align*}
  Since $\frac{\tilde p_1 b}{2} = 1$,
  an application of the Gagliardo--Nirenberg inequality
  (cf.\ \cite[Lemma~2.3]{LiLankeitBoundednessChemotaxisHaptotaxis2016} for a version allowing for merely positive $\tilde q_1$)
  gives $c_2 \gt 0$ such that
  \begin{align*}
          \intom \varphi^{\tilde p_1}
    &\le  c_2 \left( \intom |\nabla \varphi|^2 \right)
            \left( \intom \varphi^{\frac{2}{\beta_1+2}} \right)^{\frac{\tilde p_1 (1-b)}{\tilde q_1}}
          + c_2 \left( \intom \varphi^{\frac{2}{\beta_1+2}} \right)^{\frac{\tilde p_1}{\tilde q_1}}
    \qquad \text{for all $\varphi \in \sob12$}.
  \end{align*}
  Thus, setting $c_3 \defs \frac{(\beta+1)^2}{4} c_1^{\frac{\tilde p_1( 1-b)}{\tilde q_1}} c_2$
  and noting that $|\Ba'|(s) = \frac{1}{(1 + \alpha (s+1))^2} \le 1$ for $s \ge 0$ and $\alpha \in (0, 1)$, we conclude
  \begin{align*}
          \intom \Ba^\frac{(\beta_1 + 2)n + 2}{n}(\ua(\cdot, t))
    &=    \intom \left( \Ba^\frac{\beta_1+2}{2}(\ua(\cdot, t)) \right)^{\tilde p_1} \\
    &\le  c_2 \intom \left| \nabla \Ba^\frac{\beta_1+2}{2}(\ua(\cdot, t)) \right|^2
          \left( \intom \Ba(\ua(\cdot, t)) \right)^{\frac{\tilde p_1 (1-b)}{\tilde q_1}}
          + c_2 \left( \intom \Ba(\ua(\cdot, t)) \right)^{\frac{\tilde p_1}{\tilde q_1}} \\
    &\le  c_3 \intom \Ba^{\beta_1}(\ua(\cdot, t)) |\Ba'(\ua(\cdot, t))|^2 |\nabla \ua(\cdot, t)|^2
          + c_1^{\frac{\tilde p_1}{\tilde q_1}} c_2 \\
    &\le  c_3 \intom \Ba^{\beta_1}(\ua(\cdot, t)) |\nabla \ua(\cdot, t)|^2
          + c_1^{\frac{\tilde p_1}{\tilde q_1}} c_2
    \qquad \text{for all $t \in (0, T)$ and $\alpha \in (0, 1)$}.
  \end{align*}

  We now fix $\eta \gt 0$ and $p \in (0, \frac{(\beta_1+2)n + 2}{n})$.
  By Young's inequality, we then obtain $c_4 \gt 0$ such that
  \begin{align}\label{eq:gni_f1:c3}
          \intom \Ba^{p}(\ua(\cdot, t))
    &\le  \frac{\eta}{4^{p}} \intom \Ba^{\beta_1}(\ua(\cdot, t)) |\nabla \ua(\cdot, t)|^2
          + c_4
    \qquad \text{for all $t \in (0, T)$ and $\alpha \in (0, 1)$}.
  \end{align}
  For $\alpha \in (0, 1)$ and $s \in \supp \xioa \subset [0, 2 \alpha^{-1/(4-\min\{q_1, q_2\})}] \subset [0, 2 \alpha^{-1}]$, we have
  \begin{align*}
        s
    =   (1 + \alpha(s+1)) \frac{s}{1 + \alpha(s+1)} 
    \le 4 \Ba(s)
  \end{align*}
  so that the monotonicity of $[0, \infty) \ni s \mapsto s^{p}$ asserts
  \begin{align*}
          \intom \left(\frac{\ua(\cdot, t)}{4}\right)^{p} \xioa(\ua(\cdot, t))
    &\le  \intom \Ba^{p}(\ua(\cdot, t))
    \qquad \text{for all $t \in (0, T)$ and $\alpha \in (0, 1)$}.
  \end{align*}
  Together with \eqref{eq:gni_f1:c3},
  this implies \eqref{eq:gni_f1:u} for $C_1 \defs 4^{p} c_4$.
  By an analogous argumentation, we also obtain the corresponding statement for the second solution component.
\end{proof}

If $\beta_1$ and $\beta_2$ are sufficiently large compared to $q_1$ and $q_2$,
one might hope that the estimates obtained in Lemma~\ref{lm:gni_f1}
are strong enough to control the right-hand side of \eqref{eq:alpha_entropy:statement}.
This idea can be quantified as follows.
\begin{lemma}\label{lm:rhs_bdd_f2}
  Let $T \in (0, \infty)$ and suppose that \eqref{eq:intro:cond_f2} and \eqref{eq:intro:h2} hold.
  Then there are $C_1, C_2 \gt 0$
  such that \eqref{eq:rhs_bdd_f1:statement} holds.
\end{lemma}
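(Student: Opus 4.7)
The plan is to reduce via Lemma \ref{lm:entropy_rhs_basic} to controlling the integrand $G_1'(\ua)\fo(\ua,\va) + G_2'(\va)\ft(\ua,\va)$ on the set $\{\ua \ge 1\} \cap \{\va \ge 1\}$, and then to handle the sole cross contribution by Young's inequality together with the Gagliardo--Nirenberg-type bound from Lemma \ref{lm:gni_f1}. On $\{\ua, \va \ge 1\}$, an elementary computation starting from the definition of $G_i$ and using $\sigma+1 \in [\sigma, 2\sigma]$ for $\sigma \ge 1$ yields $c\,s^{1-q_i} L_{q_i}(s) \le G_i'(s) \le C\,s^{1-q_i} L_{q_i}(s)$ for $s \ge 1$. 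Inserting the definitions of $f_1$ and $f_2$ and discarding the non-positive term $-a_2 G_2'(\va) \ua \va$ (available because $a_2 > 0$ by \eqref{eq:intro:h2}), the integrand is bounded from above by $C(\ua^{2-q_1} L_{q_1}(\ua) + \va^{2-q_2} L_{q_2}(\va)) + C a_1 \ua^{2-q_1} L_{q_1}(\ua) \va - c(\ua^{3-q_1} L_{q_1}(\ua) + \va^{3-q_2} L_{q_2}(\va))$. The two linear pieces can be absorbed into the respective cubic counterparts via standard Young's inequality, so the only remaining work concerns the cross term $\ua^{2-q_1} L_{q_1}(\ua)\va$.

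This cross term is split using Young's inequality in one of two ways, depending on which half of the alternative \eqref{eq:intro:cond_f2} is assumed. Case~A, $m_2 > \frac{2n-2}{n} + (q_2-q_1)$: choose conjugate exponents $\frac{3-q_1}{2-q_1}$ and $3-q_1$ to obtain $\ua^{2-q_1} \va \le \delta\, \ua^{3-q_1} + C_\delta\, \va^{3-q_1}$; the first summand is absorbed into $-c\,\ua^{3-q_1}$ for $\delta$ small, while the second, after integration in time, is controlled by Lemma \ref{lm:gni_f1} applied to $\va$---this being legitimate precisely because the assumption on $m_2$ rearranges to $3-q_1 < \beta_2 + 2 + \frac{2}{n}$. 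Case~B, $m_1 > \frac{2n-2}{n} + \frac{q_2-q_1}{2-q_2}$: choose conjugate exponents $\frac{3-q_2}{2-q_2}$ and $3-q_2$ to obtain $\ua^{2-q_1} \va \le \delta\, \ua^{(2-q_1)(3-q_2)/(2-q_2)} + C_\delta\, \va^{3-q_2}$; absorb the $\va$-piece into $-c\,\va^{3-q_2}$, and observe that the assumption on $m_1$ is equivalent to $(2-q_1)(3-q_2)/(2-q_2) < \beta_1 + 2 + \frac{2}{n}$, so that Lemma \ref{lm:gni_f1} applied to $\ua$ handles the remaining $\ua$-piece.

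In either subcase, once the small parameter $\eta$ in Lemma \ref{lm:gni_f1} is fixed, the bound on the kinetic integrals reads schematically as a constant minus a positive multiple of $\intnstom (\ua^{3-q_1} L_{q_1}(\ua) + \va^{3-q_2} L_{q_2}(\va)) \mathds{1}_{\{\ua, \va \ge 1\}}$ plus a term of the form $\eta\,\intnstom \Ba^{\beta_j}|\nabla w_j|^2$ with $(j, w_j) = (2, \va)$ in Case~A or $(1, \ua)$ in Case~B. This residual dissipation is reabsorbed by using \eqref{eq:alpha_entropy:statement} itself, which bounds $\frac{d_j}{\chi_j} \intnstom \Ba^{\beta_j}|\nabla w_j|^2$ by a constant plus the kinetic integrals; choosing $\eta \le \frac{d_j}{2\chi_j}$ and rearranging thus costs only a factor of two on the kinetic side. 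The desired estimate \eqref{eq:rhs_bdd_f1:statement} then follows from the pointwise bound $s^{3-q_i} L_{q_i}(s) \ge c\,s^2 \ln s$ for $s \ge 1$ (note that $L_{q_i}(s) = \ln s$ when $q_i = 1$) together with the fact that $s \mapsto s^2\ln s$ is bounded on $[0,1]$.

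The principal technical obstacle will be the careful ordering of the two small parameters $\delta$ and $\eta$: $\delta$ is fixed first so as to determine $C_\delta$ and to guarantee absorption of the pure-power part into the cubic dissipative kinetic contribution, and only thereafter is $\eta$ tuned so that the subsequent rearrangement via \eqref{eq:alpha_entropy:statement} does not consume more than half of the kinetic integral on the left-hand side. The logarithmic factors $L_{q_i}$ arising in the borderline case $q_i = 1$ are benign: they can be absorbed into neighbouring polynomial terms by infinitesimal perturbations of the Young exponents, which still comply with the strict inequalities in \eqref{eq:intro:cond_f2}.
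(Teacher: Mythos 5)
Your plan is correct and follows essentially the same route as the paper's proof: reduce via Lemma~\ref{lm:entropy_rhs_basic} to the regime $\{\ua,\va\ge 1\}$, use the two-sided bound $G_i'(s)\asymp s^{1-q_i}L_{q_i}(s)$, split the single remaining cross contribution $\ua^{2-q_1}L_{q_1}(\ua)\va$ by Young's inequality according to which half of \eqref{eq:intro:cond_f2} holds, control the resulting pure power of $\ua$ (resp.\ $\va$) by the Gagliardo--Nirenberg-type Lemma~\ref{lm:gni_f1}, and absorb the leftover fraction of gradient dissipation using the entropy inequality \eqref{eq:alpha_entropy:statement} itself. The paper carries the small parameter $\eta$ from the start (via $\ln s\le s^\eta$) so that the two Young exponents in the cases $m_1>\ul m_1$ and $m_2>\ul m_2$ read $\frac{(3-q_2)(2-q_1+\eta)}{2-q_2}$ and $\frac{3-q_1}{1-\eta}$, whereas you defer the logarithmic perturbation to a final remark; this is purely cosmetic. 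You also correctly observe that $(3-q_2)(2-q_1)-(3-q_1)(2-q_2)=q_2-q_1$, which tidies up the threshold $\ul m_1$ used in \eqref{eq:intro:cond_f2}. The only details your plan glosses over are routine: the cutoff factors $\xioa\xita$ produced by Lemma~\ref{lm:entropy_rhs_basic} must be carried along so that Lemma~\ref{lm:gni_f1} is applicable (it is stated for $\intom w^p\xi_\alpha(w)$), and the absorptions of $\delta\ua^{3-q_1}$ into $-c\,\ua^{3-q_1}L_{q_1}(\ua)$ in the borderline case $q_1=1$ produce a harmless bounded constant rather than a pointwise majorization.
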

\begin{proof}
  We will crucially rely on the assumption \eqref{eq:intro:cond_f2} which asserts that $m_1 \gt \ul m_1$ or $m_2 \gt \ul m_2$,
  where
  \begin{align*}
    \ul m_1 \defs \frac{2n-2}{n} + \frac{(3-q_2)(2-q_1) - (3-q_1)(2-q_2)}{2-q_2}
    \quad \text{and} \quad
    \ul m_2 \defs  \frac{2n-2}{n} + (q_2 - q_1).
  \end{align*}
  Setting again $\beta_i \defs m_i - q_i - 1$ for $i \in \{1, 2\}$, these definitions imply
  \begin{alignat*}{2}
          \frac{(\beta_1 + 2)n + 2}{n}
    &\gt  \ul m_1 - q_1 + 1 + \frac2n
    =     \frac{(3-q_2)(2-q_1)}{2-q_2}
    &&\qquad \text{if $m_1 \gt \ul m_1$} \quad \text{and} \\
          \frac{(\beta_2 + 2)n + 2}{n}
    &\gt  \ul m_2 - q_2 + 1 + \frac2n
    =     3 - q_1
    &&\qquad \text{if $m_2 \gt \ul m_2$},
  \end{alignat*}
  whence there is $\eta \in (0, 1)$ such that still
  \begin{alignat}{2}\label{eq:engery_rhs_f2:m1}
    \frac{(\beta_1 + 2)n + 2}{n} &\gt \frac{(3-q_2)(2-q_1+\eta)}{2-q_2} &&\qquad \text{if $m_1 \gt \ul m_1$} \quad \text{and} \\
  \label{eq:engery_rhs_f2:m2}
    \frac{(\beta_2 + 2)n + 2}{n} &\gt \frac{3 - q_1}{1-\eta} &&\qquad \text{if $m_2 \gt \ul m_2$}.
  \end{alignat}
  
  For $s \ge 1$,
  we have $\frac{s+1}{s} \in [1, 2]$
  and hence $s^{1-q_i} \le (s+1)^{1-q_i} \le 2^{1-q_i} s^{1-q_i} $ for $i \in \{1, 2\}$
  which due to $\chi_i G_i'(s) = \int_1^s \frac{(\sigma+1)^{1-q_i}}{\sigma} \dsigma$ for $s \ge 1$ and $i \in \{1, 2\}$
  implies that
  \begin{align*}
        \frac{s^{1-q_i} L_{q_i}(s)}{1 - q_i \mathds 1_{\{q_i \lt 1\}}}
    \le \chi_i G_i'(s)
    \le \frac{2^{1-q_i} s^{1-q_i} L_{q_i}(s)}{1 - q_i \mathds 1_{\{q_i \lt 1\}}}
    \qquad \text{for $s \ge 1$ and $i \in \{1, 2\}$}.
  \end{align*}
  (We recall that $L_{q}(s) = \mathds 1_{\{q \lt 1\}} + \mathds 1_{\{q = 1\}} \ln s$ for $s \ge 1$ and $q \le 1$ by \eqref{eq:g_el_est:def_l_g}.)
  Combined with the facts that $\ln(s+\ure) - \ln s = \ln \frac{s+\ure}{s} \le \ln(1 + \ure)$ and $\ln s \le s^\eta$ for $s \ge 1$
  and Young's inequality, we thus obtain $c_1, c_2 \gt 0$ such that
  \begin{align*}
    &\pe  \left[ G_1'(\ua) \fo(\ua, \va) + G_2'(\va) \ft(\ua, \va) \right] \xioa(\ua) \xita(\va) \\
    &\le  \left[
            c_1 \ua^{2-q_1+\eta} \va
            - 2c_2 \ua^{3-q_1} L_{q_1}(\ua + \ure)
            - 2c_2 \va^{3-q_2} L_{q_2}(\va + \ure)
          \right] \xioa(\ua) \xita(\va)
          + c_1
  \end{align*}
  in $\{\ua \ge 1\} \cap \{\va \ge 1\}$ for all $\alpha \in (0, 1)$.

  We now distinguish between the cases $m_1 \gt \ul m_1$ and $m_2 \gt \ul m_2$.
  In the former one, we first employ Young's inequality to obtain $c_3 \gt 0$ such that
  \begin{align*}
    &\pe  c_1 \ua^{2-q_1 + \eta} \va \xioa(\ua) \xita(\va) \\
    &\le  c_3 \ua^{\frac{(3-q_2)(2-q_1 + \eta)}{2-q_2}} \xioa(\ua)
          + c_2 \va^{3-q_2} \xioa(\ua) \xita(\va)
    \qquad \text{in $\Omega \times (0, T)$ for all $\alpha \in (0, 1)$}
  \end{align*}
  and then make use of the assumption $m_1 \gt \ul m_1$
  which allows us to apply \eqref{eq:engery_rhs_f2:m1} and Lemma~\ref{lm:gni_f1}
  to obtain $c_4 \gt 0$ such that
  \begin{align*}
        c_3 \intntom \ua^{\frac{(3-q_2)(2-q_1 + \eta)}{2-q_2}} \xi_\alpha(\ua)
    \le \frac{d_1}{2 \chi_1} \intntom \Ba^{\beta_1}(\ua) |\nabla \ua|^2
        + c_4
    \qquad \text{for all $\alpha \in (0, 1)$},
  \end{align*}

  If on the other hand $m_2 \gt \ul m_2$, then we again make first use of Young's inequality to obtain $c_5 \gt 0$ such that
  \begin{align*}
        c_1 \ua^{2 - q_1 + \eta} \va \xioa(\ua) \xita(\va)
    \le c_2 \ua^{3 - q_1} \xioa(\ua) \xita(\va)
        + c_5 \va^{\frac{3-q_1}{1-\eta}} \xita(\va)
    \qquad \text{in $\Omega \times (0, T)$ for all $\alpha \in (0, 1)$}.
  \end{align*}
  According to Lemma~\ref{lm:gni_f1} (which is applicable thanks to \eqref{eq:engery_rhs_f2:m2}),
  there is then $c_6 \gt 0$ such that
  \begin{align*}
        c_5 \intntom  \va^{\frac{3-q_1}{1-\eta}} \xita(\va)
    \le \frac{d_2}{2 \chi_2} \intntom \Ba^{\beta_2}(\va) |\nabla \va|^2
        + c_6
    \qquad \text{for all $\alpha \in (0, 1)$}.
  \end{align*}
  In both cases $m_1 \gt \ul m_1$ and $m_2 \gt \ul m_2$,
  we then conclude from the estimates above that there is $c_7 \gt 0$ such that
  \begin{align*}
    &\pe \intntom \left[ G_1'(\ua) \fo(\ua, \va) + G_2'(\va) \ft(\ua, \va) \right] \xioa(\ua) \xita(\va)
            \mathds 1_{\{\ua \ge 1\} \cap \{\va \ge 1\}} \\
    &\le  \frac{d_1}{2 \chi_1} \intntom \Ba^{\beta_1}(\ua) |\nabla \ua|^2
          + \frac{d_2}{2 \chi_2} \intntom \Ba^{\beta_2}(\va) |\nabla \va|^2 \\
    &\pe  - c_2 \intntom \ua^{3-q_1} L_{q_1}(\ua + \ure) \xioa(\ua) \xita(\va)
          - c_2 \intntom \va^{3-q_2} L_{q_2}(\ua + \ure) \xioa(\ua) \xita(\va)
          + c_7
  \end{align*}
  for all $\alpha \in (0, 1)$,
  which in conjunction with \eqref{eq:alpha_entropy:statement} and \eqref{eq:entropy_rhs_basic:statement} gives the claim.
\end{proof} 

This concludes our journey of controlling the right-hand side in \eqref{eq:alpha_entropy:statement}.
As a consequence, we obtain the following a~priori bounds.
\begin{lemma}\label{lm:entropy_est_alpha}
  Let $T \in (0, \infty)$.
  There is $C_1 \gt 0$ such that
  \begin{align}
    &\pe  \sup_{t \in (0, T)} \left(
            \intom \Ba^{2 - q_1}(\ua(\cdot, t)) L_{q_1}(\ua(\cdot, t) + \ure)
            + \intom \Ba^{2 - q_1}(\va(\cdot, t)) L_{q_2}(\va(\cdot, t) + \ure)
          \right) \le C_1 \quad \text{and} \label{eq:entropy_est_alpha:linfty_lp} \\
    &\pe  \intntom \Ba^{\beta_1}(\ua) |\nabla \ua|^2
          + \intntom \Ba^{\beta_2}(\va) |\nabla \va|^2
          \le C_1  \label{eq:entropy_est_alpha:l2_l2}
  \end{align}
  for all $\alpha \in (0, 1)$,
  where again $\beta_i \defs m_i - q_i - i$ for $i \in \{1, 2\}$,
  and $L_{q_i}$ and $\Ba$ are as in \eqref{eq:g_el_est:def_l_g} and \eqref{eq:alpha_entropy:ba}, respectively.
  Moreover, if \eqref{eq:intro:h2} holds, then we can find $C_2 \gt 0$ with the property that
  \begin{align}\label{eq:entropy_est_alpha:f_l1}
          \intntom \ua^{2} \ln(\ua + \ure)
          + \intntom \va^{2} \ln(\va + \ure)
    &\le  C_2 
    \qquad \text{for all $\alpha \in (0, 1)$}.
  \end{align}
\end{lemma}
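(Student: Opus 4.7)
The plan is to derive all three estimates by combining the entropy-like inequality \eqref{eq:alpha_entropy:statement} from Lemma~\ref{lm:alpha_entropy} with the right-hand side bounds provided by Lemma~\ref{lm:rhs_bdd_f1} or Lemma~\ref{lm:rhs_bdd_f2}. First, under \eqref{eq:intro:h1} the kinetic terms $\foa$ and $\fta$ vanish identically, so the right-hand side of \eqref{eq:alpha_entropy:statement} reduces to the constant $C_2$, and \eqref{eq:entropy_est_alpha:linfty_lp} and \eqref{eq:entropy_est_alpha:l2_l2} follow by dropping the nonnegative terms on the left and taking the supremum in $t \in (0, T)$.

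Under \eqref{eq:intro:h2} I would apply Lemma~\ref{lm:rhs_bdd_f1} when \eqref{eq:intro:cond_f1} is in force and Lemma~\ref{lm:rhs_bdd_f2} when \eqref{eq:intro:cond_f2} is in force (interpreting the spatial integrals on the left of \eqref{eq:rhs_bdd_f1:statement} as space--time integrals up to time $t$) to obtain constants $C_3, C_4 \gt 0$ such that
\begin{align*}
    \intnstom \Goa'(\ua) \foa(\ua, \va) + \intnstom \Gta'(\va) \fta(\ua, \va)
    \le C_3 - C_4 \intnstom \ua^2 \ln \ua - C_4 \intnstom \va^2 \ln \va.
\end{align*}
Inserting this into \eqref{eq:alpha_entropy:statement} and exploiting the elementary inequality $s^2 \ln s \ge -\ure^{-1}$ for $s \ge 0$, I would move the two logarithmic integrals to the left at the cost of a harmless additive constant of order $|\Omega| T$. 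This yields a single inequality in which every summand on the left is nonnegative and whose right-hand side is bounded independently of $\alpha \in (0, 1)$ and $t \in (0, T)$; the estimates \eqref{eq:entropy_est_alpha:linfty_lp} and \eqref{eq:entropy_est_alpha:l2_l2} follow at once by discarding the remaining nonnegative summands.

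For \eqref{eq:entropy_est_alpha:f_l1} I would keep from the same inequality the bound on $\intntom (\ua^2 \ln \ua)_+$ and $\intntom (\va^2 \ln \va)_+$, and then convert $\ln \ua$ to $\ln(\ua + \ure)$ by splitting $\Omega \times (0, T)$. On $\{\ua \lt 1\}$ the integrand $\ua^2 \ln(\ua + \ure)$ is bounded by $\ln(1 + \ure)$; on $\{\ua \ge 1\}$ I would use $\ln(\ua + \ure) \le \ln \ua + \ln(1 + \ure)$ and then dispose of the leftover term $\ln(1 + \ure) \intntom \ua^2 \mathds 1_{\{\ua \ge 1\}}$ by distinguishing between $\{1 \le \ua \lt \ure\}$ (trivially bounded) and $\{\ua \ge \ure\}$ (where $\ua^2 \le \ua^2 \ln \ua$ since $\ln \ua \ge 1$ there). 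An analogous argument for $\va$ then produces \eqref{eq:entropy_est_alpha:f_l1}.

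The conceptual work is already packaged into Lemma~\ref{lm:rhs_bdd_f1} and Lemma~\ref{lm:rhs_bdd_f2}; what remains is careful bookkeeping of the logarithmic terms near zero, which is the only place where signs can cause trouble and where the lower bound $s^2 \ln s \ge -\ure^{-1}$ is decisive.
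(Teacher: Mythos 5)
Your proof is correct and follows the same strategy as the paper's: combine \eqref{eq:alpha_entropy:statement} with Lemma~\ref{lm:rhs_bdd_f1} or Lemma~\ref{lm:rhs_bdd_f2}, then use nonnegativity of the left-hand side together with the lower bound on $s^2\ln s$ to close the estimate. You also spell out the conversion from $\ln\ua$ (which is what the cited lemmata deliver) to $\ln(\ua+\ure)$ (which appears in \eqref{eq:entropy_est_alpha:f_l1}), a bookkeeping step the paper's one-line proof carries out silently.
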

\begin{proof}
  According to Lemma~\ref{lm:alpha_entropy}, Lemma~\ref{lm:rhs_bdd_f1} and Lemma~\ref{lm:rhs_bdd_f2},
  there are $c_1, c_2 \gt 0$ and $c_3 \ge 0$
  such that $c_3$ is positive if \eqref{eq:intro:h2} holds
  and
  \begin{align*}
    &\pe    c_1 \intom \Ba^{2 - q_1}(\ua(\cdot, t)) L_{q_1}(\ua(\cdot, t) + \ure)
          + c_1 \intom \Ba^{2 - q_2}(\va(\cdot, t)) L_{q_2}(\va(\cdot, t) + \ure) \notag \\
    &\pe  + \frac{d_1}{2 \chi_1} \intntom \Ba^{\beta_1}(\ua) |\nabla \ua|^2
          + \frac{d_2}{2 \chi_2} \intntom \Ba^{\beta_2}(\va) |\nabla \va|^2 \notag \\
    &\le  c_2
          - c_3 \intntom \ua^2 \ln(\ua + \ure)
          - c_3 \intntom \va^2 \ln(\va + \ure)
    \qquad \text{for $t \in (0, T)$ and $\alpha \in (0, 1)$},
  \end{align*}
  as desired.
\end{proof}

\subsection{Space-time bounds and the limit process}
As a next step, we derive further space-time bounds from \eqref{eq:entropy_est_alpha:linfty_lp} and \eqref{eq:entropy_est_alpha:l2_l2}.
To that end, we make use of the following interpolation inequality
which is both a refinement and a consequence of the Gagliardo--Nirenberg inequality
and has been proven by Tao and Winkler in \cite{TaoWinklerFullyCrossdiffusiveTwocomponent2021}.
\begin{lemma}\label{lm:gni_tao_winkler}
  Let $0 \lt q \lt p \lt \frac{2n}{(n-2)_+}$
  and suppose that $\Lambda \in C^0(\R)$ fulfills $\Lambda \ge 1$ on $\R$.
  Then there exist $C \gt 0$ and $\theta \in (0, 1]$ such that
  \begin{align*}
        \intom |\varphi|^p \Lambda^\theta(\varphi)
    \le C \left( \intom |\nabla \varphi|^2 \right)^\frac{pb}{2}
        \left( \intom |\varphi|^q \Lambda(\varphi) \right)^\frac{p(1-b)}{q}
        + C \left( \intom |\varphi|^q \Lambda(\varphi) \right)^\frac{p}{q}
    \qquad \text{for all $\varphi \in \sob12$},
  \end{align*}
  where
  \begin{align*}
    b \defs \frac{\frac1q - \frac1p}{\frac1q + \frac1n - \frac12} \in (0, 1).
  \end{align*}
\end{lemma}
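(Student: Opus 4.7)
The plan is to reduce the statement to the classical Gagliardo--Nirenberg inequality by introducing an auxiliary exponent and combining it with a H\"older interpolation that transfers the weight $\Lambda$ from the right-hand side to the left. Concretely, I would first fix $\theta \in (0, 1)$ small enough that the number
\begin{align*}
  \tilde p \defs \frac{p - \theta q}{1 - \theta}
\end{align*}
still satisfies $q \lt \tilde p \lt \frac{2n}{(n-2)_+}$; since $\tilde p \sea p$ as $\theta \sea 0$ and $p$ is strictly less than the Sobolev threshold by assumption, such a choice is possible.

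Second, I would exploit the elementary identity
\begin{align*}
  |\varphi|^p \Lambda^\theta(\varphi) = |\varphi|^{p - \theta q} \cdot \bigl( |\varphi|^q \Lambda(\varphi) \bigr)^\theta
\end{align*}
and apply H\"older's inequality with exponents $\frac{1}{1 - \theta}$ and $\frac{1}{\theta}$ to obtain
\begin{align*}
  \intom |\varphi|^p \Lambda^\theta(\varphi) \le \left( \intom |\varphi|^{\tilde p} \right)^{1 - \theta} \left( \intom |\varphi|^q \Lambda(\varphi) \right)^\theta.
\end{align*}
To the first factor I would apply the classical (unweighted) Gagliardo--Nirenberg inequality with the triple $(\tilde p, 2, q)$, which is admissible thanks to the range of $\tilde p$, and then insert $\Lambda \ge 1$ in the form $\intom |\varphi|^q \le \intom |\varphi|^q \Lambda(\varphi)$ on the right. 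Combined with the elementary bound $(a + b)^{1 - \theta} \le a^{1 - \theta} + b^{1 - \theta}$, this yields an estimate of the same shape as the claim but with exponents $\frac{\tilde p \tilde b (1 - \theta)}{2}$, $\frac{\tilde p (1 - \tilde b)(1 - \theta)}{q} + \theta$ and $\frac{\tilde p (1 - \theta)}{q} + \theta$, where $\tilde b \defs (\tfrac1q - \tfrac1{\tilde p}) / (\tfrac1q + \tfrac1n - \tfrac12)$.

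The final step is purely algebraic verification that these three exponents coincide with $\frac{pb}{2}$, $\frac{p(1 - b)}{q}$ and $\frac{p}{q}$, respectively. Using $\tilde p (1 - \theta) = p - \theta q$, the exponent $\frac{\tilde p (1 - \theta)}{q} + \theta$ collapses to $\frac{p}{q}$ immediately; moreover, a short direct computation gives $(p - \theta q) \tilde b = pb$, from which both remaining identifications follow. I expect the main (mild) obstacle to be identifying the correct decomposition $|\varphi|^p \Lambda^\theta = |\varphi|^{p - \theta q}(|\varphi|^q \Lambda)^\theta$ and then choosing $\tilde p$ and $\theta$ so that the bookkeeping of exponents closes; once this structural ansatz is in place, the remainder of the argument reduces to a single application of the standard Gagliardo--Nirenberg inequality.
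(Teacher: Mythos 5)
Your proposal is correct: the algebraic identities close exactly as you claim (using $\tilde p(1-\theta) = p - \theta q$, one computes $(p-\theta q)(\tfrac1q - \tfrac1{\tilde p}) = \tfrac{p-\theta q}{q} - (1-\theta) = \tfrac{p}{q} - 1 = p(\tfrac1q - \tfrac1p)$, which gives $(p-\theta q)\tilde b = pb$ directly), and the decomposition $|\varphi|^p\Lambda^\theta = |\varphi|^{p-\theta q}(|\varphi|^q\Lambda)^\theta$ followed by H\"older, the unweighted Gagliardo--Nirenberg inequality for possibly small exponents, the bound $\Lambda \ge 1$, and subadditivity of $s \mapsto s^{1-\theta}$ is airtight. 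The paper, however, does not argue at all: it proves the lemma by citing~\cite[Lemma~7.5]{TaoWinklerFullyCrossdiffusiveTwocomponent2021} verbatim. So you have given a self-contained derivation where the paper simply delegates to an external reference. The only external input your argument needs is the Gagliardo--Nirenberg inequality for exponents $q \in (0,1)$ on bounded domains, which the paper already invokes elsewhere (Lemma~\ref{lm:gni_f1} via \cite[Lemma~2.3]{LiLankeitBoundednessChemotaxisHaptotaxis2016}), so no new machinery is needed. The practical advantage of your route is independence from the Tao--Winkler paper; the advantage of the paper's is brevity. One small presentational remark: you only obtain $\theta \in (0,1)$ rather than allowing $\theta = 1$, but since the lemma asserts existence of \emph{some} $\theta \in (0,1]$, that is exactly what is required.
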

\begin{proof}
  This is a direct consequence of \cite[Lemma~7.5]{TaoWinklerFullyCrossdiffusiveTwocomponent2021}.
\end{proof}

\begin{lemma}\label{lm:space_time_bdds_alpha}
  For all $T \in (0, \infty)$, there are $C \gt 0$ and $\theta_1, \theta_2 \in (0, 1]$ such that
  \begin{align}\label{eq:space_time_bdds_alpha:lp_lp}
    &\intntom \Ba^{p_1}(\ua) L_{q_1}^{\theta_1}(\Ba(\ua) + \ure) + \intntom \Ba^{p_2}(\va) L_{q_2}^{\theta_2}(\Ba(\va) + \ure) \le C
    \qquad \text{for all $\alpha \in (0, 1)$},
  \end{align}
  where $p_1$ and $p_2$ are as in \eqref{eq:intro:pi},
  and $L_{q_i}$ and $\Ba$ are as in \eqref{eq:g_el_est:def_l_g} and \eqref{eq:alpha_entropy:ba}, respectively.
\end{lemma}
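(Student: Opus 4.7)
The approach is to apply the Gagliardo--Nirenberg-type interpolation Lemma~\ref{lm:gni_tao_winkler} to the auxiliary functions $w_{i\alpha} \defs \Ba^{(\beta_i+2)/2}(\cdot)$ for $i \in \{1,2\}$, where $\beta_i \defs m_i-q_i-1 \gt -2$ by Lemma~\ref{lm:params}. Since $|\Ba'(s)| = (1+\alpha(s+1))^{-2} \le 1$, the chain rule yields the pointwise bound $|\nabla w_{i\alpha}|^2 \le \tfrac{(\beta_i+2)^2}{4}\,\Ba^{\beta_i}(\cdot)\,|\nabla(\cdot)|^2$, so \eqref{eq:entropy_est_alpha:l2_l2} supplies a uniform-in-$\alpha$ bound on $\intntom |\nabla w_{i\alpha}|^2$.

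For the exponent $m_i+1-q_i+\tfrac{2(2-q_i)}{n}$ occurring in \eqref{eq:intro:pi}, I would invoke Lemma~\ref{lm:gni_tao_winkler} pointwise in $t$ with $q \defs \tfrac{2(2-q_i)}{\beta_i+2}$ and the weight $\Lambda(s) \defs L_{q_i}(s^{2/(\beta_i+2)}+\ure)$, which satisfies $\Lambda \ge 1$ (since $\ln \ure = 1$). This choice of $\Lambda$ is tailored so that $\intom w_{i\alpha}^q \Lambda(w_{i\alpha}) = \intom \Ba^{2-q_i}(\cdot)\,L_{q_i}(\Ba(\cdot)+\ure)$ is bounded uniformly in $t$ by \eqref{eq:entropy_est_alpha:linfty_lp}. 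Setting $p \defs \tfrac{2 p_i^{(1)}}{\beta_i+2}$ with $p_i^{(1)} \defs m_i+1-q_i+\tfrac{2(2-q_i)}{n}$, a direct calculation verifies the critical balance $p = 2 + \tfrac{2q}{n}$, equivalently $pb=2$, as well as the admissibility condition $p \lt \tfrac{2n}{(n-2)_+}$ of Lemma~\ref{lm:gni_tao_winkler} (invoking Lemma~\ref{lm:params}). Integrating the resulting estimate over $(0,T)$ then bounds $\intntom w_{i\alpha}^p \Lambda^\theta(w_{i\alpha})$ by a constant multiple of $\intnt \|\nabla w_{i\alpha}(\cdot,t)\|_{\leb2}^2\,\dt$ times a power of $\sup_t\intom w_{i\alpha}^q \Lambda(w_{i\alpha})$, both of which are finite; translating back via $w_{i\alpha}=\Ba^{(\beta_i+2)/2}(\cdot)$ yields the claimed bound with exponent $p_i^{(1)}$.

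The alternative branch of the maximum defining $p_i$ is handled separately. When $p_i=2-q_i$ (which can only occur under \eqref{eq:intro:h1}), the bound follows at once from \eqref{eq:entropy_est_alpha:linfty_lp} by integration over $(0,T)$. When $p_i=3-q_i$ under \eqref{eq:intro:h2}, the additional space-time bound \eqref{eq:entropy_est_alpha:f_l1}, which upon using $\Ba(\ua) \le \ua+1$ translates into $\intntom \Ba^2(\ua)\ln(\Ba(\ua)+\ure) \le C$, is the essential new ingredient; a second application of Lemma~\ref{lm:gni_tao_winkler}, this time with $q=\tfrac{4}{\beta_i+2}$ and $\Lambda(s)=\ln(s^{2/(\beta_i+2)}+\ure)$, coupled with a Hölder-in-time argument exploiting the merely $L^1_t$-integrable weighted $L^q$-norm, then delivers the $L^{3-q_i}$-type bound. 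Since $L_{q_i}(\Ba+\ure) \ge 1$, one may finally take $\theta_i$ as the minimum of the values arising in the various applications of Lemma~\ref{lm:gni_tao_winkler}, producing the single $\theta_i \in (0,1]$ required by \eqref{eq:space_time_bdds_alpha:lp_lp}.

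The only delicate point in the plan is the algebraic verification that, with $p$ and $q$ as chosen, the critical relation $pb=2$ reduces exactly to the exponent prescribed in \eqref{eq:intro:pi}, and this must be checked case by case. The hardest part will be the bookkeeping in the $3-q_i$ regime under \eqref{eq:intro:h2}: there only a space-time -- rather than $L^\infty_t$ -- bound on the weighted $L^q$-norm is available, so the Hölder-in-time argument must be tuned precisely to still close the estimate.
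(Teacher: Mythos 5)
For the branch $p_i = m_i + 1 - q_i + \frac{2(2-q_i)}{n}$ your plan is the paper's argument almost verbatim: same auxiliary function $\Ba^{(\beta_i+2)/2}(\cdot)$, same $q$ and weight $\Lambda$ in Lemma~\ref{lm:gni_tao_winkler}, same observation that $pb=2$ lets the time integral close against \eqref{eq:entropy_est_alpha:l2_l2}, with \eqref{eq:entropy_est_alpha:linfty_lp} supplying the $L^\infty_t$ factor. The branch $p_i = 2 - q_i$ matches too.

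The branch $p_i = 3-q_i$ under \eqref{eq:intro:h2} is where you diverge from the paper, which simply invokes \eqref{eq:entropy_est_alpha:f_l1}, and your alternative does not close. You propose a second application of Lemma~\ref{lm:gni_tao_winkler} with $q = \frac{4}{\beta_i+2}$ and $p = \frac{2(3-q_i)}{\beta_i+2}$, compensating by H\"older in time for the fact that $B(t) \defs \intom \varphi^q \Lambda(\varphi)$ is only in $L^1_t$, not $L^\infty_t$. But the cross term in Lemma~\ref{lm:gni_tao_winkler} is $\bigl(\intom|\nabla\varphi|^2\bigr)^{pb/2} B(t)^{p(1-b)/q}$; applying H\"older with exponents $\frac{2}{pb}$ and $\frac{2}{2-pb}$ forces the exponent $\frac{2p(1-b)}{q(2-pb)}$ on $B$, and substituting $b = \frac{\frac1q - \frac1p}{\frac1q + \frac1n - \frac12}$ shows that $\frac{2p(1-b)}{q(2-pb)} \le 1$ is equivalent to $\frac{2q}{n} \le 0$, which never holds (and the remainder term $B^{p/q}$ with $p/q > 1$ fails for the same reason). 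So no tuning of the H\"older exponents salvages this. Note also that when $q_i = 1$, which is exactly the sub-case in which the paper's one-line reduction via $\Ba(\ua) \le \ua + 1$ is transparently rigorous, your choices give $p = q$, so Lemma~\ref{lm:gni_tao_winkler} is vacuous and you would fall back on the paper's direct estimate anyway. In short: two of the three branches match the paper, but the $p_i = 3-q_i$ branch needs a device other than the one you outline.
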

\begin{proof}
  We fix $T \in (0, \infty)$. As usual, it suffices to show the statement for the first solution component.

  Let us first assume $p_1 = 3-q_1$ and that \eqref{eq:intro:h2} holds.
  Then \eqref{eq:entropy_est_alpha:f_l1} already contains \eqref{eq:space_time_bdds_alpha:lp_lp}.
  Moreover, if $p_1 = 2-q_1$, then \eqref{eq:entropy_est_alpha:linfty_lp}
  and an integration in time also show \eqref{eq:space_time_bdds_alpha:lp_lp}.
  According to \eqref{eq:intro:pi}, it remains to be shown that \eqref{eq:space_time_bdds_alpha:lp_lp}
  also holds for $2-q_1 \lt p_1 = \beta_1 + 2 + \frac{2(2-q_1)}{n}$, where again $\beta_1 \defs m_1 - q_1 - 1$.
  As already alluded to,
  the main ingredients for this proof are \eqref{eq:entropy_est_alpha:linfty_lp} and \eqref{eq:entropy_est_alpha:l2_l2}
  which assert that there are $c_1, c_2 \gt 0$ such that  
  \begin{align*}
    \sup_{t \in (0, T)} \intom \Ba^{2-q_1}(\ua(\cdot, t)) L_{q_1}(\ua(\cdot, t) + \ure) \le c_1
    \quad \text{and} \quad
    \intntom \Ba^{\beta_1}(\ua) |\nabla \ua|^2 \le c_2.
  \end{align*}
  Preparing an application of Lemma~\ref{lm:gni_tao_winkler},
  we set
  $\tilde q_1 \defs \frac{2(2-q_1)}{\beta_1+2}$,
  which is positive as $\beta_1 \gt -2$ is contained in \eqref{eq:params:statement}.
  Moreover,
  \begin{align*}
      \tilde p_1
    \defs \frac{2(n+\tilde q_1)}{n}
    = 2\left(1+\frac{\tilde q_1}{n}\right)
    = \frac{2 (\beta_1+2 + \frac{2(2-q_1)}{n})}{\beta_1+2}
    = \frac{2 p_1}{\beta_1+2}
    \gt \tilde q_1
  \end{align*}
  thanks to $p_1 \gt 2-q_1$.
  Thus, $\tilde p_1 \lt \frac{2(n + \tilde p_1)}{n}$ and hence $\frac{n-2}{n} \tilde p_1 \lt 2$
  which in turn implies $\tilde p_1 \lt \frac{2n}{(n-2)_+}$.
  Therefore, we may indeed apply Lemma~\ref{lm:gni_tao_winkler} to obtain $c_3 \gt 0$, $\theta_1 \in (0, 1]$ and $b \in (0, 1)$ such that
  with $\Lambda(s) \defs L_{q_1}(s^\frac{2}{\beta_1+2} + \ure)$, $s \ge 0$,
  \begin{align*}
          \intom \varphi^{\tilde p_1} \Lambda^{\theta_1}(\varphi)
    &\le  c_3 \left( \intom |\nabla \varphi|^2 \right)^{\frac{\tilde p_1 b}{2}}
            \left( \intom \varphi^{\frac{2 (2-q_1)}{\beta_1+2}} \Lambda(\varphi) \right)^{\frac{\tilde p_1 (1-b)}{\tilde q_1}}
          + c_3 \left( \intom \varphi^{\frac{2 (2-q_1)}{\beta_1+2}} \Lambda(\varphi) \right)^{\frac{\tilde p_1}{\tilde q_1}}
  \end{align*}
  for all nonnegative $\varphi \in \sob12$.
  Taking here $\varphi = \Ba^{p_1}(\ua(\cdot, t))$, $t \in (0, T)$, and integrating in time yield
  \begin{align*}
    &\pe  \intntom \Ba^{p_1}(\ua) L_{q_1}^{\theta_1}(\Ba(\ua) + \ure) \\
    &=    \intntom \left( \Ba^\frac{\beta_1+2}{2}(\ua) \right)^\frac{2p_1}{\beta_1+2} \Lambda^{\theta_1}(\Ba^\frac{\beta_1+2}{2}(\ua)) \\
    &\le  c_3 \intnt \left( \intom \left| \nabla \Ba^\frac{\beta_1+2}{2}(\ua) \right|^2 \right)^\frac{\tilde p_1 b}{2}
            \left( \intom \Ba^{2-q_1}(\ua) L_{q_1}(\ua + \ure) \right)^{\frac{\tilde p_1(1-b)}{\tilde q_1}} \\
    &\pe  + c_3 \intnt \left( \intom \Ba^{2-q_1}(\ua) L_{q_1}(\ua + \ure) \right)^{\frac{\tilde p_1}{\tilde q_1}} \\
    &\le  \frac{c_1^{\tilde p_1/\tilde q_1} c_3 (\beta_1+2)^2}{4} \intntom \Ba^{\beta_1}(\ua) |B'(\ua)|^2 |\nabla \ua|^2
          + T c_1^{\tilde p_1 / \tilde q_1} c_3 \\
    &\le  \frac{c_1^{\tilde p_1/\tilde q_1} c_2 c_3 (\beta_1+2)^2}{4}
          + T c_1^{\tilde p_1 / \tilde q_1} c_3
    \qquad \text{for all $\alpha \in (0, 1)$},
  \end{align*}
  where in the last step we have used that $|\Ba'(s)| = \frac{1}{(1 + \alpha (s+1))^2} \le 1$ for $s \ge 0$ and $\alpha \in (0, 1)$.
  Thus, \eqref{eq:space_time_bdds_alpha:lp_lp} indeed holds in all cases treated by this lemma.
\end{proof}

As an application of Young's inequality reveals, 
\eqref{eq:entropy_est_alpha:l2_l2} and \eqref{eq:space_time_bdds_alpha:lp_lp} allow us to also obtain gradient space-time bounds.
\begin{lemma}\label{lm:gradient_space_time_bdds_alpha}
  Let $T \in (0, \infty)$ and $r_1, r_2$ be as in \eqref{eq:intro:ri}.
  Then there is $C \gt 0$ such that
  \begin{align}\label{eq:gradient_space_time_bdds_alpha:lr_w1r}
    &\intntom |\nabla \ua|^{r_1} + \intntom |\nabla \va|^{r_2} \le C
    \qquad \text{for all $\alpha \in (0, 1)$}.
  \end{align}
\end{lemma}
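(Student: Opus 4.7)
The plan is to treat both solution components separately and to obtain \eqref{eq:gradient_space_time_bdds_alpha:lr_w1r} by interpolating, via Hölder's inequality, between the dissipation estimate \eqref{eq:entropy_est_alpha:l2_l2} and the space-time bound \eqref{eq:space_time_bdds_alpha:lp_lp}. To that end, let $\beta_i \defs m_i - q_i - 1$ as in Lemma~\ref{lm:entropy_est_alpha}, and recall that Lemma~\ref{lm:params} gives $\beta_i \gt -2$ and $p_i \gt 0$, and in fact $p_i \gt \beta_i + 2$, so that $p_i - \beta_i \gt 2 \gt 0$ in any case.

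First I would handle the easy case $\beta_i \ge 0$, in which definition \eqref{eq:intro:ri} forces $r_i = 2$. Since $\Ba(s) = \frac{s+1}{1+\alpha(s+1)} \ge \frac{1}{1+\alpha} \ge \frac{1}{2}$ for $s \ge 0$ and $\alpha \in (0, 1)$, we have $\Ba^{\beta_i} \ge 2^{-\beta_i}$, and hence
\begin{align*}
  \intntom |\nabla \ua|^2 \le 2^{\beta_i} \intntom \Ba^{\beta_i}(\ua) |\nabla \ua|^2
\end{align*}
which is bounded by \eqref{eq:entropy_est_alpha:l2_l2}; the same reasoning applies to $\va$.

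In the remaining case $\beta_i \lt 0$, we have $r_i = \frac{2p_i}{p_i - \beta_i} \in (0, 2)$ and $\frac{2}{r_i}$, $\frac{2}{2-r_i}$ form a pair of Hölder conjugates. Writing
\begin{align*}
  |\nabla \ua|^{r_i} = \bigl( \Ba^{\beta_i}(\ua) |\nabla \ua|^2 \bigr)^{r_i/2} \cdot \Ba^{-\beta_i r_i/2}(\ua),
\end{align*}
Hölder's inequality gives
\begin{align*}
  \intntom |\nabla \ua|^{r_i} \le \left( \intntom \Ba^{\beta_i}(\ua) |\nabla \ua|^2 \right)^{r_i/2} \left( \intntom \Ba^{-\beta_i r_i / (2-r_i)}(\ua) \right)^{(2-r_i)/2}.
\end{align*}
A direct computation, using that $\beta_i \lt 0$ and $r_i = \frac{2p_i}{p_i - \beta_i}$, shows that the exponent in the second factor satisfies $-\beta_i r_i / (2-r_i) = p_i$; the choice of $r_i$ in \eqref{eq:intro:ri} is designed precisely to make this match. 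Since $L_{q_i}^{\theta_i}(\Ba(\ua) + \ure) \ge 1$ by \eqref{eq:g_el_est:def_l_g} (noting $\Ba + \ure \ge \ure$, so $\ln(\Ba + \ure) \ge 1$ in the $q_i = 1$ case), the second factor is controlled by \eqref{eq:space_time_bdds_alpha:lp_lp}, while the first is controlled by \eqref{eq:entropy_est_alpha:l2_l2}. The argument for $\va$ is identical with $i=2$.

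There is essentially no obstacle here beyond verifying the algebra of the exponents; the whole proof is an application of Hölder's inequality arranged so that the dissipative power $\beta_i$ and the integrability power $p_i$ combine to give precisely the threshold exponent $r_i$.
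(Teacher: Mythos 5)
Your proposal is correct and follows essentially the same route as the paper: both proofs split into the cases $r_i = 2$ (equivalently $\beta_i \ge 0$) and $r_i < 2$ (equivalently $\beta_i < 0$), write $|\nabla \ua|^{r_i}$ as the product $\bigl(\Ba^{\beta_i}(\ua) |\nabla \ua|^2\bigr)^{r_i/2} \cdot \Ba^{-\beta_i r_i/2}(\ua)$, and verify the algebraic identity $-\beta_i r_i/(2-r_i) = p_i$ so that \eqref{eq:entropy_est_alpha:l2_l2} and \eqref{eq:space_time_bdds_alpha:lp_lp} can be combined. The only cosmetic difference is that you invoke Hölder's inequality where the paper applies Young's inequality pointwise with the same conjugate pair $\frac{2}{r_i}$, $\frac{2}{2-r_i}$; the two are interchangeable here, and your remark that $L_{q_i}^{\theta_i}(\Ba(\ua)+\ure) \ge 1$ makes explicit a step the paper leaves implicit.
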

\begin{proof}
  Again, it suffices to prove the bound only for $\ua$, $\alpha \in (0, 1)$.
  We first assume that $r_1 \lt 2$
  and hence $r_1 = \frac{2p_1}{p_1 - \beta_1}$ by \eqref{eq:intro:ri}, where $\beta_1 \defs m_1 - q_1 - 1$.
  With $\Ba$ as in \eqref{eq:alpha_entropy:ba}, we then make use of Young's inequality to obtain
  \begin{align*}
          \intntom |\nabla \ua|^{r_1}
    &=    \intntom \Ba^\frac{\beta_1 r_1}{2}(\ua) |\nabla \ua|^{r_1} \Ba^{-\frac{\beta_1 r_1}{2}}(\ua) \\
    &\le  \frac{r_1}{2} \intntom \Ba^{\beta_1}(\ua) |\nabla \ua|^2
          + \frac{2-r_1}{2} \intntom \Ba^{-\frac{\beta_1 r_1}{2-r_1}}(\ua)
  \end{align*}
  for all $\alpha \in (0, 1)$ which due to \eqref{eq:space_time_bdds_alpha:lp_lp} and
  \begin{align*}
      -\frac{\beta_1 r_1}{2-r_1}
    = \frac{-\beta_1}{\frac2{r_1}-1}
    = \frac{-\beta_1}{\frac{p_1-\beta_1}{p_1}-1}
    = \frac{-\beta_1}{\frac{-\beta_1}{p_1}}
    = p_1
  \end{align*}
  implies \eqref{eq:gradient_space_time_bdds_alpha:lr_w1r} for some $C \gt 0$.

  If, on the other hand $r_1 \ge 2$ and hence $r_1 = 2 \le \frac{2 p_1}{p_1 - \beta_1}$ by \eqref{eq:intro:ri},
  then $\beta_1 \ge 0$ since positivity of $p_1$ is contained in \eqref{eq:params:statement}.
  Thus, in this case the estimate \eqref{eq:entropy_est_alpha:l2_l2} directly implies \eqref{eq:gradient_space_time_bdds_alpha:lr_w1r}.
\end{proof}

As a last preparation before obtaining limit functions $u$ and $v$ by applying several compactness theorems---in particular, the Aubin--Lions lemma---,
we derive estimates for the time derivatives $\uat$ and $\vat$, $\alpha \in (0, 1)$.
\begin{lemma}\label{lm:uat_vat_bdd}
  Let $T \in (0, \infty)$. Then there exists $C \gt 0$ such that
  \begin{align}\label{eq:uat_vat_bdd:statement}
    \|\uat\|_{L^1((0, T); \dual{\sob{n+1}{2}})}
    + \|\vat\|_{L^1((0, T); \dual{\sob{n+1}{2}})}
    \le C
    \qquad \text{for all $\alpha \in (0, 1)$}.
  \end{align}
\end{lemma}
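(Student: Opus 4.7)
The plan is to estimate, for a.e.\ $t \in (0, T)$, the dual norm $\|\uat(\cdot, t)\|_{\dual{\sob{n+1}{2}}}$ by duality, using the weak formulation of the first equation of \eqref{prob:alpha} from Lemma~\ref{lm:ex_ua_va}, and then to integrate over $(0, T)$. Since $n+1 > \tfrac{n}{2} + 1$, the Sobolev embedding $\sob{n+1}{2} \embed W^{1,\infty}(\Omega)$ provides $c > 0$ with $\|\psi\|_{\leb\infty} + \|\nabla \psi\|_{\leb\infty} \le c \|\psi\|_{\sob{n+1}{2}}$ for $\psi \in \sob{n+1}{2}$, so that testing at a fixed time yields
\begin{align*}
  \|\uat(\cdot, t)\|_{\dual{\sob{n+1}{2}}} \le c \left( \|\Doa(\ua) \nabla \ua\|_{\leb1}(t) + \|\Soa(\ua) \nabla \va\|_{\leb1}(t) + \|\foa(\ua, \va)\|_{\leb1}(t) \right),
\end{align*}
and the task reduces to $\alpha$-uniform $L^1(\Omega \times (0, T))$ bounds for each of the three terms on the right.

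For the diffusion term, we use $\Doa(\ua) \le d_1 \Ba^{m_1-1}(\ua) + \alpha$ and the Cauchy--Schwarz splitting $\Doa(\ua) |\nabla \ua| = [\Doa(\ua) \Ba^{-\beta_1/2}(\ua)] \cdot [\Ba^{\beta_1/2}(\ua) |\nabla \ua|]$ with $\beta_1 \defs m_1 - q_1 - 1$. The second factor is $L^2$-controlled by \eqref{eq:entropy_est_alpha:l2_l2}. For the first, squaring gives (up to constants) $\Ba^{m_1 + q_1 - 1}(\ua) + \alpha^2 \Ba^{-\beta_1}(\ua)$, and the inequality $m_1 + q_1 - 1 = 2(m_1 - 1 - \beta_1/2) < p_1$ from Lemma~\ref{lm:params}, together with the pointwise estimates $\tfrac12 \le \Ba \le \tfrac1\alpha$ and $\beta_1 > -2$, lets us conclude via Lemma~\ref{lm:space_time_bdds_alpha}.

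The taxis term is the main obstacle, and this is where condition \eqref{eq:intro:main_cond} is used essentially. Starting from $\Soa(\ua) \le \chi_1 \Ba^{q_1}(\ua)$ and Hölder's inequality in $\Omega \times (0, T)$ with conjugate exponents $r_2$ and $r_2/(r_2-1)$, the task reduces to controlling $\nabla \va$ in $L^{r_2}$ by Lemma~\ref{lm:gradient_space_time_bdds_alpha} and $\Ba^{q_1}(\ua)$ in $L^{r_2/(r_2-1)}$ by Lemma~\ref{lm:space_time_bdds_alpha}; this last bound requires $q_1 r_2 / (r_2-1) \le p_1$, i.e.\ $q_1/p_1 + 1/r_2 \le 1$, which is exactly the content of \eqref{eq:intro:main_cond}. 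For $q_1 \le 0$, boundedness of $\Ba^{q_1}(\ua)$ is automatic thanks to $\Ba \ge \tfrac12$; for $q_1 = 1$, the borderline case $q_1 r_2/(r_2-1) = p_1$ is admissible since the logarithmic factor $L_{q_1}^{\theta_1}$ in \eqref{eq:space_time_bdds_alpha:lp_lp} is bounded below.

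Finally, under \eqref{eq:intro:h1} the source term vanishes identically, while under \eqref{eq:intro:h2} the pointwise bound $|\foa(\ua, \va)| \le \lambda_1 \ua + \mu_1 \ua^2 + a_1 \ua \va$, combined with Young's inequality, Lemma~\ref{lm:ua_va_l1} and \eqref{eq:entropy_est_alpha:f_l1}, yields the desired $L^1(\Omega \times (0, T))$ estimate. The bound for $\vat$ follows analogously with the roles of $\ua$ and $\va$ interchanged.
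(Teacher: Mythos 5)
Your proof is correct and follows essentially the same route as the paper's: duality via the embedding $\sob{n+1}{2}\hookrightarrow W^{1,\infty}(\Omega)$, splitting the diffusive term against the dissipative weight $\Ba^{\beta_1}$ from \eqref{eq:entropy_est_alpha:l2_l2} while controlling the complementary factor via $2(m_1-1-\tfrac{\beta_1}{2}) < p_1$ from Lemma~\ref{lm:params}, invoking \eqref{eq:intro:main_cond} with Lemmas~\ref{lm:space_time_bdds_alpha} and \ref{lm:gradient_space_time_bdds_alpha} for the taxis term, and using Lemma~\ref{lm:ua_va_l1} and \eqref{eq:entropy_est_alpha:f_l1} for the sources. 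Your use of Cauchy--Schwarz/Hölder on the full space-time cylinder rather than Young's inequality pointwise in $t$ is a cosmetic variation, and your explicit remarks on the $\alpha$-term (via $\alpha^2\Ba^{-\beta_1}\le\alpha^{2+\beta_1}\le 1$ since $\beta_1>-2$) and on the borderline $q_1=1$ case (where the lower bound on $L_{q_1}^{\theta_1}$ in \eqref{eq:space_time_bdds_alpha:lp_lp} is what makes equality admissible) are both welcome clarifications.
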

\begin{proof}
  Since $\ua \in L^2((0, T); \sob12)$ by Lemma~\ref{lm:ex_ua_va},
  the weak formulation \eqref{eq:weak_sol_nonlin:u_sol} entails that
  \begin{align*}
      \intom \uat(\cdot, t) \psi
    = - \intom \Doa(\ua(\cdot, t)) \nabla \ua(\cdot, t) \cdot \nabla \psi
      + \intom \Soa(\ua(\cdot, t)) \nabla \va(\cdot, t) \cdot \nabla \psi
      + \intom \foa(\ua(\cdot, t), \va(\cdot, t)) \psi
  \end{align*}
  for a.e.\ $t \in (0, T)$, all $\psi \in \sob12$ and all $\alpha \in (0, 1)$.
  Thus, recalling that $\Doa(\ua) \le d_1 \Ba^{m_1-1}(\ua) + 1$ and $\Soa(\ua) \le \chi_1 \Ba^{q_1}(\ua)$ for $\alpha \in (0, 1)$
  if $\Ba$ as in \eqref{eq:alpha_entropy:ba},
  we may estimate
  \begin{align*}
    &\pe  \left| \intom \uat(\cdot, t) \psi \right| \\
    &\le  \left| \intom (\Doa(\ua(\cdot, t)) \nabla \ua(\cdot, t) \cdot \nabla \psi \right|
          + \left| \intom \Soa(\ua(\cdot, t)) \nabla \va(\cdot, t) \cdot \nabla \psi \right|
          + \left| \intom \foa(\ua(\cdot, t), \va(\cdot, t)) \psi \right| \\
    &\le  d_1 \left(
            \intom \left( \Ba^{m_1-1-\frac{\beta_1}{2}}(\ua(\cdot, t)) + 1 \right)^2
            + \intom \left( \left( \Ba^{\frac{\beta_1}{2}}(\ua(\cdot, t)) + 1 \right) |\nabla \ua(\cdot, t)| \right)^2
          \right) \|\nabla \psi\|_{\leb\infty} \\
    &\pe  + \chi_1 \left(
            \intom \left( \Ba^{q_1}(\ua(\cdot, t)) \right)^{\frac{r_2}{r_2-1}}
            + \intom |\nabla \va(\cdot, t)|^{r_2}
          \right) \|\nabla \psi\|_{\leb\infty} \\
    &\pe  + \left( \intom |\foa(\ua(\cdot, t), \va(\cdot, t))| \right)
          \| \psi\|_{\leb\infty}
    \qquad \text{for a.e.\ $t \in (0, T)$, all $\psi \in \sob1\infty$ and all $\alpha \in (0, 1)$},
  \end{align*}
  wherein as usual $\beta_1 \defs m_1-q_1-1$.
  As according to \eqref{eq:params:statement} and \eqref{eq:intro:main_cond},
  both $2(m_1-1-\frac{\beta_1}{2})$ and $\frac{\max\{q_1, 0\} r_2}{r_2-1}$ are at most $p_1$,
  the bounds \eqref{eq:space_time_bdds_alpha:lp_lp}, \eqref{eq:entropy_est_alpha:l2_l2},
  \eqref{eq:gradient_space_time_bdds_alpha:lr_w1r} and \eqref{eq:entropy_est_alpha:f_l1}
  along with the embeddings $\sob{n+1}{2} \embed \sob1\infty \embed \leb\infty$
  and an integration in time
  yield $c_1 \gt 0$ such that
  \begin{align*}
          \intnt \sup_{\substack{\psi \in \sob{n+1}{2} \\ \|\psi\|_{\sob{n+1}{2}} \le 1}} \left| \intom \uat \psi \right|
    &\le  c_1
    \qquad \text{for all $\alpha \in (0, 1)$},
  \end{align*}
  which together with analogous considerations regarding $\vat$ implies \eqref{eq:uat_vat_bdd:statement}.
\end{proof}

The a~priori bounds gained in the lemmata above now allow us to conclude
that $(\ua, \va)$ converge in certain spaces along some null sequence $(\alpha_j)_{j \in \N}$.
\begin{lemma}\label{lm:alpha_sea_0}
  Set
  \begin{align*}
    \mc P_i \defs
    \begin{cases}
      [1, p_i), & q_i \lt 1, \\
      [1, p_i], & q_i = 1.
    \end{cases}
  \end{align*}
  Then there exists a null sequence $(\alpha_j)_{j \in \N} \subset (0, 1)$ and nonnegative $u, v \in L_{\loc}^1(\Ombar \times [0, \infty))$
  such that
  \begin{alignat}{2}
    \uaj &\ra u
      &&\qquad \text{pointwise a.e.}, \label{eq:alpha_sea_0:u_pw} \\
    \vaj &\ra v
      &&\qquad \text{pointwise a.e.}, \label{eq:alpha_sea_0:v_pw} \\
    \Ba(\uaj) &\ra u + 1
      &&\qquad \text{in $L_{\loc}^{p}(\Ombar \times [0, \infty))$ for all $p \in \mc P_1$}, \label{eq:alpha_sea_0:u_lp} \\
    \Ba(\vaj) &\ra v + 1
      &&\qquad \text{in $L_{\loc}^{p}(\Ombar \times [0, \infty))$ for all $p \in \mc P_2$}, \label{eq:alpha_sea_0:v_lp} \\
    \uaj &\rh u
      &&\qquad \text{in $L_{\loc}^{r_1}([0, \infty); \sob{1}{r_1})$}, \label{eq:alpha_sea_0:u_nabla_lr} \\
    \vaj &\rh v
      &&\qquad \text{in $L_{\loc}^{r_1}([0, \infty); \sob{1}{r_1})$}, \label{eq:alpha_sea_0:v_nabla_lr} \\
    \foa(\ua, \va) &\ra \fo(u, v)
      &&\qquad \text{in $L_{\loc}^{1}(\Ombar \times [0, \infty))$} \quad \text{and} \label{eq:alpha_sea_0:f1_l1} \\
    \fta(\ua, \va) &\ra \ft(u, v)
      &&\qquad \text{in $L_{\loc}^{1}(\Ombar \times [0, \infty))$} \label{eq:alpha_sea_0:f2_l1}
  \end{alignat}
  as $j \ra 0$, where $\Ba$ is as in $\eqref{eq:alpha_entropy:ba}$ for $\alpha \in (0, 1)$.
\end{lemma}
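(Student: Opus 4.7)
The plan is a standard compactness argument resting on the $\alpha$-uniform a priori estimates of Lemmata~\ref{lm:entropy_est_alpha}, \ref{lm:space_time_bdds_alpha}, \ref{lm:gradient_space_time_bdds_alpha} and~\ref{lm:uat_vat_bdd}. First I would combine the gradient bound of Lemma~\ref{lm:gradient_space_time_bdds_alpha} (augmented as necessary by the spatial $L^1$-bound of Lemma~\ref{lm:ua_va_l1} to control $\ua, \va$ themselves) with the time-derivative bound of Lemma~\ref{lm:uat_vat_bdd}, and apply the Aubin--Lions lemma on each interval $[0, T]$ together with a diagonalization over $T \in \N$. This yields a null sequence $(\alpha_j)_{j \in \N}$ and nonnegative functions $u, v \in L_{\loc}^1(\Ombar \times [0, \infty))$ such that $\uaj \ra u$ and $\vaj \ra v$ strongly in $L_{\loc}^1(\Ombar \times [0, \infty))$; passing to a further subsequence produces the pointwise a.e.\ convergences \eqref{eq:alpha_sea_0:u_pw} and \eqref{eq:alpha_sea_0:v_pw}, with nonnegativity of the limits inherited from that of $\ua, \va$. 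The weak convergences \eqref{eq:alpha_sea_0:u_nabla_lr} and \eqref{eq:alpha_sea_0:v_nabla_lr} then follow from reflexivity of $L^{r_i}([0, T]; \sob{1}{r_i})$, with the weak limit identified via the strong $L_{\loc}^1$-convergence already obtained.

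For \eqref{eq:alpha_sea_0:u_lp} and \eqref{eq:alpha_sea_0:v_lp}, I would argue via Vitali's theorem. Pointwise a.e.\ convergence $\Ba(\uaj) \ra u + 1$ follows from \eqref{eq:alpha_sea_0:u_pw} together with the observation that $\Ba(s) = (s+1)/(1+\alpha(s+1)) \ra s+1$ as $\alpha \sea 0$ for every fixed $s \ge 0$. For $p \in [1, p_1)$ (which is all of $\mc P_1$ when $q_1 \lt 1$), uniform integrability of $|\Ba(\uaj)|^p$ on bounded subsets of $\Omega \times [0, \infty)$ is an immediate consequence of the $L^{p_1}$-bound in \eqref{eq:space_time_bdds_alpha:lp_lp}; in the endpoint case $p = p_1$, which by definition of $\mc P_1$ only occurs if $q_1 = 1$, uniform integrability is supplied by the additional factor $L_{q_1}^{\theta_1}(\Ba(\uaj) + \ure) = \ln^{\theta_1}(\Ba(\uaj) + \ure)$ in \eqref{eq:space_time_bdds_alpha:lp_lp} via de la Vall\'ee Poussin's criterion. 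The analogous reasoning handles $\Ba(\vaj)$.

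Finally, \eqref{eq:alpha_sea_0:f1_l1} and \eqref{eq:alpha_sea_0:f2_l1} are also obtained by Vitali's theorem. Since \eqref{eq:approx_main:xiia} entails that $\xiia \ra 1$ pointwise on $[0, \infty)$ as $\alpha \sea 0$, continuity of $f_i$ combined with \eqref{eq:alpha_sea_0:u_pw}--\eqref{eq:alpha_sea_0:v_pw} implies that $\foaj(\uaj, \vaj)$ and $\ftaj(\uaj, \vaj)$ converge pointwise a.e.\ to $\fo(u, v)$ and $\ft(u, v)$, respectively. Under \eqref{eq:intro:h1} these functions vanish identically and the claim is trivial; under \eqref{eq:intro:h2}, the dominating quadratic terms $\uaj^2$ and $\vaj^2$ are uniformly integrable on bounded subsets of $\Omega \times [0, \infty)$ thanks to the entropy-based bound \eqref{eq:entropy_est_alpha:f_l1} and de la Vall\'ee Poussin, while the mixed term $\uaj \vaj$ is controlled by the Cauchy--Schwarz inequality.

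The main obstacle I expect lies in this last step: securing $L_{\loc}^1$-convergence of the kinetic terms requires simultaneously exploiting the entropy-based integrability bound \eqref{eq:entropy_est_alpha:f_l1} and the structure of the cutoff $\xiia$, whereas the remaining conclusions follow quite directly from the already-established $\alpha$-uniform estimates through routine applications of the Aubin--Lions lemma, Vitali's theorem and weak-compactness arguments.
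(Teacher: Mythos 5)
Your proposal is correct and follows essentially the same path as the paper: Aubin--Lions with the gradient bound \eqref{eq:gradient_space_time_bdds_alpha:lr_w1r} and the time-derivative bound \eqref{eq:uat_vat_bdd:statement} for strong $L^1_{\loc}$ (hence pointwise a.e.)\ convergence and nonnegativity, weak compactness for \eqref{eq:alpha_sea_0:u_nabla_lr}--\eqref{eq:alpha_sea_0:v_nabla_lr}, Vitali's theorem with \eqref{eq:space_time_bdds_alpha:lp_lp} (the extra $\ln^{\theta_i}$ factor covering the endpoint $p = p_i$ when $q_i = 1$) for \eqref{eq:alpha_sea_0:u_lp}--\eqref{eq:alpha_sea_0:v_lp}, and finally the cutoff structure of $\xiia$ combined with the pointwise convergences plus Vitali/\eqref{eq:entropy_est_alpha:f_l1} for the kinetic terms. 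The one small place to be careful, which you correctly flag, is that the $L^{r_i}$ spatial control needed to run Aubin--Lions is supplied by the $L^1$ bound (via Lemma~\ref{lm:ua_va_l1} under \eqref{eq:intro:h2}, and by mass conservation under \eqref{eq:intro:h1}) together with Poincar\'e, or alternatively by \eqref{eq:space_time_bdds_alpha:lp_lp}; the paper treats this implicitly.
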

\begin{proof}
  Thanks to \eqref{eq:gradient_space_time_bdds_alpha:lr_w1r} and \eqref{eq:uat_vat_bdd:statement},
  the Aubin--Lions lemma (along with a diagonalization argument) provides us with
  a null sequence $(\alpha_j)_{j \in \N} \subset (0, 1)$ and functions $u, v \in L_{\loc}^1(\Ombar \times [0, \infty))$
  such that $\uaj \ra u$ and $\vaj \ra v$ in $L_{\loc}^1(\Ombar \times [0, \infty))$ as $j \ra \infty$.
  After switching to a subsequence, if necessary, we may thus assume that \eqref{eq:alpha_sea_0:u_pw} and \eqref{eq:alpha_sea_0:v_pw} hold.
  Thus, nonnegativity of $u$ and $v$ is inherited from nonnegativity of $\uaj$ and $\vaj$, $j \in \N$,
  which in turn is asserted by Lemma~\ref{lm:ex_ua_va}.
  Due to the bound \eqref{eq:space_time_bdds_alpha:lp_lp},
  and because $\Ba(\ua) \ra \ua+1$ and $\Ba(\va) \ra \va+1$ pointwise a.e.\ as $\alpha \sea 0$ by \eqref{eq:alpha_sea_0:u_pw} and \eqref{eq:alpha_sea_0:v_pw},
  Vitali's theorem asserts that \eqref{eq:alpha_sea_0:u_lp} and \eqref{eq:alpha_sea_0:v_lp} hold.

  Moreover, possibly after switching to further subsequences,
  \eqref{eq:alpha_sea_0:u_nabla_lr} and \eqref{eq:alpha_sea_0:v_nabla_lr} follow from \eqref{eq:gradient_space_time_bdds_alpha:lr_w1r}.
  (We note that \eqref{eq:alpha_sea_0:u_pw} and \eqref{eq:alpha_sea_0:v_pw} guarantee that the corresponding limit functions coincide.)

  Finally, additional consequences of \eqref{eq:alpha_sea_0:u_pw} and \eqref{eq:alpha_sea_0:v_pw}
  are \eqref{eq:alpha_sea_0:f1_l1} and \eqref{eq:alpha_sea_0:f2_l1}:
  For fixed $T \in (0, \infty)$, the complement of
  \begin{align*}
    A \defs
    \left\{\,
      (x, t) \in \Omega \times (0, T) :
      \max\{u(x, t), v(x, t)\} \lt \infty \text{ and } (\uaj, \vaj)(x, t) \ra (u, v)(x, t) \text{ as } j \ra \infty
    \,\right\}
  \end{align*}
  in $\Omega \times (0, T)$ is a null set (since the inclusions $u, v \in L^1(\Omega \times (0, T))$ imply $u, v \lt \infty$ a.e.).
  Given $(x, t) \in A$, there is $M \gt 0$ with $\max\{u(x, t), v(x, t)\} \lt M$.
  Thus, we can find $j_1 \in \N$ such that $\max\{\uaj(x, t), \vaj(x, t)\} \lt 2M$ for all $j \ge j_1$.
  Taking moreover $j_2 \in \N$ so large that $2M \le \alpha_j^{-1/(4-\min\{q_1, q_2\})}$ for all $j \ge j_2$, we see that
  $\xi_{\alpha_j}(u(x, t)) = \xi_{\alpha_j}(v(x, t)) = 1$ and hence
  $\foaj(\uaj(x, t), \vaj(x, t)) = \fo(\uaj(x, t), \vaj(x, t))$ for all $j \ge \max\{j_1, j_2\}$
  so that $\foaj(\uaj(x, t), \vaj(x, t)) \ra \fo(u(x, t), v(x, t))$ as $j \ra \infty$ by the continuity of $\fo$.
  Since $(x, t) \in A$ was arbitrary, $\foaj(\uaj, \vaj) \ra \fo(u, v)$ a.e.\ as $j \ra \infty$.
  In the case of \eqref{eq:intro:h1}, \eqref{eq:alpha_sea_0:f1_l1} is trivially true
  while for \eqref{eq:intro:h2}, we make first use of Young's inequality to obtain $c_1 \gt 0$ such that
  $|\foa(s_1, s_2)| \le c_1 (s_1^2 + s_2^2 + 1)$ for all $s_1, s_2 \ge 0$ and~$\alpha \in (0, 1)$
  and then employ Vitali's theorem along with \eqref{eq:entropy_est_alpha:f_l1} and the just obtained pointwise convergence of $\foa$
  to also obtain \eqref{eq:alpha_sea_0:f1_l1} in that case.
  As usual, \eqref{eq:alpha_sea_0:f2_l1} can be shown analogously.
\end{proof}

\section{Existence of global weak solutions to \eqref{prob:nonlin}: proof of Theorem~\ref{th:ex_weak_nonlin}}\label{sec:proof_11}
In this final section, we show that the pair $(u, v)$ constructed in Lemma~\ref{lm:alpha_sea_0}
is a solution to \eqref{prob:nonlin} in the following sense.
\begin{definition}\label{def:weak_sol_main}
  A pair $(u, v) \in L_{\loc}^1(\Ombar \times [0, \infty))$ is called a \emph{global nonnegative weak solution} of \eqref{prob:nonlin}
  if $u, v \ge 0$,
  \begin{align*}
    D_1(u) \nabla u,
    S_1(u) \nabla v,
    D_2(u) \nabla v,
    S_2(v) \nabla u,
    f_1,
    f_2
    \in L_{\loc}^1(\Ombar \times [0, \infty))
  \end{align*}
  and 
  \begin{align}\label{eq:weak_sol_main:u_sol}
        - \intninfom u \varphi_t - \intom u_0 \varphi(\cdot, 0)
    &=  - \intninfom D_1(u) \nabla u \cdot \nabla \varphi
        + \intninfom S_1(u) \nabla v \cdot \nabla \varphi
        + \intninfom \fo(u, v) \varphi \\
    \intertext{as well as}\label{eq:weak_sol_main:v_sol}
        - \intninfom v \varphi_t - \intom v_0 \varphi(\cdot, 0)
    &=  - \intninfom D_2(u) \nabla v \cdot \nabla \varphi
        - \intninfom S_2(u) \nabla u \cdot \nabla \varphi
        + \intninfom \ft(u, v) \varphi
  \end{align}
  hold for all $\varphi \in C_c^\infty(\Ombar \times [0, \infty))$.
\end{definition}

\begin{lemma}\label{lm:u_v_weak_sol_main}
  The tuple $(u, v)$ constructed in Lemma~\ref{lm:alpha_sea_0}
  is a weak solution of \eqref{prob:nonlin} in the sense of Definition~\ref{def:weak_sol_main}.
\end{lemma}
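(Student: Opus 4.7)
The plan is to pass to the limit $\alpha = \alpha_j \sea 0$ in the weak formulation satisfied by $(\ua, \va)$ from Lemma~\ref{lm:ex_ua_va}, exploiting the convergences of Lemma~\ref{lm:alpha_sea_0}. Fix $\varphi \in C_c^\infty(\Ombar \times [0, \infty))$ and choose $T \gt 0$ with $\supp \varphi \subset \Ombar \times [0, T)$. Shifting the time derivative onto $\varphi$ via Lemma~\ref{lm:test_weak}~(ii), the first equation of \eqref{prob:alpha} takes the form
\begin{align*}
  -\intntom \ua \varphi_t - \intom \una \varphi(\cdot, 0)
  &= -\intntom \Doa(\ua) \nabla \ua \cdot \nabla \varphi \\
  &\pe + \intntom \Soa(\ua) \nabla \va \cdot \nabla \varphi + \intntom \foa(\ua, \va) \varphi
\end{align*}
and an analogous identity holds for the $v$-equation.

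First I would dispatch the three straightforward terms: the $L^1_{\loc}$-convergence $\uaj \ra u$ inherent in \eqref{eq:alpha_sea_0:u_lp} handles $\intntom \ua \varphi_t$; the convergence \eqref{eq:def_una_vna:conv_lebl} (tested against $\varphi(\cdot, 0) \in L^\infty$) handles the initial term; and \eqref{eq:alpha_sea_0:f1_l1} handles the kinetic term. For the self-diffusive term I would write $\Doa(s) = d_1 \Ba^{m_1-1}(s) + \alpha$; the $\alpha$-correction $\alpha_j \intntom \nabla \uaj \cdot \nabla \varphi$ vanishes in view of \eqref{eq:gradient_space_time_bdds_alpha:lr_w1r}. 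Setting $\beta_1 \defs m_1 - q_1 - 1$, I would then split
\[
  \Ba^{m_1-1}(\uaj) \nabla \uaj \cdot \nabla \varphi
  = \bigl[ \Ba^{(m_1+q_1-1)/2}(\uaj) \, \nabla \varphi \bigr] \cdot \bigl[ \Ba^{\beta_1/2}(\uaj) \nabla \uaj \bigr].
\]
The second bracket is bounded in $L^2(\Omega \times (0, T))$ by \eqref{eq:entropy_est_alpha:l2_l2}; writing it as $\nabla \Phi_\alpha(\uaj)$ with $\Phi_\alpha(s) \defs \int_0^s \Ba^{\beta_1/2}(\sigma) \dsigma$ and exploiting the strong $L^q_{\loc}$-convergence $\Phi_\alpha(\uaj) \ra \int_0^u (\sigma+1)^{\beta_1/2} \dsigma$ (which follows from \eqref{eq:alpha_sea_0:u_pw} and \eqref{eq:alpha_sea_0:u_lp} via Vitali's theorem) identifies its weak $L^2$-limit as $(u+1)^{\beta_1/2} \nabla u$. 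The first bracket converges strongly in $L^2(\Omega \times (0, T))$ to $(u+1)^{(m_1+q_1-1)/2} \nabla \varphi$: pointwise convergence comes from \eqref{eq:alpha_sea_0:u_pw}, and the key inequality $m_1 + q_1 - 1 \lt p_1$ of \eqref{eq:params:statement}, combined with \eqref{eq:space_time_bdds_alpha:lp_lp}, supplies the uniform integrability required by Vitali's theorem. A weak-strong product then gives $\intntom \Doaj(\uaj) \nabla \uaj \cdot \nabla \varphi \ra \intntom D_1(u) \nabla u \cdot \nabla \varphi$.

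The main obstacle, and the very reason for the structural hypothesis \eqref{eq:intro:main_cond}, is the cross-diffusive term $\intntom \Soaj(\uaj) \nabla \vaj \cdot \nabla \varphi$. Since $\nabla \vaj$ converges only weakly in $L^{r_2}(\Omega \times (0, T))$ by \eqref{eq:alpha_sea_0:v_nabla_lr}, I would argue that $\Soaj(\uaj) \ra S_1(u)$ strongly in $L^{r_2'}(\Omega \times (0, T))$, where $r_2'$ denotes the Hölder conjugate of $r_2$. Pointwise convergence is clear from \eqref{eq:alpha_sea_0:u_pw} and continuity of $S_1$, and using the majorization $\Soa(s) \le \chi_1 \Ba^{q_1}(s)$ Vitali's theorem reduces the upgrade to the uniform integrability of $\Ba^{q_1 r_2'}(\uaj)$. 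This is exactly what \eqref{eq:intro:main_cond} provides: for $q_1 \in (0, 1)$, the inequality $q_1/p_1 + 1/r_2 \lt 1$ rearranges to $q_1 r_2' \lt p_1$, so that the $L^{p_1}$-bound \eqref{eq:space_time_bdds_alpha:lp_lp} does the job; for $q_1 = 1$, the critical equality $q_1 r_2' = p_1$ is absorbed by the logarithmic refinement $L^{p_1}(\log L)^{\theta_1}$ in \eqref{eq:space_time_bdds_alpha:lp_lp} via the Dunford--Pettis theorem; and for $q_1 \le 0$, $\Soaj(\uaj)$ is uniformly bounded (as $\Ba \ge 1/(1+\alpha_j) \ge 1/2$), so Lebesgue's dominated convergence theorem suffices.

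The $v$-equation is treated symmetrically, invoking the $i = 2$ versions of the same inequalities. Finally, the $L^1_{\loc}$-integrability of the fluxes and kinetics required by Definition~\ref{def:weak_sol_main} is inherited from the above uniform bounds via Fatou's lemma, completing the verification that the limit pair $(u, v)$ is a weak solution of \eqref{prob:nonlin}.
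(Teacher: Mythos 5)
Your proof is correct and follows the same overall scaffold as the paper: write the weak formulation for $(\ua,\va)$ with the time derivative shifted onto $\varphi$, then pass to the limit $\alpha=\alpha_j\searrow 0$ term by term. The treatment of the linear term, the initial datum, the kinetics, the $\alpha$-correction in $\Doa$, and the cross-diffusive term agree with the paper essentially line for line (the paper invokes Pratt's lemma where you invoke Vitali or Dunford--Pettis, but the uniform-integrability mechanism and the role of \eqref{eq:intro:main_cond} in producing $q_1 r_2' \lt p_1$, resp.\ $q_1 r_2' \le p_1$ with the logarithmic refinement, are identical).

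Where you genuinely diverge is the self-diffusive term. The paper proves the strong convergence
$\Baj^{m_1-1}(\uaj)\ra(u+1)^{m_1-1}$ in $L^{r_1/(r_1-1)}_{\loc}$ (reducing, via $(m_1-1)\frac{r_1}{r_1-1}\lt p_1$, to the $L^{p_1}$ bound of Lemma~\ref{lm:space_time_bdds_alpha}) and pairs it against the weak $L^{r_1}$-convergence $\nabla\uaj\rh\nabla u$ already packaged in \eqref{eq:alpha_sea_0:u_nabla_lr}. You instead factor $\Ba^{m_1-1}=\Ba^{(m_1+q_1-1)/2}\cdot\Ba^{\beta_1/2}$ and pair strong $L^2$-convergence of $\Ba^{(m_1+q_1-1)/2}(\uaj)\nabla\varphi$ against the raw dissipation bound \eqref{eq:entropy_est_alpha:l2_l2} for $\Ba^{\beta_1/2}(\uaj)\nabla\uaj$. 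Both routes rest on the same arithmetic fact $2(m_1-1-\beta_1/2)=m_1+q_1-1\lt p_1$ from \eqref{eq:params:statement}, so they are in a sense dual packagings; yours works directly with the entropy dissipation and so stays closer to the structural origin of the estimate, while the paper's stays entirely inside the convergences already collected in Lemma~\ref{lm:alpha_sea_0}. The one step your version adds is the identification of the weak $L^2$-limit of $\Ba^{\beta_1/2}(\uaj)\nabla\uaj$ as $(u+1)^{\beta_1/2}\nabla u$: passing through $\nabla\Phi_\alpha(\uaj)$ gives you $\nabla\Phi_0(u)$ distributionally, but writing $\nabla\Phi_0(u)=(u+1)^{\beta_1/2}\nabla u$ is a Sobolev chain rule for a merely locally Lipschitz $\Phi_0$ (unbounded derivative when $\beta_1\gt 0$) applied to $u\in W^{1,r_1}_{\loc}$. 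This is standard (truncate $\Phi_0'$, pass to the limit using that $(u+1)^{\beta_1/2}\nabla u\in L^2_{\loc}$ by weak lower semicontinuity), but it is not free; the paper's weak--strong $L^{r_1}$-$L^{r_1'}$ pairing sidesteps it entirely because the weak limit of $\nabla\uaj$ is $\nabla u$ by fiat of Lemma~\ref{lm:alpha_sea_0}. So: correct, genuinely different decomposition, slightly more work to close the chain-rule gap you leave implicit.
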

\begin{proof}
  Both the required regularity and nonnegativity of $u$ and $v$ are contained in Lemma~\ref{lm:alpha_sea_0}.

  In order show that \eqref{eq:weak_sol_main:u_sol} holds,
  we first fix $\varphi \in C_c^\infty(\Ombar \times [0, \infty))$.
  For all $\alpha \in (0, 1)$,
  the pair $(\ua, \va)$ given by Lemma~\ref{lm:ex_ua_va} solves \eqref{prob:alpha} weakly
  so that by \eqref{eq:weak_sol_nonlin:u_sol} and an integration by parts,
  \begin{align}\label{eq:u_v_weak_sol_main:ua_va_weak_sol}
        I_{1 \alpha} + I_{2 \alpha}
    &\defs - \intninfom \ua \varphi_t
        - \intom \una \varphi(\cdot, 0) \notag \\
    &=  - \intninfom \Doa(\ua) \nabla \ua \cdot \nabla \varphi
        + \intninfom \Soa(\ua) \nabla \va \cdot \nabla \varphi
        + \intninfom \foa(\ua, \va) \varphi \notag \\
    &\sfed  I_{3 \alpha} + I_{4 \alpha} + I_{5 \alpha}
    \qquad \text{for all $\alpha \in (0, 1)$}.
  \end{align}
  Mainly relying on the convergences provided by Lemma~\ref{lm:alpha_sea_0}, we now take the limit $\alpha = \alpha_j \sea 0$ in each term herein.
  First, 
  \begin{align*}
    I_{2 \alpha_j} \ra - \intom u_0 \varphi(\cdot, 0)
    \quad \text{and} \quad
    I_{5 \alpha_j} \ra \intninfom \fo(u, v) \varphi
    \qquad \text{as $j \ra \infty$}
  \end{align*}
  are direct consequences of \eqref{eq:def_una_vna:conv_lebl} and \eqref{eq:alpha_sea_0:f1_l1}.
  Moreover, as $r_1 \gt 1$ by \eqref{eq:intro:main_cond},
  we infer from \eqref{eq:alpha_sea_0:u_nabla_lr} that $\uaj \ra u$ in $L_{\loc}^1(\Ombar \times [0, \infty))$ and thus
  \begin{align*}
    I_{1 \alpha_j} \ra - \intninfom u \varphi_t
    \qquad \text{as $j \ra \infty$}.
  \end{align*}
  Regarding $I_{3 \alpha}$, we first note that in the case of $m_1 \le 1$,
  \begin{align}\label{eq:u_v_weak_sol_main:i3_m1_lt_1}
    \Baj^{m_1 - 1}(\uaj) \ra (u+1)^{m_1 - 1}
    \qquad \text{in $L_{\loc}^\frac{r_1}{r_1-1}(\Ombar \times [0, \infty))$ as $j \ra \infty$}
  \end{align}
  by Lebesgue's theorem and \eqref{eq:alpha_sea_0:u_pw},
  where $\Ba$ is as in \eqref{eq:alpha_entropy:ba} for $\alpha \in (0, 1)$.
  We now show that \eqref{eq:u_v_weak_sol_main:i3_m1_lt_1} also holds for $m_1 \gt 1$.
  If additionally $r_1 = \frac{2p_1}{p_1-\beta_1}$ with $\beta_1 \defs m_1-q_1-1$,
  then $(m_1-1) \frac{r_1}{r_1-1} = (m_1-1) \frac{2p_1}{p_1+\beta_1} \lt p_1$
  since $0 \lt 2(m_1-1) \lt p_1+\beta_1$ is entailed in \eqref{eq:params:statement}.
  If on the other hand ($m_1 \gt 1$ and) $r_1 \neq \frac{2p_1}{p_1-\beta_1}$ and thus $r_1 = 2 \gt \frac{2p_1}{p_1-\beta_1}$ by \eqref{eq:intro:ri},
  then $\beta_1 \lt 0$ so that \eqref{eq:params:statement} asserts $2(m_1 - 1) \lt p_1$
  and hence also $(m_1-1) \frac{r_1}{r_1-1} \lt p_1$.
  Therefore, \eqref{eq:alpha_sea_0:u_lp} asserts that \eqref{eq:u_v_weak_sol_main:i3_m1_lt_1} indeed also holds for $m_1 \gt 1$.
  Combined with \eqref{eq:alpha_sea_0:u_nabla_lr}, \eqref{eq:u_v_weak_sol_main:i3_m1_lt_1} then implies
  \begin{align*}
        \intninfom \Baj^{m_1 - 1}(\uaj) \nabla \uaj \cdot \nabla \varphi
    \ra \intninfom (u+1)^{m_1 - 1} \nabla u \cdot \nabla \varphi
    \qquad \text{as $j \ra \infty$},
  \end{align*}
  and since additionally $\alpha_j \intninfom \nabla \uaj \cdot \nabla \varphi \ra 0$ as $j \ra \infty$ by \eqref{eq:alpha_sea_0:u_nabla_lr},
  we conclude
  \begin{align*}
    I_{3 \alpha_j} \ra - \intninfom D_1(u) \nabla u \cdot \nabla \varphi
    \qquad \text{as $j \ra \infty$}.
  \end{align*}
  Finally, we concern ourselves with the term stemming from the cross diffusion:
  Precisely due to our main condition \eqref{eq:intro:main_cond},
  we can choose $p \gt 1$ such that
  \begin{align*}
    \frac1p + \frac1{r_1} = 1
    \quad \text{and} \quad
    p \in \begin{cases}
      [1, \infty), & q_1 \le 0, \\
      [1, \frac{p_1}{q_1}), & 0 \lt q_1 \lt 1, \\
      [1, p_1], & q_1 = 1.
    \end{cases}
  \end{align*}
  As also 
  $0 \le \Soa(s) \le \chi_1 \Ba^{q_1}(s)$ for all $s \ge 0$ and $\alpha \in (0, 1)$
  as well as $\Soaj(\uaj) \ra S_1(\uaj)$ a.e.\ as $\alpha \sea 0$,
  Pratt's lemma and \eqref{eq:alpha_sea_0:u_lp} assert that $\Soaj^p(\uaj) \ra S_1^p(u)$
  in $L_{\loc}^1(\Ombar \times [0, \infty))$ as $j \ra \infty$, provided that $q_1 \ge 0$.
  For $q_1 \lt 0$, the same conclusion can be reached by Lebesgue's theorem.
  Combined with \eqref{eq:alpha_sea_0:v_nabla_lr}, this entails that
  $\Soaj(\uaj) \nabla \vaj \rh S_1(u) \nabla v$ in $L_{\loc}^1(\Ombar \times [0, \infty))$ as $j \ra \infty$ and thus
  \begin{align*}
    I_{4 \alpha_j} \ra \intninfom S_1(u) \nabla v \cdot \nabla \varphi
    \qquad \text{as $j \ra \infty$}.
  \end{align*}
  In combination, these convergences and \eqref{eq:u_v_weak_sol_main:ua_va_weak_sol} prove \eqref{eq:weak_sol_main:u_sol},
  and since \eqref{eq:weak_sol_main:v_sol} can be shown analogously,
  $(u, v)$ is indeed a weak solution of \eqref{prob:nonlin}.
\end{proof}

This lemma already contains our main theorem.
\begin{proof}[Proof of Theorem~\ref{th:ex_weak_nonlin}]
  All claims have been proven in Lemma~\ref{lm:u_v_weak_sol_main}.
\end{proof}

\section*{Acknowledgments}
The author is partially supported by the German Academic Scholarship Foundation
and by the Deutsche Forschungsgemeinschaft within the project \emph{Emergence of structures and advantages in
cross-diffusion systems}, project number 411007140.

\footnotesize

\end{document}